\begin{document}

\title{$\mathrm{Ext}$-groups in the category of strict polynomial functors}

\author{Van Tuan Pham}

\newtheorem{thm}{Theorem}[subsection]
\newtheorem{lem}[thm]{Lemma}
\newtheorem{pro}[thm]{Proposition}
\newtheorem{cor}[thm]{Corollary}
\newtheorem{que}[thm]{Question}
\newtheorem*{kd}{Affirmation}
\newtheorem*{thms}{Theorem}
\newtheorem*{pros}{Proposition}
\newtheorem*{bai}{Problem}
\newtheorem*{deffs}{Definition}
\newtheorem*{cors}{Corollary}
\newtheorem{prob}[thm]{Problem}
\newtheorem{conj}[thm]{Conjecture}

\theoremstyle{definition}
\newtheorem{deff}[thm]{Definition}
\newtheorem{kh}[thm]{Notation}
\newtheorem{exe}[thm]{Example}
\newtheorem{vid}[thm]{}
\newtheorem{cauhoi}[thm]{Question}
\newtheorem{nota}[thm]{Notation}
\newtheorem{conv}[thm]{Convention}
\newtheorem*{remark}{Remark(T)}

\newtheorem{rem}[thm]{Remark}

% % % % % % % % % % % % % % % % %

\newcommand{\Fr}{\operatorname{Fr}}
\newcommand{\Hom}{\operatorname{Hom}}
\newcommand{\Ext}{\operatorname{Ext}}
\newcommand{\GL}{\operatorname{GL}}
\newcommand{\Ooper}{\operatorname{O}}
\newcommand{\Uoper}{\operatorname{U}}

\newcommand{\Ch}{\mathbf{Ch}}

\newcommand{\Ab}{\operatorname{Ab}}
\newcommand{\Ev}{\operatorname{-ev}}

\newcommand{\Boverline}{\overline{B}}

\newcommand{\Model}{\mathsf{M}_\Bbbk}

\newcommand{\Pcoeur}{\mathbf{P}_{p\operatorname{-}\mathfrak{coeur}}}

\newcommand{\ktto}[1]{\mathrm{k}\left(#1\right)}

\newcommand{\Mod}{\operatorname{-Mod}}
\newcommand{\Modr}{\operatorname{Mod-}}
\newcommand{\Gal}{\operatorname{Gal}}
\newcommand{\Set}{\operatorname{Set}}
\newcommand{\Colim}{\operatorname{colim}}
\newcommand{\Hocolim}{\operatorname{hocolim}}
\newcommand{\Holim}{\operatorname{holim}}
\newcommand{\Tor}{\operatorname{Tor}}
\newcommand{\Lie}{\operatorname{Lie}}
\newcommand{\Tot}{\operatorname{Tot}}
\newcommand{\Diag}{\operatorname{diag}}
\newcommand{\Res}{\operatorname{Res}}
\newcommand{\Ind}{\operatorname{Ind}}

\newcommand{\Formel}{\mathfrak{Fc}}

\newcommand{\SP}{\operatorname{SP}}
\newcommand{\Sq}{\operatorname{Sq}}
\newcommand{\Trace}{\operatorname{Tr}}
\newcommand{\Spoper}{\operatorname{Sp}}
\newcommand{\injdim}{\operatorname{inj.dim}}

\newcommand{\Core}{\mathfrak{Co}}
\newcommand{\Corepx}[1]{\mathfrak{Co}_{p}\left(#1\right)}
\newcommand{\Kfoncteur}{\Bbbk\operatorname{-}\mathsf{foncteurs}}

% % % % % % % % % % % % % % % % %

\newcommand{\Comod}{\operatorname{-Comod}}
\newcommand{\Comodr}{\operatorname{Comod-}}

\newcommand{\Aut}{\operatorname{Aut}}
\newcommand{\Algcom}{\operatorname{-alg.com}}
\newcommand{\Alg}{\operatorname{-Alg}}
\newcommand{\Grp}{\operatorname{Grp}}
\newcommand{\Cr}{\operatorname{cr}}
\newcommand{\Ht}{\operatorname{h}}
\newcommand{\Id}{\operatorname{Id}}
\newcommand{\Pol}{\operatorname{Pol}}
\newcommand{\Sch}{\operatorname{Sch}}
\newcommand{\Spec}{\operatorname{Sp}_\Bbbk}
\newcommand{\End}{\operatorname{End}}
\newcommand{\Eval}{\operatorname{ev}}
\newcommand{\Rep}{\operatorname{Rep}}
\newcommand{\Ob}{\operatorname{Ob}}
\newcommand{\Ens}{\operatorname{Ens}}

\newcommand{\Pbf}{\mathbf{P}}
\newcommand{\Triv}{\textrm{triv}}

\newcommand{\wunderline}{\mathbf{w}}

\newcommand{\Op}{\mathrm{op}}

% % % % % % % % % % % % % % % %

\newcommand{\EndK}[1]{\End_\Bbbk\left(#1\right)}
\newcommand{\EndGV}[2]{\End_{\Gamma^{#1}\mathcal{V}_\Bbbk}\left(#2\right) }

\newcommand{\Evaln}{\operatorname{ev}_{\Bbbk^n}}

% % % % % % % % % mathcal

\newcommand{\Ocal}{\mathcal{O}}
\newcommand{\Acal}{\mathcal{A}}
\newcommand{\Bcal}{\mathcal{B}}
\newcommand{\Ccal}{\mathcal{C}}
\newcommand{\Dcal}{\mathcal{D}}
\newcommand{\Ecal}{\mathcal{E}}
\newcommand{\FcalK}{\mathcal{F}_{\Bbbk}}
\newcommand{\Ucal}{\mathcal{U}}

\newcommand{\PcalK}{\mathcal{P}_\Bbbk}
\newcommand{\PcalKn}{\mathcal{P}_\Bbbk(n)}

\newcommand{\PcalKoper}[1]{\mathcal{P}_{#1}}
\newcommand{\Pcalx}[1]{\mathcal{P}_{#1}}

\newcommand{\PcalKttilde}{\widetilde{\mathcal{P}_\Bbbk}}

\newcommand{\Kcal}{\mathcal{K}}
\newcommand{\Ncal}{\mathcal{N}}
\newcommand{\Scal}{\mathcal{S}}
\newcommand{\Ical}{\mathcal{I}}
\newcommand{\Jcal}{\mathcal{J}}
\newcommand{\VcalK}{\mathcal{V}_\Bbbk}
\newcommand{\VcalPol}{\mathcal{V}_{\Pol,\Bbbk}}
\newcommand{\PolKoper}[1]{\Pol_\Bbbk\left(#1\right)}

\newcommand{\Vcaln}{\mathcal{V}_\Bbbk^{\times n}}
\newcommand{\VcalKe}{\mathcal{V}_\Bbbk^*}
\newcommand{\VcalKgr}{\mathcal{V}_\Bbbk^\mathrm{gr}}

\newcommand{\Lcal}{\mathcal{L}}
\newcommand{\SPcal}{\mathcal{SP}}

\newcommand{\Mcal}{\mathcal{M}}

% % % % % % % % % % % % mathsf

\newcommand{\Asf}{\mathsf{A}}
\newcommand{\Bsf}{\mathsf{B}}

\newcommand{\Csf}{\mathsf{C}}
\newcommand{\Lsf}{\mathsf{L}}
\newcommand{\Qsf}{\mathsf{Q}}
\newcommand{\Psf}{\mathbf{P}}
\newcommand{\Isf}{\mathsf{I}}
\newcommand{\Ssf}{\mathsf{S}}
\newcommand{\esf}{\mathsf{e}}
\newcommand{\Ksf}{\mathsf{K}}
\newcommand{\Stsf}{\mathsf{St}}
\newcommand{\Psfpcore}{\mathbf{P}_{p\text{-}\mathfrak{coeur}}}
\newcommand{\Psfhcore}{\mathbf{P}_{2\text{-}\mathfrak{coeur}}}

\newcommand{\IPsf}{\mathbf{I\hskip -0.01cm P}}

% % % % % % % % % % % %mathbf
\newcommand{\Dbf}{\mathbf{D}}
\newcommand{\Rbf}{\mathbf{R}}
\newcommand{\Lbf}{\mathbf{L}}
\newcommand{\Kbf}{\mathbf{K}}

% % % % % % % % % % % % % % % % % mathbb

\newcommand{\Abb}{\mathbb{A}}

\newcommand{\Zbb}{\mathbb{Z}}
\newcommand{\Nbb}{\mathbb{N}}
\newcommand{\Rbb}{\mathbb{R}}
\newcommand{\Qbb}{\mathbb{Q}}
\newcommand{\Sbb}{\mathbb{S}}
\newcommand{\Fbbp}{\mathbb{F}_p}
\newcommand{\Fbbpbar}{\overline{\mathbb{F}}_p}

\newcommand{\Soc}{\operatorname{soc}}

% % % % % % % % % % % % % % % % %mathfrak

\newcommand{\Sfrak}{\mathfrak{S}}
\newcommand{\Gfrak}{\mathfrak{G}}
\newcommand{\Hfrak}{\mathfrak{H}}
\newcommand{\Bfrak}{\mathfrak{B}}
\newcommand{\gfrak}{\mathfrak{g}}
\newcommand{\hfrak}{\mathfrak{h}}

% % % % % % % % % % % % % % % % %

\newcommand{\dhr}{\partial}

\newcommand{\td}[1]{\left|#1\right|}

\newcommand{\Bloc}[1]{\mathfrak{Bl}\left(#1\right)}

\newcommand{\Ceil}[1]{\left\lceil #1\right\rceil}

\newcommand{\Ngoac}[1]{\left(#1\right)}
\newcommand{\Ngoacv}[1]{\left[#1\right]}
\newcommand{\Ngoacn}[1]{\left\{#1\right\}}

\newcommand{\Pist}{\pi^{\rm st}}

\newcommand{\Lst}{L^{\rm st}}
\newcommand{\SPinfty}{\SP^\infty}

\newcommand{\vh}[1]{\left\langle #1\right\rangle}

\newcommand{\HomVcalPol}[1]{\Hom_{\VcalPol}\left(#1\right)}
\newcommand{\HominP}[1]{\mathbf{\Hom}_{\Pcal_\Bbbk}\left(#1\right)}
\newcommand{\HominPn}[1]{\mathbf{\Hom}_{\Pcal_\Bbbk(n)}\left(#1\right)}

\newcommand{\Chd}[1]{\mathbf{Ch}_{\ge 0}\left(#1\right)}
\newcommand{\Hrat}[2]{H_{\mathrm{rat}}^{#1}\left(#2\right)}
\newcommand{\HomPoper}[2]{\Hom_{\mathcal{P}_{#1,\Bbbk}}\left(#2\right)}

\newcommand{\Endx}[2]{\operatorname{End}_{#1}\left(#2\right)}

\newcommand{\Homx}[2]{\Hom_{#1}\left(#2\right)}
\newcommand{\Homxin}[2]{\mathbf{Hom}_{#1}\left(#2\right)}
\newcommand{\RHomx}[2]{\Rbf\Hom_{#1}\left(#2\right)}
\newcommand{\RHomxin}[2]{\Rbf\mathbf{Hom}_{#1}\left(#2\right)}

\newcommand{\Extx}[2]{\Ext^{*}_{#1}\left(#2\right)}
\newcommand{\Extxin}[2]{\mathbf{Ext}^{*}_{#1}\left(#2\right)}

\newcommand{\pr}{\operatorname{pr}}

\newcommand{\HomPn}[1]{\Hom_{\Pcal_\Bbbk(n)}\left(#1\right)}
\newcommand{\HomPh}[1]{\Hom_{\Pcal_\Bbbk(2)}\left(#1\right)}

\newcommand{\RHomiP}[1]{\mathbf{R}\mathbf{Hom}_{\Pcal_\Bbbk}\left(#1\right)}
\newcommand{\RHomiPn}[1]{\mathbf{R}\mathbf{Hom}_{\Pcal(n)_\Bbbk}\left(#1\right)}

\newcommand{\RHomPn}[1]{\mathbf{R}\Hom_{\Pcal(n)_\Bbbk}\left(#1\right)}
\newcommand{\RHomP}[1]{\mathbf{R}\Hom_{\Pcal_\Bbbk}\left(#1\right)}

\newcommand{\OPcalR}{\odot_\Pcal^\Rbf}
\newcommand{\Enrbar}{\overline{E}_{n,r}}
\newcommand{\Fct}[1]{\mathrm{Fct}\left(#1\right)}
\newcommand{\FctK}[1]{\mathrm{Fct}_\Bbbk\left(#1\right)}

\newcommand{\Rhomx}[2]{\mathbf{R}\Hom_{#1}\left(#2\right)}

\newcommand{\FcalKoper}[1]{\mathcal{F}_{#1,\Bbbk}}

\newcommand{\HomK}[1]{\Hom\left(#1\right)}

\newcommand{\SchK}{\Sch/\Bbbk}

\newcommand{\BbbkEv}{\operatorname{Vect}_\Bbbk}

\newcommand{\PolKoperdegree}[2]{\Pol_{#1,\Bbbk}\left(#2\right)}

\newcommand{\GammadVK}{\Gamma^d\mathcal{V}_\Bbbk}

\newcommand{\SKnd}{S_{\Bbbk}(d,n)}
\newcommand{\SKoper}[1]{S_\Bbbk\left(#1\right)}
\newcommand{\SpecKoper}[1]{\Spec_\Bbbk\left(#1\right)}
\newcommand{\BbbkTriv}{\Bbbk^\mathrm{triv}}
\newcommand{\BbbkSgn}{\Bbbk^\mathrm{sgn}}

\newcommand{\Ftilde}{\widetilde{F}}
\newcommand{\GLnK}{\operatorname{GL}_{n,\Bbbk}}
\newcommand{\SpnK}{\operatorname{Sp}_{n,\Bbbk}}
\newcommand{\OnnK}{\operatorname{O}_{n,n;\Bbbk}}
\newcommand{\lflooroper}[1]{\left\lfloor#1\right\rfloor}
\newcommand{\odotRbf}{\overset{\Rbf}{\odot}}

\newcommand{\Vunderline}{\mathbf{V}}
\newcommand{\Wunderline}{\mathbf{W}}

\newcommand{\otimesx}[1]{\otimes_{#1}}
\newcommand{\otimesxL}[1]{\otimes^\Lbf_{#1}}
\newcommand{\dunder}{\mathbf{d}}
\newcommand{\Sx}[2]{S^{\boxtimes \,#1\,}_{#2}}
\newcommand{\Gx}[2]{\Gamma^{\boxtimes\,#1\,#2}}
\newcommand{\ktt}{\mathrm{k}}
\newcommand{\boxtimesx}[2]{\underset{#1}{\overset{#2}{\text{\Large $\boxtimes$}}}}

\newcommand{\Dbfb}[1]{\mathbf{D}^b\left(#1\right)}
\newcommand{\Extxx}[3]{\operatorname{Ext}^{#1}_{#2}\left(#3\right)}
\newcommand{\Extxxin}[3]{\mathbf{Ext}^{#1}_{#2}\left(#3\right)}
\newcommand{\munderline}{\mathbf{m}}
\newcommand{\Otimesx}[1]{\overset{#1}{\underset{i=1}{\bigotimes}}}

\newcommand{\Modp}{\operatorname{-mod}}
\newcommand{\PcalKh}{\mathcal{P}_\Bbbk(2)}
\newcommand{\Gcal}{\mathcal{G}}
\newcommand{\Itype}{\rm (\mathfrak{I})}
\newcommand{\Zph}{\mathbb{Z}_{>0}^{\times 2}}
\newcommand{\I}{\operatorname{I}}
\newcommand{\II}{\operatorname{I\hskip -0.05cm I}}
\newcommand{\Fctr}[1]{\mathrm{Fct}_*\left(#1\right)}
\newcommand{\psf}{\mathsf{p}}
\newcommand{\dsf}{\mathsf{d}}
\newcommand{\Tw}{\operatorname{t}}
\newcommand{\chuan}[1]{\left\|#1\right\|}

\newcommand{\cunder}{\mathbf{c}}
\newcommand{\Rat}{\mathcal{R}\mathrm{at}}
\newcommand{\Poly}{\mathcal{P}\mathrm{ol}}
\newcommand{\Cbb}{\mathbb{C}}
\newcommand{\Cfrak}{\mathfrak{C}}
\newcommand{\Kbfd}[1]{\mathbf{K}_{\ge 0}\left(#1\right)}
\newcommand{\tunder}{\mathbf{t}}
\newcommand{\Wcal}{\mathcal{W}}
\newcommand{\Urm}{\mathrm{U}}
\newcommand{\Pbftilde}{\widetilde{\Pbf}}
\newcommand{\Jtilde}{\widetilde{J}}
\newcommand{\Dbfd}[1]{\mathbf{D}_{\ge 0}\left(#1\right)}
\newcommand{\eunder}{\mathbf{e}}
\newcommand{\Kbb}{\mathbb{K}}
\newcommand{\KbbEv}{\operatorname{Vect}_\mathbb{K}}
\newcommand{\KbbV}{\mathcal{V}_\mathbb{K}}
\newcommand{\Lbb}{\mathbb{L}}

\newcommand{\ket}{\hfill$\square$}

\begin{abstract}
	The aim of this paper is to study, by using the mathematical tools developed by Cha\l upnik, Touz\'e and Van der Kallen, the effect of the Frobenius twist on $\Ext$-group in the category of strict polynomial functors.
	As an application, we obtain explicit formulas of cohomology of the orthogonal groups and symplectic ones.
\end{abstract}

\footnote{VNU University of science, phamvantuan1987@gmail.com}

\maketitle

\section*{Introduction}

Let $ \Bbbk $ be a field of positive characteristic $ p $.
In this paper, we study rational cohomology of orthogonal group scheme $ \Ooper_{n,n} $ and symplectic group scheme $ \Spoper_{n} $ with coefficients in polynomial algebra $S^*\left(\left(\left(\Bbbk^{2n\vee}\right)^{(r)}\right)^{\oplus\ell}\right)$.
Here, $ \left(\left(\Bbbk^{2n\vee}\right)^{(r)}\right)^{\oplus\ell} $ (for simplicity, we write $ \Bbbk^{2n\vee(r)\oplus \ell} $) is the direct sum of $ \ell $ copies of $ r $-th Frobenius twist of dual linear of $ \Bbbk $-vector space $ \Bbbk^{2n} $ and $\displaystyle S^*\left(\Bbbk^{2n\vee(r)\oplus \ell}\right)=\bigoplus_{d=0}^{\infty}S^d\left(\Bbbk^{2n\vee(r)\oplus \ell}\right)$ is polynomial algebra on $ \Bbbk $-vector space $ \Bbbk^{2n\vee(r)\oplus \ell} $.

Throughout, in order to consider these two group schemes simultaneously, let $ G_{n} $ denote either the orthogonal group scheme $ \Ooper_{n,n} $ or the symplectic group scheme $ \Spoper_{n} $.
Then $ G_{n} $ is by definition a subgroup scheme of $ \GL_{2n} $, and acts naturally on the vector space $ \Bbbk^{2n} $ by matrix multiplication.
By applying the $r$-th Frobenius twist (for $r\ge 0$), and taking $\ell$ copies of the representation obtained, we will obtain an action of $G_n$ on $ \Bbbk^{2n\vee(r)\oplus \ell} $, subsequently an action by algebraic automorphisms on the polynomial algebra $ S^{*}\left(\Bbbk^{2n\vee(r)\oplus \ell}\right) $.

The main theorem of this paper (Theorem \ref{thm3.3.11}) is the following explicit formulas of cohomology of the orthogonal groups and symplectic ones.

\begin{thms}[Partial Cohomological First and Second Fundamental Theorems for $ \Ooper_{n,n} $ and $ \Spoper_{n} $]
	Suppose that $p>2$. If $n\ge p^{r}\ell$ then the cohomology algebra
	\begin{equation*}
	H^{*}_{\mathrm{rat}}\left(G_{n},S^{*}\left(\Bbbk^{2n\vee(r)\oplus \ell}\right)\right)
	\end{equation*}
	is a symmetric algebra on a finite set of generators $(h|i|j)_{G_{n}}\in H^{2h}_{\mathrm{rat}}\left(G_{n},S^{*}\left(\Bbbk^{2n\vee(r)\oplus \ell}\right)\right)$ where $0\le h<p^r$, $0\le i\le j\le \ell$, and $i\ne j$ if $G_n$ is the symplectic group $\Spoper_n$.
	
	Moreover, there are no relations among the $(h|i|j)_{G_{n}}$.
\end{thms}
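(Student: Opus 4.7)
The plan is to convert the rational cohomology into an $\Ext$-group in the category $\PcalK$ of strict polynomial functors, then to apply the Frobenius-twist techniques developed in the earlier sections of the paper (following Cha\l upnik, Touz\'e, and Van der Kallen) to make the computation explicit.

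First, assuming $n\ge p^{r}\ell$, I would invoke a stabilization theorem of Touz\'e--Van der Kallen type to identify
\[
H^{*}_{\mathrm{rat}}\bigl(G_{n},S^{*}(\Bbbk^{2n\vee(r)\oplus \ell})\bigr)\;\cong\;\Extx{\PcalK}{B_{G_n},\, F_{r,\ell}},
\]
where $B_{G_n}\in\PcalK$ is the divided-power functor associated to the defining invariant form of $G_{n}$ (built from $S^{2}$ in the orthogonal case and from $\Lambda^{2}$ in the symplectic case), and $F_{r,\ell}$ denotes the polynomial functor $V\mapsto S^{*}(V^{(r)\oplus\ell})$. The hypothesis $n\ge p^{r}\ell$ is precisely the stable range in which evaluation at $\Bbbk^{2n}$ of the relevant polynomial functors yields an isomorphism of graded rational $G_{n}$-modules.

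Second, I would decompose $F_{r,\ell}$ along multi-indices as a direct sum of tensor products of Frobenius-twisted symmetric powers, and apply the Frobenius untwisting principle recalled earlier in the paper to rewrite each corresponding $\Ext$-summand as a tensor product of an untwisted $\Ext$-group with an algebra of even-degree classes, one in each cohomological degree $2h$ for $0\le h<p^{r}$. The untwisted factor is controlled by the classical invariant theory of $G_{n}$ on $S^{*}((\Bbbk^{2n})^{\oplus \ell})$ (the classical First Fundamental Theorem), whose invariants form a polynomial algebra on pairings indexed by $(i,j)$ with $0\le i\le j\le \ell$ in the orthogonal case and $0\le i<j\le \ell$ in the symplectic case, reflecting the symmetry of $S^{2}$ versus the antisymmetry of $\Lambda^{2}$. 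Combining the two polynomial structures produces the generating set $(h|i|j)_{G_{n}}$.

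The principal obstacle, I expect, is verifying that there are no relations among these generators. I would approach this by comparing Hilbert series on both sides: the polynomial algebra on the $(h|i|j)_{G_{n}}$ has an explicit Hilbert series, and the corresponding series on the $\Ext$-side can be assembled from the known bases of Ext-groups between Frobenius-twisted functors in $\PcalK$ together with the classical invariant-theoretic dimensions for $G_{n}$. A subtler point is the compatibility of the cup product on rational cohomology with the tensor-product structure on the $\Ext$-side after untwisting; this relies on the multiplicative properties of the untwisting functor, which are among the technical tools developed earlier in the paper.
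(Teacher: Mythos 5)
Your overall strategy follows the paper's: convert to an $\Ext$-computation in $\PcalK$, untwist the Frobenius, and identify the result with a symmetric algebra on $E_r\otimes X_G(\Bbbk^\ell)$. But your first step contains a genuine gap. You assert that $n\ge p^r\ell$ is ``precisely the stable range in which evaluation at $\Bbbk^{2n}$ of the relevant polynomial functors yields an isomorphism.'' It is not. The general comparison theorem (Theorem \ref{cle0}(4), from \cite{Tou10a}) gives an isomorphism $\phi_{G_n,F}$ only when $2n\ge\deg F$; here the target functors are $S^{2d(r)}_{\Bbbk^\ell}$ of degree $2p^rd$, which is unbounded in $d$, so the naive stable range fails for all $d>n/p^r$. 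The paper closes this gap with the notion of $n$-coresolved functors: the Troesch coresolutions show that $S^{2d(r)}_{\Bbbk^\ell}$ is $2p^r\ell$-coresolved (Lemma \ref{lem3.5.8}), and Theorem \ref{cle2} then upgrades $\phi_{G_n,S^{2d(r)}_{\Bbbk^\ell}}$ to an isomorphism for every $d$ as soon as $n\ge p^r\ell$. Without this ingredient your identification of the cohomology with an $\Ext$-group simply does not hold in the range you need it.

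Two smaller remarks. First, you correctly flag multiplicativity of the untwisting isomorphism as a ``subtler point,'' but it is more than that: the untwisting isomorphism of Theorem \ref{thm2.6.20} is not known to be compatible with products, and the paper devotes Section \ref{sect3.3} (Theorems \ref{thm3.3.1}, \ref{thm3.4.1}, \ref{thm3.3.8}) to constructing a different isomorphism $\alpha(S^\mu,X_G)$ that is compatible with cup products and the $\Sfrak_n$-action; this is where most of the technical work lies. Second, your Hilbert-series argument for the absence of relations is unnecessary (and somewhat circular, since computing the Hilbert series of the cohomology presupposes the identification): once $\Phi_{G_n}$ is an \emph{algebra} isomorphism from the free symmetric algebra $S^*\left(E_r\otimes X_G(\Bbbk^\ell)\right)$, the generators $(h|i|j)_{G_n}$ are the images of a basis and freeness is automatic.
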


We don't compute directly the cohomology algebra $ H^{*}_{\mathrm{rat}}\left(G_{n},S^{*}\left(\Bbbk^{2n\vee(r)\oplus \ell}\right)\right) $ in the category of rational $ G_{n} $-modules.
Instead of that, we will develop Touz\'e's idea in \cite{Tou10a}, which allows to convert the calculation this cohomology algebra into the calculation of an appropriate $ \Ext $-group in the category $ \PcalK $ of strict polynomial functors.
This category $ \PcalK $ has been invented by Friedlander and Suslin \cite{FS97}, and the calculation of $ \Ext $-groups here is much simpler than in the category of rational $ G_{n} $-modules.

A similar result with our theorem, but for rational cohomology of general linear group scheme $ \GL_n $ has been carried out by Touz\'e in \cite{Tou12}.
This paper of Touz\'e together with  \cite{Tou10a} are the starting point for our study.

Now, in our main theorem, by taking degree zero part of the cohomology algebra, we get back the partial first and second fundamental theorems for $ \Ooper_{n,n} $ and $ \Spoper_{n} $, an important result in classical invariant theory.
Denote by $e_1^\vee,e_2^\vee,\ldots,e_{2n}^\vee$ the dual basis of the canonical basis of $\Bbbk^{2n}$.
We have
\begin{equation*}
(0|i|j)_{G_{n}}(x_{1},...,x_{\ell})=\omega_{G_{n}}(x_{i},x_{j})
\end{equation*}
where $ \omega_{G_{n}} $ is $ \sum_{i=1}^{n}e_{i}^{\vee}\wedge e_{n+i}^{\vee}\in\Lambda^{2}(\Bbbk^{2n}) $ if $ G_{n}=\Spoper_n $ and is $ \sum_{i=1}^{n}e_{i}^{\vee}e_{n+i}^{\vee}\in S^{2}(\Bbbk^{2n}) $ if $ G_{n}=\Ooper_{n,n} $.

\begin{thms}[Partial First and Second Fundamental Theorems for $ \Ooper_{n,n} $ and $ \Spoper_{n} $ \cite{deCP76}] The set $(0|i|j)_{G_{n}}$
	where $0\le h<p^r$, $0\le i\le j\le \ell$, and $i\ne j$ if $G_n$ is the symplectic group $\Spoper_n$,	
	is a system of generators of the invariant algebra $H^{0}_{\mathrm{rat}}\left(G_{n},S^{*}\left(\Bbbk^{2n\vee\oplus \ell}\right)\right)$.
	Moreover, if $n\ge \ell$, there are no relations among the $(0|i|j)_{G_{n}}.$
\end{thms}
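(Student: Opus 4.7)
The plan is to derive this statement as the $r=0$ case of the main theorem (Theorem~\ref{thm3.3.11}). Setting $r=0$ makes the Frobenius twist trivial, so the coefficient module $\Bbbk^{2n\vee(0)\oplus\ell}$ becomes $\Bbbk^{2n\vee\oplus\ell}$, and the size hypothesis $n\ge p^{r}\ell$ of the main theorem specialises to $n\ge\ell$, which is exactly the hypothesis needed for the ``no relations'' part of the present statement.

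I would next extract the cohomological degree zero part. In the conclusion of the main theorem the generator $(h|i|j)_{G_{n}}$ lives in cohomological degree $2h$, and the range constraint $0\le h<p^{r}=1$ forces $h=0$. Thus the symmetric algebra on the generators sits entirely in cohomological degree zero and coincides with the invariant algebra $H^{0}_{\mathrm{rat}}(G_{n},S^{*}(\Bbbk^{2n\vee\oplus\ell}))$. The ``no relations'' clause of the main theorem becomes, in this specialisation, exactly the algebraic independence of the $(0|i|j)_{G_{n}}$, i.e.\ the partial second fundamental theorem asserted here.

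To finish I would identify the classes $(0|i|j)_{G_{n}}$ with the concrete classical invariants. Using the formula
\[
(0|i|j)_{G_{n}}(x_{1},\ldots,x_{\ell})=\omega_{G_{n}}(x_{i},x_{j})
\]
recalled just before the statement, these generators are precisely the pairings of pairs of variable vectors by the symmetric (respectively symplectic) bilinear form preserved by $G_{n}$, matching the classical generating system of De~Concini--Procesi \cite{deCP76}.

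The only step I expect to require genuine work is the identification, at the level of the underlying polynomial functions on $\bigl(\Bbbk^{2n\vee\oplus\ell}\bigr)^{*}$, of the degree-zero class $(0|i|j)_{G_{n}}$ produced by the abstract construction of the main theorem with the explicit form $\omega_{G_{n}}(x_{i},x_{j})$. This is a $\Hom$-level (rather than $\Ext^{*}$-level) computation that underlies the translation between the functorial and concrete descriptions of the invariant classes, and it is the verification I would single out as the main point to check carefully; once it is in place, both the generation and the independence follow immediately from the corresponding parts of the main theorem.
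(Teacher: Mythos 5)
Your derivation matches the paper's: the statement is presented there as the classical De\ Concini--Procesi theorem, recovered by the $r=0$ specialisation of Theorem~\ref{thm3.3.11} and its degree-zero part, exactly as you do. Two caveats worth recording: this route only establishes the generation claim under the hypothesis $n\ge\ell$ inherited from the main theorem (which is why the paper calls the recovered statement ``partial'' and cites \cite{deCP76} for the unrestricted first fundamental theorem), and the identification $(0|i|j)_{G_{n}}(x_{1},\ldots,x_{\ell})=\omega_{G_{n}}(x_{i},x_{j})$ that you rightly single out follows by unwinding the degree-zero component of $\phi_{G_{n},S^{2}_{\Bbbk^{\ell}}}$, which by construction is evaluation at $\Bbbk^{2n\vee}$ followed by pullback along $\iota_{d}:\lambda\mapsto\lambda\,\omega_{G_{n}}^{\otimes d}$ and yields exactly the asserted formula.
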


In general, consider a linear algebraic group $ G $, or a linear algebraic group scheme G, defined over the field $ \Bbbk $.
Let $ A $ be a commutative $ \Bbbk $-algebra on which $ G $ acts rationally by $ \Bbbk $-algebra automorphisms.
The classical invariant theory studies the ring of invariants $ A^{G} $.
We say that $ G $ has the finite generation property if the following holds:
\textsf{If $ G $ acts on a finitely generated commutative $ \Bbbk $-algebra $ A $, then the ring of invariants $ A^{G} $ is finitely generated as a $ \Bbbk $-algebra.}
A Noether's result in 1926 said that a finite group $ G $, viewed as a discrete algebraic group over $ \Bbbk $, has the finite generation property.
Due to the works of Nagata in 1964, Haboush in 1975 and Popov in 1979, we know that:
A linear algebraic group scheme $ G $ over $ \Bbbk $ has the the finite generation property if and only if $ G $ is a reductive linear algebraic group over $ \Bbbk $.

Higher invariant theory studies the cohomology algebra $ H^{*}(G,A) $.
Notice that the classical invariant algebra $ A^{G} $ is the degree zero part of $ H^{*}(G,A) $, i.e., $ A^{G}=H^{0}(G,A) $.
The basic problem in higher invariant theory is studying the finite generation property of the cohomology algebra $ H^{*}(G,A) $.
We say that $ G $ has the cohomological finite generation property if the following holds:
\textsf{If $ G $ acts on a finitely generated commutative $ \Bbbk $-algebra $ A $, then the cohomology ring $ H^{*}(G,A) $ is finitely generated as a $ \Bbbk $-algebra.}
It is obvious that the cohomological finite generation property implies the finite generation property.
Evens has generalized the above Noether's result: \emph{A finite group has the cohomological finite generation property over $ \Bbbk $.}

In 1997, Friedlander and Suslin \cite{FS97} extended the above result of Evens: \emph{A finite group scheme has the cohomological finite generation property.}
According to \cite{FS97}, this theorem is implied by the existence of certain universal extension classes for general linear groups over fields of positive characteristic $ p $.
In order to construct the universal extension classes, Friedlander and Suslin have introduced the category $ \PcalK $ of strict polynomial functors.

Finally, in 2010, Touz\'e and Van der Kallen \cite{Tou10b,TvdK10} showed that \emph{all reductive linear algebraic group over $ \Bbbk $ has the cohomological finite generation property}.
In other words, a linear algebraic group $ G $ is a reductive linear algebraic group if and only if $ G $ has the finite generation property, if and only if $ G $ has the cohomological finite generation property.

Thus, according to the theorem of  Touz\'e and Van der Kallen, the algebra $ H^{*}_{\mathrm{rat}}\left(G_{n},S^{*}\left(\Bbbk^{2n\vee(r)\oplus \ell}\right)\right) $ is finite generated.
Therefore, our contribution in this paper is to prove the algebra $ H^{*}_{\mathrm{rat}}\left(G_{n},S^{*}\left(\Bbbk^{2n\vee(r)\oplus \ell}\right)\right) $ is a polynomial algebra if $ n\ge p^{r}\ell $, and give an explicit set of generators in this case, which generalizes partial first and second fundamental theorems for $ \Ooper_{n,n} $ and $ \Spoper_{n} $.

Let us give the main idea of the proof of our main theorem \ref{thm3.3.11}.
We recall that  $\Bbbk$ is a fixed field of positive characteristic $p$.
Let $\VcalK$ denote the category of  finite dimensional $\Bbbk$-vector spaces and  $ \Bbbk $-linear maps.
For simplicity, in this introduction, we assume that $ \Bbbk $ is an infinite field.
A homogeneous strict polynomial functor of degree $ d $ is a functor $ F:\VcalK\to\VcalK $ such that for any $ V,W\in\VcalK $, the structure morphism $ F_{V,W} $ is a homogeneous polynomial map of degree $ d $ from $ \HomK{V,W} $ to $ \HomK{F(V),F(W)} $.
In other words, $ F_{V,W} $ is a $ \Bbbk $-linear map  $ \Gamma^{d}\HomK{V,W}\to \HomK{V,W} $, where $ \Gamma^{d} $ is the $ d $-th divided power.
Therefore, we define the category $ \Gamma^{d}\VcalK $ whose objects are the same as of $\VcalK$, while whose morphisms are $\Homx{\Gamma^d\VcalK}{V,W}=\Gamma^d\HomK{V,W}$.
Then, a homogeneous strict polynomial functor of degree $ d $ is a $\Bbbk$-linear functors from $\Gamma^{d}\VcalK$ to $\VcalK$.
These functors are just objects of the category $ \Pcalx{d}=\Pcalx{d;\Bbbk} $, whose morphisms are natural transformations.
The direct sum $ \PcalK=\bigoplus_{d=0}^{\infty}\Pcalx{d} $ is the category of all strict polynomial functors.
Typical examples of homogeneous strict polynomial functors of degree $ d $ are symmetric power $ S^{d} $, tensor power $ \otimes^{d} $, exterior power $ \Lambda^{d} $ or divided
power $ \Gamma^{d} $.
We have $ I=S^{1}=\Lambda^{1}=\Gamma^{1} $.

In general, we can define strict polynomial functors in several variables.
For each $ n $-tuple of natural numbers $ \dunder=(d_1,d_2,\ldots,d_{n}) $, we define $\Gamma^{\dunder}\VcalK$ to be the tensor product $\bigotimes_{i=1}^n\Gamma^{d_i}\VcalK$
and $ \Pcalx{\dunder}=\Pcalx{\dunder;\Bbbk} $ to be the category of  $\Bbbk$-linear functors from $\Gamma^{\dunder}\VcalK$ to $\VcalK$.
An object of this category is called a homogeneous strict polynomial functor of degree $ \dunder $ in $ n $ variables.

The category $ \PcalKn=\bigoplus_{\dunder}\Pcalx{\dunder} $ where the direct sum is indexed by all $ n $-tuples of natural numbers, is suited to study the representation theory of $\GL_{\munderline,\Bbbk}=\prod_{i=1}^{n}\GL_{m_i,\Bbbk}$, with a tuple of natural numbers $\munderline=(m_1,\ldots,m_n)$.
Evaluation on the standard representation $\Bbbk^{\munderline}=\left(\Bbbk^{m_1},\ldots,\Bbbk^{m_n}\right)$ of $ \GL_{\munderline,\Bbbk} $ induces a map
\begin{equation*}
\Extx{\Pcalx{\dunder}}{F,G}\to \Extx{\GL_{\munderline,\Bbbk}}{F(\Bbbk^{\munderline}),G(\Bbbk^{\munderline})}.
\end{equation*}
This map is an isomorphism in case $ m_i\ge d_i $ for all $ i=1,2,\ldots,n $, see \cite[Lemma 2.3]{Tou10a}.
As a result, we can use the category $ \PcalKn $ to study $\Ext$-groups of the general linear groups.
Moreover, Touz\'e \cite{Tou10a} showed that the strict polynomial functors could be used to study cohomology of the orthogonal and symplectic groups.

By using the relation between the cohomology of the symplectic and orthogonal groups and the computation of extension in $\PcalK$ \cite{Tou10a}, we obtain the following morphism of  $ \Bbbk $- graded algebras
\begin{equation*}
\Phi_{G_n}:\bigoplus_{d\ge 0}\Ext^{*}_{\PcalK} \left(\Gamma^{p^rd}\circ X_{G},S^{2d(r)}_{\Bbbk^\ell}\right)\to H^{*}_{\mathrm{rat}}\left(G_{n},S^{*}\left(\Bbbk^{2n\vee(r)\oplus \ell}\right)\right).
\end{equation*}
where $X_G\in\Pcalx{2}$ denotes  $S^2$ if $G_n=\Ooper_{n,n}$ and $\Lambda^2$ if $G_n=\Spoper_n$  (see \eqref{equ3.5.3}).
Here, for each $ F\in\Pcalx{\dunder} $ and $ r\in\Nbb $, the functor $ F^{(r)}\in\Pcalx{p^{r}\dunder} $ is given by $ F^{(r)}\left(V_1,V_2,\ldots,V_{n}\right)=F\left(V_1^{(r)},V_2^{(r)},\ldots,V_{n}^{(r)}\right) $ where $ V_i^{(r)} $ is  the $ r$-th Frobenius twist of $ \Bbbk $-vector space $ V_i $.

We prove in subsection \ref{subs3.3.5} that the morphism $\Phi_{G_n}$ is an isomorphism if $n\ge p^r\ell$.
For this, we use the notion of the $n$-coresolved functors introduced by Touz\'e \cite{Tou12} to solve an analogous problem for general linear group scheme.

We compute in Lemma \ref{lem3.4.4} the algebra source of the morphism $\Phi_{G_n}$.
We remark that $X_G$ is a direct factor of $\otimes^2$ since $p>2$. It suffices to compute $\Ext$-groups $\Ext^{*}_{\PcalK} \left(\Gamma^{p^rd}\circ \otimes^2,S^{2d(r)}_{\Bbbk^\ell}\right)$, compatible with the products and natural in $\otimes^2$.
It is treated in Section \ref{sect3.3}.
More generally, the $\Ext$-group $\Ext^{*}_{\PcalK} \left(\Gamma^{p^rd}\circ \otimes^n,S^{\mu(r)}\right)$ is computed in Theorem \ref{thm3.4.1}, where $ \mu $ is a tuple of natural numbers.

This research is a continuation of the work of Cha\l upnik, Touz\'e and Van der Kallen.
Among our results, we obtain in Theorem \ref{thm2.6.20} a graded isomorphism, natural in $F\PcalK$
\begin{equation}\label{iso2.1.1}
\Extx{\PcalK}{X^{p^rd}\circ\otimes^n,F^{(r)}}\simeq\Extxx{*-\epsilon(p^rd-d)}{\PcalK}{X^{d}\circ\HomK{E_r^{\otimes n-1},\text{-}}\circ\otimes^n,F}.
\end{equation}
We use the letter $X$ to denote one of the symbols $S$, $\Lambda$, or $\Gamma$; and $\epsilon=0,1,2$ if $X=\Gamma,\Lambda,S$ respectively.
$E_r$ is the graded vector space which equals $\Bbbk$ in degrees $0,2,4,\ldots,2p^r-2$ and which is zero in the other degrees.

\tableofcontents

\section{ Review of strict polynomial functors}

In this section, we introduce the categories $\Pcalx{\dunder}$ of strict polynomial functors in one or several variables.
Our main references are \cite{FS97,SFB97,FFSS99,Tou10a}

\subsection{ Schur categories  $\Gamma^{\dunder}\VcalK$}

Fix a field $\Bbbk$ of positive characteristic $p$.
Let $\VcalK$ denote the category of  finite dimensional $\Bbbk$-vector spaces and  $ \Bbbk $-linear maps.

Let $d$ be a natural number.
In this paper, natural numbers mean non-negative integers.
The tensor product over $ \Bbbk $ induces this functor
\begin{equation*}
\otimes^{d}:\VcalK\to\VcalK,\quad V\mapsto V^{\otimes d}=\underbrace{V\otimes V\otimes\cdots\otimes V}_{d\text{ times}}
\end{equation*}
The symmetric group $ \Sfrak_{d} $ acts on $ \otimes^{d} $ by permuting the factors of the tensor product, i.e., $ \sigma\cdot\left(v_{1}\otimes v_{2}\otimes\cdots\otimes v_{d}\right)=v_{\sigma^{-1}(1)}\otimes v_{\sigma^{-1}(2)}\otimes\cdots\otimes v_{\sigma^{-1}(d)} $.
For each $ V\in  \VcalK$, $ \Gamma^{d}(V):=\left(V^{\otimes d}\right)^{\Sfrak_{d}} $.
We have the $ d $-th divided power
\begin{equation*}
\Gamma^{d}:\VcalK\to\VcalK,\quad V\mapsto \Gamma^{d}(V).
\end{equation*}
In other words, $ \Gamma^{d}=\left(\otimes^{d}\right)^{\Sfrak_{d}} $.

For each pair $(V,W)$ of objects  of $\VcalK$, we define a morphism $\Gamma^d(V)\otimes \Gamma^d(W)\to\Gamma^d(V\otimes W)$ natural in $V,W$ to be the composition
\begin{equation*}
\left(V^{\otimes d}\right)^{\Sfrak_d}\otimes \left(W^{\otimes d}\right)^{\Sfrak_d}\simeq\left(V^{\otimes d}\otimes W^{\otimes d}\right)^{\Sfrak_d\times\Sfrak_d}\hookrightarrow \left((V\otimes W)^{\otimes d}\right)^{\Sfrak_d}.
\end{equation*}

\begin{deff}
	Let $d$ be a natural number.
	Let $\Acal,\Bcal$ be two  $\Bbbk$-linear categories and $F:\Acal\to\Bcal$ be a $\Bbbk$-linear functor.
	Denote by $\Gamma^d\Acal$ the category whose objects are the ones  of $\Acal$, while whose morphisms are $\Homx{\Gamma^d\Acal}{A,B}=\Gamma^d\Homx{\Acal}{A,B}$
	and the composition is given by 
	\begin{align*}
	\Gamma^d\Homx{\Acal}{B,C}\otimes\Gamma^d\Homx{\Acal}{A,B}&\to \Gamma^d\Big(\Homx{\Acal}{B,C}\otimes\Homx{\Acal}{A,B}\Big)\\
	&\to\Gamma^d\Homx{\Acal}{A,C}
	\end{align*}
	where the second morphism is induced by the composition in $\Acal$.
	We note that $\Gamma^1\Acal=\Acal$.
	Define the  $\Bbbk$-linear functor $\Gamma^dF:\Gamma^d\Acal\to\Gamma^d\Bcal$
	to be a functor that associates to each $A\in\Gamma^d\Acal$ a $F(A)$ and the structure morphisms $\left(\Gamma^dF\right)_{A_,B}$ are defined to be $\Gamma^d \left(F_{A,B}\right)$.	
\end{deff}

The categories $ \GammadVK $ are called \emph{Schur categories}.
For each natural number $ n $, we see that $ \Endx{\GammadVK}{\Bbbk^{n}}=\Gamma^{d}\EndK{\Bbbk^{n}}\simeq\Endx{\Bbbk\Sfrak_{d}}{\Bbbk^{n}} $ is the Schur algebra $ S_{\Bbbk}(n,d) $, see \cite{Gre07}.
This is why the categories $ \GammadVK $ are called  Schur categories.
We next present the basic relation of the Schur category with the category of $ \Bbbk $-vector spaces and the category of modules on group algebra of symmetric group.

Let $ \Bbbk\Sfrak_{d} $ be the group algebra of the symmetric group $ \Sfrak_{d} $ and $\Bbbk\Sfrak_d\Modp$ be the category of finite dimensional $\Bbbk\Sfrak_d$-modules.
For each $ \Bbbk $-vector space $ V $, $ V^{\otimes n}=\underbrace{V\otimes\cdots\otimes V}_{n\text{-times}} $ is equipped with a natural action of the  symmetric group $ \Sfrak_{d} $.
We have the functor
\begin{equation*}
\tau_{d}:\VcalK\to\Bbbk\Sfrak_{d}\Modp,\quad V\mapsto V^{\otimes d},\; f\mapsto f^{\otimes d}.
\end{equation*}
Since $ \Homx{\GammadVK}{V,W}\simeq \Homx{\Bbbk\Sfrak_{d}}{V^{\otimes d},W^{\otimes d}} $ natural in $V,W\in\VcalK$, we can consider the Schur category $ \GammadVK $ as the image of the functor $ \tau_{d} $: the objects of  $ \GammadVK $ are the ones of $ \VcalK $, and the morphisms of $ \GammadVK $ are the ones in $ \Bbbk\Sfrak_{d}\Modp $.
Hence, the functor $ \tau_{d} $ can be analyzed through the category $ \GammadVK $

\begin{equation}\label{equ1.1}
\xymatrix{
	\VcalK\ar[rr]^{\tau_{d}}\ar[rd]_{\gamma_d}&&\Bbbk\Sfrak_d\Modp\\
	&\GammadVK\ar[ru]_{\iota_d}	
}
\end{equation}
where the functor $ \gamma_d $ is identical on the objects and the functor $ \iota_d $ is identical on the arrows.
The canonical functor
\begin{equation*}
\Gamma^d\VcalK\to\Bbbk\Sfrak_d\Modp,\quad V\mapsto V^{\otimes n}
\end{equation*}
is an embedding of categories.

Let $d,e$ be two natural numbers.
We will give the definition of the functors $ \Gamma^{d+e}\VcalK\to\Gamma^d\VcalK\otimes\Gamma^{e}\VcalK $ and $ \Gamma^{de}\VcalK\to\Gamma^d\left(\Gamma^e\VcalK\right) $.

We define a morphism $\Delta_{d,e}:\Gamma^{d+e}\to\Gamma^d\otimes\Gamma^e$ to be {the following composition}  where the first morphism is induced by the inclusion $\Sfrak_d\times\Sfrak_e\hookrightarrow\Sfrak_{d+e}$:
\begin{equation*}
\left(\otimes^{d+e}\right)^{\Sfrak_{d+e}}\hookrightarrow\left(\otimes^{d+e}\right)^{\Sfrak_d\times\Sfrak_e}\simeq \left(\otimes^d\right)^{\Sfrak_d}\otimes\left(\otimes^e\right)^{\Sfrak_e}.
\end{equation*}
The morphism $\Delta_{d,e}$ induces naturally a  $\Bbbk$-linear functor $\Delta_{d,e}:\Gamma^{d+e}\VcalK\to\Gamma^d\VcalK\otimes\Gamma^{e}\VcalK$.
This functor associates  to each $V$ a pair $(V,V)$ and the structure morphisms  $\left(\Delta_{d,e}\right)_{V,W}$ are given by
\begin{equation*}
\Gamma^{d+e}\HomK{V,W}\xrightarrow{\Delta_{d,e}}\Gamma^d\HomK{V,W}\otimes\Gamma^e\HomK{V,W}.
\end{equation*}

Define a morphism $c_{d,e}:\Gamma^{de}\to\Gamma^d\circ\Gamma^e$ to be the composition
\begin{equation*}
\left(\otimes^{de}\right)^{\Sfrak_{de}}\hookrightarrow\left(\otimes^{de}\right)^{(\Delta_d\Sfrak_e)\times\Sfrak_d}\xrightarrow{\simeq}\left(\left(\otimes^{de}\right)^{\Delta_d\Sfrak_e}\right)^{\Sfrak_d}\xrightarrow{\simeq}\left(\left((\otimes^e)^{\Sfrak_e}\right)^{\otimes d}\right)^{\Sfrak_d},
\end{equation*}
where the group $\Delta_d\Sfrak_e$ is the image of the composition $\Sfrak_e\xrightarrow{\Delta_d}\left(\Sfrak_e\right)^{\times d}\hookrightarrow\Sfrak_{de}$, {with $\Delta_d$  the diagonal functor.}
The morphism $c_{d,e}$ induces naturally a $\Bbbk$-linear functor $c_{d,e}:\Gamma^{de}\VcalK\to\Gamma^d\left(\Gamma^e\VcalK\right)$. 
This functor sends each  $V$ to $V$ and the structure morphisms $\left(c_{d,e}\right)_{V,W}$ are given by
\begin{equation*}
\Gamma^{de}\HomK{V,W}\xrightarrow{c_{d,e}}\Gamma^d\left(\Gamma^e\HomK{V,W}\right).
\end{equation*}

Let $\dunder=(d_1,\ldots,d_n)$ be a $n$-tuple of natural numbers.
Denote by $\Gamma^{\dunder}\Acal$ the tensor product $\bigotimes_{i=1}^n\Gamma^{d_i}\Acal$.
\begin{itemize}
	\item Let $\dunder=(d_1,\ldots,d_n)$ and $\eunder=(e_1,\ldots,e_n)$ be two $n$-tuples of natural numbers. We denote by $\dunder+\eunder$ (resp. $\dunder\cdot\eunder$) the $n$-tuple $(d_1+e_1,\ldots,d_n+e_n)$ (resp. $(d_1e_1,\ldots,d_ne_n)$).
	\item Let $\dunder=(d_1,\ldots,d_n)$ be a $n$-tuple of natural numbers and $\eunder=(e_1,\ldots,e_m)$ be a $m$-tuple of natural numbers. We denote by $(\dunder,\eunder)$ the $(n+m)$-tuple $(d_1,\ldots,d_n,e_1,\ldots,e_m)$.
	\item {The weight of a $n$-tuple of natural numbers $\dunder=(d_1,\ldots,d_n)$ is $|\dunder|=\sum_{i=1}^{n}d_i$.}
	\item For each $n$-tuple {of natural numbers} $\dunder=(d_1,\ldots,d_n)$, we denote by $\Sfrak_{\dunder}$ the direct product $\prod_{i=1}^{n}\Sfrak_{d_i}$. We have a canonical monomorphism of groups $\Sfrak_{\dunder}\to\Sfrak_{|\dunder|}$.
\end{itemize}
For any two $n$-tuples of natural numbers $\dunder=(d_1,\ldots,d_n)$ and $\eunder=(e_1,\ldots,e_n)$, we have a functor $\Delta_{\dunder,\eunder}:\Gamma^{\dunder+\eunder}\VcalK\to\Gamma^{\dunder}\VcalK\otimes\Gamma^{\eunder}\VcalK$ defined to be {the composition below}
\begin{equation*}
\Gamma^{\dunder+\eunder}\VcalK=\bigotimes_{i=1}^n\Gamma^{d_i+e_i}\VcalK\xrightarrow{\otimes_{i=1}^n\Delta_{d_i,e_i}}\bigotimes_{i=1}^n\Gamma^{d_i}\VcalK\otimes\Gamma^{e_i}\VcalK\simeq \Gamma^{\dunder}\VcalK\otimes\Gamma^{\eunder}\VcalK.
\end{equation*}

\subsection{ Categories $\Pcalx{\dunder}$ of homogeneous strict polynomial functors of degree $\dunder$}

After defining the  Schur categories in the previous subsection, we now introduce the $ \Bbbk $-linear representations of these categories, that are the strict polynomial functors.

\begin{deff}
	Let $\dunder=(d_1,\ldots,d_n)$ be a $n$-tuple of natural numbers.
	The category $\Pcalx{\dunder}=\Pcalx{\dunder;\Bbbk}$ is the category of  $\Bbbk$-linear functors from $\Gamma^{\dunder}\VcalK$ to $\VcalK$.
	The morphisms in $ \Pcalx{\dunder} $ are the natural transformations.
	An object of $\Pcalx{\dunder}$ is called  \emph{homogeneous strict polynomial functor} of degree $\dunder$ in $ n $ variables.

	Let $d$ be a natural number.
	The categories $\Pcalx{d}(n)=\Pcalx{d;\Bbbk}(n)$ and $\PcalKn$ are defined by 
	\begin{equation*}
	\Pcalx{d}(n)=\bigoplus_{|\dunder|=d}\Pcalx{\dunder},\qquad \PcalKn=\bigoplus_{d=0}^\infty\Pcalx{d}(n).
	\end{equation*}
	An object of $\Pcalx{d}(n)$ is called  \emph{strict  polynomial functor} of total degree $d$ in $ n $ variables.
	Moreover, an object of $\PcalKn$ is called  strict polynomial functor in $n$ variables.
	If $n=1$, the categories $\Pcalx{d}(1), \PcalK(1)$ are simply denoted by  $\Pcalx{d},\PcalK$ respectively.
\end{deff}

Since $\VcalK$ is a $\Bbbk$-linear abelian category, the category $\Pcalx{\dunder}$ is also $\Bbbk$-linear abelian: the direct sum, the direct product, the  kernels and cokernels are defined in the category $ \VcalK $, for example $(F\oplus G)(V)=F(V)\oplus G(V)$.

We next give examples of strict polynomial functors and their relation with other objects.

\begin{exe} 	
	Let $d$ be a natural number.
	We recall that, by definition, the functor $\iota_d:\Gamma^d\VcalK\to\Bbbk\Sfrak_d\Modp,V\mapsto V^{\otimes n}$ is an embedding, \eqref{equ1.1}.
	Thus each $\Bbbk$-linear functor $\psi:\Bbbk\Sfrak_d\Modp\to\VcalK$ induces the strict  polynomial functor $\psi\circ\iota_d\in\Pcalx{d}$. 
	Moreover, if we have a natural transformation  $\tau:\psi_1\to\psi_2$ between two  $\Bbbk$-linear functors $\psi_1,\psi_2:\Bbbk\Sfrak_d\Modp\to\VcalK$, then we obtain a morphism $\psi_1\circ\iota_d\to\psi_2\circ\iota_d$ in the category $\Pcalx{d}$.
	
	We now give a list of particular cases.
	\begin{enumerate}
		\item \textbf{Tensor power.} If $\psi$ is the forgetful functor $\Bbbk\Sfrak_d\Modp\to\VcalK$, then the functor $\psi\circ\iota_d$ is denoted by $\otimes^d$ and called \emph{$d$-th tensor power functor}.

		\item \textbf{Divided power.}
		If $\psi$ is the invariant functor $(\text{-})^{\Sfrak_d}$, then the functor $(\text{-})^{\Sfrak_d}\circ\iota_d$ is denoted by $\Gamma^d$ and called  \emph{$d$-th divided power functor}.

		\item Let $\dunder$ be a $n$-tuple of natural numbers with $|\dunder|=d$. If $\psi$ is the functor $(\text{-})^{\Sfrak_{\dunder}}:\Bbbk\Sfrak_d\Modp\to\VcalK$, then the functor $(\text{-})^{\Sfrak_{\dunder}}\circ\iota_d$ is denoted by $\Gamma^{\dunder}$.

		Moreover, the canonical natural  transformations  $(\text{-})^{\Sfrak_d}\to(\text{-})^{\Sfrak_{\dunder}}$ and  $(\text{-})^{\Sfrak_{\dunder}}\to (\text{-})^{\Sfrak_d}$ induce morphisms of strict polynomial functors $\Delta_{\dunder}:\Gamma^d\to \Gamma^{\dunder}$ and $m_{\dunder}:\Gamma^{\dunder}\to\Gamma^{d}$.

		\item \textbf{Symmetric power.}
		If $\psi$ is the covariant functor $(\text{-})_{\Sfrak_d}$, then the functor $(\text{-})_{\Sfrak_d}\circ\iota_d$ is denoted by $S^d$ and called \emph{$d$-th symmetric power functor}.

		\item Let $\dunder$ be a $n$-tuple of natural numbers with $|\dunder|=d$. If $\psi$ is the functor $(\text{-})_{\Sfrak_{\dunder}}:\Bbbk\Sfrak_d\Modp\to\VcalK$, then the functor $(\text{-})_{\Sfrak_{\dunder}}\circ\iota_d$ is denoted by $S^{\dunder}$.

		Moreover, the canonical natural  transformations  $(\text{-})_{\Sfrak_d}\to(\text{-})_{\Sfrak_{\dunder}}$ and $(\text{-})_{\Sfrak_{\dunder}}\to (\text{-})_{\Sfrak_d}$ induce morphisms of strict polynomial functors $\Delta_{\dunder}:S^d\to S^{\dunder}$ and $m_{\dunder}:S^{\dunder}\to S^{d}$.

		\item \textbf{Exterior power.}
		We next define the \emph{$d$-th exterior power functor} as a strict polynomial functor.
		We consider two cases corresponding to the characteristic of the field $ \Bbbk $.
		
		\begin{itemize}
			\item Case $ p $ is odd.
			If $\psi$ is the composition $\Bbbk\Sfrak_d\Modp\xrightarrow{\BbbkSgn\otimes\text{-}}\Bbbk\Sfrak_d\Modp\xrightarrow{(\text{-})_{\Sfrak_d}}\VcalK$ where $\BbbkSgn$ is the  sign representation of $\Sfrak_d$, then the functor $\psi\circ\iota_d$ is denoted by $\Lambda^d$ and called $d$-th  exterior power functor.
			
			\item Case $ p=2 $.
			We define  $d$-th exterior power functor $\Lambda^d$ to be the image of the composition
			$S^d\xrightarrow{\Delta_{(1,\ldots,1)}}\otimes^d\xrightarrow{m_{(1,\ldots,1)}}\Gamma^d$.
		\end{itemize}

		\item We suppose that $p$ is odd. Let $\dunder$ be a $n$-tuple of natural numbers with $|\dunder|=d$. If $\psi$ is the composition $\Bbbk\Sfrak_d\Modp\xrightarrow{\BbbkSgn\otimes\text{-}}\Bbbk\Sfrak_d\Modp\xrightarrow{(\text{-})_{\Sfrak_{\dunder}}}\VcalK$ , then the functor $\psi\circ\iota_d$ is denoted by $\Lambda^{\dunder}$. 
		
		Moreover, the canonical natural transformations $(\text{-})_{\Sfrak_d}\to(\text{-})_{\Sfrak_{\dunder}}$ and $(\text{-})_{\Sfrak_{\dunder}}\to (\text{-})_{\Sfrak_d}$ induce  morphisms of strict polynomial functors $\Delta_{\dunder}:\Lambda^d\to \Lambda^{\dunder}$ and $m_{\dunder}:\Lambda^{\dunder}\to \Lambda^{d}$.
	\end{enumerate}
\end{exe}

\begin{exe}
	\begin{enumerate}
		\item The strict  polynomial functor $\boxtimes^n\in\Pcalx{(1,\ldots,1)}$ is the functor that associates to each  $(V_1,\ldots,V_n)\in \VcalK^{\otimes n}$ the tensor product $\bigotimes_{i=1}^nV_i$. Denote by $I$ the functor $\boxtimes^1\in\Pcalx{1}$.
		
		\item \textbf{Exterior tensor product.}
		Let $\dunder,\eunder$ be two tuples of natural numbers and $F\in\Pcalx{\dunder},G\in\Pcalx{\eunder}$. Define a strict  polynomial functor $F\boxtimes G\in\Pcalx{(\dunder,\eunder)}$ to be the composition
		\begin{equation*}
		\Gamma^{(\dunder,\eunder)}\VcalK=\Gamma^{\dunder}\VcalK\otimes\Gamma^{\eunder}\VcalK\xrightarrow{F\otimes G}\VcalK\otimes\VcalK\xrightarrow{\boxtimes^2}\VcalK.
		\end{equation*}
		In particular, we have $\boxtimes^n\boxtimes\boxtimes^m\simeq\boxtimes^{n+m}$. If $X$ denotes one of the symbols $\Gamma,\Lambda$ or $S$, then we denote by  $X^{\boxtimes\dunder}$ the functor $\boxtimes_{i=1}^nX^{d_i}\in\Pcalx{\dunder}$.
		
		\item \textbf{Tensor product.}
		Let $\dunder,\eunder$ be two  $n$-tuples of natural numbers and $F\in\Pcalx{\dunder},G\in\Pcalx{\eunder}$. Define a strict polynomial functor $F\otimes G\in\Pcalx{\dunder+\eunder}$ to be the composition
		\begin{equation*}
		\Gamma^{\dunder+\eunder}\VcalK\xrightarrow{\Delta_{\dunder,\eunder}}\Gamma^{\dunder}\VcalK\otimes\Gamma^{\eunder}\VcalK\xrightarrow{F\boxtimes G}\VcalK.
		\end{equation*}
		In particular, we have $\otimes^d\otimes\otimes^e\simeq\otimes^{d+e}$.
		Moreover, if $X$ denotes one of the symbols $\Gamma,\Lambda$ or $S$, then we have isomorphisms of strict polynomial functors $X^{\dunder}\simeq\bigotimes_{i=1}^nX^{d_i}$.
		
		\item \textbf{Composition.}
		Let $F\in\Pcalx{d}$ and $G\in\Pcalx{e}$. Define the strict polynomial functor $F\circ G\in\Pcalx{de}$ to be the composition
		\begin{equation*}
		\Gamma^{de}\VcalK\xrightarrow{c_{d,e}}\Gamma^d\left(\Gamma^e\VcalK\right)\xrightarrow{\Gamma^dG}\Gamma^d\VcalK\xrightarrow{F}\VcalK.
		\end{equation*}
		More generally, let $\dunder,\eunder$ be two  $n$-tuples of natural numbers $F\in\Pcalx{\dunder}$ and $G_i\in\Pcalx{e_i}$ for $i=1,\ldots,n$. Define a strict polynomial functor $F\circ(G_1,\ldots,G_n)\in\Pcalx{\dunder\cdot\eunder}$ to be the composition
		\begin{equation*}
		\Gamma^{\dunder\cdot\eunder}\VcalK\simeq\bigotimes_{i=1}^n\Gamma^{d_ie_i}\VcalK\xrightarrow{\otimes_{i=1}^nc_{d_i,e_i}}\bigotimes_{i=1}^n\Gamma^{d_i}\left(\Gamma^{e_i}\VcalK\right)\xrightarrow{\otimes_{i=1}^n\Gamma^{d_i}G_i}\bigotimes_{i=1}^n\Gamma^{d_i}\VcalK\xrightarrow{F}\VcalK.
		\end{equation*}
		
		\item \textbf{Parameterized functor.}
		Let $\dunder$ be a $n$-tuple of natural numbers, $F\in\Pcalx{\dunder}$ and 
		$\Vunderline=(V_1,\ldots,V_n)$ be a $n$-tuple of objects of $\VcalK$. 
		For each $i$, the  $\Bbbk$-linear functor $V_i\otimes\text{-}:\VcalK\to\VcalK$ may be seen to be a strict  polynomial functor $V_i\otimes\text{-}\in\Pcalx{1}$. 
		The functor  {obtained by the composition} $F\circ(V_1\otimes\text{-},\ldots,V_n\otimes\text{-})$ is denoted by $F_{\Vunderline}$ and called \emph{parameterized functor} of $F$ with respect to $\Vunderline$.
		Denote by $F^{\Vunderline}$ the functor $F_{\Vunderline^\vee}$ where $\Vunderline^\vee=\left(V_1^\vee,\ldots,V_n^\vee\right)$ is the $\Bbbk$-linear duality of $\Vunderline$.
		By definition, the parameterization with respect to $\Vunderline$ defines an exact functor $\Pcalx{\dunder}\to\Pcalx{\dunder}, F\mapsto F_{\Vunderline}$.
	\end{enumerate}
\end{exe}

Since the monoidal product $\otimes$ of $\VcalK$ is exact in each variable, 
the functors $ \Pcalx{\dunder}\times\Pcalx{\eunder}\to\Pcalx{\dunder+\eunder},(F,G)\mapsto F\otimes G $ and $ \Pcalx{\dunder}\times\Pcalx{\eunder}\to\Pcalx{(\dunder,\eunder)},(F,G)\mapsto F \boxtimes G $
are again exact in each variable.

Let $ \gamma_{d}:\VcalK\to\GammadVK $, \eqref{equ1.1}.
We notice that $ \gamma_{d} $ is just a linear functor as $ d=1 $, then, $ \gamma_{1}$ is the identity functor of the category $ \VcalK $.
The composition of each functor $ F\in\Pcalx{d} $ with $ \gamma_{d} $ gives the functor $ F\circ\gamma_{d}:\VcalK\to\VcalK $.
Recall that we often denote by $ \FcalK $ the category of functors from $ \VcalK $ to the category $ \mathrm{Vec}_{\Bbbk} $ of the  $ \Bbbk $-vector spaces and the $ \Bbbk $-linear maps.
Thus, the precomposition by $ \gamma_{d} $ yields a functor, is often called  forgetful functor
\begin{equation*}
\text{-}\circ\gamma_{d}:\Pcalx{d}\to\FcalK.
\end{equation*}
Precomposition by $ \gamma_{d} $ of the functor $ \otimes^{d}$ (resp., $S^{d},\Gamma^{d} $ and $ \Lambda^{d} $) gives the ordinary $ d $-tensor power (resp., $ d $-symmetric power and $ d $-exterior power).
The relation between $\Ext$-groups
in the category $ \PcalK $ with $\Ext$-groups in the category $ \FcalK $ has been studied in the paper \cite{FFSS99}, see for example \cite[Thm 3.10]{FFPS03}.
Concerning $\Ext$-groups in the category $ \FcalK $, see \cite{FLS94}.
The category $ \PcalK $ presents several
advantages over the category $ \FcalK $ from the point of view of computing $ \Ext $-groups since these are the accessibility of injectives and projectives, see subsection \ref{subsect1.3}.
We notice that not every functor in the category $ \FcalK $ is a precomposition by $ \gamma_{d} $ of some functor $ F\in\Pcalx{d} $.
Moreover, if this functor $ F $ exists, it may not be unique.

For each $ F\in\Pcalx{d} $ and each natural number $ n $, we have the structure morphism
\begin{equation*}
\SKnd=\Endx{\GammadVK}{\Bbbk^{n}}\xrightarrow{F_{\Bbbk^{n},\Bbbk^{n}}}\EndK{F(\Bbbk^{n})}.
\end{equation*}
This provides $ F(\Bbbk^{n}) $ with the structure of a $ \SKnd $-module.
Furthermore, associating to $ F\in\Pcalx{d} $ the corresponding module $ F(\Bbbk^{n})\in\SKnd\Modp $, we get an exact functor
\begin{equation*}
\mathrm{eval}_{\Bbbk^{n}}:\Pcalx{d}\to S_{\Bbbk}(n,d)\text{-mod}.
\end{equation*}

In the following, we recall an important duality in $ \PcalK $, called \emph{Kuhn duality}.
The $ \Bbbk $-linear duality of a vector space $ V $ is denoted by $ V^{\vee} $.
We have the functor $ (\text{-})^{\vee}:\VcalK^\Op\to\VcalK $.
Moreover, for each $ V\in\VcalK $, we have $ V\cong \left(V^{\vee}\right)^{\vee} $ canonically.

\begin{deff}
	Let $F\in\Pcalx{\dunder}$.
	Denote by $F^\sharp$ the following composition, where the first and third morphisms are induced by the $\Bbbk$-linear duality  $(\text{-})^\vee:\VcalK^\Op\to\VcalK$:
	\begin{equation*}
	\Gamma^{\dunder}\VcalK\to\Gamma^{\dunder}\VcalK^\Op\xrightarrow{F}\VcalK^\Op\to \VcalK.
	\end{equation*}
	In particular, we have $F^\sharp(\Vunderline)=F\left(\Vunderline^\vee\right)^\vee$.
	The functor $F^\sharp$ is called \emph{Kuhn duality} of $F$.	
	We also obtain the functor  $\Pcalx{\dunder}^\Op\to\Pcalx{\dunder},F\mapsto F^\sharp$.
\end{deff}

By definition, we have isomorphisms
\begin{equation*}
S^{d\sharp}\simeq \Gamma^d,\quad \Lambda^{d\sharp}\simeq\Lambda^{d},\quad \Gamma^{d\sharp}\simeq S^{d}.
\end{equation*}
Since the functor $V\mapsto V^\vee$ is exact, the functor $F\mapsto F^\sharp$ is also exact.
Moreover, the isomorphisms $\left(V^\vee\right)^\vee\simeq V$ natural in $V\in\VcalK$ induces isomorphisms natural in $F,G\in\Pcalx{\dunder}$, with $i\in\Nbb$:
\begin{equation*}
\left(F^\sharp\right)^\sharp\simeq F,\qquad \Extxx{i}{\Pcalx{\dunder}}{F,G}\simeq\Extxx{i}{\Pcalx{\dunder}}{G^\sharp,F^\sharp}.
\end{equation*}

\begin{exe} Let $F,G\in\PcalK(n),H\in\PcalK(m)$ and $\Vunderline$ be a  $n$-tuple of objects of $\VcalK$, and $G_1,\ldots,G_n\in\PcalK$. We have isomorphisms, natural 
	\begin{align*}
	&(F\oplus G)^\sharp\simeq F^\sharp\oplus G^\sharp,\\
	&(F\otimes G)^\sharp\simeq F^\sharp\otimes G^\sharp,\\
	&(F\boxtimes H)^\sharp\simeq F^\sharp\boxtimes H^\sharp,\\
	&\left(F_{\Vunderline}\right)^\sharp\simeq \left(F^\sharp\right)^{\Vunderline},\\
	&\Big(F\circ (G_1,\ldots,G_n) \Big)^\sharp\simeq F^{\sharp}\circ\left(G_1^\sharp,\ldots,G_n^\sharp\right).
	\end{align*}
\end{exe}

\subsection{ Projective and injective functors in $\Pcalx{\dunder}$}
\label{subsect1.3}

One of the reasons why the calculation of $\Ext$-groups in the category $ \PcalK $ is simpler than in other categories, such as $ \FcalK $,  $ \Ucal $ or $ \GLnK\text{-mod} $ is that,  $ \PcalK $ (and $ \PcalKn $) has projective objects and injective objects which are relatively simple.
Moreover, the class of the projective (respectively, injective) objects is closed with respect to the tensor product.

In this section, we recall the definition and some simple properties of the projective objects and injective objects in $ \PcalKn $.

Let $\dunder$ be a $n$-tuple of natural numbers.
By definition,  $ \Pcalx{\dunder} $ is the category of $ \Bbbk $-linear functors from $ \Gamma^{\dunder}\VcalK $ to $ \VcalK $.
Moreover, for each $ \Vunderline\in \Gamma^{\dunder}\VcalK$, we have $\Gamma^{\boxtimes\dunder,\Vunderline}(\Wunderline)=\left(\Gamma^{\boxtimes\dunder}\right)^{\Vunderline}(\Wunderline)\simeq\Homx{\Gamma^{\dunder}\VcalK}{\Vunderline,\Wunderline}$ natural in $ \Wunderline\in \Gamma^{\dunder}\VcalK$.
Therefore, due to $ \Bbbk $-linear version of the Yoneda lemma, there exists an isomorphism natural in $F\in\Pcalx{\dunder}$ and $\Vunderline\in\Gamma^{\dunder}\VcalK$:
\begin{equation}\label{equ1.3.1a}
\Homx{\Pcalx{\dunder}}{\Gamma^{\boxtimes\dunder,\Vunderline},F}\simeq F(\Vunderline).
\end{equation}

\begin{pro}\label{pro1.5.7}
	Let $\dunder$ be a $n$-tuple of natural numbers.
	\begin{enumerate}
		\item The functor $\Gamma^{\dunder}\VcalK^\Op\to\Pcalx{\dunder},\Vunderline\mapsto \Gx{\dunder,}{\Vunderline}$ is an embedding (called the \emph{Yoneda embedding}).
		\item The functor  $\Gx{\dunder,}{\Vunderline}$ is projective for all $\Vunderline\in\Gamma^{\dunder}\VcalK$. Moreover, these functors form a projective generating system of $\Pcalx{\dunder}$.
	\end{enumerate}
\end{pro}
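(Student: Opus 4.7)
The plan is to derive both statements directly from the $\Bbbk$-linear Yoneda isomorphism \eqref{equ1.3.1a}. For part (1), I would specialise \eqref{equ1.3.1a} to $F = \Gamma^{\boxtimes\dunder,\Wunderline}$; using the identification $\Gamma^{\boxtimes\dunder,\Wunderline}(\Vunderline) = (\Gamma^{\boxtimes\dunder})^{\Wunderline}(\Vunderline) \simeq \Homx{\Gamma^{\dunder}\VcalK}{\Wunderline,\Vunderline}$ recalled just above the statement, one obtains a natural bijection
\begin{equation*}
\Homx{\Pcalx{\dunder}}{\Gamma^{\boxtimes\dunder,\Vunderline},\,\Gamma^{\boxtimes\dunder,\Wunderline}} \simeq \Homx{\Gamma^{\dunder}\VcalK}{\Wunderline,\Vunderline} = \Homx{\Gamma^{\dunder}\VcalK^{\Op}}{\Vunderline,\Wunderline}.
\end{equation*}
A direct unwinding shows that this bijection coincides with the map on Hom-sets induced by the assignment $\Vunderline \mapsto \Gamma^{\boxtimes\dunder,\Vunderline}$ itself, so this functor is fully faithful, i.e.\ an embedding.

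Part (2) has two halves. For projectivity, \eqref{equ1.3.1a} identifies the representable $\Homx{\Pcalx{\dunder}}{\Gamma^{\boxtimes\dunder,\Vunderline},-}$ with the evaluation functor $F \mapsto F(\Vunderline)$. Because kernels, cokernels and arbitrary direct sums in $\Pcalx{\dunder}$ are computed objectwise in $\VcalK$, evaluation is exact; hence $\Gamma^{\boxtimes\dunder,\Vunderline}$ is projective. For the generating property, I would use \eqref{equ1.3.1a} in the other direction: every element $x \in F(\Vunderline)$ corresponds to a morphism $\varphi_{\Vunderline,x} \colon \Gamma^{\boxtimes\dunder,\Vunderline} \to F$ whose value at $\Vunderline$ hits $x$. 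Fixing a skeleton $\Scal$ of $\Gamma^{\dunder}\VcalK$ (possible since $\VcalK$ is essentially small) and assembling the $\varphi_{\Vunderline,x}$ for $\Vunderline \in \Scal$ and $x \in F(\Vunderline)$ yields a morphism
\begin{equation*}
\bigoplus_{\Vunderline \in \Scal}\ \bigoplus_{x \in F(\Vunderline)} \Gamma^{\boxtimes\dunder,\Vunderline} \longrightarrow F
\end{equation*}
which is surjective at every object $\Wunderline$ (the component indexed by $(\Wunderline,y)$ already hits $y$), hence an epimorphism in $\Pcalx{\dunder}$. This exhibits every $F$ as a quotient of a direct sum of representable functors.

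The only mildly delicate point I anticipate is the set-theoretic bookkeeping for the generating family, which is handled by passing to the skeleton $\Scal$; everything else reduces to formal Yoneda together with objectwise exactness of the functor operations in $\Pcalx{\dunder}$. No property of the divided power beyond the representability isomorphism \eqref{equ1.3.1a} enters the argument, so the proof should be short.
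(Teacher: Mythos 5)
Your argument is correct and is exactly the route the paper takes implicitly: the proposition is stated with no proof, as an immediate formal consequence of the $\Bbbk$-linear Yoneda isomorphism \eqref{equ1.3.1a}, and your parts (1) and (2) spell out precisely that deduction (full faithfulness by evaluating \eqref{equ1.3.1a} on a representable, projectivity from objectwise exactness of evaluation).

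One caveat on the generating step: the coproduct $\bigoplus_{\Vunderline}\bigoplus_{x\in F(\Vunderline)}\Gamma^{\boxtimes\dunder,\Vunderline}$ is not an object of $\Pcalx{\dunder}$, since functors there take values in \emph{finite-dimensional} vector spaces and this sum is infinite (even after passing to a skeleton, the inner index set $F(\Vunderline)$ is infinite when $\Bbbk$ is). The fix is standard: either phrase generation as joint faithfulness (your observation that every $y\in F(\Wunderline)$ is hit by some $\Gamma^{\boxtimes\dunder,\Wunderline}\to F$ already shows the family detects nonzero morphisms), or produce an epimorphism from the single finite sum $\Gamma^{\boxtimes\dunder,\Vunderline_0}\otimes F(\Vunderline_0)\to F$ with $\Vunderline_0=(\Bbbk^{d_1},\ldots,\Bbbk^{d_n})$, using that a homogeneous functor of degree $\dunder$ is determined by its value on such a tuple.
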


%\begin{vid}\label{vid1.3.13}
%	Let $F\in\Pcalx{\dunder}$ and $\Vunderline$ be a $n$-tuple of objects of $\VcalK$. By definition, $F$ is a  $\Bbbk$-linear functor from $\Gamma^{\dunder}\VcalK$ to $\VcalK$. On a une application $\Bbbk$-linéaire naturelle en $\Wunderline\in\Gamma^{\dunder}\VcalK$:
%	\begin{equation*}
%		\Gamma^{\boxtimes\dunder,\Vunderline}(\Wunderline)=\Homx{\Gamma^{\dunder}\VcalK}{\Vunderline,\Wunderline}\xrightarrow{F_{\Vunderline,\Wunderline}}\HomK{F(\Vunderline),F(\Wunderline)}.
%	\end{equation*}
%	Cette application induit un morphisme $\theta_{F,\Vunderline}:F(\Vunderline)\otimes\Gamma^{\boxtimes\dunder,\Vunderline}\to F$ dans la catégorie $\Pcalx{\dunder}$. De plus, ce morphisme est un épimorphisme si $\dim V_i\ge d_i$ pour tout $i$.
%\end{vid}

A direct consequence of Proposition \ref{pro1.5.7} is that the product $F\boxtimes G$ is projective in $ \Pcalx{(\dunder,\eunder)} $ if the functors
$F\in\Pcalx{\dunder},G\in\Pcalx{\eunder}$ are  projective. 
Moreover, 
for each natural number $\ell$, we have an  isomorphism of strict polynomial functors
\begin{equation}\label{equ1.3.0}
\Gamma^{d,\Bbbk^\ell}\simeq\bigoplus_{\mu}\Gamma^{\mu}
\end{equation}
where the direct sum is indexed by the $\ell$-tuples of natural numbers $\mu=(\mu_1,\ldots,\mu_\ell)$ such that $|\mu|=d$.
The functors $\Gamma^\mu$ are then projective. As a result, if $F\in\Pcalx{\dunder},G\in\Pcalx{\eunder}$ are two projective functors where $\dunder,\eunder$ are two  $n$-tuples of natural numbers, thus tensor product  $F\otimes G\in\Pcalx{\dunder+\eunder}$ is also a projective functor. 

By using the exact functor $F\mapsto F^\sharp$ of  $\Pcalx{\dunder}$, we can establish some properties of injective functors from those of projective functors.
Note that a functor $ F\in\Pcalx{\dunder} $ is injective if and only if its Kuhn duality $ F^{\sharp} $ is projective.

\begin{pro}Let $\dunder$ be a $n$-tuple of natural numbers.
	\begin{enumerate}
		\item There exists an  isomorphism natural in  $F\in\PcalKoper{\dunder}$ and in $\Vunderline\in\Gamma^{\dunder}\VcalK$:
		\begin{equation*}
		\Homx{\Pcalx{\dunder}}{F,\Sx{\dunder}{\Vunderline}}\simeq F^\sharp\left(\Vunderline\right).
		\end{equation*}
		\item The functor $\Gamma^{\dunder}\VcalK\to\Pcalx{\dunder},\Vunderline\mapsto \Sx{\dunder}{\Vunderline}$ is an embedding (called the \emph{Yoneda embedding}).
		\item The functor $\Sx{\dunder}{\Vunderline}$ is injective for all $\Vunderline\in\Gamma^{\dunder}\VcalK$. Moreover, these functors form a system of injective cogenerators of $\PcalKoper{d}$.
	\end{enumerate}
\end{pro}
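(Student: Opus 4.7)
The plan is to deduce this proposition from the already-established projective analogue (Proposition \ref{pro1.5.7}) together with the exactness and involutivity of the Kuhn duality $F \mapsto F^\sharp$. The key preliminary identification is
\[
\left(S^{\boxtimes\dunder}_{\Vunderline}\right)^\sharp \;\simeq\; \Gamma^{\boxtimes\dunder,\Vunderline}.
\]
This follows by combining the listed compatibilities of $(\text{-})^\sharp$ with parameterization and with exterior tensor products: $\left(S^{\boxtimes\dunder}_{\Vunderline}\right)^\sharp \simeq \left((S^{\boxtimes\dunder})^\sharp\right)^{\Vunderline} \simeq \left(\Gamma^{\boxtimes\dunder}\right)^{\Vunderline}$, and this last object is precisely $\Gamma^{\boxtimes\dunder,\Vunderline}$ by the formula recalled just before \eqref{equ1.3.1a}. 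One small bookkeeping point to check is that the exponent/subscript convention $F^{\Vunderline}=F_{\Vunderline^\vee}$ interacts correctly with the double dual $\Vunderline \simeq \Vunderline^{\vee\vee}$; this is routine.

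For part (1), I use the natural isomorphism $\Homx{\Pcalx{\dunder}}{F,G} \simeq \Homx{\Pcalx{\dunder}}{G^\sharp,F^\sharp}$ provided by Kuhn duality, applied to $G = S^{\boxtimes\dunder}_{\Vunderline}$. Plugging in the identification above and invoking the Yoneda isomorphism \eqref{equ1.3.1a} for the projective $\Gamma^{\boxtimes\dunder,\Vunderline}$ yields
\[
\Homx{\Pcalx{\dunder}}{F,S^{\boxtimes\dunder}_{\Vunderline}} \;\simeq\; \Homx{\Pcalx{\dunder}}{\Gamma^{\boxtimes\dunder,\Vunderline},F^\sharp} \;\simeq\; F^\sharp(\Vunderline),
\]
and naturality in both $F$ and $\Vunderline$ is automatic because each step is natural.

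For part (2), specializing (1) to $F = S^{\boxtimes\dunder}_{\Wunderline}$ and again using the identification of Kuhn duals gives
\[
\Homx{\Pcalx{\dunder}}{S^{\boxtimes\dunder}_{\Wunderline},S^{\boxtimes\dunder}_{\Vunderline}} \;\simeq\; \Gamma^{\boxtimes\dunder,\Wunderline}(\Vunderline) \;\simeq\; \Homx{\Gamma^{\dunder}\VcalK}{\Wunderline,\Vunderline},
\]
and tracing the isomorphism shows it is the obvious map induced by the functoriality of $\Vunderline \mapsto S^{\boxtimes\dunder}_{\Vunderline}$, hence the functor is fully faithful, i.e.\ an embedding.

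For part (3), injectivity of $S^{\boxtimes\dunder}_{\Vunderline}$ is immediate from (1): the functor $F \mapsto F^\sharp(\Vunderline)$ is exact (Kuhn duality is exact and evaluation is exact on $\VcalK$), so $\Homx{\Pcalx{\dunder}}{-,S^{\boxtimes\dunder}_{\Vunderline}}$ is exact. For cogeneration, given any $F \in \Pcalx{\dunder}$, Proposition \ref{pro1.5.7}(2) produces an epimorphism $\bigoplus_i \Gamma^{\boxtimes\dunder,\Vunderline_i} \twoheadrightarrow F^\sharp$ from projective generators; applying the exact contravariant functor $(\text{-})^\sharp$ and using $(F^\sharp)^\sharp \simeq F$ together with the identification $\left(\Gamma^{\boxtimes\dunder,\Vunderline_i}\right)^\sharp \simeq S^{\boxtimes\dunder}_{\Vunderline_i}$ yields a monomorphism $F \hookrightarrow \prod_i S^{\boxtimes\dunder}_{\Vunderline_i}$, as required. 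The only real subtlety is the dual-of-exterior-tensor-and-parameterization computation in the preliminary identification; once that is in hand every other step is formal.
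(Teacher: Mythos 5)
Your proof is correct and follows exactly the route the paper intends: the proposition is stated without proof there, immediately after the remark that injective properties follow from the projective ones (Proposition \ref{pro1.5.7} and the Yoneda isomorphism \eqref{equ1.3.1a}) by applying the exact involution $F\mapsto F^\sharp$, which is precisely your dualization via $\left(\Sx{\dunder}{\Vunderline}\right)^\sharp\simeq\Gx{\dunder,}{\Vunderline}$. All steps, including the bookkeeping with $F^{\Vunderline}=F_{\Vunderline^\vee}$ and the double dual, check out.
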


Exactness of tensor products gives us the following K\"{u}nneth-type formula.

\begin{pro}
	Let $\dunder$ be a $n$-tuple of natural numbers and $F_i,G_i\in\Pcalx{d_i}$ for $i=1,\ldots,n$. We have a graded isomorphism natural in $F_i,G_i,i=1,\ldots,n$:
	\begin{equation}\label{equ1.3.2a}
	\Extx{\Pcalx{\dunder}}{\boxtimesx{i=1}{n}F_i,\boxtimesx{i=1}{n}G_i}\simeq\bigotimes_{i=1}^n\Extx{\Pcalx{d_i}}{F_i,G_i}.
	\end{equation}
\end{pro}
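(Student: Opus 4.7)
The plan is to reduce to the Hom statement, then bootstrap to higher $\Ext$ via projective resolutions, using that the exterior tensor product $\boxtimes$ is exact in each variable and sends projectives to projectives.

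First I would handle the degree zero case. For each $i$, pick $V_i \in \VcalK$ and set $\Vunderline = (V_1,\ldots,V_n) \in \Gamma^{\dunder}\VcalK$. Directly from the definition of $\boxtimes$ and of $\Gamma^{\boxtimes\dunder,\Vunderline}$, we have a canonical isomorphism
\begin{equation*}
\Gx{d_1,}{V_1}\boxtimes\cdots\boxtimes\Gx{d_n,}{V_n}\simeq \Gx{\dunder,}{\Vunderline}.
\end{equation*}
Applying the Yoneda isomorphism \eqref{equ1.3.1a} in $\Pcalx{\dunder}$ and then in each $\Pcalx{d_i}$, I get
\begin{equation*}
\Homx{\Pcalx{\dunder}}{\boxtimesx{i=1}{n}\Gx{d_i,}{V_i},\boxtimesx{i=1}{n}G_i} \simeq \bigotimes_{i=1}^n G_i(V_i) \simeq \bigotimes_{i=1}^n\Homx{\Pcalx{d_i}}{\Gx{d_i,}{V_i},G_i}.
\end{equation*}
Both sides are exact in $\boxtimesx{i=1}{n}G_i$ and additive in each $\Gx{d_i,}{V_i}$, so this extends to arbitrary projectives $P_i \in \Pcalx{d_i}$ of the form $\bigoplus_\alpha \Gamma^{\boxtimes d_i,V_i^\alpha}$, giving the Hom version of the formula whenever each $F_i$ is a standard projective.

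Next I would promote this to $\Ext^*$. By Proposition \ref{pro1.5.7} and the remarks after it, for each $i$ I can choose a resolution $P_i^\bullet \to F_i$ by sums of standard projectives $\Gamma^{\boxtimes d_i,V}$; each term $P_i^j$ is then projective in $\Pcalx{d_i}$, and by the remark that the $\boxtimes$-product of projectives is projective (a consequence of Proposition \ref{pro1.5.7}), every $\boxtimesx{i=1}{n} P_i^{j_i}$ is projective in $\Pcalx{\dunder}$. Because $\boxtimes$ is exact in each variable (the monoidal product of $\VcalK$ is exact in each variable, and the functor $\Gamma^{\dunder}\VcalK \to \VcalK$ underlying $\boxtimes$ is evaluation tensor evaluation), an iterated application of the acyclic assembly lemma shows that the total complex of the $n$-fold complex $\boxtimesx{i=1}{n} P_i^\bullet$ is a projective resolution of $\boxtimesx{i=1}{n}F_i$ in $\Pcalx{\dunder}$.

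Finally I compute. Applying $\Homx{\Pcalx{\dunder}}{-,\boxtimesx{i=1}{n}G_i}$ to this total complex yields a complex whose terms, by the Hom version proved above, are canonically isomorphic to $\bigotimes_{i=1}^n \Homx{\Pcalx{d_i}}{P_i^{j_i},G_i}$; naturality in each factor identifies this with the total complex of the $n$-fold complex $\bigotimes_{i=1}^n \Homx{\Pcalx{d_i}}{P_i^\bullet,G_i}$. Since $\Bbbk$ is a field, the algebraic K\"unneth formula for tensor products of chain complexes of $\Bbbk$-vector spaces has no Tor term, so the cohomology of the total complex is $\bigotimes_{i=1}^n \Extx{\Pcalx{d_i}}{F_i,G_i}$, as required. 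Naturality in the $F_i$ and $G_i$ follows from functoriality of resolutions up to chain homotopy. The only step that needs genuine care is verifying that the total $n$-fold complex $\boxtimesx{i=1}{n} P_i^\bullet$ is really acyclic over $\boxtimesx{i=1}{n} F_i$; once exactness of $\boxtimes$ in each slot is invoked, this is a standard iterated spectral sequence (or inductive mapping cone) argument, and I expect this to be the main technical point.
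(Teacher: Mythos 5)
Your proposal is correct and follows essentially the same route as the paper: reduce the $\Hom$ case to the standard projectives $\Gamma^{d_i,V_i}$ via the Yoneda isomorphism \eqref{equ1.3.1a}, observe that $\boxtimes$ is exact in each variable and preserves projectives so that $\boxtimesx{i=1}{n}P_i^\bullet$ is a projective resolution of $\boxtimesx{i=1}{n}F_i$, and conclude with the K\"unneth formula over the field $\Bbbk$. The only cosmetic difference is that the paper extends the $\Hom$ isomorphism to arbitrary $F_i$ by left exactness before resolving, whereas you only establish it on standard projectives, which is all the resolution argument requires.
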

\begin{proof}
	We first prove the isomorphism \eqref{equ1.3.2a} for all $\Hom$. Define the morphism:
	\begin{align*}
	\phi: \bigotimes_{i=1}^n\Homx{\Pcalx{d_i}}{F_i,G_i}&\to \Homx{\Pcalx{\dunder}}{\boxtimesx{i=1}{n}F_i,\boxtimesx{i=1}{n}G_i}\\
	f_1\otimes\cdots\otimes f_n&\mapsto f_1\boxtimes \cdots\boxtimes f_n.
	\end{align*}
	Consider $ \bigotimes_{i=1}^n\Homx{\Pcalx{d_i}}{F_i,G_i}$ and $ \Homx{\Pcalx{\dunder}}{\boxtimesx{i=1}{n}F_i,\boxtimesx{i=1}{n}G_i} $ of the morphism $\phi$ as two functors in  variables $(F_1,\ldots,F_n)$. Since these two functors are left exact, in order to prove that $\phi$ is an  isomorphism, we can assume for each  $i$, the functor  $F_i$ is projective, of the form $\Gamma^{d_i,V_i}$. In this case, $\phi$ is an  isomorphism since the following diagram is commutative, where the vertical  maps are induced by the Yoneda lemma ( the isomorphism \eqref{equ1.3.1a}):
	\begin{equation*}
	\xymatrix{
		\bigotimes\limits_{i=1}^n\Homx{\Pcalx{d_i}}{\Gamma^{d_i,V_i},G_i}\ar[rr]\ar[d]^\simeq&&\Homx{\Pcalx{\dunder}}{\boxtimesx{i=1}{n}\Gamma^{d_i,V_i},\boxtimesx{i=1}{n}G_i}\ar[d]^\simeq\\
		\bigotimes\limits_{i=1}^nG_i(V_i)\ar@{=}[rr]&&\left(\boxtimesx{i=1}{n}G_i\right)(V_1,\ldots,V_n).
	}
	\end{equation*}
	We obtain the isomorphism \eqref{equ1.3.2a} for all $\Hom$. Let $P^i$ be a  projective resolution of $F_i$ in $\Pcalx{d_i}$, $i=1,\ldots,n$. Since the functor $\boxtimes$ is exact in each variable and preserves the projective ones, the complex  $\boxtimesx{i=1}{n}P^i$ is a  projective resolution of $\boxtimesx{i=1}{n}F_i$ in $\Pcalx{\dunder}$.
	Moreover, the isomorphism $\phi$ induces an isomorphism of complexes:
	\begin{equation*}
	\Homx{\Pcalx{\dunder}}{\boxtimesx{i=1}{n}P^i,\boxtimesx{i=1}{n}G_i}\simeq\bigotimes_{i=1}^n\Homx{\Pcalx{d_i}}{P^i,G_i}.
	\end{equation*}
	By taking homology, we obtain the result.
\end{proof}

\subsection{ Adjoint functors and $\Ext$-groups}~

Let $n_1,n_2$ be two positive integer and $d$ be a natural number. Let $\alpha_1:\VcalK^{\times n_1}\rightleftarrows\VcalK^{\times n_2}:\alpha_2$ be an adjoint pair, where $\alpha_1,\alpha_2$ are $\Bbbk$-linear functors. 
Then we deduce an adjoint pair  $\Gamma^d\alpha_1:\Gamma^d\left(\VcalK^{\times n_1}\right)\rightleftarrows\Gamma^d\left(\VcalK^{\times n_2}\right):\Gamma^d\alpha_2$.
According to \cite[Lemma 1.3]{Pir03}, we obtain an adjoint pair
\begin{equation*}
\left(\Gamma^d\alpha_2\right)^*:\Pcalx{d}(n_1)\rightleftarrows\Pcalx{d}(n_2):\left(\Gamma^d\alpha_1\right)^*.
\end{equation*}
Let $F_1\in\Pcalx{d}(n_1)$ and $F_2\in\Pcalx{d}(n_2)$. The functors $F_1\circ \Gamma^d\alpha_2$ and $F_2\circ \Gamma^d\alpha_1$ are simply denoted by $F_1\circ\alpha_2$ and $F_2\circ\alpha_1$. We have an isomorphism natural in $F_1,F_2$:
\begin{equation*}
\Homx{\Pcalx{d}(n_2)}{F_1\circ\alpha_2,F_2}\simeq \Homx{\Pcalx{d}(n_1)}{F_1,F_2\circ\alpha_1}.
\end{equation*}
Moreover, by \cite[Lemma 1.4]{Pir03}, we have a graded isomorphism natural 
\begin{equation*}
\Extx{\Pcalx{d}(n_2)}{F_1\circ\alpha_2,F_2}\simeq \Extx{\Pcalx{d}(n_1)}{F_1,F_2\circ\alpha_1}.
\end{equation*}
Now we give the two particular cases.
\begin{enumerate}
	\item Let $n$ be a positive integer. Denote by $\Delta_n$ the functor $\VcalK\to\VcalK^{\times n},V\mapsto (V,\ldots,V)$ and by  $\boxplus^n$ the functor $\VcalK^{\times n}\to \VcalK,(V_1,\ldots,V_n)\mapsto \bigoplus_{i=1}^nV_i$. {They are}  $\Bbbk$-linear functors. We have two adjoint pairs $(\Delta_n,\boxplus^n)$ and $(\boxplus^n,\Delta_n)$. Therefore, we obtain graded isomorphisms  natural in $F\in\PcalK$ and $G\in\PcalKn$:
	\begin{align*}
	\Extx{\PcalKn}{F\circ\boxplus^n,G}&\simeq\Extx{\PcalK}{F,G\circ\Delta_n},\\ \Extx{\PcalK}{G\circ\Delta_n,F}&\simeq\Extx{\PcalKn}{G,F\circ\boxplus^n}.
	\end{align*}
	\item Let $n$ be a positive integer,  $\dunder$ be a $n$-tuple of natural numbers and  $\Vunderline=(V_1,\ldots,V_n)$ be a $n$-tuple of objects of $\VcalK$. Denote by $\Vunderline\otimes\text{-}$ the functor $\VcalK^{\times n}\to\VcalK^{\times n},\Wunderline\mapsto (V_1\otimes W_1,\ldots,V_n\otimes W_n)$. We get an adjoint pair $\Vunderline^\vee\otimes\text{-}:\VcalK^{\times n}\rightleftarrows\VcalK^{\times n}:\Vunderline\otimes\text{-}$. Hence, we obtain a graded isomorphism  natural in $F,G\in\Pcalx{\dunder}$:
	\begin{equation*}
	\Extx{\Pcalx{\dunder}}{F^{\Vunderline},G}\simeq\Extx{\Pcalx{\dunder}}{F,G_{\Vunderline}}.
	\end{equation*}
	
\end{enumerate}

\subsection{ Frobenius twist}~
Since $\Bbbk$ is a field of prime characteristic $p$, the Frobenius morphism  $\phi:\Bbbk\to\Bbbk,a\mapsto a^p$ is a ring homomorphism. Denote by $\Bbbk_\phi$ 
the $\Bbbk$-bimodule that coincides with $\Bbbk$
as left canonical $ \Bbbk $-module and the right scalar multiplication by $ \lambda $ is given by $a\cdot \lambda=a\phi(\lambda)=a\lambda^p$.
If $V$ is a $\Bbbk$-vector space, we define the $\Bbbk$-vector space $V^{(1)}$ to be the tensor product $\Bbbk_\phi\otimes V$. We obtain a $\Bbbk$-linear functor
$$(\text{-})^{(1)}=\Bbbk_\phi\otimes\text{-} :\VcalK\to\VcalK.$$
Since $\Bbbk$ is a field, this functor is exact.
If $v\in V$, we denote by $v^{(1)}$ the element  $1\otimes v\in V^{(1)}$. Then $V^{(1)}$ is the  $\Bbbk$-vector space generated by the set  $\Ngoacn{v^{(1)}:v\in V}$ quotienting by the relations, where $\lambda\in\Bbbk $ and $v,w\in V$:
\begin{equation*}
v^{(1)}+w^{(1)}=(v+w)^{(1)},\qquad (\lambda v)^{(1)}=\lambda^pv^{(1)}.
\end{equation*}
We give some basic properties of Frobenius twists \cite[page 212]{FS97}.
\begin{pro} Let $V,W$ be two finite dimensional $\Bbbk$-vector spaces.
	\begin{enumerate}
		\item If $V$ is a $\Bbbk$-vector space with basis $\Ngoacn{v_1,\ldots,v_n}$, then $V^{(1)}$ is a $\Bbbk$-vector space with basis $\Ngoacn{v_1^{(1)},\ldots,v_n^{(1)}}$.
		\item There exists a unique $\Bbbk$-linear map $V^{(1)}\otimes W^{(1)}\to (V\otimes W)^{(1)}$ 
		that sends  $v^{(1)}\otimes w^{(1)}$ to $(v\otimes w)^{(1)}$ for $v\in V$ and $w\in W$ and this map is {an isomorphism.}
		\item There exists a unique $\Bbbk$-linear map  $$\Big(\Hom(V,W)\Big)^{(1)}\to\Hom\left(V^{(1)},W^{(1)}\right)$$ that sends  $a\otimes f$ to  $ \left(a\Id_{\Bbbk_\phi}\right)\otimes f$ and this map is an isomorphism. In particular, we have an isomorphism
		$\left(V^\vee\right)^{(1)}\xrightarrow{\simeq}\left(V^{(1)}\right)^\vee$.
	\end{enumerate}
\end{pro}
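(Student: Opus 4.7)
The plan is to establish all three parts by reducing to explicit verifications on chosen bases, using the single key observation that $\Bbbk_\phi$ is one-dimensional as a left $\Bbbk$-module (its underlying left module is just $\Bbbk$). Consequently, $(\text{-})^{(1)} = \Bbbk_\phi\otimes_\Bbbk\text{-}$ is an exact, dimension-preserving $\Bbbk$-linear endofunctor of $\VcalK$.

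For part (1), I would choose the left-$\Bbbk$-linear isomorphism $V\simeq\Bbbk^n$ coming from the given basis; tensoring with $\Bbbk_\phi$ yields $V^{(1)}\simeq\Bbbk_\phi^{\oplus n}$, which is $n$-dimensional. To see that $\Ngoacn{v_1^{(1)},\ldots,v_n^{(1)}}$ is a spanning set, rewrite a simple tensor $a\otimes v$ with $v=\sum_i c_iv_i$ as $\sum_i\left(ac_i^p\right)v_i^{(1)}$, using the relation $a\cdot c_i\otimes v_i = a\otimes c_iv_i$ together with $a\cdot c_i = ac_i^p$. A spanning set of size $\dim V^{(1)}$ is a basis.

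For part (2), uniqueness is immediate since $V^{(1)}\otimes W^{(1)}$ is generated by the simple tensors $v^{(1)}\otimes w^{(1)}$. For existence, I would fix bases as in (1) and define the map by prescribing $v_i^{(1)}\otimes w_j^{(1)}\mapsto (v_i\otimes w_j)^{(1)}$, extended $\Bbbk$-bilinearly; a short computation using $(\lambda v)^{(1)} = \lambda^p v^{(1)}$ in both source and target then confirms the formula $v^{(1)}\otimes w^{(1)}\mapsto (v\otimes w)^{(1)}$ holds for arbitrary $v\in V,w\in W$. The map is an isomorphism because it sends the basis $\Ngoacn{v_i^{(1)}\otimes w_j^{(1)}}$ of $V^{(1)}\otimes W^{(1)}$ bijectively to the basis $\Ngoacn{(v_i\otimes w_j)^{(1)}}$ of $(V\otimes W)^{(1)}$, the latter arising from (1) applied to the basis $\Ngoacn{v_i\otimes w_j}$ of $V\otimes W$.

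For part (3), uniqueness again follows as every element is a sum of simple tensors. For existence, I would define the map on the basis $\Ngoacn{(v_i^\vee\otimes w_j)^{(1)}}$ of $\bigl(\Hom(V,W)\bigr)^{(1)}\simeq\bigl(V^\vee\otimes W\bigr)^{(1)}$ by sending $(v_i^\vee\otimes w_j)^{(1)}$ to the unique $\Bbbk$-linear map $V^{(1)}\to W^{(1)}$ carrying $v_k^{(1)}\mapsto\delta_{ik}w_j^{(1)}$, and verify that the resulting map indeed sends $a\otimes f$ to the linear map $v^{(1)}\mapsto a\cdot f(v)^{(1)}$ by the same semilinearity bookkeeping as in (2). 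Both sides have dimension $\dim V\cdot\dim W$ by part (1), and the construction manifestly sends a basis to a basis, hence is an isomorphism. The final assertion about $\left(V^\vee\right)^{(1)}\xrightarrow{\simeq}\left(V^{(1)}\right)^\vee$ is the specialization $W=\Bbbk$, using the canonical identification $\Bbbk^{(1)}\simeq\Bbbk$. There is no substantive obstacle here; the only thing that requires care is the consistent tracking of Frobenius powers produced by the relation $(\lambda v)^{(1)}=\lambda^p v^{(1)}$, which cancel identically between source and target in parts (2) and (3).
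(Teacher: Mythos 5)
Your proof is correct. Note that the paper does not actually prove this proposition: it is stated with a reference to \cite[page 212]{FS97}, so there is no in-text argument to compare against. Your basis computation is the standard verification one would write out — the key points (that $\{v_i^{(1)}\}$ is a basis of $V^{(1)}$, hence the distinguished simple tensors span the sources in parts (2) and (3), giving uniqueness, and that each map sends a basis to a basis, giving bijectivity) are all handled properly, including the semilinearity relation $a\lambda^p\otimes v=a\otimes\lambda v$ that makes the maps well defined even when $\Bbbk$ is not perfect.
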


The Frobenius twist is related to the symmetric algebra.
\begin{pro}
	Let $V$ be a finite dimensional $\Bbbk$-vector space. There is a $\Bbbk$-linear map $V^{(1)}\to S^p(V)$ natural in $V$, that associates to each $v^{(1)}\in V^{(1)}$ the $v^p\in  S^p(V)$ and this  map is a monomorphism.
\end{pro}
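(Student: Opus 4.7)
The plan is to build the map by first constructing a $\phi$-semilinear additive map $V \to S^p(V)$ and then appealing to the universal property of $V^{(1)} = \Bbbk_\phi \otimes_\Bbbk V$ to extend it to a $\Bbbk$-linear map out of $V^{(1)}$; injectivity will then be checked on a basis.

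Concretely, I would first define $\psi_V \colon V \to S^p(V)$ by $v \mapsto v^p$, the image of $v \otimes \cdots \otimes v$ under the canonical projection $V^{\otimes p} \twoheadrightarrow S^p(V)$. The two properties to verify are (i) additivity, which is the freshman's dream $(v+w)^p = v^p + w^p$ in $S^p(V)$, valid because the middle binomial coefficients $\binom{p}{k}$ for $0 < k < p$ vanish in $\Bbbk$; and (ii) $\phi$-semilinearity, $(\lambda v)^p = \lambda^p v^p$, which is immediate from the commutative algebra structure on $S^*(V)$.

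Next I would invoke the universal property of the Frobenius twist: a $\Bbbk$-linear map $V^{(1)} \to M$ is the same datum as an additive map $\psi \colon V \to M$ satisfying $\psi(\lambda v) = \lambda^p \psi(v)$, obtained via the bilinear assignment $(a,v) \mapsto a\,\psi(v)$ on $\Bbbk_\phi \times V$. Applying this to $\psi_V$ yields a unique $\Bbbk$-linear map $\tilde\psi_V \colon V^{(1)} \to S^p(V)$ with $\tilde\psi_V\bigl(v^{(1)}\bigr) = v^p$. Naturality is then automatic: for any $\Bbbk$-linear $f \colon V \to W$, both $\tilde\psi_W \circ f^{(1)}$ and $S^p(f) \circ \tilde\psi_V$ send $v^{(1)}$ to $f(v)^p$, using the functoriality of $(\text{-})^{(1)}$ and $S^p$.

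For injectivity I would pick a basis $\{v_1, \ldots, v_n\}$ of $V$. By the preceding proposition $\{v_i^{(1)}\}$ is a basis of $V^{(1)}$, and the images $v_i^p$ are precisely the monomials indexed by the weak compositions $(0,\dots,0,p,0,\dots,0)$ of $p$, which belong to the standard monomial basis of $S^p(V)$. They are therefore linearly independent, so $\tilde\psi_V$ is a monomorphism. There is no serious obstacle in the argument; the only substantive ingredient is the freshman's dream, and the rest is bookkeeping around the universal property of $V^{(1)}$ and the standard monomial basis of $S^p(V)$.
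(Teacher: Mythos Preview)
Your proof is correct and matches the paper's construction of the map via the relations $(v+w)^p=v^p+w^p$ and $(\lambda v)^p=\lambda^p v^p$ in $S^p(V)$. The only difference is in the injectivity step: the paper argues by induction on $\dim V$ using the decomposition $f_{V_1\oplus V_2}=(f_{V_1}\oplus f_{V_2})$ followed by the inclusion $S^p(V_1)\oplus S^p(V_2)\hookrightarrow S^p(V)$, whereas you observe directly that the images $v_i^p$ of a basis are distinct monomials in the standard basis of $S^p(V)$; both arguments are equally short and elementary.
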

\begin{proof}
	Since $p=0$ in $\Bbbk$, we have relations in $S^p(V)$
	$$(v+w)^p=v^p+w^p,\qquad (\lambda v)^p=\lambda^pv^p.$$
	Hence, there is a  unique $\Bbbk$-linear map  $f_V:V^{(1)}\to S^p(V)$ that sends  $v^{(1)}$ to $v^p$ and this map is natural in $V$.
	It is clear that $f_\Bbbk$ is an isomorphism.
	Moreover, if $V=V_1\oplus V_2$, then the map $f_V$ is the composition
	$$V^{(1)}=V_1^{(1)}\oplus V_2^{(1)}\xrightarrow{f_{V_1}\oplus f_{V_2}}S^p(V_1)\oplus S^p(V_2)\hookrightarrow S^p(V).$$
	We then deduce, by induction on the dimension of $V$, that $f_V$ is a monomorphism.
\end{proof}

Let $V$ be a finite dimensional $\Bbbk$-vector space. The inclusion $V^{(1)}\hookrightarrow S^p(V),  v^{(1)}\mapsto v^p$ induces an algebra homomorphism 
$S^*\left(V^{(1)}\right)\to S^*(V)$.
We then obtain a monomorphism $S^n\left(V^{(1)}\right)\hookrightarrow S^{pn}(V),
v_1^{(1)}\cdots v_n^{(1)}\mapsto v_1^p\cdots v_n^p$.
Dually, we have an epimorphism $\Gamma^{pn}(V)\twoheadrightarrow \Gamma^n\left(V^{(1)}\right)$ natural in $ V $.

\begin{deff}
	The \emph{Frobenius twist} $ I^{(1)} $ is the strict polynomial functor in $\PcalKoper{p}$ that associates to each $V\in\Gamma^p\VcalK$ the  $\Bbbk$-vector space  $V^{(1)}$ and the structure morphisms  $\left(I^{(1)}\right)_{V,W}$ are defined to be the compositions:
	$$\Gamma^d\HomK{V,W}\twoheadrightarrow \left(\HomK{V,W}\right)^{(1)}\simeq\HomK{V^{(1)},W^{(1)}}.$$
\end{deff}

\begin{deff} Let $\dunder$ be a $n$-tuple of natural numbers and $F\in\PcalKoper{\dunder}$. We define a strict  polynomial functor $F^{(1)}$ in  $\Pcalx{p\dunder}$ to be the composition 
	$$F^{(1)}=F\circ \left(\underbrace{I^{(1)},\ldots,I^{(1)}}_{n}\right).$$
	By repeating this process $r$ times, we obtain the $r$-th  Frobenius twist of the functor $F$ defined by induction
	$F^{(r)}=F^{(r-1)}\circ \left(I^{(1)},\ldots,I^{(1)}\right)$.
\end{deff}

\subsection{ Symmetric monoidal structure of $\Pcalx{\dunder}$}
\label{Subs1.6}

In this section, we define a canonical closed $\Bbbk$-linear symmetric monoidal structure on the category $\Pcalx{\dunder}$, which is a generalization of one variable strict polynomial functors in Krause \cite{Krause13}.
The key in this construction is the Yoneda embedding
\begin{equation*}
\begin{array}{ccc}
\Gamma^{\dunder}\VcalK^\Op&\to&\Pcalx{\dunder}\\
\Vunderline&\mapsto &\Gx{\dunder,}{\Vunderline}=\Gamma^{d_1,V_1}\boxtimes\cdots\boxtimes\Gamma^{d_n,V_n}.
\end{array}
\end{equation*}
Thanks to this embedding, we can see $\Gamma^{\dunder}\VcalK^\Op$ as a full subcategory of $\Pcalx{\dunder}$, moreover, it is dense in $\Pcalx{\dunder}$, i.e., every object in $\Pcalx{\dunder}$ is canonically a colimit of objects in $\Gamma^{\dunder}\VcalK^\Op$.
Therefore, based on a result of Day \cite{Day70}, the canonical closed $\Bbbk$-linear symmetric monoidal structure on $\Pcalx{\dunder}$ is obtained by defining it on $\Gamma^{\dunder}\VcalK^\Op$ then extending it on $\Pcalx{\dunder}$.
For details, we proceed as follows.

The canonical closed $\Bbbk$-linear symmetric monoidal structure on $\VcalK$ induces a closed $\Bbbk$-linear  symmetric  monoidal structure on $\Gamma^d\VcalK$ whose monoidal product $\otimesx{\Gamma^d\VcalK}$ and the $\Hom$-internal $\mathbf{Hom}_{\Gamma^d\VcalK}$ are given on the objects by the formulas

\begin{equation*}
V\otimesx{\Gamma^d\VcalK}W:=V\otimes W,\qquad \Homxin{\Gamma^d\VcalK}{V,W}:=\HomK{V,W}.
\end{equation*}
We have an isomorphism
\begin{equation*}
\Homx{\Gamma^d\VcalK}{V_1\otimesx{\Gamma^d\VcalK}V_2,V_3}\simeq\Homx{\Gamma^d\VcalK}{V_1,\Homxin{\Gamma^d\VcalK}{V_2,V_3}}
\end{equation*}
natural in $V_1,V_2$ and $V_3$ of $\Gamma^d\VcalK$.
The functor $\gamma_d:\VcalK\to\Gamma^d\VcalK$ is a strict monoidal functor, i.e., there are isomorphisms 
$\gamma_d(\Bbbk)\simeq\Bbbk$ and $\gamma_d(V\otimes W)\simeq \gamma_d(V)\otimesx{\Gamma^d\VcalK}\gamma_d(W)$
natural in $V,W\in\VcalK$.

Let $\dunder=(d_1,\ldots,d_n)$ be a $n$-tuple of natural numbers.
The  $\Bbbk$-linear monoidal structures on the categories $\Gamma^{d_i}\VcalK$ induce a $\Bbbk$-linear monoidal structure on $\Gamma^{\dunder}\VcalK=\bigotimes_{i=1}^n\Gamma^{d_i}\VcalK$.
Let $\Vunderline$ and $\Wunderline$ be two objects of $\Gamma^{\dunder}\VcalK$, we define
\begin{align*}
\Vunderline\otimesx{\Gamma^{\dunder}\VcalK}\Wunderline&=\left(V_1\otimesx{\Gamma^{d_1}\VcalK}W_1,\ldots,V_n\otimesx{\Gamma^{d_n}\VcalK}W_n\right),\\
\Homxin{\Gamma^{\dunder}\VcalK}{\Vunderline,\Wunderline}&=\left(\Homxin{\Gamma^{d_1}\VcalK}{V_1,W_1},\ldots,\Homxin{\Gamma^{d_n}\VcalK}{V_n,W_n}\right).
\end{align*}
In the following, for simplicity, we write  $\otimes,\Hom$ respectively for $\otimesx{\Gamma^{\dunder}\VcalK}$ and $\mathbf{Hom}_{\Gamma^{\dunder}\VcalK}$.

Using the Yoneda embedding $\Gamma^{\dunder}\VcalK^\Op\hookrightarrow\Pcalx{\dunder}$,
the symmetric monoidal structure of $\Gamma^{\dunder}\VcalK^\Op$ can be extended over the category $\Pcalx{\dunder}$. 
This result is due to Day \cite{Day70}, and was stated in the case of strict polynomial functors in one variable by Krause \cite{Krause13}.
The symmetric monoidal product obtained on $\Pcalx{\dunder}$ is called convolution product of Day, denoted by $\otimesx{\Pcalx{\dunder}}$. The following statement recapitulates the principle properties of the closed symmetric monoidal structure obtained.

\begin{thm}[\text{\cite{Day70,Krause13}}]\label{pro2.3.2}
	There exists a closed symmetric monoidal category 
	\begin{equation*}
	\left(\Pcalx{\dunder},\otimesx{\Pcalx{\dunder}},\Gx{\dunder}{},\mathbf{Hom}_{\Pcalx{\dunder}}\right)
	\end{equation*}
	such that the Yoneda embedding
	$\Gamma^{\dunder}\VcalK^\Op\to\Pcalx{\dunder} $
	is a symmetric monoidal functor. 
	The monoidal product $\otimesx{\Pcalx{\dunder}}$ is characterized by the following properties.
	\begin{enumerate}
		\item[\rm (1)] The bifunctor $\otimesx{\Pcalx{\dunder}}$ is right exact  in each variable.
		\item[\rm (2)] There is an isomorphism $F\otimesx{\Pcalx{\dunder}}\Gx{\dunder,}{\Vunderline}\simeq F^{\Vunderline}$, natural in $F$ and $\Vunderline$.
	\end{enumerate}
	Similarly, the \emph{$\Hom$-internal} $\mathbf{Hom}_{\Pcalx{\dunder}}$ is characterized by the two following properties.
	\begin{enumerate}
		\item[\rm (1)] The bifunctor $\mathbf{Hom}_{\Pcalx{\dunder}}$ is left exact in each variable.
		\item[\rm (2)] There is an isomorphism $\mathbf{Hom}_{\Pcalx{\dunder}}(\Gx{\dunder,}{\Vunderline},F)\simeq F_{\Vunderline}$, natural in $F$ and $\Vunderline$.
	\end{enumerate}
\end{thm}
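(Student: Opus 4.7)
The strategy is to invoke Day's general construction of convolution monoidal structures on functor categories and then to identify the resulting tensor product, unit, and internal hom through their behaviour on Yoneda representables, propagating the explicit formulas from representables by colimits and adjunctions.

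First, I would equip the essentially small $\Bbbk$-linear category $\Gamma^{\dunder}\VcalK^{\Op}$ with the symmetric monoidal structure inherited from the one on $\Gamma^{\dunder}\VcalK$ constructed at the beginning of this subsection (same tensor on objects, reversed morphisms). Day's theorem \cite{Day70} then produces a closed $\Bbbk$-linear symmetric monoidal structure $\left(\otimesx{\Pcalx{\dunder}},\Homxin{\Pcalx{\dunder}}{-,-}\right)$ on $\FctK{\Gamma^{\dunder}\VcalK,\VcalK}=\Pcalx{\dunder}$ such that the Yoneda embedding $\Gamma^{\dunder}\VcalK^{\Op}\hookrightarrow\Pcalx{\dunder}$ is strong symmetric monoidal. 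In particular the unit is the Yoneda image of $(\Bbbk,\ldots,\Bbbk)$, namely $\Gx{\dunder}{}$, and closedness provides the tensor--hom adjunction.

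Next I would verify property~(2) for the convolution product. Strong monoidality of Yoneda yields an isomorphism
\begin{equation*}
\Gx{\dunder,}{\Vunderline}\otimesx{\Pcalx{\dunder}}\Gx{\dunder,}{\Wunderline}\simeq \Gx{\dunder,}{\Vunderline\otimes\Wunderline}
\end{equation*}
natural in $\Vunderline,\Wunderline$. A direct computation using the canonical isomorphism $\Homx{\Gamma^{\dunder}\VcalK}{\Vunderline\otimes\Wunderline,\mathbf{U}}\simeq \Homx{\Gamma^{\dunder}\VcalK}{\Wunderline,\Vunderline^\vee\otimes\mathbf{U}}$ identifies the right-hand side with $\bigl(\Gx{\dunder,}{\Wunderline}\bigr)^{\Vunderline}$, so property~(2) holds whenever $F$ is representable. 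Both $F\mapsto F\otimesx{\Pcalx{\dunder}}\Gx{\dunder,}{\Vunderline}$ (a left adjoint, by closedness) and $F\mapsto F^{\Vunderline}$ (exact, as recalled in the parameterization discussion) are right exact in $F$; since every object of $\Pcalx{\dunder}$ is a colimit of representables by Proposition~\ref{pro1.5.7}, the isomorphism extends naturally to all $F$. Property~(1), right exactness of $\otimesx{\Pcalx{\dunder}}$ in each variable, is automatic from it being a left adjoint in each variable.

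For the internal hom, left exactness in each variable is immediate since $\Homxin{\Pcalx{\dunder}}{-,-}$ is a right adjoint in each variable. For property~(2), an adjunction chain for every test object $G\in\Pcalx{\dunder}$ gives
\begin{equation*}
\Homx{\Pcalx{\dunder}}{G,\Homxin{\Pcalx{\dunder}}{\Gx{\dunder,}{\Vunderline},F}}\simeq \Homx{\Pcalx{\dunder}}{G\otimesx{\Pcalx{\dunder}}\Gx{\dunder,}{\Vunderline},F}\simeq \Homx{\Pcalx{\dunder}}{G^{\Vunderline},F}\simeq \Homx{\Pcalx{\dunder}}{G,F_{\Vunderline}},
\end{equation*}
where the first iso is the tensor--hom adjunction, the second is the already-established property~(2) for the convolution, and the third is the parameterization adjunction $(-)^{\Vunderline}\dashv(-)_{\Vunderline}$ recalled in the previous subsection. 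Yoneda then identifies $\Homxin{\Pcalx{\dunder}}{\Gx{\dunder,}{\Vunderline},F}\simeq F_{\Vunderline}$. The only substantial obstacle is formal: one must either trust Day's abstract existence result or reproduce its construction via a Kan extension along $\otimes:\Gamma^{\dunder}\VcalK^{\Op}\times\Gamma^{\dunder}\VcalK^{\Op}\to\Gamma^{\dunder}\VcalK^{\Op}$; once this is in hand, everything reduces to routine adjunction manipulations and the colimit extension argument above.
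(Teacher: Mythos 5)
Your proposal is correct and follows exactly the route the paper intends: the paper gives no proof of this theorem, deferring entirely to Day's convolution construction and Krause's one-variable treatment, and your argument is precisely that construction, with the characterizing properties checked on the representables $\Gx{\dunder,}{\Vunderline}$ and extended by right/left exactness and the tensor--hom and parameterization adjunctions. The details you supply (the identification $\Gx{\dunder,}{\Vunderline\otimes\Wunderline}\simeq\bigl(\Gx{\dunder,}{\Wunderline}\bigr)^{\Vunderline}$ and the adjunction chain for the internal hom) are consistent with how the paper itself uses these properties later, e.g.\ in the proof of Proposition~\ref{pro2.3.8}.
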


These tensor internal and $\Hom$-internal functors play an important role throughout this paper.

We denote by $\otimesxL{\Pcalx{\dunder}}$ (resp. $\Rbf\mathbf{Hom}_{\Pcalx{\dunder}}$) the total left (resp. right) derived functor of the functor $\otimesx{\Pcalx{\dunder}}$ (resp. $\mathbf{Hom}_{\Pcalx{\dunder}}$).
We denote by $\mathbf{Ext}^i_{\Pcalx{\dunder}}$ the $i$-th cohomology of $\Rbf\mathbf{Hom}_{\Pcalx{\dunder}}$.
The following proposition gives some explicit formulas of the $\Hom$-internal and  the convolution product.

\begin{pro}\label{pro2.3.8}
	Let $F,G\in\Pcalx{\dunder}$ and $\Vunderline\in\Gamma^{\dunder}\VcalK$.
	Then, we have isomorphisms, natural in $F,G$ and $\Vunderline$
	\begin{eqnarray}
	\Homxin{\Pcalx{\dunder}}{F,G}(\Vunderline)&\simeq&\Homx{\Pcalx{\dunder}}{F^{\Vunderline},G},\label{iso2.3.1}\\
	\Extxxin{i}{\Pcalx{\dunder}}{F,G}(\Vunderline)&\simeq& \Ext_{\Pcalx{\dunder}}^i\left(F^{\Vunderline},G\right)\label{iso2.3.6},\\
	\Homxin{\Pcalx{\dunder}}{F,G}&\simeq&\left(F\otimesx{\Pcalx{\dunder}}G^\sharp\right)^\sharp,\label{iso2.3.2a}\\
	\Rbf\Homxin{\Pcalx{\dunder}}{F,G}&\simeq &\left(F\otimesxL{\Pcalx{\dunder}}G^\sharp\right)^\sharp\label{iso2.3.4}.
	\end{eqnarray}
\end{pro}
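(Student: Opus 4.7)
The plan is to prove the four isomorphisms in sequence, each building on the preceding one through standard adjunction, Yoneda-type arguments, and Kuhn duality. For \eqref{iso2.3.1}, I would combine the $\Bbbk$-linear Yoneda isomorphism \eqref{equ1.3.1a} with the tensor-Hom adjunction intrinsic to the closed symmetric monoidal structure of Theorem \ref{pro2.3.2}:
\begin{align*}
\Homxin{\Pcalx{\dunder}}{F,G}(\Vunderline)
&\simeq \Homx{\Pcalx{\dunder}}{\Gx{\dunder,}{\Vunderline},\Homxin{\Pcalx{\dunder}}{F,G}} \\
&\simeq \Homx{\Pcalx{\dunder}}{\Gx{\dunder,}{\Vunderline}\otimesx{\Pcalx{\dunder}}F,G} \\
&\simeq \Homx{\Pcalx{\dunder}}{F^{\Vunderline},G},
\end{align*}
where the final identification uses property (2) of the convolution product.

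To deduce \eqref{iso2.3.6}, I would take an injective resolution $G\to I^\bullet$ in $\Pcalx{\dunder}$. Since evaluation at $\Vunderline$ is exact, applying \eqref{iso2.3.1} termwise yields an isomorphism of complexes $\Homxin{\Pcalx{\dunder}}{F,I^\bullet}(\Vunderline)\simeq \Homx{\Pcalx{\dunder}}{F^{\Vunderline},I^\bullet}$. Passing to $i$-th cohomology, the left side computes $\Extxxin{i}{\Pcalx{\dunder}}{F,G}(\Vunderline)$ by definition, while the right side computes $\Ext^i_{\Pcalx{\dunder}}(F^{\Vunderline},G)$ via the same injective resolution.

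For \eqref{iso2.3.2a}, my strategy is to show that both sides, viewed as contravariant functors of $F$, are left exact and agree naturally on the Yoneda projectives $\Gx{\dunder,}{\Wunderline}$, then extend to arbitrary $F$ by projective presentations. On the left, \eqref{iso2.3.1} gives $\Homxin{\Pcalx{\dunder}}{\Gx{\dunder,}{\Wunderline},G}(\Vunderline)\simeq G(\Wunderline\otimes\Vunderline)$. On the right, property (2) of the convolution product together with its symmetry gives $\Gx{\dunder,}{\Wunderline}\otimesx{\Pcalx{\dunder}}G^\sharp\simeq(G^\sharp)^{\Wunderline}$, and the Kuhn duality identity $(F_\Vunderline)^\sharp\simeq(F^\sharp)^\Vunderline$ from the example following the definition of $(\text{-})^\sharp$, applied to $F=G^\sharp$, yields $((G^\sharp)^{\Wunderline})^\sharp\simeq G_{\Wunderline}$, which again evaluates to $G(\Wunderline\otimes\Vunderline)$. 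Since every $F$ admits a presentation $P_1\to P_0\to F\to 0$ by sums of Yoneda projectives, the agreement on each $P_j$ together with contravariant left exactness forces agreement on $F$.

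Finally, for \eqref{iso2.3.4}, I would derive it from \eqref{iso2.3.2a} using that Kuhn duality is exact and interchanges injectives with projectives. An injective resolution $G\to I^\bullet$ produces a projective resolution $(I^\bullet)^\sharp\to G^\sharp$, so computing the derived tensor product via this resolution gives
\begin{equation*}
\left(F\otimesxL{\Pcalx{\dunder}}G^\sharp\right)^\sharp \simeq \left(F\otimesx{\Pcalx{\dunder}}(I^\bullet)^\sharp\right)^\sharp \simeq \Homxin{\Pcalx{\dunder}}{F,(I^\bullet)^{\sharp\sharp}} \simeq \Homxin{\Pcalx{\dunder}}{F,I^\bullet},
\end{equation*}
the middle step being \eqref{iso2.3.2a} applied levelwise. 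The last complex computes $\Rbf\Homxin{\Pcalx{\dunder}}{F,G}$. The main obstacle I anticipate lies in \eqref{iso2.3.2a}: one must verify that the pointwise isomorphisms on Yoneda projectives are natural in $\Wunderline$ and compatible with morphisms between projective presentations of $F$, so that the kernel description genuinely assembles into a natural isomorphism on all of $\Pcalx{\dunder}$; every other step reduces to adjunction manipulations or routine homological algebra.
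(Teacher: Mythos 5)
Your proof is correct and follows essentially the same route as the paper: \eqref{iso2.3.1} and \eqref{iso2.3.2a} are verified against the characterizations in Theorem \ref{pro2.3.2} and extended by left/right exactness, while \eqref{iso2.3.6} and \eqref{iso2.3.4} are deduced by resolving $G$ injectively (the paper leaves these two deductions implicit). The only minor variation is that for \eqref{iso2.3.1} you invoke the closed-monoidal (tensor-hom) adjunction directly instead of reducing to projective $F$ as the paper does; both arguments rest on the same theorem.
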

\begin{proof}
	We consider each side of the isomorphism \eqref{iso2.3.1} as a functor of the variable $F$.
	By Yoneda's lemma and the characterization of the $\Hom$-internal given in Theorem \ref{pro2.3.2}, these two functors are isomorphic (natural in $G$) on the full subcategory of projective ones of $\Pcalx{\dunder}$.
	Since the functors are left exact, the isomorphism extends to an isomorphism, natural in $F$, $G$ and $\Vunderline$.
	We obtain the isomorphism \eqref{iso2.3.1}.
	This isomorphism implies the isomorphism \eqref{iso2.3.6}.

	Since the functor $\mathbf{Hom}_{\Pcalx{\dunder}}$ is left exact and the functor $\otimesx{\Pcalx{\dunder}}$ is right exact, in order to prove the isomorphism \eqref{iso2.3.2a}, without loss of generality, we may assume that $F=\Gx{\dunder,}{\Vunderline}$.
	By Theorem \ref{pro2.3.2} and the isomorphism \eqref{iso2.3.1}, we have isomorphisms 
	\begin{equation*}
	\Homxin{\Pcalx{\dunder}}{\Gx{\dunder,}{\Vunderline},G}\simeq G_{\Vunderline}\simeq \left(\Gx{\dunder,}{\Vunderline}\otimesx{\Pcalx{\dunder}}G^\sharp\right)^\sharp,
	\end{equation*}
	which proves \eqref{iso2.3.2a}.
	This isomorphism implies the isomorphism \eqref{iso2.3.4}.
\end{proof}

\section{ $\Ext$-groups of the form $\Extx{\PcalKn}{F,G^{(r)}}$}

In this section, we will study the effect of the characteristic of the coefficient field on $ \Ext $-groups in the category $ \Pcalx{\dunder} $.
Since the information concerning the characteristic is encoded in the Frobenius twist, we study the $ \Ext $-groups of the form $ \Extx{\PcalKn}{F^{(r)},G^{(r)}}$ where $ F,G\in\PcalKn$.
More generally, we consider the $ \Ext $-groups of the form 
\begin{equation}\label{equ2.1}
\Extx{\PcalKn}{F,G^{(r)}}\quad\text{and}\quad \Extx{\PcalKn}{F^{(r)},G}
\end{equation}
where $ F,G\in\PcalKn $.
However, by using Kuhn duality, we know that
\begin{equation*}
\Extx{\PcalK}{F^{(r)},G}
\simeq \Extx{\PcalK}{G^\sharp,\left(F^{(r)}\right)^\sharp}
\simeq
\Extx{\PcalK}{G^\sharp,\left(F^{\sharp}\right)^{(r)}}
\end{equation*}
natural in $F$ and $G$.
Therefore, we will focus on the $ \Ext $-groups of the form $ \Extx{\PcalKn}{F,G^{(r)}} $.
The key idea to study these $\Ext$-groups is to use the adjoint to the precomposition with the $r$-th Frobenius twist.
This is Cha\l upnik's idea \cite{Chalupnik15}.

\subsection{ Left adjoint to precomposition with Frobenius twist}

Let $r$ be a fixed natural number.
The precomposition with the $r$-th Frobenius twist $I^{(r)}$ yields an exact functor
\begin{equation*}
\begin{array}{cccc}
\Fr_r:&\Pcalx{\dunder}&\to&\Pcalx{p^r\dunder}\\
&F&\mapsto&F^{(r)}=F\circ I^{(r)}.
\end{array}
\end{equation*}
In the following proposition, we show that $\Fr_r$ is an embedding functor.
This proposition is a version of strict polynomial functors in several variables of \cite[Lemma 2.2]{Tou12}.

\begin{pro}\label{pro2.3.5a}
	Let $F,G\in\Pcalx{\dunder}$ and $H\in\Pcalx{p^r\dunder}$. 
	There are isomorphisms
	\begin{eqnarray}
	\Homx{\Pcalx{\dunder}}{F,G}&\xrightarrow{\simeq}&\Homx{\Pcalx{p^r\dunder}}{F^{(r)},G^{(r)}}\label{equ2.3.1},\\
	\Homx{\Pcalx{p^r\dunder}}{H\otimesx{\Pcalx{p^r\dunder}} \Gx{\dunder(r)}{},G^{(r)}}&\xrightarrow{\simeq}& \Homx{\Pcalx{p^r\dunder}}{H,G^{(r)}}\label{iso2.3.2},
	\end{eqnarray}
	natural in $F,G$ and $H$, where the former is induced by Frobenius twist and the latter is induced by the composition  $H\xrightarrow{\simeq}H\otimesx{\Pcalx{p^r\dunder}}\Gx{p^r\dunder}{}\to H\otimesx{\Pcalx{p^r\dunder}} \Gx{\dunder(r)}{}$.
\end{pro}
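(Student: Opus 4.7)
The plan is built on the observation that isomorphism \eqref{iso2.3.2} is a formal consequence of \eqref{equ2.3.1} via the closed monoidal structure of Theorem \ref{pro2.3.2}. Accordingly, I would first establish \eqref{equ2.3.1}, then derive \eqref{iso2.3.2} from it.

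For \eqref{equ2.3.1}, both sides are contravariant left-exact in $F$, since $\Fr_r$ is an exact functor by construction. Invoking Proposition \ref{pro1.5.7}, the representables $\Gamma^{\dunder,\Vunderline}$ form a projective generating family of $\Pcalx{\dunder}$, so it suffices to verify the claim for $F=\Gamma^{\dunder,\Vunderline}$. The Yoneda isomorphism \eqref{equ1.3.1a} identifies the left-hand side with $G(\Vunderline)$. For the right-hand side, one computes $(\Gamma^{\dunder,\Vunderline})^{(r)}(X)=\Gamma^{\dunder}(\Vunderline^\vee\otimes X^{(r)})$, and must identify $\Hom_{\Pcalx{p^r\dunder}}((\Gamma^{\dunder,\Vunderline})^{(r)}, G^{(r)})$ with $G(\Vunderline)$ compatibly with the Frobenius map on Hom-sets. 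This is a kind of twisted Yoneda lemma: a natural transformation $(\Gamma^{\dunder,\Vunderline})^{(r)}\to G^{(r)}$ is determined by its value on a universal element, and the essential surjectivity of $V\mapsto V^{(r)}$ on $\VcalK$ (the dimension is preserved) ensures the correspondence is a bijection inverse to Frobenius.

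For \eqref{iso2.3.2}, the tensor-Hom adjunction of Theorem \ref{pro2.3.2} rewrites the left-hand side as
\begin{equation*}
\Homx{\Pcalx{p^r\dunder}}{H,\Homxin{\Pcalx{p^r\dunder}}{\Gx{\dunder(r)}{},G^{(r)}}},
\end{equation*}
so it suffices to exhibit a natural isomorphism $\Homxin{\Pcalx{p^r\dunder}}{\Gx{\dunder(r)}{},G^{(r)}}\simeq G^{(r)}$. Using iso \eqref{iso2.3.1}, the value at $\Vunderline$ is $\Homx{\Pcalx{p^r\dunder}}{(\Gx{\dunder(r)}{})^{\Vunderline},G^{(r)}}$. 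A direct calculation with the compatibility of Frobenius with tensor and duality gives $(\Gx{\dunder(r)}{})^{\Vunderline}\simeq (\Gamma^{\dunder,\Vunderline^{(r)}})^{(r)}$, so \eqref{equ2.3.1} applied to $F=\Gamma^{\dunder,\Vunderline^{(r)}}$, combined with Yoneda, yields $G(\Vunderline^{(r)})=G^{(r)}(\Vunderline)$, as desired. One then checks that these identifications sew together into a map of functors in $\Vunderline$ compatible with the canonical morphism $H\to H\otimesx{\Pcalx{p^r\dunder}}\Gx{\dunder(r)}{}$ described in the statement.

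The main obstacle is the Yoneda reduction step of \eqref{equ2.3.1}: after reducing to $F=\Gamma^{\dunder,\Vunderline}$, one must explicitly invert the Frobenius map on Hom-spaces. Concretely, from a natural transformation $\mu\colon F^{(r)}\to G^{(r)}$ in $\Pcalx{p^r\dunder}$, which a priori is functorial only with respect to morphisms in $\Gamma^{p^r\dunder}\VcalK$, one has to produce a natural transformation $\eta\colon F\to G$ in $\Pcalx{\dunder}$. The subtlety is that morphisms in $\Gamma^{\dunder}\VcalK$ and $\Gamma^{p^r\dunder}\VcalK$ interact differently with Frobenius twists, yet the non-$\Bbbk$-linear bijection $V\cong V^{(r)}$ at the level of underlying vector spaces provides just enough rigidity for the inverse to exist and be natural.
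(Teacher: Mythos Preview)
Your derivation of \eqref{iso2.3.2} from \eqref{equ2.3.1} via the closed monoidal structure is correct and pleasant. The difficulty, as you yourself flag, is the direct proof of \eqref{equ2.3.1}: after reducing to $F=\Gamma^{\boxtimes\dunder,\Vunderline}$, you must show that
\[
\Homx{\Pcalx{p^r\dunder}}{\bigl(\Gamma^{\boxtimes\dunder,\Vunderline}\bigr)^{(r)},\,G^{(r)}}\;\simeq\;G(\Vunderline),
\]
and the functor $\bigl(\Gamma^{\boxtimes\dunder,\Vunderline}\bigr)^{(r)}$ is \emph{not} representable in $\Pcalx{p^r\dunder}$ (indeed $\Gamma^{\boxtimes\dunder(r)}$ is not projective there), so no form of Yoneda applies directly. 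Your appeal to the ``non-$\Bbbk$-linear bijection $V\cong V^{(r)}$'' does not supply the missing naturality: a transformation $\mu\colon F^{(r)}\to G^{(r)}$ is natural only for morphisms in $\Gamma^{p^r\dunder}\VcalK$, and there is no functor $\Gamma^{\dunder}\VcalK\to\Gamma^{p^r\dunder}\VcalK$ along which to transport this. This step is a genuine gap.

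The paper closes this gap by proving \eqref{equ2.3.1} and \eqref{iso2.3.2} \emph{simultaneously} rather than sequentially. One reduces both to generators, choosing them compatibly: $F=\Gamma^{\boxtimes\dunder,\Vunderline^{(r)}}$ and $H=\Gamma^{\boxtimes p^r\dunder,\Vunderline}$, so that $F^{(r)}\simeq H\otimesx{\Pcalx{p^r\dunder}}\Gamma^{\boxtimes\dunder(r)}\simeq(\Gamma^{\boxtimes\dunder(r)})^{\Vunderline}$. The map \eqref{iso2.3.2} is then precomposition with the epimorphism $\Gamma^{\boxtimes p^r\dunder,\Vunderline}\twoheadrightarrow(\Gamma^{\boxtimes\dunder(r)})^{\Vunderline}$, hence a monomorphism by left-exactness of $\Hom$. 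The \emph{composition} of the two maps lands in $\Homx{\Pcalx{p^r\dunder}}{\Gamma^{\boxtimes p^r\dunder,\Vunderline},G^{(r)}}$, which \emph{is} a standard Yoneda evaluation, and one checks directly that this composition is the identity $G(\Vunderline^{(r)})=G^{(r)}(\Vunderline)$. Since the composition is an isomorphism and the second map is a monomorphism, both maps are isomorphisms. Thus the ``twisted Yoneda'' obstacle you identified is not solved head-on but bypassed: the easy injectivity of \eqref{iso2.3.2} is used as leverage to reduce \eqref{equ2.3.1} to an untwisted Yoneda. If you attempt to fill your gap directly (say by exploiting the epimorphism $\Gamma^{\boxtimes p^r\dunder,\Uunderline}\twoheadrightarrow\bigl(\Gamma^{\boxtimes\dunder,\Vunderline}\bigr)^{(r)}$ for $\Uunderline^{(r)}\cong\Vunderline$), you will find yourself reproducing exactly this simultaneous argument.
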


\begin{proof}
	By left exactness of functors $\Hom_{\Pcalx{\dunder}}(\text{-},\text{-})$ and $\Hom_{\Pcalx{p^r\dunder}}(\text{-},\text{-})$, it suffices to prove that the morphisms \eqref{equ2.3.1} and \eqref{iso2.3.2} are isomorphisms as $F$ and $H$ are respectively standard projective generators of $\Pcalx{\dunder}$ and $\Pcalx{p^r\dunder}$, 
	this means $F$ and $H$ are of the form $F=\Gx{\dunder,}{\Vunderline^{(r)}}$ and $H=\Gx{p^r\dunder,}{\Vunderline}$. 
	We have $\Gx{p^r\dunder,}{\Vunderline}\otimesx{\Pcalx{p^r\dunder}} \Gx{\dunder(r)}{}\simeq \left(\Gx{\dunder(r)}{}\right)^{\Vunderline}$ and $\left(\Gx{\dunder,}{\Vunderline^{(r)}}\right)^{(r)}\simeq \left(\Gx{\dunder(r)}{}\right)^{\Vunderline}$. 
	Thus we just need to show that the maps
	\begin{align*}
	\Homx{\Pcalx{\dunder}}{\Gx{\dunder,}{\Vunderline^{(r)}},G}&\to\Homx{\Pcalx{p^r\dunder}}{\left(\Gx{\dunder(r)}{}\right)^{\Vunderline},G^{(r)}},\\
	\Homx{\Pcalx{p^r\dunder}}{\left(\Gx{\dunder(r)}{}\right)^{\Vunderline},G^{(r)}}&\to \Homx{\Pcalx{p^r\dunder}}{\Gx{p^r\dunder,}{\Vunderline},G^{(r)}}.
	\end{align*}
	are isomorphisms.
	By left exactness of $\Hom_{\Pcalx{p^r\dunder}}$, the second map is a monomorphism. 
	Hence, it is left to show that the composition 
	\begin{equation*}
	\Homx{\Pcalx{\dunder}}{\Gx{\dunder,}{\Vunderline^{(r)}},G}\to \Homx{\Pcalx{p^r\dunder}}{\Gx{p^r\dunder,}{\Vunderline},G^{(r)}}
	\end{equation*}
	of these morphisms is an isomorphism. 
	It follows from the commutativity of the diagram
	\begin{equation*}
	\xymatrix{\Homx{\Pcalx{\dunder}}{\Gx{\dunder,}{\Vunderline^{(r)}},G}
		\ar[r]\ar[d]^\simeq & \Homx{\Pcalx{p^r\dunder}}{\Gx{p^r\dunder,}{\Vunderline},G^{(r)}}\ar[d]^\simeq\\ 
		G\left(V^{(r)}\right)\ar@{=}[r]& G^{(r)}(V)
	}
	\end{equation*}
	where the two vertical morphisms are defined by Yoneda's lemma.
	This completes the proof.
\end{proof}

\begin{cor}\label{cor2.3.8}
	Let $F$ and $G$ be two objects of the category $\Pcalx{\dunder}$ or two chain complexes in $\Pcalx{\dunder}$.
	Then $F\simeq G$ if and only if $F^{(r)}\simeq G^{(r)}$.
\end{cor}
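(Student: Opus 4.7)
The forward direction is immediate: the Frobenius twist functor $\Fr_r : \Pcalx{\dunder}\to\Pcalx{p^r\dunder}$ is a functor (indeed exact), so it carries isomorphisms to isomorphisms, and likewise for chain complexes (apply $\Fr_r$ degreewise; chain maps and their inverses are preserved since $\Fr_r$ commutes with composition and with the identity). So the content is in the converse.

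For the converse on objects, I would simply unpack the first isomorphism of Proposition \ref{pro2.3.5a}. Given an isomorphism $\phi : F^{(r)}\xrightarrow{\simeq} G^{(r)}$ with inverse $\psi : G^{(r)}\xrightarrow{\simeq} F^{(r)}$, the bijection
\begin{equation*}
\Homx{\Pcalx{\dunder}}{F,G}\xrightarrow{\simeq}\Homx{\Pcalx{p^r\dunder}}{F^{(r)},G^{(r)}}
\end{equation*}
produces unique morphisms $f : F\to G$ and $g : G\to F$ with $f^{(r)}=\phi$ and $g^{(r)}=\psi$. Since $\Fr_r$ is a functor, $(g\circ f)^{(r)}=g^{(r)}\circ f^{(r)}=\psi\circ\phi=\Id_{F^{(r)}}=(\Id_F)^{(r)}$, and the injectivity part of the same Hom-isomorphism (applied to $\Homx{\Pcalx{\dunder}}{F,F}$) forces $g\circ f=\Id_F$. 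Symmetrically $f\circ g=\Id_G$, so $F\simeq G$ via $f$.

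For chain complexes, I would argue degreewise. Write $F=(F_n,d_n^F)$ and $G=(G_n,d_n^G)$, and suppose $\phi=(\phi_n)$ is a chain isomorphism $F^{(r)}\xrightarrow{\simeq}G^{(r)}$ with inverse $\psi=(\psi_n)$. Applying the argument above in each degree produces mutually inverse morphisms $f_n : F_n\to G_n$ and $g_n : G_n\to F_n$ in $\Pcalx{\dunder}$ with $f_n^{(r)}=\phi_n$ and $g_n^{(r)}=\psi_n$. The chain condition $\phi_{n-1}\circ d_n^{F^{(r)}}=d_n^{G^{(r)}}\circ \phi_n$ reads $(f_{n-1}\circ d_n^F)^{(r)}=(d_n^G\circ f_n)^{(r)}$, and the injectivity of \eqref{equ2.3.1} applied to $\Homx{\Pcalx{\dunder}}{F_n,G_{n-1}}$ yields $f_{n-1}\circ d_n^F=d_n^G\circ f_n$. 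Thus $f=(f_n)$ is a genuine chain map, and similarly $g=(g_n)$ is a chain map inverse to it, giving $F\simeq G$.

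No real obstacle arises; the whole argument is a direct exploitation of the full-faithfulness statement in Proposition \ref{pro2.3.5a}. The only mild point worth making explicit is that the Hom-isomorphism of that proposition is in particular injective on morphism sets, and it is precisely this injectivity (not merely surjectivity) that lets one lift the identity relations and the chain-map relations from $\Pcalx{p^r\dunder}$ back to $\Pcalx{\dunder}$. In categorical language the corollary just says that the Frobenius twist functor $\Fr_r$ is fully faithful, so it reflects isomorphisms, and this property automatically extends to the category of chain complexes $\Ch(\Pcalx{\dunder})$.
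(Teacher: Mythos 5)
Your proof is correct and is exactly the argument the paper intends: Proposition \ref{pro2.3.5a} says $\Fr_r$ is fully faithful, hence reflects isomorphisms, and this extends degreewise to chain complexes. The paper leaves this implicit, so no comparison is needed.
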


\subsection*{Functors $ \ell^{r} $}

In the following, we consider the adjoint to the functor $ \Fr_r $.
Since we only focus on the $ \Ext $-groups of the form $ \Extx{\PcalKn}{F,G^{(r)}} $, only the left adjoint to $ \Fr_r $ is studied.

Following Cha\l upnik \cite[Section 2]{Chalupnik15} (see also \cite[page 548]{Tou13a}), we introduce the functor $\ell^r:\PcalKoper{p^r\dunder}\to\Pcalx{\dunder}$.
In Proposition \ref{pro2.2.11}, we will point out that $ \ell^{r} $ is the left adjoint to  the functor $ \Fr_r $.

\begin{deff}\label{def2.5.5}
	Let $\dunder=(d_1,\ldots,d_n)$ be a $n$-tuple of natural numbers and $r$ be a natural number. We define a functor $\ell^r:\PcalKoper{p^r\dunder}\to\Pcalx{\dunder}$
	by the formula
	\begin{equation*}
	\ell^r(F)(\Vunderline)=\left( \Homx{\Pcalx{p^r\dunder}}{F,\left(\Sx{\dunder}{\Vunderline^{\vee}}\right)^{(r)}}\right)^{\vee},
	\end{equation*}
	where $F\in\Pcalx{p^r\dunder}$, $\Vunderline$ is a $n$-tuple of objects of $\VcalK$ and $\Sx{\dunder}{\Vunderline}$ denotes the product $\boxtimesx{i=1}{n}S^{d_i}_{V_i}\in\Pcalx{\dunder}$.
\end{deff}

According to Kuhn duality, we obtain the equality
\begin{equation*}
\ell^r(F)^\sharp(\Vunderline)= \Homx{\Pcalx{p^r\dunder}}{F,\left(\Sx{\dunder}{\Vunderline}\right)^{(r)}}
\end{equation*}
natural in $ F $ and $ \Vunderline $.

\begin{pro}\label{lem2.6.12}
	Let $F\in \Pcalx{p^r\dunder}$. 
	There is an isomorphism, natural in $F$:
	\begin{equation}\label{iso2.4.3a}
	\ell^r(F)^{\sharp(r)}\simeq \Homxin{\Pcalx{p^r\dunder}}{F,S^{\boxtimes\dunder(r)}}.
	\end{equation}
\end{pro}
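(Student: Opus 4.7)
The plan is to evaluate both sides at an arbitrary test tuple $\Vunderline \in \Gamma^{\dunder}\VcalK$ and show the resulting $\Bbbk$-vector spaces are naturally isomorphic, the whole argument reducing to the characterization of the $\Hom$-internal from Theorem \ref{pro2.3.2}, the parameterization adjunction, and the compatibility of the Frobenius twist with tensor products.

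First, I would unpack the left-hand side. Taking Kuhn duals in Definition \ref{def2.5.5} gives
\[
\ell^r(F)^\sharp(\Vunderline) \simeq \Homx{\Pcalx{p^r\dunder}}{F,\left(\Sx{\dunder}{\Vunderline}\right)^{(r)}},
\]
natural in $F$ and $\Vunderline$. Hence, using the defining formula $H^{(r)}(\Vunderline) = H(\Vunderline^{(r)})$ for the Frobenius twist,
\[
\ell^r(F)^{\sharp(r)}(\Vunderline) \;=\; \ell^r(F)^\sharp\!\left(\Vunderline^{(r)}\right) \;\simeq\; \Homx{\Pcalx{p^r\dunder}}{F,\left(\Sx{\dunder}{\Vunderline^{(r)}}\right)^{(r)}}.
\]

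Next, I would unpack the right-hand side using the characterization of the $\Hom$-internal. By the isomorphism \eqref{iso2.3.1} in Proposition \ref{pro2.3.8},
\[
\Homxin{\Pcalx{p^r\dunder}}{F,S^{\boxtimes\dunder(r)}}(\Vunderline) \;\simeq\; \Homx{\Pcalx{p^r\dunder}}{F^{\Vunderline},S^{\boxtimes\dunder(r)}},
\]
and then the parameterization-shift adjunction recalled in the subsection on adjoint functors (applied with $\Vunderline \otimes (-)$ and its dual) rewrites this as
\[
\Homx{\Pcalx{p^r\dunder}}{F,\left(S^{\boxtimes\dunder(r)}\right)_{\Vunderline}}.
\]

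The remaining step is to identify the two coefficient functors $\bigl(\Sx{\dunder}{\Vunderline^{(r)}}\bigr)^{(r)}$ and $\bigl(S^{\boxtimes\dunder(r)}\bigr)_{\Vunderline}$. Evaluating both at an arbitrary $\Wunderline$, each side equals $\boxtimesx{i=1}{n} S^{d_i}\bigl(V_i^{(r)} \otimes W_i^{(r)}\bigr)$, using only that $(V\otimes W)^{(r)} \simeq V^{(r)}\otimes W^{(r)}$ canonically and that Frobenius twist commutes with $\boxtimes$. This yields a natural isomorphism between them, and chaining the three identifications produces the desired isomorphism \eqref{iso2.4.3a}.

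The only subtle point will be checking the naturality in $F$ across all three steps (Yoneda, parameterization adjunction, Frobenius-twist-tensor compatibility), but each of these isomorphisms was already established to be natural in the previous sections, so assembling them causes no real obstacle.
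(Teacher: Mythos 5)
Your proposal is correct and follows essentially the same route as the paper: evaluate both sides at a test tuple $\Vunderline$, use the definition of $\ell^r$ (via Kuhn duality) on the left, the characterization of the $\Hom$-internal plus the parameterization adjunction on the right, and the identification $\bigl(\Sx{\dunder}{\Vunderline^{(r)}}\bigr)^{(r)}\simeq\bigl(S^{\boxtimes\dunder(r)}\bigr)_{\Vunderline}$ to match the coefficients. The only difference is that you spell out this last coefficient isomorphism by evaluation, which the paper simply asserts.
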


\begin{proof}
	Let $\Vunderline$ be a $n$-tuple of objects of $\VcalK$. 
	By definition and the isomorphism $\left(\Sx{\dunder}{\Vunderline^{(r)}}\right)^{(r)}\simeq \left(S^{\boxtimes\dunder(r)}\right)_{\Vunderline}$, there are isomorphisms
	\begin{align*}
	\ell^r(F)^{\sharp(r)}(\Vunderline)=\ell^r(F)^{\sharp}\left(\Vunderline^{(r)}\right)& = \Homx{\Pcalx{p^r\dunder}}{F,\left(\Sx{\dunder}{\Vunderline^{(r)}}\right)^{(r)}}\\
	&\simeq  \Homx{\Pcalx{p^r\dunder}}{F,\left(S^{\boxtimes\dunder(r)}\right)_{\Vunderline}}=\Homxin{\Pcalx{p^r\dunder}}{F,S^{\boxtimes\dunder(r)}}(\Vunderline)
	\end{align*}
	natural in $F,\Vunderline$,	which establish the formula.
\end{proof}

\begin{pro}\label{pro2.2.11}
	The functor $\ell^r:\Pcalx{p^{r}\dunder}\to\Pcalx{\dunder}$ is the left adjoint to $\Fr_r:\Pcalx{\dunder}\to\Pcalx{p^{r}\dunder}$, i.e., there is an isomorphism
	\begin{eqnarray}
	\Homx{\Pcalx{\dunder}}{\ell^r(F),G}&\simeq&\Homx{\Pcalx{p^r\dunder}}{F,G^{(r)}},\label{equ2.4.4}
	\end{eqnarray}
	natural in $F\in\Pcalx{p^r\dunder}$ and $G\in\Pcalx{\dunder}$.
	Moreover, we have
	\begin{eqnarray}
	\Homxin{\Pcalx{\dunder}}{\ell^r(F),G}^{(r)}&\simeq&\Homxin{\Pcalx{p^r\dunder}}{F,G^{(r)}}.\label{iso2.4.5a}
	\end{eqnarray}
	natural in $F\in\Pcalx{p^r\dunder}$ and $G\in\Pcalx{\dunder}$.
\end{pro}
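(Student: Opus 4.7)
The plan is to establish \eqref{equ2.4.4} by reducing to the case where $G$ is a standard injective cogenerator, and then deduce the internal version \eqref{iso2.4.5a} from the external adjunction via a compatibility of $\ell^r$ with parameterization. The key input is that by Definition~\ref{def2.5.5}, the Kuhn dual of $\ell^r(F)$ satisfies $\ell^r(F)^\sharp(\Wunderline) = \Hom_{\Pcalx{p^r\dunder}}\bigl(F,(\Sx{\dunder}{\Wunderline})^{(r)}\bigr)$, which is exactly the right hand side of \eqref{equ2.4.4} when $G = \Sx{\dunder}{\Wunderline}$.

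For \eqref{equ2.4.4}, combining the Yoneda formula $\Hom_{\Pcalx{\dunder}}(H,\Sx{\dunder}{\Wunderline}) \simeq H^\sharp(\Wunderline)$ applied to $H = \ell^r(F)$ with the defining identity above yields a natural isomorphism between the two sides of \eqref{equ2.4.4} for $G = \Sx{\dunder}{\Wunderline}$, natural in $F$ and $\Wunderline$. Both sides of \eqref{equ2.4.4} are left exact functors of $G$: the left hand side because $\Hom_{\Pcalx{\dunder}}(\ell^r(F),-)$ is always left exact, the right hand side because the Frobenius twist $(-)^{(r)}$ is exact. For an arbitrary $G$, a two-step injective coresolution $0 \to G \to I^0 \to I^1$ with $I^0, I^1$ finite products of standard injective cogenerators exists because the $\Sx{\dunder}{\Wunderline}$ cogenerate $\Pcalx{\dunder}$; applying the natural isomorphism termwise and invoking the five-lemma extends the isomorphism uniquely to $G$, with naturality in both variables.

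For \eqref{iso2.4.5a}, evaluating both sides on $\Vunderline \in \VcalK^{\times n}$ and applying \eqref{iso2.3.1} rewrites the identity as
\[
\Hom_{\Pcalx{\dunder}}\bigl(\ell^r(F)^{\Vunderline^{(r)}},G\bigr) \simeq \Hom_{\Pcalx{p^r\dunder}}(F^{\Vunderline},G^{(r)}).
\]
By \eqref{equ2.4.4} the right hand side rewrites as $\Hom_{\Pcalx{\dunder}}(\ell^r(F^{\Vunderline}),G)$, so by Yoneda it suffices to exhibit a natural isomorphism $\ell^r(F^{\Vunderline}) \simeq \ell^r(F)^{\Vunderline^{(r)}}$. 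Unpacking Definition~\ref{def2.5.5} and invoking the adjunction $\Hom_{\Pcalx{p^r\dunder}}(F^{\Vunderline},-) \simeq \Hom_{\Pcalx{p^r\dunder}}(F,(-)_{\Vunderline})$ between parameterization functors, this reduces to the pointwise identity
\[
\bigl((\Sx{\dunder}{\Wunderline^\vee})^{(r)}\bigr)_{\Vunderline} \simeq (\Sx{\dunder}{\Wunderline^\vee \otimes \Vunderline^{(r)}})^{(r)},
\]
which follows from the monoidal compatibility $(V \otimes U)^{(r)} \simeq V^{(r)} \otimes U^{(r)}$ of the Frobenius twist.

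I expect the main obstacle to be bookkeeping rather than conceptual substance: the various dualities (Kuhn dual versus linear dual), the interchange between the two parameterization notations $(-)^{\Vunderline}$ and $(-)_{\Vunderline}$, and the Frobenius twist must all be tracked carefully to verify naturality both in the extension from standard injectives and in the parameterization compatibility.
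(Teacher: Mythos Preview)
Your proof is correct, but it takes a genuinely different route for \eqref{equ2.4.4} than the paper does. You reduce to standard injective cogenerators $G = \Sx{\dunder}{\Wunderline}$, where the adjunction isomorphism is tautological from the definition of $\ell^r$ (via the Yoneda formula $\Homx{\Pcalx{\dunder}}{H,\Sx{\dunder}{\Wunderline}} \simeq H^\sharp(\Wunderline)$), and then extend to arbitrary $G$ by left exactness in $G$. The paper instead constructs an explicit unit $F \to \ell^r(F)^{(r)}$ via the closed monoidal structure (as the composite $F \simeq F\otimesx{\Pcalx{p^r\dunder}}\Gx{p^r\dunder}{} \to F\otimesx{\Pcalx{p^r\dunder}}\Gx{\dunder(r)}{} \simeq \ell^r(F)^{(r)}$, the last step being Proposition~\ref{lem2.6.12}), and then invokes Proposition~\ref{pro2.3.5a} to see that the induced map on $\Hom$'s is an isomorphism. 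Your argument is more elementary in that it avoids the internal tensor product and Proposition~\ref{pro2.3.5a} altogether; the paper's argument has the advantage of producing the unit of adjunction explicitly. For \eqref{iso2.4.5a} the two arguments are essentially dual: you parameterize the first variable and reduce to the compatibility $\ell^r(F^{\Vunderline}) \simeq \ell^r(F)^{\Vunderline^{(r)}}$, while the paper parameterizes the second variable and uses $(G_{\Vunderline^{(r)}})^{(r)} \simeq (G^{(r)})_{\Vunderline}$ directly, which is marginally shorter.
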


\begin{proof}
	Define $F\to\ell^r(F)^{(r)}$ to be the composition
	\begin{equation}\label{equ2.7}
	F\xrightarrow{\simeq} F\otimesx{\Pcalx{p^r\dunder}}\Gx{p^r\dunder}{}\to F\otimesx{\Pcalx{p^r\dunder}}\Gx{\dunder(r)}{}\simeq \ell^r(F)^{(r)}
	\end{equation}
	where the second map is induced by the canonical projection $\Gx{p^r\dunder}{}\twoheadrightarrow\Gx{\dunder}{(r)}$ and the third one is the isomorphism \eqref{iso2.4.3a} defined in Proposition \ref{lem2.6.12}.
	The map \eqref{equ2.7} induces a map
	$\Homx{\Pcalx{p^r\dunder}}{\ell^r(F)^{(r)},G^{(r)}}\to\Homx{\Pcalx{p^r\dunder}}{F,G^{(r)}}$. 
	By Proposition \ref{pro2.3.5a}, this map and the one induced by the Frobenius twist $\Homx{\Pcalx{\dunder}}{\ell^r(F),G}\to$$ \Homx{\Pcalx{p^r\dunder}}{\ell^r(F)^{(r)},G^{(r)}}$ are isomorphisms.
	The isomorphism \eqref{equ2.4.4} is obtained by composing these isomorphisms. 
	Thus the functor $\ell^r$ is the left adjoint to $\Fr_r$. 
	
	Now, let $\Vunderline$ be a $n$-tuple of objects of $\VcalK$. The isomorphism \eqref{equ2.4.4} yields isomorphisms
	\begin{align*}
	\Homxin{\Pcalx{\dunder}}{\ell^r(F),G}^{(r)}(\Vunderline)
	&=\Homxin{\Pcalx{\dunder}}{\ell^r(F),G}\left(\Vunderline^{(r)}\right)\\
	&=\Homx{\Pcalx{\dunder}}{\ell^r(F),G_{\Vunderline^{(r)}}}\\&\simeq \Homx{\Pcalx{p^r\dunder}}{F,\left(G_{\Vunderline^{(r)}}\right)^{(r)}}\\
	&\simeq \Homx{\Pcalx{p^r\dunder}}{F, \left(G^{(r)}\right)_{\Vunderline}}=\Homxin{\Pcalx{p^r\dunder}}{F,G^{(r)}}(\Vunderline)
	\end{align*}
	natural in $F,G$ and $\Vunderline$, and \eqref{iso2.4.5a} is proved.
\end{proof}

\subsection*{Derived functors $ \Lbf\ell^{r} $}

Denote by $\Lbf\ell^r:\Dbfb{\Pcalx{p^r\dunder}}\to\Dbfb{\Pcalx{\dunder}}$ the total left derived functor of the functor $\ell^r$.
The isomorphism \eqref{iso2.4.3} in the following  proposition  is the derived version of the isomorphism \eqref{iso2.4.5a} in Proposition \ref{pro2.2.11}.

\begin{pro}\label{pro2.4.7}
	Let $r,s$ be two natural numbers.
	Let $F\in\Pcalx{p^r\dunder}$ and $G\in\Pcalx{\dunder}$. There are isomorphisms
	\begin{eqnarray}
	\RHomxin{\Pcalx{p^r\dunder}}{F,G^{(r)}}&\simeq&\RHomxin{\Pcalx{\dunder}}{\Lbf\ell^r(F),G}^{(r)},\label{iso2.4.3}\\
	\Lbf\ell^r(F)^{\sharp(r)}&\simeq& \RHomxin{\Pcalx{p^r\dunder}}{F,S^{\boxtimes\dunder(r)}}\label{equ2.4.5},\\
	\Lbf\ell^r\circ\Lbf\ell^s&\simeq&\Lbf\ell^{r+s}.\label{equ2.4.6}
	\end{eqnarray}
	natural in $F$ and $G$.
\end{pro}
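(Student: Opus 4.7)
The plan is to derive the non-derived identifications \eqref{iso2.4.5a} of Proposition \ref{pro2.2.11} and \eqref{iso2.4.3a} of Proposition \ref{lem2.6.12} by applying them termwise to a projective resolution $P^\bullet\to F$ in $\Pcalx{p^r\dunder}$. The central enabling observation is that, since $\Fr_r:\Pcalx{\dunder}\to\Pcalx{p^r\dunder}$ is exact, its left adjoint $\ell^r$ sends projective objects to projective objects. Consequently, $\ell^r(P^\bullet)$ is a bounded-above complex of projectives in $\Pcalx{\dunder}$ and hence represents $\Lbf\ell^r(F)$ in $\Dbfb{\Pcalx{\dunder}}$. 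Moreover, Kuhn duality and the Frobenius twist are exact, so $\ell^r(P^\bullet)^{\sharp(r)}$ represents $\Lbf\ell^r(F)^{\sharp(r)}$.

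For \eqref{iso2.4.3}, applying the internal adjunction \eqref{iso2.4.5a} termwise produces an isomorphism of complexes
\begin{equation*}
\Homxin{\Pcalx{\dunder}}{\ell^r(P^\bullet),G}^{(r)}\simeq\Homxin{\Pcalx{p^r\dunder}}{P^\bullet,G^{(r)}},
\end{equation*}
natural in $F$ and $G$ by the naturality already established in Proposition \ref{pro2.2.11}. Since $P^\bullet$ is a projective resolution of $F$, the right hand side computes $\RHomxin{\Pcalx{p^r\dunder}}{F,G^{(r)}}$. Because bounded-above complexes of projectives are $\mathbf{Hom}$-acyclic, and the (exact) Frobenius twist commutes with cohomology, the left hand side computes $\RHomxin{\Pcalx{\dunder}}{\Lbf\ell^r(F),G}^{(r)}$. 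For \eqref{equ2.4.5}, applying \eqref{iso2.4.3a} termwise gives an isomorphism of complexes $\ell^r(P^\bullet)^{\sharp(r)}\simeq \Homxin{\Pcalx{p^r\dunder}}{P^\bullet,S^{\boxtimes\dunder(r)}}$, and the same interpretation of each side yields the claimed identification.

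For \eqref{equ2.4.6}, I would appeal to uniqueness of adjoints. The definition of the $r$-th Frobenius twist gives $\Fr_s\circ\Fr_r=\Fr_{r+s}$ on the nose, so passing to left adjoints produces the non-derived identification $\ell^r\circ\ell^s\simeq\ell^{r+s}$, where $\ell^s$ is read as the functor $\Pcalx{p^{r+s}\dunder}=\Pcalx{p^s(p^r\dunder)}\to\Pcalx{p^r\dunder}$. To pass to the derived setting, I use again that $\ell^s$ preserves projectives: for a projective resolution $P^\bullet\to F$ in $\Pcalx{p^{r+s}\dunder}$, the complex $\ell^s(P^\bullet)$ is a complex of projectives representing $\Lbf\ell^s(F)$, so $\ell^r(\ell^s(P^\bullet))=\ell^{r+s}(P^\bullet)$ simultaneously represents $\Lbf\ell^r(\Lbf\ell^s(F))$ and $\Lbf\ell^{r+s}(F)$.

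The main technical point throughout is the observation that $\ell^r$ preserves projectives; once this is secured, every step reduces to combining the relevant underived isomorphism with the standard fact that a bounded-above complex of projectives both computes the left derived functor of any additive functor applied to it and is $\mathbf{Hom}$-acyclic. No new constructions beyond the exactness of $\Fr_r$, of Kuhn duality, and of the Frobenius twist are required.
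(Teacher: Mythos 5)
Your proposal is correct and follows essentially the same route as the paper: both arguments hinge on the observation that $\ell^r$ preserves projectives (being left adjoint to the exact functor $\Fr_r$), and then deduce the derived isomorphisms from the underived ones of Propositions \ref{pro2.2.11} and \ref{lem2.6.12}, the paper by citing the composite-derived-functor theorem of Weibel where you unwind the same content with an explicit projective resolution. The only cosmetic difference is that for \eqref{equ2.4.5} the paper specializes \eqref{iso2.4.3} at $G=S^{\boxtimes\dunder}$ and invokes Yoneda, whereas you apply \eqref{iso2.4.3a} termwise; these are equivalent.
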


\begin{proof}
	By Proposition \ref{pro2.2.11}, the functor $\ell^r$ is the left adjoint to the exact functor $\Fr_r$, so it preserves the projectives. 
	Moreover, the isomorphism
	\eqref{iso2.4.5a} in Proposition \ref{pro2.2.11} induces an isomorphism
	\begin{equation*}
	\Homxin{\Pcalx{p^r\dunder}}{-,G^{(r)}}\simeq\Fr_r\circ\Homxin{\Pcalx{\dunder}}{-,G}\circ\ell^r
	\end{equation*}
	natural in $G$.
	Since the functor $\ell^r$ preserves the projectives and the functor $\Fr_r$ is exact, by \cite[Theorem 18.8.2]{Weibel94}, there are isomorphisms natural in $G$:
	\begin{align*}
	\RHomxin{\Pcalx{p^r\dunder}}{-,G^{(r)}}&\simeq \Rbf\left(\Fr_r\circ\Homx{\Pcalx{\dunder}}{-,G}\circ\ell^r\right)\\
	&\simeq\Fr_r\circ \RHomx{\Pcalx{\dunder}}{-,G}\circ\Lbf\ell^r
	\end{align*}
	which prove \eqref{iso2.4.3}. 
	Replacing $G$ by $S^{\boxtimes\dunder}$ in the isomorphism  \eqref{iso2.4.3} and using Yoneda's lemma, we obtain the isomorphism \eqref{equ2.4.5}.
	
	By Proposition \ref{pro2.2.11} and the equality  $\Fr_r\circ\Fr_s=\Fr_{r+s}$, we have an isomorphism $\ell^r\circ\ell^s\simeq\ell^{r+s}$. 
	Moreover, since the functor $\ell^s$
	preserves the projectives, \cite[Theorem 10.8.2]{Weibel94} yields isomorphisms
	\begin{equation*}
	\Lbf\ell^r\circ\Lbf\ell^s\simeq \Lbf\left(\ell^r\circ\ell^s\right)\simeq \Lbf\ell^{r+s}
	\end{equation*}
	which establishes the desired conclusion.
\end{proof}

\begin{cor}\label{cor2.4.7} 
	Let $F\in\Pcalx{p^r\dunder}$ and $G\in\Pcalx{\dunder}$. 
	If the  complex  $\Lbf\ell^r(F)$ is \emph{formal}, i.e., there is an isomorphism $\Lbf\ell^r(F)\simeq H_*\Lbf\ell^r(F)$ in the derived category $\Dbfb{\Pcalx{\dunder}}$, then for all $k$ we have an isomorphism, natural in $G$:
	\begin{equation}\label{iso2.4.4a}
	\Extxxin{k}{\Pcalx{p^r\dunder}}{F,G^{(r)}}\simeq\bigoplus_{i+j=k}\Extxxin{i}{\Pcalx{\dunder}}{H_j\Lbf\ell^r(F),G}^{(r)}.
	\end{equation}
\end{cor}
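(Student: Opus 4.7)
The plan is to reduce the statement to a direct computation on the right-hand side of the natural isomorphism \eqref{iso2.4.3} from Proposition~\ref{pro2.4.7},
\begin{equation*}
\RHomxin{\Pcalx{p^r\dunder}}{F,G^{(r)}}\simeq\RHomxin{\Pcalx{\dunder}}{\Lbf\ell^r(F),G}^{(r)},
\end{equation*}
which is already natural in $G$. Once we control the right-hand side under the formality assumption, we simply take $H^k$ and read off the formula.

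First I would invoke the formality assumption to write, in $\Dbfb{\Pcalx{\dunder}}$,
\begin{equation*}
\Lbf\ell^r(F)\simeq\bigoplus_{j}H_j\Lbf\ell^r(F)[j],
\end{equation*}
the sum being finite since we work in the bounded derived category. Applying $\Rbf\mathbf{Hom}_{\Pcalx{\dunder}}(\text{-},G)$, which is contravariant and triangulated in its first argument and converts the (finite) coproduct into a finite product, yields a natural isomorphism
\begin{equation*}
\RHomxin{\Pcalx{\dunder}}{\Lbf\ell^r(F),G}\simeq\bigoplus_{j}\RHomxin{\Pcalx{\dunder}}{H_j\Lbf\ell^r(F),G}[-j].
\end{equation*}

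Next I would take the $k$-th cohomology on both sides. The functor $\Fr_r$ is exact (it is precomposition by the exact functor $I^{(r)}$), so it commutes with taking cohomology of a complex; combined with the shift identity $H^k(X[-j])=H^{k-j}(X)$, this gives
\begin{equation*}
\Extxxin{k}{\Pcalx{p^r\dunder}}{F,G^{(r)}}\simeq\bigoplus_{j}\Extxxin{k-j}{\Pcalx{\dunder}}{H_j\Lbf\ell^r(F),G}^{(r)}.
\end{equation*}
Re-indexing with $i=k-j$ produces the formula \eqref{iso2.4.4a}, and naturality in $G$ is inherited throughout from \eqref{iso2.4.3}.

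This argument is almost entirely formal once Proposition~\ref{pro2.4.7} is in hand. The only point that requires genuine care is the bookkeeping of grading conventions, namely checking that the shift $[j]$ appearing in the formal decomposition of $\Lbf\ell^r(F)$ and the shift $[-j]$ picked up by $\Rbf\mathbf{Hom}$ in its contravariant first argument combine so that the homology index and the $\Ext$ degree add, giving the condition $i+j=k$ rather than $i-j=k$ or $j-i=k$. Beyond this verification, nothing substantial stands in the way.
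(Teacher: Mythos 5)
Your argument is correct and is essentially the paper's own proof: both invoke formality to replace $\Lbf\ell^r(F)$ by its homology inside the isomorphism \eqref{iso2.4.3} and then take cohomology, with the exactness of $\Fr_r$ handling the outer Frobenius twist. Your version merely spells out the shift bookkeeping more explicitly, which checks out.
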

\begin{proof}
	Since the complex  $\Lbf\ell^r(F)$ is formal, it is isomorphic to its homology $H_*\Lbf\ell^r(F)$ in the derived category $\Dbfb{\Pcalx{\dunder}}$.
	Then, \eqref{iso2.4.3} induces an isomorphism $\RHomxin{\Pcalx{p^r\dunder}}{F,G^{(r)}}\simeq\RHomxin{\Pcalx{\dunder}}{H_*\Lbf\ell^r(F),G}^{(r)}$, natural in $G$. 
	Taking their homology, we obtain the isomorphism \eqref{iso2.4.4a}.
\end{proof}

\subsection{ Formal class $\Formel(r,n)$}

The corollary \ref{cor2.4.7} leads us to introduce the following definition.

\begin{deff}\label{def2.4.8}
	Let $r,n$ be two natural numbers, $n>0$. 
	Denote by $\Formel(r,n)$ the class of the strict polynomial functors $F\in\PcalKn$ such that the complex  $\Lbf\ell^r(F)$ is formal.
\end{deff}

Using this definition, we have the following reformulation of Corollary \ref{cor2.4.7}.

\begin{cor}\label{cor2.4.9}
	If $F\in \Formel(r,n)$, then, for all $k$, we have an isomorphism
	\begin{equation*}
	\Extxxin{k}{\Pcalx{p^r\dunder}}{F,G^{(r)}}\simeq\bigoplus_{i+j=k}\Extxxin{i}{\Pcalx{\dunder}}{H_j\Lbf\ell^r(F),G}^{(r)}
	\end{equation*}
	natural in $G\in\PcalKn$.
\end{cor}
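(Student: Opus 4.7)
The plan is to observe that this corollary is essentially just a rewording of Corollary \ref{cor2.4.7} using the terminology of Definition \ref{def2.4.8}, so the proof reduces to unpacking the definition and noting a minor compatibility with the direct sum decomposition of $\PcalKn$.

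First I would note that by Definition \ref{def2.4.8}, the hypothesis $F\in\Formel(r,n)$ is nothing but the statement that the complex $\Lbf\ell^r(F)\in\Dbfb{\PcalKn}$ is formal, that is, isomorphic to its homology in the bounded derived category. Since $F$ is an object of $\PcalKn=\bigoplus_{\dunder}\Pcalx{\dunder}$, one may decompose $F$ according to multi-degree: $F=\bigoplus_{\dunder} F_{\dunder}$ with $F_{\dunder}\in\Pcalx{p^r\dunder}$ (writing the degrees in the form $p^r\dunder$ is convenient since $\ell^r$ sends $\Pcalx{p^r\dunder}$ into $\Pcalx{\dunder}$). Formality being preserved under direct sums, we may assume without loss of generality that $F$ is concentrated in a single multi-degree $p^r\dunder$, so that $F\in\Pcalx{p^r\dunder}$.

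Next I would reduce the right-hand side in the same way: for $G\in\PcalKn$, write $G=\bigoplus_{\eunder}G_{\eunder}$ with $G_{\eunder}\in\Pcalx{\eunder}$. Then $G^{(r)}=\bigoplus_{\eunder}G_{\eunder}^{(r)}$, and since $\Ext$-groups between strict polynomial functors of distinct total multi-degrees vanish, both sides of the claimed isomorphism are concentrated on the single summand corresponding to $\eunder=\dunder$. Thus the assertion reduces to the case $F\in\Pcalx{p^r\dunder}$, $G\in\Pcalx{\dunder}$.

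In this reduced form the statement is precisely Corollary \ref{cor2.4.7}, and I would finish by invoking it: the formality of $\Lbf\ell^r(F)$ together with the isomorphism \eqref{iso2.4.3} of Proposition \ref{pro2.4.7} yields $\RHomxin{\Pcalx{p^r\dunder}}{F,G^{(r)}}\simeq\RHomxin{\Pcalx{\dunder}}{H_*\Lbf\ell^r(F),G}^{(r)}$, and taking $k$-th cohomology gives the claimed direct sum decomposition indexed by $i+j=k$. There is no real obstacle here, only bookkeeping: the only point requiring mild care is verifying that the decomposition of $G\in\PcalKn$ into its homogeneous components is compatible with the internal $\Ext$ functors and with the Frobenius twist, both of which follow immediately from the definitions.
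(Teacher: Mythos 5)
Your proposal is correct and matches the paper's treatment: the paper presents this corollary as a direct reformulation of Corollary \ref{cor2.4.7} via Definition \ref{def2.4.8}, which is exactly what you do. The extra bookkeeping about decomposing $F$ and $G$ into homogeneous multi-degree components is harmless and only makes explicit what the paper leaves implicit.
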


We will give some examples and properties of the class $\Formel(r,n)$ in the remainder of this section.

\begin{exe}\label{exe2.5.3}
	The formal class $\Formel(0,n)$ is  the class of all objects of $\PcalKn$.
	Each projective functor belongs to $\Formel(r,n)$ for all $r$.
	Furthermore, if $F\in\PcalKn$ is a homogeneous functor of degree $\dunder$ that is not a multiple of $p$, then $F\in\Formel(r,n)$ for all $r$.
\end{exe}

The following proposition gives another example of functors that belong to $\Formel(r,n)$ for all $r\ge 1$.
In this proposition, we use the notion of block, see \cite{Tou13b}.
This result is just to illustrate formal classes, is not used in the rest of this paper.

\begin{pro}
	Let $F$ be a strict polynomial functor.
	If the blocks of composition factors of the functor $F$ do not contain the trivial block, then $F$ belongs to $\Formel(r,n)$ for all $r\ge 1$.
\end{pro}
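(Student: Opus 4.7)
My strategy is to prove the stronger statement that $\Lbf\ell^r(F) \simeq 0$ in $\Dbfb{\Pcalx{\dunder}}$; since the zero complex coincides with its own homology, this is a fortiori a formal complex. The key input is a block-theoretic observation: every Frobenius-twisted functor $G^{(r)}$ with $G \in \PcalKn$ has all of its composition factors in the trivial block of $\Pcalx{p^r\dunder}$. In the classification of blocks of $\PcalKn$ from \cite{Tou13b}, simple polynomial functors are labelled by partitions up to their $p$-cores, and the trivial block is the one whose simples correspond to partitions with empty $p$-core; Frobenius-twisted simples land in this block because an $r$-fold Frobenius twist multiplies all parts of the labelling partition by $p^r$, which is visibly a sum of $p$-hooks.

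Granting this, the hypothesis on $F$ together with block orthogonality in $\Pcalx{p^r\dunder}$ gives
\begin{equation*}
\Ext^*_{\Pcalx{p^r\dunder}}\bigl(F, G^{(r)}\bigr) = 0
\end{equation*}
for every $G \in \PcalKn$, and hence, pointwise via \eqref{iso2.3.6} (parametrization $F \mapsto F^{\Vunderline}$ preserves the blocks of composition factors), the same vanishing holds for the internal $\mathbf{Ext}$'s. Specializing to $G = S^{\boxtimes\dunder}$ and using the formula \eqref{equ2.4.5} of Proposition \ref{pro2.4.7}, I obtain
\begin{equation*}
\Lbf\ell^r(F)^{\sharp(r)} \simeq \Rbf\mathbf{Hom}_{\Pcalx{p^r\dunder}}\bigl(F, S^{\boxtimes\dunder(r)}\bigr) \simeq 0.
\end{equation*}
Faithfulness of the Frobenius twist functor on chain complexes (the complex-level version of Corollary \ref{cor2.3.8}, which is precisely what that corollary asserts) then forces $\Lbf\ell^r(F)^\sharp \simeq 0$, and a final application of Kuhn duality yields $\Lbf\ell^r(F) \simeq 0$, as desired.

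The main obstacle in this plan is the preliminary block-theoretic claim that Frobenius twists sit in the trivial block, which amounts to making explicit how Frobenius twists act on the labelling of simples by partitions modulo $p$-cores; once this compatibility is established via \cite{Tou13b}, everything else is a mechanical assembly of the adjunction formalism of Proposition \ref{pro2.4.7} with Kuhn duality and Corollary \ref{cor2.3.8}. The same reasoning in fact shows that under the hypothesis of the proposition, $\Lbf\ell^r(F)$ vanishes identically, which is a priori much stronger than formality and in particular makes the formula of Corollary \ref{cor2.4.9} into the tautology $0 = 0$.
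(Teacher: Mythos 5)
Your proposal is correct and follows essentially the same route as the paper: both arguments establish the stronger statement $\Lbf\ell^r(F)\simeq 0$ by playing the block-theoretic hypothesis on $F$ against the fact that Frobenius twists lie in the trivial block, the paper doing so via $\ell^r$ of a projective resolution whose terms avoid the trivial block together with Proposition \ref{lem2.6.12}, and you via the derived formula \eqref{equ2.4.5} directly. One caveat: your parenthetical claim that parametrization preserves the blocks of composition factors is false in general (already $S^2_{\Bbbk^2}$ contains $\Lambda^2$ as a direct summand, and for $p>2$ the simples $L_{(2)}=S^2$ and $L_{(1,1)}=\Lambda^2$ lie in different blocks of $\Pcalx{2}$); the vanishing of the internal $\mathbf{Ext}$'s should instead be obtained by moving the parametrization across the adjunction to the target, where $\bigl(G^{(r)}\bigr)_{\Vunderline}\simeq\bigl(G_{\Vunderline^{(r)}}\bigr)^{(r)}$ is still a Frobenius twist and hence still supported on the trivial block.
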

\begin{proof}
	Since the set of blocks of the functor $F$ does not contain the trivial block, there is a projective resolution $P$ of $F$ such that the set of blocks of each $P_i$ does not contain the trivial block.
	By definition $\Lbf\ell^r(F)\simeq \ell^r(P)$.
	It follows from	Proposition \ref{lem2.6.12} that
	\begin{equation*}
	\ell^r(P_i)^{\sharp(r)}=\Homxin{\PcalKn}{P_i,\Sx{\dunder(r)}{}}=0.
	\end{equation*}
	Hence $\Lbf\ell^r(F)=0$. 
	We conclude that $F\in \Formel(r,n)$ for all $r\ge 1$.
\end{proof}

The following proposition establishes the stability of $\Formel(r,n)$ with
\emph{direct sum}, \emph{tensor product} and \emph{parametrization}.

\begin{pro}\label{pro2.5.5}
	Let $F$, $G$ be elements of the formal class $\Formel(r,n)$ and $\Vunderline$ be a $n$-tuple of objects of $\VcalK$. 
	Then the functors $F_{\Vunderline},F\oplus G$ and $F\otimes G$ also belong to $\Formel(r,n)$. 
	Moreover, there are graded isomorphisms
	\begin{align*}
	H_*\Lbf\ell^r\left(F_{\Vunderline}\right)&\simeq \left(H_*\Lbf\ell^r(F)\right)_{\Vunderline^{(r)}}\\
	H_*\Lbf\ell^r(F\oplus G)&\simeq H_*\Lbf\ell^r(F)\oplus H_*\Lbf\ell^r(G)\\
	H_*\Lbf\ell^r(F\otimes G)&\simeq H_*\Lbf\ell^r(F)\otimes H_*\Lbf\ell^r(G)
	\end{align*}
	natural in $F,G,\Vunderline$.
\end{pro}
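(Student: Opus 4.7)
The plan is to treat the three cases separately, handling them in order of increasing difficulty, using throughout that $\ell^r$ is a left adjoint (Proposition \ref{pro2.2.11}) and therefore right-exact and additive.

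Direct sums are immediate: a direct sum of projective resolutions of $F$ and $G$ is a projective resolution of $F\oplus G$, so $\Lbf\ell^r(F\oplus G)\simeq \Lbf\ell^r(F)\oplus \Lbf\ell^r(G)$, and a direct sum of formal complexes is formal. For parameterization, I would first establish the natural isomorphism $\ell^r(F_\Vunderline)\simeq (\ell^r F)_{\Vunderline^{(r)}}$ by a direct computation from the definition: the adjunction $\Hom(F_\Vunderline,H)\simeq \Hom(F,H^\Vunderline)$ applied to $H=(\Sx{\dunder}{\Wunderline})^{(r)}$ reduces the claim to the identity
\[
\bigl((\Sx{\dunder}{\Wunderline})^{(r)}\bigr)^\Vunderline\simeq \bigl(\Sx{\dunder}{(\Vunderline^\vee)^{(r)}\otimes\Wunderline}\bigr)^{(r)},
\]
which is a matter of unwinding the definition of the Frobenius twist. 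Since parameterization is exact and sends $\Gamma^{\dunder,\Wunderline}$ to the projective $\Gamma^{\dunder,\Vunderline^\vee\otimes\Wunderline}$, it lifts this isomorphism to the derived category, and exactness preserves formality.

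The tensor product case is the most involved. Since $\otimes$ is bi-exact on vector spaces and the tensor product of standard projectives $\Gamma^{\dunder,V}\otimes\Gamma^{\eunder,W}$ is projective (a direct summand of $\Gamma^{\dunder+\eunder,V\oplus W}$), a K\"unneth argument produces a projective resolution $P\otimes Q\to F\otimes G$ from projective resolutions of $F,G$. I would then construct a natural transformation $\alpha_{F,G}\colon\ell^r(F\otimes G)\to \ell^r(F)\otimes \ell^r(G)$ by transposing, via the adjunction $\ell^r\dashv\Fr_r$, the tensor product of the units $F\to \ell^r(F)^{(r)}$ and $G\to \ell^r(G)^{(r)}$, and reduce to showing $\alpha$ is an isomorphism on standard projectives $F=\Gamma^{p^r\dunder,V}$, $G=\Gamma^{p^r\eunder,W}$ (both sides being right-exact bifunctors). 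On representables the calculation rests on the Yoneda-type identity $\ell^r(\Gamma^{p^r\mu,U})\simeq \Gamma^{\mu,U^{(r)}}$: applying $\ell^r$ to the two expressions for $\Gamma^{p^r(\dunder+\eunder),V\oplus W}$ — one as an exterior-tensor decomposition in the $V,W$ variables and the other as $\Gamma^{\dunder+\eunder,V^{(r)}\oplus W^{(r)}}$ followed by its own decomposition — yields two a priori different direct-sum decompositions of the same functor, which can be matched term-by-term using the bigrading by weights of $V,W$ and the vanishing of $\ell^r$ on summands whose degree is not a multiple of $p^r$. Passing to derived functors then gives $\Lbf\ell^r(F\otimes G)\simeq \Lbf\ell^r(F)\otimes \Lbf\ell^r(G)$, and the usual K\"unneth formula over the field $\Bbbk$ guarantees that the tensor product of formal complexes is formal, with homology the tensor product of the homologies.

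The main obstacle is precisely this term-by-term matching step in the tensor product case: keeping careful track of the bigrading by the $\GL\times\GL$-action on $(V,W)$ and of which summands must vanish under $\ell^r$ for weight reasons is where the essential work lies. The direct sum and parameterization cases are by comparison formal consequences of the left-adjoint nature of $\ell^r$.
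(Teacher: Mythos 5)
Your argument is correct, and for the direct-sum and parameterization cases it coincides in substance with the paper's: both reduce to underived isomorphisms for $\ell^r$ and use exactness of the operations to preserve formality, projectivity of resolutions, and hence the passage to $\Lbf\ell^r$. The genuine divergence is in the tensor-product case. The paper never constructs a comparison map on the left-adjoint side: it invokes Proposition \ref{lem2.6.12}, which identifies $\ell^r(F)^{\sharp(r)}$ with $\Homxin{\Pcalx{p^r\dunder}}{F,S^{\boxtimes\dunder(r)}}$, uses the exponential isomorphism $S^{d}(V\oplus W)\simeq\bigoplus_{i+j=d} S^{i}(V)\otimes S^{j}(W)$ to split that internal Hom of a tensor product as a tensor product of internal Homs, and then strips off the Frobenius twist and the Kuhn dual via Corollary \ref{cor2.3.8}. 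You instead build a natural transformation $\alpha_{F,G}$ explicitly from the adjunction units and check it on the standard projectives through the weight decomposition of $\Gamma^{d,V\oplus W}$. Both routes work; the paper's is shorter because the exponentiality of $S$ performs the term-by-term matching automatically, whereas in your route the one point demanding real care --- which you correctly flag --- is that the weight/equivariance argument by itself only yields an \emph{abstract} isomorphism of the two sides on representables, and you must still unwind your specific $\alpha$ on $\Gamma^{\boxtimes p^r\dunder,\Vunderline}\otimes\Gamma^{\boxtimes p^r\eunder,\Wunderline}$ (for instance by computing $\Hom(\alpha,H)$ via Yoneda and the adjunction, where both sides become weight spaces of $H\left(\Vunderline^{(r)}\oplus\Wunderline^{(r)}\right)$) to confirm that it realizes that isomorphism. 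In exchange, your construction produces the comparison morphism and its naturality directly, rather than through the duality $\sharp(r)$ and the full faithfulness of Frobenius precomposition.
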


\begin{proof} Since the functors $(\text{-})_{\Vunderline},\oplus,\otimes$ are exact, they preserve the formal complexes.
	To prove this proposition, it suffices to show there are isomorphisms natural in $F,G$:
	\begin{gather}
	\Lbf\ell^r\left(F_{\Vunderline}\right)\simeq \left(\Lbf\ell^r(F)\right)_{\Vunderline^{(r)}},\quad \Lbf\ell^r(F\oplus G)\simeq\Lbf\ell^r(F)\oplus\Lbf\ell^r(G),\label{iso2.4.10}\\
	\Lbf\ell^r(F\otimes G)\simeq\Lbf\ell^r(F)\otimes\Lbf\ell^r(G).\label{iso2.4.11}
	\end{gather}
	By Corollary \ref{cor2.3.8}, Proposition \ref{lem2.6.12} and exponential formula of symmetric powers $ S^{d} $, there are isomorphisms natural in $F,G$:
	\begin{equation*}
	\ell^r\left(F_{\Vunderline}\right)\simeq\ell^r(F)_{\Vunderline^{(r)}},\;
	\ell^r(F\oplus G)\simeq \ell^r(F)\oplus\ell^r(G),\;\ell^r(F\otimes G)\simeq \ell^r(F)\otimes\ell^r(G).
	\end{equation*}
	Moreover, the functors $(\text{-})_{\Vunderline},\oplus,\otimes$ are exact and preserve the projectives. Then, we obtain the isomorphisms \eqref{iso2.4.10} and \eqref{iso2.4.11}.
\end{proof}

In order to establish the relation between formal classes and the \emph{precomposition with the diagonal functor}, we need the following lemmas.

\begin{lem}\label{lem2.5.6} Let $F\in\PcalKn$ and $G\in\PcalK$. 
	There are isomorphisms, natural in $F,G$:
	\begin{eqnarray}
	\Homxin{\PcalK}{F\circ\Delta_n,G}&\simeq &\Homxin{\PcalKn}{F,G\circ\boxplus^n}\circ\Delta_n,\label{equ2.6.3}\\
	\RHomxin{\PcalK}{F\circ\Delta_n,G}&\simeq& \RHomxin{\PcalKn}{F,G\circ\boxplus^n}\circ\Delta_n.\label{equ2.6.4}
	\end{eqnarray}
\end{lem}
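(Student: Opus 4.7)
The plan is to reduce both isomorphisms to the external $\Hom$-adjunction $\text{-}\circ\Delta_n\dashv\text{-}\circ\boxplus^n$ recorded at the end of the previous subsection, using the characterization \eqref{iso2.3.1} of the $\Hom$-internal.

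The essential step for \eqref{equ2.6.3} is a natural compatibility between parametrization and precomposition with the diagonal,
\begin{equation*}
(F\circ\Delta_n)^V\simeq F^{\Delta_n(V)}\circ\Delta_n,
\end{equation*}
which is immediate from the definitions since both sides send $W\in\VcalK$ to $F(V^\vee\otimes W,\ldots,V^\vee\otimes W)$. Evaluating the left-hand side of \eqref{equ2.6.3} at $V$ via \eqref{iso2.3.1} and using this identity gives
\begin{equation*}
\Homxin{\PcalK}{F\circ\Delta_n,G}(V)\simeq\Homx{\PcalK}{F^{\Delta_n(V)}\circ\Delta_n,G}.
\end{equation*}
Next I would apply the adjunction $\Homx{\PcalK}{H\circ\Delta_n,G}\simeq\Homx{\PcalKn}{H,G\circ\boxplus^n}$ with $H=F^{\Delta_n(V)}$ and invoke \eqref{iso2.3.1} once more in $\PcalKn$, identifying the right-hand side with $\Homxin{\PcalKn}{F,G\circ\boxplus^n}(\Delta_n(V))$, which establishes \eqref{equ2.6.3}.

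For the derived statement \eqref{equ2.6.4} my strategy is to pick an injective resolution $G\to I^\bullet$ in $\PcalK$ and apply \eqref{equ2.6.3} term by term. This computes $\RHomxin{\PcalK}{F\circ\Delta_n,G}$ on the one side and $\RHomxin{\PcalKn}{F,G\circ\boxplus^n}\circ\Delta_n$ on the other, provided that $I^\bullet\circ\boxplus^n$ is still an injective resolution of $G\circ\boxplus^n$ in $\PcalKn$. Exactness is automatic because precomposition with the exact functor $\boxplus^n:\VcalK^{\times n}\to\VcalK$ is exact on functor categories, and preservation of injectives follows because $\text{-}\circ\boxplus^n$ is the right adjoint in the adjunction $\text{-}\circ\Delta_n\dashv\text{-}\circ\boxplus^n$, whose left adjoint is exact.

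There is no serious obstacle beyond bookkeeping. The only care required is to select the correct member of the two adjoint pairs $\text{-}\circ\boxplus^n\dashv\text{-}\circ\Delta_n$ and $\text{-}\circ\Delta_n\dashv\text{-}\circ\boxplus^n$ produced via Pirashvili's lemma from the biadjunction $\boxplus^n\dashv\Delta_n\dashv\boxplus^n$ on $\VcalK$: the adjunction used throughout the argument is $\text{-}\circ\Delta_n\dashv\text{-}\circ\boxplus^n$, which both supplies the $\Hom$-adjunction in the underived step and furnishes the preservation of injectives needed in the derived step.
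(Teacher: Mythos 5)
Your proposal is correct and follows essentially the same route as the paper: both arguments evaluate the internal $\Hom$ at $V$ via \eqref{iso2.3.1}, apply the sum--diagonal adjunction $\text{-}\circ\Delta_n\dashv\text{-}\circ\boxplus^n$, and identify the result with $\Homxin{\PcalKn}{F,G\circ\boxplus^n}(\Delta_n(V))$; the only (harmless) differences are that you carry the parameter on the contravariant side via $(F\circ\Delta_n)^V\simeq F^{\Delta_n(V)}\circ\Delta_n$ where the paper uses $G_V\circ\boxplus^n\simeq(G\circ\boxplus^n)_{\Delta_n(V)}$, and that you derive by an injective resolution of $G$ (using that $\text{-}\circ\boxplus^n$ preserves injectives) where the paper resolves the contravariant variable projectively (using that $\text{-}\circ\Delta_n$ preserves projectives). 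Both choices are legitimate since the internal $\Hom$ bifunctor is balanced, so no gap remains.
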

\begin{proof} 
	The sum-diagonal adjunction $\text{-}\circ\Delta_n:\PcalKn\rightleftarrows\PcalK:\text{-}\circ\boxplus^n$ yields isomorphisms
	\begin{align*}
	\Homx{\PcalK}{F\circ\Delta_n,G_V}&\simeq \Homx{\PcalKn}{F,G_V\circ\boxplus^n}\\
	&\simeq  \Homx{\PcalKn}{F,\left(G\circ\boxplus^n\right)_{\Delta_n(V)}}
	\end{align*}
	natural in $F,G,V$, where $V$ is an object of $\VcalK$.
	The isomorphism \eqref{equ2.6.3} is then proved.
	This isomorphism induces an isomorphism, natural in $G$:
	\begin{equation*}
	\Homxin{\PcalK}{\text{-},G}\circ(\text{-}\circ\Delta_n)\simeq (\text{-}\circ\Delta_n)\circ\Homxin{\PcalKn}{\text{-},G\circ\boxplus^n}.
	\end{equation*}
	Since the functor $\text{-}\circ\Delta_n$ is exact and preserves projectives, by \cite[Theorem 10.8.2]{Weibel94}, there is an isomorphism, natural in $G$:
	\begin{equation*}
	\RHomxin{\PcalK}{\text{-},G}\circ(\text{-}\circ\Delta_n)\simeq (\text{-}\circ\Delta_n)\circ\RHomxin{\PcalKn}{\text{-},G\circ\boxplus^n}
	\end{equation*}
	which gives the isomorphism \eqref{equ2.6.4}.
\end{proof}

\begin{lem}\label{lem2.6.10} Let $F,G\in\PcalKn$ and $\Vunderline$ be a $n$-tuple of objects of $\VcalK$. 
	We have isomorphisms
	\begin{eqnarray}
	\Homxin{\PcalKn}{F,G}(\Vunderline)&\simeq& \Homxin{\PcalKn}{F^{\Vunderline},G}\circ\Delta_n(\Bbbk),\label{equ2.6.7}\\
	\RHomxin{\PcalKn}{F,G}(\Vunderline)&\simeq& \RHomxin{\PcalKn}{F^{\Vunderline},G}\circ\Delta_n(\Bbbk)\label{equ2.6.8}
	\end{eqnarray}
	natural in $F,G$ and $\Vunderline$.
\end{lem}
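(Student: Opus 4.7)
The plan is to deduce both isomorphisms from the characterizing property of the internal Hom given in Proposition \ref{pro2.3.8}, namely
\begin{equation*}
\Homxin{\PcalKn}{F,G}(\Vunderline)\simeq\Homx{\PcalKn}{F^{\Vunderline},G},
\end{equation*}
natural in all variables, and its derived analogue (the isomorphism \eqref{iso2.3.6} together with the fact that parameterization $(\text{-})^{\Vunderline}$ is an exact functor on $\PcalKn$).

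First I would apply the formula above directly to the left-hand side of \eqref{equ2.6.7} to obtain
\begin{equation*}
\Homxin{\PcalKn}{F,G}(\Vunderline)\simeq\Homx{\PcalKn}{F^{\Vunderline},G}.
\end{equation*}
Then I would evaluate the right-hand side of \eqref{equ2.6.7}: by definition $\Delta_n(\Bbbk)=(\Bbbk,\ldots,\Bbbk)$, so applying the same characterizing formula (with $F$ replaced by $F^{\Vunderline}$ and $\Vunderline$ replaced by $(\Bbbk,\ldots,\Bbbk)$) gives
\begin{equation*}
\Homxin{\PcalKn}{F^{\Vunderline},G}(\Bbbk,\ldots,\Bbbk)\simeq\Homx{\PcalKn}{(F^{\Vunderline})^{(\Bbbk,\ldots,\Bbbk)},G}.
\end{equation*}
The small verification to carry out here is that $(F^{\Vunderline})^{(\Bbbk,\ldots,\Bbbk)}\simeq F^{\Vunderline}$, which is immediate from the definition of parameterization: two successive parameterizations by $\Vunderline^{\vee}$ and $(\Bbbk,\ldots,\Bbbk)^{\vee}$ amount to one parameterization by $\Vunderline^{\vee}\otimes(\Bbbk,\ldots,\Bbbk)^{\vee}\simeq\Vunderline^{\vee}$. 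Comparing, both sides of \eqref{equ2.6.7} are canonically identified with $\Homx{\PcalKn}{F^{\Vunderline},G}$, and naturality in $F$, $G$ and $\Vunderline$ is inherited from the naturality of Proposition \ref{pro2.3.8}.

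For the derived statement \eqref{equ2.6.8}, the same argument works verbatim once we know the derived version of the characterizing formula, i.e., a natural isomorphism
\begin{equation*}
\RHomxin{\PcalKn}{F,G}(\Vunderline)\simeq\Rhomx{\PcalKn}{F^{\Vunderline},G}.
\end{equation*}
This follows from \eqref{iso2.3.6} together with the observation that the parameterization functor $(\text{-})^{\Vunderline}:\PcalKn\to\PcalKn$ is exact and sends standard projectives to projectives (by the formula $\left(\Gx{\dunder,}{\Wunderline}\right)^{\Vunderline}$ being standard projective, via the Yoneda description and the $\Ev$-shift on the parameter). Applying \cite[Theorem 10.8.2]{Weibel94} then lifts the $\Hom$-level identity to $\RHom$. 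Applying this derived formula to the LHS of \eqref{equ2.6.8}, and then applying it again to evaluate $\RHomxin{\PcalKn}{F^{\Vunderline},G}$ at $\Delta_n(\Bbbk)$, both sides reduce to $\Rhomx{\PcalKn}{F^{\Vunderline},G}$.

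No serious obstacle is expected: the whole argument is a formal consequence of Proposition \ref{pro2.3.8} and the trivial identity $(F^{\Vunderline})^{(\Bbbk,\ldots,\Bbbk)}=F^{\Vunderline}$. The only point that requires a small verification is the interaction of iterated parameterizations, and for the derived version the exactness and projective-preservation of $(\text{-})^{\Vunderline}$; once these are in hand, the proof is a direct double application of the characterizing property of $\mathbf{Hom}_{\PcalKn}$.
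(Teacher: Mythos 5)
Your argument is correct and is essentially the paper's proof: both establish \eqref{equ2.6.7} by applying the characterizing isomorphism \eqref{iso2.3.1} of Proposition \ref{pro2.3.8} to each side and invoking the identity $\Vunderline\otimes\Delta_n(\Bbbk)\simeq\Vunderline$ for iterated parameterization. The only (immaterial) difference is in the derived step: the paper obtains \eqref{equ2.6.8} by applying the $\Hom$-level isomorphism to an injective coresolution $J$ of $G$ and noting that evaluation at $\Vunderline$ is exact, while you derive in the first variable via exactness and projective-preservation of $(\text{-})^{\Vunderline}$; both are routine and valid.
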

\begin{proof}
	By using the definition of the functor $\Hom$-internal and the canonical isomorphism $\Vunderline\otimes\Delta_n(\Bbbk)\simeq \Vunderline$, we obtain \eqref{equ2.6.7}. 
	Let $J$ be an injective coresolution of the functor $G$. 
	The isomorphism \eqref{equ2.6.7} induces an isomorphism of the complexes:
	\begin{equation*}
	\Homxin{\PcalKn}{F,J}(\Vunderline)\simeq \Homxin{\PcalKn}{F^{\Vunderline},J}\circ\Delta_n(\Bbbk)
	\end{equation*}
	which yields the isomorphism \eqref{equ2.6.8}.
\end{proof}

Using the two above lemmas, we can now establish a relation of the formal classes with  precomposition by the diagonal functor $\Delta_n$.

\begin{pro}\label{pro2.5.8}	
	If the functor $F\in\Pcalx{\dunder}$ belongs to the class $\Formel(r,n)$ then $F\circ\Delta_n\in\Formel(r,1)$ and $H_*\Lbf\ell^r(F\circ\Delta_n)\simeq H_*\Lbf\ell^r(F)\circ\Delta_n$.
\end{pro}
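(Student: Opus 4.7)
The plan is to first prove the derived-category isomorphism
\begin{equation*}
\Lbf\ell^r(F\circ\Delta_n)\simeq\Lbf\ell^r(F)\circ\Delta_n\quad\text{in }\Dbfb{\PcalK},
\end{equation*}
from which the statement follows quickly: since $F\in\Formel(r,n)$ means $\Lbf\ell^r(F)\simeq H_*\Lbf\ell^r(F)$, and precomposition with the exact functor $\Delta_n$ preserves quasi-isomorphisms and commutes with passage to homology, the complex $\Lbf\ell^r(F\circ\Delta_n)$ becomes formal with homology $H_*\Lbf\ell^r(F)\circ\Delta_n$, yielding both conclusions at once.

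To prove the main isomorphism, I would identify both sides by computing their Kuhn-dual Frobenius twists via the characterization \eqref{equ2.4.5}, and then invoke the chain-complex form of Corollary \ref{cor2.3.8}. Assume $p^r$ divides $|\dunder|$ and set $m=|\dunder|/p^r$ (otherwise both sides of the target isomorphism vanish by degree considerations). Applying \eqref{equ2.4.5} to $F\circ\Delta_n$ gives
\begin{equation*}
\Lbf\ell^r(F\circ\Delta_n)^{\sharp(r)}\simeq \RHomxin{\PcalK}{F\circ\Delta_n,S^{m(r)}},
\end{equation*}
which Lemma \ref{lem2.5.6} rewrites as $\RHomxin{\PcalKn}{F,S^{m(r)}\circ\boxplus^n}\circ\Delta_n$. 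Using the exponential decomposition
\begin{equation*}
S^{m(r)}\circ\boxplus^n\simeq\bigoplus_{|\mu|=m}S^{\boxtimes\mu(r)},
\end{equation*}
and recalling that $F$ is homogeneous of multidegree $\dunder$, only the summand with $p^r\mu=\dunder$ can contribute. A second application of \eqref{equ2.4.5}, now in the multivariable setting for $F$ itself, identifies that summand with $\Lbf\ell^r(F)^{\sharp(r)}$ precomposed with $\Delta_n$.

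Finally, I would transport Kuhn duality and the Frobenius twist through the exact functor $\text{-}\circ\Delta_n$ via the elementary identities $(G\circ\Delta_n)^\sharp\simeq G^\sharp\circ\Delta_n$ and $(G\circ\Delta_n)^{(r)}\simeq G^{(r)}\circ\Delta_n$, obtaining $(\Lbf\ell^r(F)\circ\Delta_n)^{\sharp(r)}$. The chain-complex version of Corollary \ref{cor2.3.8}, combined with exactness of Kuhn duality, then produces the desired isomorphism $\Lbf\ell^r(F\circ\Delta_n)\simeq\Lbf\ell^r(F)\circ\Delta_n$. The main delicate point I anticipate is the multidegree bookkeeping when selecting the surviving summand of $S^{m(r)}\circ\boxplus^n$ and checking that all intermediate identifications remain natural in $F$, so that the two applications of \eqref{equ2.4.5} match up compatibly with the final appeal to Corollary \ref{cor2.3.8}.
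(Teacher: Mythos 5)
Your argument is correct and is essentially the paper's proof: the same chain of identifications (the characterization of $\ell^r$ via the internal $\Hom$ into symmetric powers as in Proposition \ref{lem2.6.12} and \eqref{equ2.4.5}, the sum--diagonal adjunction of Lemma \ref{lem2.5.6}, and extraction of the homogeneous summand $S^{\boxtimes\dunder(r)}$ of $S^{m(r)}\circ\boxplus^n$), followed by descent through the Frobenius twist. The only point to adjust is the very last step: Corollary \ref{cor2.3.8} descends isomorphisms of objects or of chain complexes, not isomorphisms in the derived category, so you should either perform the computation at the underived level first and then derive it (as the paper does, using that $\text{-}\circ\Delta_n$ is exact and preserves projectives), or replace the appeal to Corollary \ref{cor2.3.8} by Proposition \ref{pro2.4.12}, which is the tool the paper provides for exactly this descent and which simultaneously delivers the formality of $\Lbf\ell^r(F\circ\Delta_n)$.
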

\begin{proof}
	According to Proposition \ref{lem2.6.12}, the isomorphism \eqref{equ2.6.3} of Lemma \ref{lem2.5.6} and the fact that $S^{\boxtimes\dunder}$ is the homogeneous part of degree $\dunder$ of the functor $S^d\circ\boxplus^n$ where $d=d_1+\cdots +d_n$, there are isomorphisms:
	\begin{align*}
	\ell^r(F\circ\Delta_n)^{\sharp(r)}&\simeq \Homxin{\PcalK}{F\circ\Delta_n,S^{d(r)}}\\
	&\simeq \Homxin{\PcalKn}{F,S^{d(r)}\circ\boxplus^n}\circ\Delta_n\\
	&\simeq \Homxin{\PcalKn}{F,S^{\dunder(r)}}\circ\Delta_n\\ 
	&\simeq\left(\ell^r(F)\right)^{\sharp(r)}\circ\Delta_n\\
	&\simeq\left(\ell^r(F)\circ\Delta_n\right)^{\sharp(r)}.
	\end{align*}
	Combining with Corollary \ref{cor2.3.8}, there is an isomorphism $\ell^r(F\circ\Delta_n)\simeq \ell^r(F)\circ \Delta_n$ natural in $F$. 
	Since the functor $\text{-}\circ\Delta_n:\PcalKn\to\PcalK$ is exact and preserves projectives, there is an isomorphism	$\Lbf\ell^r(F\circ\Delta_n)\simeq \Lbf\ell^r(F)\circ \Delta_n$ natural in $F$. This completes the proposition.
\end{proof}

By definition, a strict polynomial functor $F\in\PcalKn$ belongs to $\Formel(r,n)$ if and only if the complex $\Lbf\ell^r(F)$ is formal.
However, it lacks an appropriate formula to work with the complex $\Lbf\ell^r(F)$.
Instead of that, due to \eqref{equ2.4.5}, the $r$-th Frobenius twist of $\Lbf\ell^r(F)$ is isomorphic to 
$\RHomxin{\Pcalx{p^r\dunder}}{F,S^{\boxtimes\dunder(r)}}$.
This raises a question: \emph{Is a  complex $C$ in $\PcalKn$ formal if the complex $C^{(r)}$ is formal?}

\begin{thm}\label{thm:vdK} 
	Let $C\in\Ch\left(\Pcalx{\dunder}\right)$ be a \emph{bounded} complex  and $r$ be a natural number.
	Then, the complex  $C$ is formal if the complex  $C^{(r)}$ is formal.
\end{thm}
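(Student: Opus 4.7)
The plan is to induct on the number $k$ of nonzero cohomology groups of $C$, using that $\mathrm{Fr}_r$ is exact so $H^*(C^{(r)})\simeq H^*(C)^{(r)}$; in particular $C$ and $C^{(r)}$ have the same number of nonzero cohomology groups. Good truncations commute with exact functors, so $(\tau_{<i}C)^{(r)}=\tau_{<i}(C^{(r)})$; moreover good truncations of a formal complex are again formal, so every truncation of $C^{(r)}$ is formal whenever $C^{(r)}$ is. The base case $k\leq 1$ is trivial, since a complex with cohomology concentrated in at most one degree is automatically formal via the canonical quasi-isomorphism roof $C\hookleftarrow\tau_{\geq i}\tau_{\leq i}C\twoheadrightarrow H^i(C)[-i]$.

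For the inductive step with $k\geq 2$ and cohomology in degrees $i_1<\cdots<i_k$, consider the truncation triangle in $\mathbf{D}^b(\mathcal{P}_{\mathbf{d};\Bbbk})$
\begin{equation*}
\tau_{<i_k}C \longrightarrow C \longrightarrow H^{i_k}(C)[-i_k] \xrightarrow{\,e\,} \tau_{<i_k}C[1].
\end{equation*}
Since $(\tau_{<i_k}C)^{(r)}=\tau_{<i_k}(C^{(r)})$ is formal and $\tau_{<i_k}C$ has $k-1$ nonzero cohomology groups, the induction hypothesis yields $\tau_{<i_k}C\simeq\bigoplus_{j<k}H^{i_j}(C)[-i_j]$ in $\mathbf{D}^b(\mathcal{P}_{\mathbf{d};\Bbbk})$. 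Hence $C$ is formal if and only if the connecting class
\begin{equation*}
e \in \bigoplus_{j<k}\mathrm{Ext}^{i_k-i_j+1}_{\mathcal{P}_{\mathbf{d};\Bbbk}}\bigl(H^{i_k}(C),H^{i_j}(C)\bigr)
\end{equation*}
vanishes. Applying $\mathrm{Fr}_r$ to the triangle gives the corresponding twisted triangle for $C^{(r)}$, whose connecting class is $e^{(r)}$; the formality of $C^{(r)}$ forces $e^{(r)}=0$.

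The induction is thus reduced to the key lemma: the Frobenius twist map $\mathrm{Fr}_r:\mathrm{Ext}^n_{\mathcal{P}_{\mathbf{d};\Bbbk}}(F,G)\to\mathrm{Ext}^n_{\mathcal{P}_{p^r\mathbf{d};\Bbbk}}(F^{(r)},G^{(r)})$ is injective for all $F,G$ and all $n\geq 0$. The case $n=0$ is Proposition \ref{pro2.3.5a}. For general $n$, the derived adjunction $\mathbf{L}\ell^r\dashv\mathrm{Fr}_r$ of Proposition \ref{pro2.2.11} identifies $\mathrm{Ext}^n_{\mathcal{P}_{p^r\mathbf{d};\Bbbk}}(F^{(r)},G^{(r)})$ with $\mathrm{Hom}_{\mathbf{D}^b(\mathcal{P}_{\mathbf{d};\Bbbk})}(\mathbf{L}\ell^r\mathrm{Fr}_r F,G[n])$, and under this identification $\mathrm{Fr}_r$ corresponds to pullback by the derived counit $\epsilon_F:\mathbf{L}\ell^r\mathrm{Fr}_r F\to F$. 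A standard long-exact-sequence argument in the triangulated category shows that such pullbacks are injective for all $G$ and $n$ if and only if $\epsilon_F$ is split epi, i.e.\ $F$ is a direct summand of $\mathbf{L}\ell^r\mathrm{Fr}_r F$ in $\mathbf{D}^b(\mathcal{P}_{\mathbf{d};\Bbbk})$.

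The main obstacle is producing this splitting. The triangle identity $\mathrm{Fr}_r\epsilon\circ\eta\,\mathrm{Fr}_r=\mathrm{id}_{\mathrm{Fr}_r}$ immediately shows $(\epsilon_F)^{(r)}$ is split epi with section $\eta_{F^{(r)}}$; but descending this section through $\mathrm{Fr}_r$ is nontrivial, since Proposition \ref{pro2.3.5a} only provides fully faithfulness of $\mathrm{Fr}_r$ on \emph{chain} morphisms, not on derived ones. To close this gap I would use the explicit formula $\mathbf{L}\ell^r(-)^{\sharp(r)}\simeq\mathbf{R}\mathbf{Hom}_{\mathcal{P}_{p^r\mathbf{d};\Bbbk}}(-,S^{\boxtimes\mathbf{d}(r)})$ of Proposition \ref{pro2.4.7}: applied to $\mathrm{Fr}_r F$, the unit $F\to\ell^r\mathrm{Fr}_r F$ lifts naturally to the derived level by dualizing the evaluation pairing $F^\sharp\otimes_{\mathcal{P}_{\mathbf{d};\Bbbk}}S^{\boxtimes\mathbf{d}}\to\mathrm{Hom}_{\mathcal{P}_{\mathbf{d};\Bbbk}}(F,S^{\boxtimes\mathbf{d}})$ and applying a twisted K\"unneth-type argument, producing the desired natural section of $\epsilon_F$ in $\mathbf{D}^b(\mathcal{P}_{\mathbf{d};\Bbbk})$.
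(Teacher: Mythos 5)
Your overall strategy --- induction on the number of nonzero cohomology objects, the truncation triangle splitting off the top cohomology, and the observation that formality then reduces to the vanishing of the connecting class $e$, which one tries to descend along the Frobenius twist --- is sound, and it is exactly in the spirit of the Dold $\mathrm{k}$-invariant argument that the paper cites for this theorem (\cite{PhamVT15,Dol60}); the bookkeeping with good truncations commuting with the exact functor $\Fr_r$ is also correct. Your reduction of the whole theorem to the single claim that
\begin{equation*}
\Fr_r:\Extxx{n}{\Pcalx{\dunder}}{F,G}\longrightarrow\Extxx{n}{\Pcalx{p^r\dunder}}{F^{(r)},G^{(r)}}
\end{equation*}
is injective for all $F,G$ and all $n$ is the right reduction.

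That injectivity, however, is precisely where your proof has a genuine gap, and you have correctly identified it as "the main obstacle" without overcoming it. For $n=0$ this is Proposition \ref{pro2.3.5a}, but for $n>0$ it is a substantial theorem, equivalent (as you note) to the derived counit $\epsilon_F:\Lbf\ell^r\left(F^{(r)}\right)\to F$ being a split epimorphism in $\Dbfb{\Pcalx{\dunder}}$. The only route the present paper offers to such a splitting is the computation $H_*\Lbf\ell^r\left(F^{(r)}\right)\simeq F^{\Delta_n(E_r)}$ together with the formality of $\Lbf\ell^r\left(F^{(r)}\right)$, i.e.\ Theorem \ref{thm2.7.3}(1) --- but that statement is deduced from Theorem \ref{thm:vdK} via Proposition \ref{pro2.4.12}, so invoking it here is circular. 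What you know unconditionally (from \eqref{equ2.4.5} and Theorem \ref{thm2.6.1}) is only that the \emph{twisted} complex $\Lbf\ell^r\left(F^{(r)}\right)^{(r)}$ is formal with the expected homology; extracting from this a section of $\epsilon_F$ itself is exactly the descent-of-formality problem you are trying to solve. Your closing suggestion (dualizing the evaluation pairing and a "twisted K\"unneth-type argument") produces at best a chain-level map, and the obstruction to upgrading it to a derived section is again a higher $\Ext$-class of the very kind whose vanishing is at issue. To make the argument non-circular you must import the injectivity of the twist map from an independent source --- for instance the one-variable case established in \cite{FFSS99} (or via the Troesch coresolution as in \cite{Tou12,vdK13}), extended to several variables by the sum-diagonal adjunction and the K\"unneth formula \eqref{equ1.3.2a}. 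As written, the decisive step is asserted rather than proved.
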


This theorem is proved in author's thesis \cite{PhamVT15}, using the classical theory of the $\mathrm{k}$-invariants given by Dold since 1960s, \cite{Dol60}.
The one variable case of this theorem was proved by W. van der Kallen \cite{vdK13}, by using spectral sequences.

\begin{pro}[\text{\cite[Theorem 2.6.12]{PhamVT15}}]\label{pro2.4.12} 
	Let $F\in\PcalKn$. 
	If there exists a graded object $C$ such that $\Lbf\ell^r(F)^{(r)}\simeq C^{(r)}$ then $F$ belongs to the class $\Formel(r,n)$ and $H_*\Lbf\ell^r(F)\simeq C$.
\end{pro}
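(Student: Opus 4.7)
The plan is to combine Theorem \ref{thm:vdK} with the faithfulness of the Frobenius twist stated in Corollary \ref{cor2.3.8}, in two short steps. First I would establish formality of $\Lbf\ell^r(F)$, i.e.\ that $F \in \Formel(r,n)$; then I would identify the homology with $C$.

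For the first step, I would observe that the graded object $C$, viewed as a complex with zero differential, is tautologically formal. Applying the exact Frobenius twist, $C^{(r)}$ is a complex with zero differential, hence also formal. By the hypothesis $\Lbf\ell^r(F)^{(r)} \simeq C^{(r)}$ in the bounded derived category, the complex $\Lbf\ell^r(F)^{(r)}$ is therefore formal. Since $\Lbf\ell^r(F)$ is a bounded complex in $\Pcalx{\dunder}$ (computed from a bounded projective resolution of the single functor $F$ in the finite-homological-dimension category $\Pcalx{p^r\dunder}$), Theorem \ref{thm:vdK} then applies and yields formality of $\Lbf\ell^r(F)$ itself, so $F \in \Formel(r,n)$.

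For the second step, formality gives an isomorphism $\Lbf\ell^r(F) \simeq H_{*}\Lbf\ell^r(F)$ in $\Dbfb{\Pcalx{\dunder}}$. Applying the exact Frobenius twist and combining with the hypothesis, I obtain
\begin{equation*}
\bigl(H_{*}\Lbf\ell^r(F)\bigr)^{(r)} \simeq \Lbf\ell^r(F)^{(r)} \simeq C^{(r)}
\end{equation*}
in $\Dbfb{\Pcalx{p^r\dunder}}$. Both ends are complexes with zero differential, so this derived-category isomorphism is a genuine isomorphism of graded objects in each degree $j$: $(H_j\Lbf\ell^r(F))^{(r)} \simeq C_j^{(r)}$. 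Finally I invoke Corollary \ref{cor2.3.8}, which says precisely that the Frobenius twist reflects isomorphisms of objects (and of chain complexes) in $\Pcalx{\dunder}$, to descend these to isomorphisms $H_j\Lbf\ell^r(F) \simeq C_j$, assembling to the desired $H_{*}\Lbf\ell^r(F) \simeq C$.

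The only delicate point I anticipate is ensuring that the hypotheses of Theorem \ref{thm:vdK} really apply, namely that $\Lbf\ell^r(F)$ is representable by a \emph{bounded} complex; this should follow because $F$ is a single object (not an unbounded complex) and a homogeneous piece of $\PcalKn$ admits a finite projective resolution, but it is the one ingredient worth pinning down carefully. Once that is in hand, the rest of the argument is a clean formal manipulation combining Theorem \ref{thm:vdK} and Corollary \ref{cor2.3.8}.
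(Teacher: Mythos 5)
Your argument is correct and follows essentially the same route as the paper: deduce formality of $C^{(r)}$ (it has zero differential), transfer it to $\Lbf\ell^r(F)$ via Theorem \ref{thm:vdK}, and then descend the resulting graded isomorphism $\left(H_*\Lbf\ell^r(F)\right)^{(r)}\simeq C^{(r)}$ through the embedding property of the Frobenius twist (Corollary \ref{cor2.3.8}). The only difference is that you flag the boundedness of $\Lbf\ell^r(F)$ explicitly, which the paper leaves implicit; your justification (finite projective resolution of a single homogeneous functor) is the right one.
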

\begin{proof} 
	Let $C$ be a graded object in $\PcalKn$. 
	Suppose that there exists an isomorphism $\Lbf\ell^r(F)^{(r)}\simeq C^{(r)}$ in the derived category. 
	Since the complex  $C^{(r)}$ is formal, by Theorem \ref{thm:vdK}, the complex  $\Lbf\ell^r(F)$ is formal. 
	Moreover, we have a graded isomorphism  $\left(H_*\Lbf\ell^r(F)\right)^{(r)}\simeq C^{(r)}$. 
	This isomorphism yields a graded isomorphism $H_*\Lbf\ell^r(F)\simeq C$ since the precomposition with Frobenius twist is an embedding of categories.
\end{proof}

The following result concerning the relation between the formality and parametrization will be used later.

\begin{pro}[\text{\cite[Proposition 2.6.14]{PhamVT15}}]\label{pro2.6.13}
	Let $C\in\Ch\left(\Pcalx{\dunder}\right)$ be a bounded complex and $r$ be a natural number. 
	The three following properties are equivalent.
	\begin{enumerate}
		\item[\rm (1)] The complex  $C$ is formal.
		\item[\rm (2)] The complex  $C_{\Vunderline}$ is formal for all $n$-tuples  $\Vunderline$ of objects of $\VcalK$.
		\item[\rm (3)] There is a $n$-tuple $\Vunderline$ of objects of $\VcalK$ such that $V_i\ne 0$ for all $i$ and the complex  $C_{\Vunderline}$ is formal.
	\end{enumerate}
\end{pro}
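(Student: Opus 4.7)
My plan is to prove the cyclic chain $(1)\Rightarrow(2)\Rightarrow(3)\Rightarrow(1)$. The first implication is immediate from the fact, recalled earlier in the paper, that the parametrization $F\mapsto F_{\Vunderline}$ on $\Pcalx{\dunder}$ is exact: an exact functor sends quasi-isomorphisms to quasi-isomorphisms and so preserves formality. The implication $(2)\Rightarrow(3)$ is trivial; one may take $\Vunderline=(\Bbbk,\ldots,\Bbbk)$.

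The substantive implication is $(3)\Rightarrow(1)$, and the strategy will be to exhibit $C$ as a direct summand of $C_{\Vunderline}$ in $\Ch(\Pcalx{\dunder})$ and then transfer formality along that retract. Since each $V_i$ is nonzero, I would choose a nonzero vector to define $\iota_i:\Bbbk\hookrightarrow V_i$ together with any linear splitting $\pi_i:V_i\twoheadrightarrow\Bbbk$ with $\pi_i\circ\iota_i=\id_{\Bbbk}$. Functoriality of parametrization in the parameter then yields chain maps
\begin{equation*}
C=C_{(\Bbbk,\ldots,\Bbbk)} \xrightarrow{C_{\iota}} C_{\Vunderline} \xrightarrow{C_{\pi}} C_{(\Bbbk,\ldots,\Bbbk)}=C
\end{equation*}
whose composition is the identity of $C$, so $C$ is a direct summand of $C_{\Vunderline}$ in $\Ch(\Pcalx{\dunder})$.

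To conclude, fix a formality isomorphism $\phi:C_{\Vunderline}\xrightarrow{\simeq} H_*(C_{\Vunderline})$ in $\Dbfb{\Pcalx{\dunder}}$. Since $H_*(\phi)$ is an automorphism of $H_*(C_{\Vunderline})$ (viewed as a complex with zero differential), I would replace $\phi$ by $H_*(\phi)^{-1}\circ\phi$, after which $H_*(\phi)=\id$. Using exactness of parametrization to identify $H_*(C_{\Vunderline})$ with $H_*(C)_{\Vunderline}$, I would then form the composition
\begin{equation*}
\tilde\phi:\ C \xrightarrow{C_{\iota}} C_{\Vunderline} \xrightarrow{\phi} H_*(C)_{\Vunderline} \xrightarrow{H_*(C)_{\pi}} H_*(C)
\end{equation*}
in $\Dbfb{\Pcalx{\dunder}}$. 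Taking homology gives $H_*(\tilde\phi)=H_*(C_{\pi})\circ\id\circ H_*(C_{\iota})=H_*(\id_C)=\id_{H_*(C)}$, so $\tilde\phi$ is a quasi-isomorphism and $C$ is formal.

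The only delicate point I anticipate is the normalization step that forces $H_*(\phi)=\id$; without it, the map on homology of $\tilde\phi$ is a priori only some automorphism composed with a retract and is not obviously an isomorphism. Once that normalization is in place, the retract argument is mechanical. Finally, the hypothesis $V_i\ne 0$ is used precisely to produce the vectors $\iota_i$, so the nonvanishing condition in (3) is essential; the parameter $r$ appearing in the statement plays no role in the argument.
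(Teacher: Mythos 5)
Your argument is correct. Note first that the paper itself gives no proof of this proposition; it only cites \cite[Proposition 2.6.14]{PhamVT15}, so there is no in-text argument to compare yours against. That said, your route is the natural one and it is complete: $(1)\Rightarrow(2)$ follows from exactness of parametrization (an exact functor descends to derived categories and commutes with $H_*$), $(2)\Rightarrow(3)$ is trivial since $C_{(\Bbbk,\ldots,\Bbbk)}=C$, and for $(3)\Rightarrow(1)$ the choice of splittings $\Bbbk\xrightarrow{\iota_i}V_i\xrightarrow{\pi_i}\Bbbk$ exhibits $C$ as a chain-complex retract of $C_{\Vunderline}$ by bifunctoriality of $(\Vunderline,F)\mapsto F_{\Vunderline}$, which is exactly where the hypothesis $V_i\ne 0$ enters. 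You are also right that the normalization $H_*(\phi)=\id$ is the one genuinely delicate point: without it, the endomorphism $(H_*C)_{\pi}\circ H_*(\phi)\circ (H_*C)_{\iota}$ of $H_*(C)$ is a "corner" of an automorphism and can fail to be invertible, so the conclusion would not follow. With the normalization, $H_*(\tilde\phi)=\id$ and a morphism in $\Dbfb{\Pcalx{\dunder}}$ inducing an isomorphism on homology is itself an isomorphism, so $C$ is formal. In effect you have proved the more general statement that formality of bounded complexes in an abelian category passes to retracts in the category of chain complexes, with the parametrized complex as the special case; and your remark that $r$ is inert in the statement is accurate.
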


\section{ Study of the formal class $\Formel(r,n)$}\label{sect2.6}

In this section, we study deeper properties of the formal classes $ \Formel(r,n) $.
The main results expressing the effect of the Frobenius twist on the $\Ext$-groups are isomorphisms \eqref{equ3.1a}, \eqref{equ3.7} and \eqref{equ3.8} in Theorems \ref{thm2.7.3} and \ref{thm2.6.8} as follows.

\subsection{ Compatibility of formal class $\Formel(r,n)$ with the precomposition by Frobenius twist}~

We denote by $E_r$ the finite dimensional graded vector space which equals $\Bbbk$ in degrees $0,2,\ldots,2p^r-2$ and which is zero in the other degrees.
By \cite[Theorem 4.5]{FS97}, we have an isomorphism between graded vector spaces
\begin{equation*}\label{equstar}
\Extx{\Pcalx{p^{r}}}{I^{(r)},I^{(r)}}\simeq E_r.\tag{$ \star $}
\end{equation*}
This important result of Friedlander and Suslin is the starting point for all subsequent studies on Frobenius twist and $\Ext$-groups.

The main result in this subsection is the following theorem.

\begin{thm}\label{thm2.7.3}
	Let $F,G\in\PcalKn$. 
	\begin{enumerate}
		\item[\rm (1)] The functor $F^{(r)}$ belongs to $\Formel(r,n)$ and $H_*\Lbf\ell^r\left(F^{(r)}\right)\simeq F^{\Delta_n(E_r)}$.
		\item[\rm (2)] There is an isomorphism natural in $G$
		\begin{eqnarray}\label{equ3.1a}
		\Extxin{\PcalKn}{F^{(r)},G^{(r)}}&\simeq& \Extxin{\PcalKn}{F^{\Delta_n(E_r)},G}^{(r)}.
		\end{eqnarray}
		\item[\rm (3)] If $F$ belongs to $\Formel(r,n)$ then $F^{(1)}$ belongs to $\Formel(r+1,n)$. Moreover, there is an isomorphism $H_*\Lbf\ell^{r+1}\left(F^{(1)}\right)\simeq H_*\Lbf\ell^r(F)^{\Delta_n\left(E_1^{(r)}\right)}$.
		\item[\rm (4)] Conversely, if the functor $F^{(1)}$ belongs to $\Formel(r+1,n)$, then $F$ belongs to $\Formel(r,n)$.
	\end{enumerate}
\end{thm}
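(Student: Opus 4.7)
The core of the theorem is part (1); once it is in hand, part (2) is a direct application of Corollary \ref{cor2.4.9} to $F^{(r)}$, and parts (3) and (4) follow by bootstrapping from (1) via the composition formula \eqref{equ2.4.6} together with the compatibility of $\Lbf\ell^r$ with parametrization established in Proposition \ref{pro2.5.5}.

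For (1), my approach is to start from the identification
\begin{equation*}
\Lbf\ell^r(F^{(r)})^{\sharp(r)}\simeq \RHomxin{\Pcalx{p^r\dunder}}{F^{(r)},S^{\boxtimes\dunder(r)}}
\end{equation*}
provided by \eqref{equ2.4.5}, and aim to show that this is isomorphic, as a complex, to $(F^{\sharp\Delta_n(E_r)})^{(r)}$. I would first treat the case of a standard projective generator $F=\Gx{\dunder,}{\Vunderline}$: by Kunneth (equation \eqref{equ1.3.2a} extended to $\RHomxin{}{}$ via \eqref{iso2.3.4}), the computation factors over the $n$ tensor slots and reduces to the one-variable object $\RHomxin{\Pcalx{p^rd}}{\Gamma^{d(r),V^{(r)}},S^{d(r)}}$. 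This one-variable $\RHomxin{}{}$ can be expressed in terms of $E_r$ using the Friedlander--Suslin computation $(\star)$ together with standard adjunctions relating divided and symmetric powers. Naturality and resolution of an arbitrary $F\in\PcalKn$ by direct sums of such generators then propagate the identification to every $F$. Since the right-hand side is visibly formal, Proposition \ref{pro2.4.12} delivers both the membership $F^{(r)}\in\Formel(r,n)$ and, after Kuhn duality, the homology formula $H_*\Lbf\ell^r(F^{(r)})\simeq F^{\Delta_n(E_r)}$.

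Part (2) then follows by plugging the result of (1) into Corollary \ref{cor2.4.9}. For (3), I would write, using \eqref{equ2.4.6},
\begin{equation*}
\Lbf\ell^{r+1}(F^{(1)})\simeq \Lbf\ell^r\bigl(\Lbf\ell^1(F^{(1)})\bigr),
\end{equation*}
apply (1) with $r=1$ to replace the inner derived functor by the formal object $F^{\Delta_n(E_1)}$, then invoke the commutation of $\Lbf\ell^r$ with parametrization (from the proof of Proposition \ref{pro2.5.5}) and the obvious commutation of Frobenius twist with $\Delta_n$ to reach $\Lbf\ell^{r+1}(F^{(1)})\simeq (\Lbf\ell^r F)^{\Delta_n(E_1^{(r)})}$. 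Under the hypothesis $F\in\Formel(r,n)$ the right-hand side is formal with the stated homology, giving (3). For (4), the same identity shows that if the left-hand side is formal then so is $(\Lbf\ell^r F)^{\Delta_n(E_1^{(r)})}$; since each slot of $\Delta_n(E_1^{(r)})$ is a nonzero vector space, Proposition \ref{pro2.6.13} reflects formality and allows us to conclude that $\Lbf\ell^r F$ itself is formal, i.e.\ $F\in\Formel(r,n)$.

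The main obstacle I expect is the identification step in (1). Even granting $(\star)$, passing from $\Ext$-groups between copies of $I^{(r)}$ to $\RHomxin{}{}$ between an arbitrary $F^{(r)}$ and $S^{\boxtimes\dunder(r)}$ requires a careful Kunneth-and-naturality argument in a graded setting, and keeping track of Kuhn duals and of the graded vector space $E_r$ through a multi-variable Frobenius twist is the delicate part; anchoring the computation on the standard projectives $\Gx{\dunder,}{\Vunderline}$ and propagating by naturality is, I expect, what makes the argument go through cleanly.
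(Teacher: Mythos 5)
Parts (2), (3) and (4) of your plan coincide with the paper's argument and are fine: (2) is Corollary \ref{cor2.4.9} applied to (1), and (3)--(4) follow from $\Lbf\ell^{r+1}\simeq\Lbf\ell^r\circ\Lbf\ell^1$, the compatibility of $\Lbf\ell^r$ with parametrization, and Proposition \ref{pro2.6.13}. The gap is in your treatment of part (1), which is the heart of the theorem.

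Your strategy is to verify $\RHomxin{\Pcalx{p^r\dunder}}{F^{(r)},S^{\boxtimes\dunder(r)}}\simeq\left(F^{\sharp\,\Delta_n(E_r)}\right)^{(r)}$ on the standard projectives $F=\Gx{\dunder,}{\Vunderline}$ and then "propagate by naturality and resolution." This fails at two points. First, even for such $F$ the source $F^{(r)}$ is \emph{not} projective in $\Pcalx{p^r\dunder}$, so the one-variable object $\RHomxin{\Pcalx{p^rd}}{\Gamma^{d(r),V^{(r)}},S^{d(r)}}$ is genuinely derived; identifying it, \emph{as an object of the derived category}, with the graded functor $\left(\Gamma^{d,V^{(r)}\otimes E_r}\right)^{\sharp(r)}$ is not a formal consequence of \eqref{equstar} plus adjunctions ($\Gamma^d$ is not a direct summand of $\otimes^d$ in characteristic $p\le d$, so no K\"unneth reduction to $I^{(r)}$ is available). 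This identification is exactly Touz\'e's Theorem \ref{thm2.6.1}, whose proof requires the Troesch coresolution $T(W,r)^*$ of $\left(S^d_W\right)^{(r)}$ and the fact that $\Homx{\PcalK}{F^{(r)},T(W,r)^*}$ is concentrated in even degrees. Second, and more seriously, resolving a general $F$ by projectives $P_\bullet$ and twisting gives a resolution $P_\bullet^{(r)}\to F^{(r)}$ by non-projectives; the resulting hyperext spectral sequence computes only the \emph{homology} of $\RHomxin{}{F^{(r)},S^{\boxtimes\dunder(r)}}$, whereas the assertion $F^{(r)}\in\Formel(r,n)$ is precisely a \emph{formality} statement, which does not pass through a spectral-sequence degeneration. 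The paper avoids both problems by quoting Theorem \ref{thm2.6.1} for arbitrary one-variable $F$ and then reducing the several-variable case to one variable via the sum-diagonal adjunction (Lemmas \ref{lem2.5.6} and \ref{lem2.6.10}, using that $S^{\boxtimes\dunder}$ is the degree-$\dunder$ part of $S^d\circ\boxplus^n$), before applying Proposition \ref{pro2.4.12} to untwist the formality. To repair your argument you should replace the "projectives plus propagation" step by an appeal to Theorem \ref{thm2.6.1} (or reprove its Troesch-coresolution input), and replace the K\"unneth reduction over the tensor slots by the sum-diagonal adjunction, which works for arbitrary $F$ and not only for external products.
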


The isomorphism \eqref{equ3.1a} is an isomorphism between strict polynomial functors. 
Computing its value at $ \Bbbk $ gives an isomorphism between graded vector spaces below
\begin{eqnarray}\label{equ3.1b}
\Extx{\PcalKn}{F^{(r)},G^{(r)}}&\simeq& \Extx{\PcalKn}{F^{\Delta_n(E_r)},G}.
\end{eqnarray}
In the isomorphism \eqref{equ3.1b}, if we replace $ F=G=I\in\Pcalx{1} $, then we obtain the isomorphism \eqref{equstar} of Friedlander-Suslin.

In order to prove Theorem \ref{thm2.7.3}, we recall the following important theorem of Touz\'e, see \cite[Lemma 4.4 and Theorem 4.6]{Tou12} (see also \cite[Proposition 13]{Tou13a}). 

\begin{thm}[Touz\'e]\label{thm2.6.1} Let $F\in\Pcalx{d}$. There is an isomorphism natural in $F$:
	\begin{equation*}
	\RHomxin{\PcalK}{F^{(r)},S^{d(r)}}\simeq \left(F^{E_r}\right)^{\sharp(r)}.
	\end{equation*}
\end{thm}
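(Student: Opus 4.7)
The plan is to construct a natural transformation between the two sides of the desired isomorphism and verify it is an isomorphism on a system of projective generators of $\Pcalx{d}$. The right-hand side $F\mapsto (F^{E_r})^{\sharp(r)}$ is an exact functor of $F$, because parametrization, Kuhn duality and Frobenius twist are all exact. The left-hand side $F\mapsto \RHomxin{\PcalK}{F^{(r)},S^{d(r)}}$ sends short exact sequences in $\Pcalx{d}$ to distinguished triangles in $\Dbfb{\Pcalx{p^r d}}$, because $F\mapsto F^{(r)}$ is exact and $\Rbf\mathbf{Hom}$ is cohomological. Consequently, once a natural comparison map between the two functors is an isomorphism on a projective generator, a five-lemma argument in $\Dbfb{\Pcalx{p^r d}}$ yields the isomorphism for every $F\in\Pcalx{d}$---and in particular the left-hand side becomes automatically formal.

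The base case $F=I$ (so $d=1$) follows directly from Friedlander and Suslin's computation $\Extx{\PcalK}{I^{(r)},I^{(r)}}\simeq E_r$: combining the internal $\Rbf\Hom$ formula \eqref{iso2.3.6} with the identification $(I^{(r)})^V\simeq V^{(r)\vee}\otimes I^{(r)}$ yields
\begin{equation*}
\RHomxin{\PcalK}{I^{(r)},I^{(r)}}(V)\simeq V^{(r)}\otimes E_r,
\end{equation*}
which matches $(I^{E_r})^{\sharp(r)}(V)=E_r\otimes V^{(r)}$. For general $F\in\Pcalx{d}$, I would appeal to Touz\'e's construction of universal extension classes---or equivalently, to Troesch's explicit coresolution of $\Gamma^{d(r)}$ by exponential functors of the form $\bigoplus S^{\mu(r)}$---to produce the comparison map $(F^{E_r})^{\sharp(r)}\to \RHomxin{\PcalK}{F^{(r)},S^{d(r)}}$ as a cup product with the universal classes lifting the Friedlander--Suslin generators of $E_r$. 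Checking this map on the projective generator $\Gamma^{d,V}=(\Gamma^d)^V$ reduces, via parametrization identities and the Cauchy decomposition of $S^d(E_r\otimes V)$, to the $d=1$ base case above.

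The principal obstacle is the construction of the universal classes for $\Gamma^d$ and the combinatorial verification that the comparison map is an isomorphism on this projective generator: this is the substantive content of Touz\'e's argument in \cite{Tou12}, carried out via the Troesch coresolution and the associated exponential machinery, and it requires careful bookkeeping of the gradings coming from $E_r$ and of the $\Sfrak_d$-action on $\otimes^d$. Once this case is settled, the extension to arbitrary $F\in\Pcalx{d}$ is a formal consequence of the exactness properties established in the first paragraph.
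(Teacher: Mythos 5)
Your overall strategy --- dévissage from a natural comparison map checked on the projective generators $\Gamma^{d,V}$, with the hard case deferred to Touz\'e's universal classes --- is a legitimate reorganization, and deferring to \cite{Tou12} is fair since the paper itself only cites \cite[Theorem 4.6]{Tou12} for the key computation. But there is a genuine gap at the heart of your argument: the comparison map. The theorem is an isomorphism in $\Dbfb{\Pcalx{p^rd}}$, so its content splits into (i) the computation of the cohomology $\Extxin{\PcalK}{F^{(r)},S^{d(r)}}$ and (ii) the \emph{formality} of the complex $\RHomxin{\PcalK}{F^{(r)},S^{d(r)}}$. A cup product with universal extension classes produces a natural map of graded $\Ext$-groups, i.e., a map between the cohomologies of the two sides; it does not by itself produce a morphism $\left(F^{E_r}\right)^{\sharp(r)}\to\RHomxin{\PcalK}{F^{(r)},S^{d(r)}}$ in $\Dbfb{\Pcalx{p^rd}}$ natural in $F$. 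Your five-lemma argument needs such a derived-category transformation as input, so the assertion that the left-hand side ``becomes automatically formal'' is circular: formality is exactly what is needed to promote the cohomology-level map to a derived-category map. (Over a field this issue disappears for the ordinary $\RHomx{\PcalK}{-,-}$ valued in vector spaces, but not for the internal $\Rbf\mathbf{Hom}$, which is a complex in the non-semisimple category $\Pcalx{p^rd}$.)

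The paper's proof resolves precisely this point, and quite cheaply: for the Troesch coresolution $T(W,r)^{*}$ of $\left(S^{d}_{W}\right)^{(r)}$, the complex $\Homx{\PcalK}{F^{(r)},T(W,r)^{*}}$ is concentrated in even degrees for \emph{every} $F$, hence has zero differential and equals its homology, which is identified with $\left(F^{E_r}\right)^{\sharp}(W)$ by \cite[Theorem 4.6]{Tou12}; the internal statement then follows by parametrizing, using $\left(S^{d(r)}\right)_{V}\simeq\left(S^{d}_{V^{(r)}}\right)^{(r)}$. If you want to keep your dévissage, you still have to choose such a coresolution to define the transformation at the chain level, at which point the evenness argument already gives the theorem for all $F$ at once and the reduction to projectives is superfluous. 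Two smaller points: the induction over projective resolutions implicitly uses that $\Pcalx{d}$ has finite global dimension (true for Schur algebras, but it should be said), and the claim that the generator case reduces to $d=1$ ``via the Cauchy decomposition'' understates it --- the computation of $\Extx{\PcalK}{\Gamma^{d(r)},S^{d(r)}_{W}}$ requires the collapsing of a hypercohomology spectral sequence (or, again, the Troesch complex), which is the substance of \cite[Theorem 4.5]{Tou12}.
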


\begin{proof}	
	For each vector space $ W $, the functor $ \left(S^{d}_{W}\right)^{(r)} $ has injective resolution $ T(W,r)^{*} $, called Troesch coresolutions, such that the complex $ \Homx{\PcalK}{F^{(r)},T(W,r)^{*}} $ is concentrated in even degrees.
	Therefore, this complex is formal, and isomorphic to its homology. This homology was determined in \cite[Theorem 4.6]{Tou12}.
	We then obtain isomorphisms:
	\begin{align*}
	\RHomx{\PcalK}{F^{(r)},\left(S^{d}_{W}\right)^{(r)}}&\simeq \Homx{\PcalK}{F^{(r)},T(W,r)^{*}}\\
	&\simeq H_{*}\Homx{\PcalK}{F^{(r)},T(W,r)^{*}}\\
	&\simeq \Extx{\PcalK}{F^{(r)},\left(S^{d}_{W}\right)^{(r)}}\\
	&\simeq \Homx{\PcalK}{F^{E_r},S^{d}_{W}}\\
	&\simeq \left(F^{E_r}\right)^{\sharp}(W).
	\end{align*}
	natural in $ F $ and $ W $.
	Therefore,
	\begin{align*}
	\RHomxin{\PcalK}{F^{(r)},S^{d(r)}}(V)&=\RHomx{\PcalK}{F^{(r)},\left(S^{d(r)}\right)_{V}}\\
	&\simeq\RHomx{\PcalK}{F^{(r)},\left(S^{d}_{V^{(r)}}\right)^{(r)}}\\
	&\simeq\left(F^{E_r}\right)^{\sharp}\left(V^{(r)}\right)\\
	&\simeq \left(F^{E_r}\right)^{\sharp(r)}(V)
	\end{align*}
	natural in $ V $.
	Thus we obtain the desired isomorphism.
\end{proof}

\begin{cor}\label{cor2.7.2}
	Let $F\in\PcalK$.
	Then $F^{(r)}\in\Formel(r,1)$ and $H_*\Lbf\ell^r\left(F^{(r)}\right)=F^{E_r}$.	
\end{cor}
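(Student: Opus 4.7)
The plan is to combine the three tools just built: Touzé's computation in Theorem~\ref{thm2.6.1}, the formula \eqref{equ2.4.5} identifying $\Lbf\ell^r(-)^{\sharp(r)}$ with a derived internal Hom into $S^{d(r)}$, and the detection criterion for formality after a Frobenius twist given by Proposition~\ref{pro2.4.12}.

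First I would suppose that $F\in\Pcalx{d}$ (the general case follows by direct sum decomposition), so that $F^{(r)}\in\Pcalx{p^r d}$. Applying \eqref{equ2.4.5} to the functor $F^{(r)}$ yields an isomorphism
\begin{equation*}
\Lbf\ell^r\!\left(F^{(r)}\right)^{\sharp(r)}\simeq \RHomxin{\Pcalx{p^rd}}{F^{(r)},S^{d(r)}}.
\end{equation*}
Now Theorem~\ref{thm2.6.1} identifies the right hand side with $\bigl(F^{E_r}\bigr)^{\sharp(r)}$, so we obtain
\begin{equation*}
\Lbf\ell^r\!\left(F^{(r)}\right)^{\sharp(r)}\simeq \bigl(F^{E_r}\bigr)^{\sharp(r)}.
\end{equation*}

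Next I would apply Kuhn duality to pass from $\sharp(r)$ to $(r)$. Since Kuhn duality is an exact contravariant involution and one checks $((Y^\sharp)^{(r)})^\sharp\simeq Y^{(r)}$ on objects (both equal $V\mapsto Y(V^{(r)})$), dualising the displayed isomorphism gives
\begin{equation*}
\Lbf\ell^r\!\left(F^{(r)}\right)^{(r)}\simeq \bigl(F^{E_r}\bigr)^{(r)}
\end{equation*}
in the bounded derived category $\Dbfb{\Pcalx{p^rd}}$. The target is formal (it is literally a graded object, with grading inherited from $E_r$), so this isomorphism has the shape required by Proposition~\ref{pro2.4.12}, applied with $n=1$ and $C=F^{E_r}$.

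Proposition~\ref{pro2.4.12} then delivers both conclusions at once: the complex $\Lbf\ell^r(F^{(r)})$ is formal, hence $F^{(r)}\in\Formel(r,1)$, and moreover $H_*\Lbf\ell^r(F^{(r)})\simeq F^{E_r}$. There is essentially no obstacle beyond correctly bookkeeping the interaction between Kuhn duality and the Frobenius twist, since all the heavy lifting (the Troesch resolution computation) is absorbed into Theorem~\ref{thm2.6.1} and the formality detection lemma is already available.
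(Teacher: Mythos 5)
Your proof is correct and follows exactly the paper's own route: apply the identity \eqref{equ2.4.5} to $F^{(r)}$, invoke Theorem \ref{thm2.6.1} to identify the result with $\left(F^{E_r}\right)^{\sharp(r)}$, undo the Kuhn duality, and conclude with Proposition \ref{pro2.4.12}. The only difference is that you spell out the (correct) compatibility of $\sharp$ with the Frobenius twist, which the paper leaves implicit.
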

\begin{proof}
	It follows from the isomorphism \eqref{equ2.4.5} and Theorem \ref{thm2.6.1} that the complex  $\Lbf\ell^r\left(F^{(r)}\right)^{(r)}$ is isomorphic to the graded object  $\left(F^{E_r}\right)^{(r)}$. 
	By Proposition \ref{pro2.4.12}, $F^{(r)}$ belongs to the class $\Formel(r,1)$ and $H_*\Lbf\ell^r\left(F^{(r)}\right)\simeq F^{E_r}$.
\end{proof}

Theorem \ref{thm2.7.3} is a generalization to the multivariable case of Theorem \ref{thm2.6.1} and Corollary \ref{cor2.7.2}.

\begin{proof}[\rm \bfseries Proof of Theorem \ref{thm2.7.3}]
	The assertion (2) of the theorem is a consequence of the first assertion and Corollary \ref{cor2.4.9}.
	
	\medskip
	
	\underline{(1).} Assume that $F$ is an object of $\Pcalx{\dunder}$. Denote by $d$ the sum $d_1+\cdots+d_n$. By the isomorphism \eqref{equ2.4.5} of Proposition \ref{pro2.4.7} and the fact that $S^{\boxtimes\dunder}$ is the homogeneous part of degree $\dunder$ {of the functor} $S^d\circ\boxplus^n$, there are isomorphisms:
	\begin{align*}
	\Lbf\ell^{r}\left(F^{(r)}\right)^{\sharp(r)}
	&\simeq \RHomxin{\PcalKn}{F^{(r)},S^{\boxtimes\dunder(r)}}\\
	&\simeq \RHomxin{\PcalKn}{F^{(r)},S^{d(r)}\circ\boxplus^n}.
	\end{align*}
	By using successively
	Lemma \ref{lem2.5.6} and the isomorphism \eqref{equ2.6.8} of Lemma \ref{lem2.6.10}, we obtain isomorphisms
	\begin{align*}
	\Lbf\ell^r\left(F^{(r)}\right)^{\sharp(r)}(\Vunderline)
	&\simeq \RHomxin{\PcalKn}{F^{(r)},S^{d(r)}\circ\boxplus^n}(\Vunderline)\\
	&\simeq \RHomxin{\PcalKn}{\left(F^{(r)}\right)^{\Vunderline},S^{d(r)}\circ\boxplus^n}(\Delta_n(\Bbbk))\\
	&\simeq \RHomx{\PcalK}{\left(F^{(r)}\right)^{\Vunderline}\circ\Delta_n,S^{d(r)}}\\
	&\simeq \RHomx{\PcalK}{\left(F^{\Vunderline^{(r)}}\circ\Delta_n\right)^{(r)},S^{d(r)}}
	\end{align*}
	where $\Vunderline$ is a $n$-tuple of objects of $\VcalK$.
	Finally, by using successively
	Theorem \ref{thm2.6.1}, the  sum-diagonal adjunction, Yoneda's lemma and the fact that $\Sx{\dunder}{\Delta_n(E_r)}$ is the homogeneous part of degree $\dunder$ of $S^d_{E_r}\circ\boxplus^n$, we obtain isomorphisms
	\begin{align*}
	\Lbf\ell^r\left(F^{(r)}\right)^{\sharp(r)}(\Vunderline)
	&\simeq \RHomx{\PcalK}{F^{\Vunderline^{(r)}}\circ\Delta_n,S^d_{E_r}}\\
	&\simeq \RHomx{\PcalKn}{F^{\Vunderline^{(r)}},S^d_{E_r}\circ\boxplus^n}\\
	&\simeq \RHomx{\PcalKn}{F^{\Vunderline^{(r)}},\Sx{\dunder}{\Delta_n(E_n)}}\\
	&\simeq \RHomxin{\PcalKn}{F^{\Delta_n(E_r)},S^{\boxtimes\dunder}}\left(\Vunderline^{(r)}\right)\\
	&\simeq \left(F^{\Delta_n(E_r)}\right)^{\sharp(r)}(\Vunderline).
	\end{align*}
	Then, the complex  $\Lbf\ell^r\left(F^{(r)}\right)^{(r)}$ is formal and its homology is isomorphic to the graded object  $\left(F^{\Delta_n(E_r)}\right)^{(r)}$. By Proposition \ref{pro2.4.12},  the functor $F^{(r)}$ belongs to $\Formel(r,n)$ and $H_*\Lbf\ell^r\left(F^{(r)}\right)\simeq F^{\Delta_n(E_r)}$.
	
	\medskip

	\underline{(3) and (4).} Proposition \ref{pro2.4.7} and Assertion (1) yield isomorphisms:
	\begin{equation*}
	\Lbf\ell^{r+1}\left(F^{(1)}\right)\simeq \Lbf\ell^{r}\left(\Lbf\ell^1\left(F^{(1)}\right)\right)\simeq \Lbf\ell^r\left(F^{\Delta_n(E_1)}\right)\simeq \left(\Lbf\ell^r(F)\right)^{\Delta_n\left(E_1^{(r)}\right)}.
	\end{equation*}
	If $F$ belongs to $\Formel(r,n)$, then the complex  $\Lbf\ell^r(F)$ is formal. 
	Then, the complex  $\Lbf\ell^{r+1}\left(F^{(1)}\right)$ is formal and its homology is isomorphic to  $H_*\Lbf\ell^r(F)^{\Delta_n\left(E_1^{(r)}\right)}$. 
	It follows that the functor $F^{(1)}$ belongs to $\Formel(r+1,n)$ and $H_*\Lbf\ell^{r+1}\left(F^{(1)}\right)\simeq H_*\Lbf\ell^r(F)^{\Delta_n\left(E_1^{(r)}\right)}$.
	
	If $F^{(1)}$ belongs to $\Formel(r+1,n)$, then the complex  $\Lbf\ell^{r+1}\left(F^{(1)}\right)$ is formal. 
	By Proposition \ref{pro2.6.13}, the complex  $\Lbf\ell^r(F)$ is also formal. Consequently, the functor $F$ belongs to $\Formel(r,n)$.
\end{proof}

\begin{exe}
	Let $\Vunderline$ be a $n$-tuple of objects of $\VcalK$ and $F,G\in\Pcalx{\dunder}$. 
	There are graded isomorphisms natural in $F,G$:
	\begin{align*}
	\Extx{\PcalKn}{\left(\Gx{\dunder}{\Vunderline}\right)^{(r)},G^{(r)}}&\simeq G\left(V_1\otimes E_r,\ldots,V_n\otimes E_r\right),\\
	\Extx{\PcalKn}{F^{(r)},\left(\Sx{\dunder}{\Vunderline}\right)^{(r)}}&\simeq F^\sharp\left(V_1\otimes E_r,\ldots,V_n\otimes E_r\right).
	\end{align*}
\end{exe}

\subsection{ Compatibility of $\Formel(r,n)$ with the precomposition by $\boxtimes^n,\otimes^n$}

The main results of this subsection are Theorems \ref{thm2.6.8} and \ref{thm2.6.20}, that express the compatibility of formal classes with the precomposition by functors $ \boxtimes^{n} $ and $ \otimes^{n} $.
Notice that $ \otimes^{n}=\boxtimes^{n}\circ\Delta_n $. Our
starting point is the isomorphism \cite[Proposition 2.2]{FF08} of Franjou and Friedlander which can be written under the form:
\begin{equation}\label{equ3.1}
\Extx{\PcalKh}{\Gamma^d\circ\boxtimes^2,F_1\boxtimes F_2}\simeq \Extx{\PcalK}{F_1^\sharp,F_2}
\end{equation}
natural in $F_1,F_2\in\Pcalx{d}$.
In the Theorem \ref{pro2.6.6}, we generalize this result, by replacing  $\Gamma^{d}\circ \boxtimes^2$ and $F\boxtimes G$ in the left hand side of \eqref{equ3.1} respectively by $F\circ\boxtimes^n$ and $\boxtimesx{i=1}{n}F_i=F_{1}\boxtimes F_{2}\boxtimes\cdots\boxtimes F_{n}$ with $F,F_1,F_2,\ldots,F_n\in\PcalK$.
Symmetric monoidal structure of the categories $ \Pcalx{d} $, presented in the subsection \ref{Subs1.6}, plays an important role in this subsection.

\begin{thm}\label{pro2.6.6}
	Let $F_1,\ldots,F_n$ and $F$ be objects of $\Pcalx{d}$. 
	There are isomorphisms natural in $F_1,\ldots,F_n$ and $F$:
	\begin{align}
	\Homxin{\PcalKn}{F\circ\boxtimes^n,\boxtimesx{i=1}{n}F_i}&\simeq\left(F\otimesx{\PcalK}F_1^\sharp\otimesx{\PcalK}\cdots\otimesx{\PcalK}F_n^\sharp\right)^\sharp\circ\boxtimes^n,\label{equ2.6.1}\\
	\RHomxin{\PcalKn}{F\circ\boxtimes^n,\boxtimesx{i=1}{n}F_i}&\simeq\left(F\otimesxL{\PcalK}F_1^\sharp\otimesxL{\PcalK}\cdots\otimesxL{\PcalK}F_n^\sharp\right)^\sharp\circ\boxtimes^n.\label{equ2.6.2}
	\end{align}
\end{thm}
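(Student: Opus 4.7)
The plan is to reduce \eqref{equ2.6.1} to a more tractable identity about convolution products. By \eqref{iso2.3.2a} applied in $\PcalKn$, together with $(F_1\boxtimes\cdots\boxtimes F_n)^\sharp\simeq F_1^\sharp\boxtimes\cdots\boxtimes F_n^\sharp$, the left-hand side of \eqref{equ2.6.1} is naturally isomorphic to $\bigl((F\circ\boxtimes^n)\otimesx{\PcalKn}(F_1^\sharp\boxtimes\cdots\boxtimes F_n^\sharp)\bigr)^\sharp$. On the other hand, Kuhn duality commutes with precomposition by $\boxtimes^n$, i.e.\ $(X\circ\boxtimes^n)^\sharp=X^\sharp\circ\boxtimes^n$, as follows at once from the self-duality $V_1^\vee\otimes\cdots\otimes V_n^\vee\simeq(V_1\otimes\cdots\otimes V_n)^\vee$ for finite-dimensional vector spaces. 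Hence the right-hand side of \eqref{equ2.6.1} rewrites as $\bigl((F\otimesx{\PcalK}F_1^\sharp\otimesx{\PcalK}\cdots\otimesx{\PcalK}F_n^\sharp)\circ\boxtimes^n\bigr)^\sharp$. Setting $G_i:=F_i^\sharp\in\Pcalx{d}$, the claim \eqref{equ2.6.1} is thereby reduced to the natural isomorphism in $\PcalKn$:
\begin{equation*}
(F\circ\boxtimes^n)\otimesx{\PcalKn}(G_1\boxtimes\cdots\boxtimes G_n)\simeq(F\otimesx{\PcalK}G_1\otimesx{\PcalK}\cdots\otimesx{\PcalK}G_n)\circ\boxtimes^n.
\end{equation*}

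I verify the displayed identity above by reduction to standard projectives. Both sides are right exact in each of the $n+1$ variables: the convolution products are right exact in each argument by Theorem~\ref{pro2.3.2}(1), the exterior product $\boxtimes$ is exact, and precomposition with $\boxtimes^n$ is exact. So it suffices to check the identity when each $G_i=\Gamma^{d,V_i}$, with $F$ left arbitrary. In that case $G_1\boxtimes\cdots\boxtimes G_n$ is the standard projective $\boxtimesx{i=1}{n}\Gamma^{d,V_i}$ of $\PcalKn$, so Theorem~\ref{pro2.3.2}(2) turns the LHS into $(F\circ\boxtimes^n)^{\Vunderline}$, which unfolds to $F^{V_1\otimes\cdots\otimes V_n}\circ\boxtimes^n$ via $(F\circ\boxtimes^n)(V_1^\vee\otimes W_1,\ldots,V_n^\vee\otimes W_n)=F\bigl((V_1\otimes\cdots\otimes V_n)^\vee\otimes W_1\otimes\cdots\otimes W_n\bigr)$. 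For the RHS, iterated application of Theorem~\ref{pro2.3.2}(2) in $\PcalK$ gives $F\otimesx{\PcalK}\Gamma^{d,V_1}\otimesx{\PcalK}\cdots\otimesx{\PcalK}\Gamma^{d,V_n}\simeq F^{V_1\otimes\cdots\otimes V_n}$, and composing with $\boxtimes^n$ yields the same functor. Since all identifications are natural in $F$ and $\Vunderline$, the displayed identity holds on standard projectives, hence in general.

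For the derived statement \eqref{equ2.6.2}, I pick an injective coresolution $J_i^\bullet$ of each $F_i$ in $\Pcalx{d}$. Since $\boxtimes$ of projectives is projective (consequence of Proposition~\ref{pro1.5.7}), Kuhn duality yields that $\boxtimes$ of injectives is injective, so the totalization of the $n$-fold multicomplex $\boxtimesx{i=1}{n}J_i^\bullet$ is an injective coresolution of $\boxtimesx{i=1}{n}F_i$ in $\PcalKn$. Applying \eqref{equ2.6.1} term-by-term identifies $\Homxin{\PcalKn}{F\circ\boxtimes^n,\,\boxtimesx{i=1}{n}J_i^\bullet}$ with $\bigl((F\otimesx{\PcalK}J_1^{\bullet,\sharp}\otimesx{\PcalK}\cdots\otimesx{\PcalK}J_n^{\bullet,\sharp})\circ\boxtimes^n\bigr)^\sharp$. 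Since each $J_i^{\bullet,\sharp}$ is a projective resolution of $F_i^\sharp$ in $\Pcalx{d}$, the totalization of $F\otimesx{\PcalK}J_1^{\bullet,\sharp}\otimesx{\PcalK}\cdots\otimesx{\PcalK}J_n^{\bullet,\sharp}$ represents $F\otimesxL{\PcalK}F_1^\sharp\otimesxL{\PcalK}\cdots\otimesxL{\PcalK}F_n^\sharp$. Exactness of Kuhn duality and of $-\circ\boxtimes^n$ ensures both commute with totalization, yielding \eqref{equ2.6.2}. The main obstacle is precisely this multi-variable bookkeeping: tracking naturality in each of the $n+1$ arguments through the multicomplex totalizations. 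The essential algebraic content is concentrated in Theorem~\ref{pro2.3.2} and the self-duality of finite-dimensional vector spaces; everything else is homological accounting.
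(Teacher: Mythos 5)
Your proof is correct, and it is in substance the paper's own argument run through Kuhn duality. The paper proves \eqref{equ2.6.1} by left exactness in each $F_i$, reducing to the standard injectives $F_i=S^d_{V_i}$ and computing $\Homxin{\PcalKn}{F\circ\boxtimes^n,\boxtimesx{i=1}{n}S^d_{V_i}}\simeq\left(\left(F\circ\boxtimes^n\right)^\sharp\right)_{\Vunderline}\simeq\left(F^{V_1\otimes\cdots\otimes V_n}\right)^\sharp\circ\boxtimes^n\simeq\left(F\otimesx{\PcalK}\Gamma^{d,V_1}\otimesx{\PcalK}\cdots\otimesx{\PcalK}\Gamma^{d,V_n}\right)^\sharp\circ\boxtimes^n$; you instead dualize the whole statement first, via \eqref{iso2.3.2a} and $\left(X\circ\boxtimes^n\right)^\sharp\simeq X^\sharp\circ\boxtimes^n$, to the convolution identity $\left(F\circ\boxtimes^n\right)\otimesx{\PcalKn}\left(G_1\boxtimes\cdots\boxtimes G_n\right)\simeq\left(F\otimesx{\PcalK}G_1\otimesx{\PcalK}\cdots\otimesx{\PcalK}G_n\right)\circ\boxtimes^n$ and reduce by right exactness to the standard projectives $G_i=\Gamma^{d,V_i}$ --- the mirror image of the same computation, with the same two inputs (Theorem \ref{pro2.3.2} and the self-duality of tensor products of finite-dimensional spaces). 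Where you genuinely improve on the text is the derived step: you coresolve each $F_i$ injectively, so that $\boxtimesx{i=1}{n}J_i^\bullet$ is an injective coresolution computing the left-hand side of \eqref{equ2.6.2}, while the dual complexes $J_i^{\bullet,\sharp}$ are projective resolutions of the $F_i^\sharp$, hence flat for $\otimesx{\PcalK}$ by Theorem \ref{pro2.3.2}(2), and so compute the derived convolution on the right; the published proof instead takes projective resolutions $P^{F_i}$ of the $F_i$, and read literally this computes neither derived functor (projectives need not be acyclic for $\Homxin{\PcalKn}{F\circ\boxtimes^n,-}$, and the injective complexes $\left(P^{F_i}\right)^\sharp$ need not be flat), so your choice is the intended correct bookkeeping. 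The only point you could make explicit is this flatness of projectives for the convolution product, which is what justifies representing $F\otimesxL{\PcalK}F_1^\sharp\otimesxL{\PcalK}\cdots\otimesxL{\PcalK}F_n^\sharp$ without resolving $F$ itself.
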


\begin{proof}
	Since each side of the isomorphism \eqref{equ2.6.1} is a left exact functor with respect to each variable $F_1,\ldots,F_n$, to prove this isomorphism, we can suppose that the functor $F_i$ is of the form  $F_i=S^d_{V_i}$ for $i=1,\ldots,n$. 
	By Yoneda's lemma and Theorem \ref{pro2.3.2}, there are isomorphisms:
	\begin{align*}
	\Homxin{\PcalKn}{F\circ\boxtimes^n,\boxtimesx{i=1}{n}S^d_{V_i}}
	&\simeq \left(\left(F\circ\boxtimes^n\right)^\sharp\right)_{(V_1,\ldots,V_n)}\\
	&\simeq \left(F^{V_1\otimes\cdots\otimes V_n}\right)^\sharp\circ\boxtimes^n\\
	&\simeq \left(F\otimesx{\PcalK}\Gamma^{d,V_1}\otimesx{\PcalK}\cdots\otimesx{\PcalK}\Gamma^{d,V_n}\right)^\sharp\circ\boxtimes^n
	\end{align*}
	which proves the isomorphism \eqref{equ2.6.1}. 
	To obtain \eqref{equ2.6.2}, let $P^{F_i}$ be a projective resolution of $F_i$ in the category $\Pcalx{d}$ for $i=1,\ldots,n$. 
	Hence the complexes $\left(P^{F_i}\right)^\sharp$ are injective coresolutions of the functors $F_i^\sharp$. Moreover, the isomorphism \eqref{equ2.6.1} yields an isomorphism of the complexes:
	\begin{equation*}
	\Homxin{\PcalKn}{F\circ\boxtimes^n,\boxtimesx{i=1}{n}P^{F_i}}\simeq\left(F\otimesx{\PcalK}\left(P^{F_1}\right)^\sharp\otimesx{\PcalK}\cdots\otimesx{\PcalK}\left(P^{F_n}\right)^\sharp\right)^\sharp\circ\boxtimes^n
	\end{equation*}
	which establishes the isomorphism \eqref{equ2.6.2}.
\end{proof}

Let $ F=\Gamma^{d} $ in the isomorphism \eqref{equ2.6.2} in Theorem \ref{pro2.6.6}, and by using the isomorphism \eqref{iso2.3.4} in Proposition \ref{pro2.3.8}, we obtain
\begin{align*}
\Extx{\PcalKh}{\Gamma^d\circ\boxtimes^2,F_1\boxtimes F_2}&\simeq \Extxin{\PcalKh}{\Gamma^d\circ\boxtimes^2,F_1\boxtimes F_2}(\Bbbk,\Bbbk)\\
&\simeq H^{*}\left(\RHomxin{\PcalKh}{\Gamma^{d}\circ\boxtimes^2,F_{1}\boxtimes F_{2}}(\Bbbk,\Bbbk)\right)\\
&\simeq
H^{*}\left(\left(\left(\Gamma^{d}\otimesxL{\PcalK}F_1^\sharp\otimesxL{\PcalK}F_2^\sharp\right)^\sharp\circ\boxtimes^2\right)(\Bbbk,\Bbbk)\right)\\
&\simeq
H^{*}\left(\left(F_1^\sharp\otimesxL{\PcalK}F_2^\sharp\right)^\sharp(\Bbbk)\right)\simeq H^{*}\left(\Rbf\Homxin{\PcalK}{F_{1}^{\sharp},F_{2}}(\Bbbk)\right)\simeq \Extx{\PcalK}{F_1^\sharp,F_2}
\end{align*}
natural in $ F_{1},F_{2}\in\Pcalx{d} $.
We get back the isomorphism \eqref{equ3.1} of Franjou and Friedlander.

Now, by using Theorem \ref{pro2.6.6}, we establish the relation between formal class $ \Formel(r,n) $ with the precomposition by the functors $ \boxtimes^n $ and $ \otimes^n $.

\begin{thm}\label{thm2.6.8} Let $F\in\Pcalx{p^rd}$. Suppose that the functor $F$ belongs to the formal class $\Formel(r,1)$.
	\begin{enumerate}
		\item[\rm (1)] The functor $F\circ\boxtimes^n$ belongs to $\Formel(r,n)$ and  $H_*\Lbf\ell^r(F\circ\boxtimes^n)\simeq H_*\Lbf\ell^r(F)^{E_r^{\otimes n-1}}\circ\boxtimes^n$.
		\item[\rm (2)] The functor  $F\circ\otimes^n$ belongs to $\Formel(r,1)$ and $H_*\Lbf\ell^r\left(F\circ\otimes^n\right)\simeq H_*\Lbf\ell^r(F)^{E_r^{\otimes n-1}}\circ\otimes^n$.
	\end{enumerate}
\end{thm}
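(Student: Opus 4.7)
The plan is to establish part (1) by means of Proposition \ref{pro2.4.12} (reducing formality to an identification of the Frobenius twist of $\Lbf\ell^r$) together with Theorem \ref{pro2.6.6}, and then to deduce part (2) from (1) via Proposition \ref{pro2.5.8} applied along the diagonal $\Delta_n$. Throughout, set $\dunder_0 = (d,\ldots,d)$ and $G := H_*\Lbf\ell^r(F)$, so that the hypothesis $F \in \Formel(r,1)$ provides an isomorphism $\Lbf\ell^r(F) \simeq G$ in the derived category. By Proposition \ref{pro2.4.12}, to establish (1) it is enough to produce an isomorphism
$$\Lbf\ell^r(F \circ \boxtimes^n)^{(r)} \simeq \bigl(G^{E_r^{\otimes n-1}} \circ \boxtimes^n\bigr)^{(r)}$$
in the derived category of $\Pcalx{p^r\dunder_0}$.

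I would transform the left-hand side by first applying the identity \eqref{equ2.4.5} to $\Lbf\ell^r(F \circ \boxtimes^n)^{\sharp(r)}$, and then Theorem \ref{pro2.6.6} with each $F_i = S^{d(r)}$ (whose Kuhn dual is $\Gamma^{d(r)}$). Using that Kuhn duality is an involution on the derived category, commutes with Frobenius twist, and satisfies $(H \circ \boxtimes^n)^\sharp \simeq H^\sharp \circ \boxtimes^n$, this reduces the problem to identifying
$$F \otimesxL{\PcalK} \underbrace{\Gamma^{d(r)} \otimesxL{\PcalK} \cdots \otimesxL{\PcalK} \Gamma^{d(r)}}_{n \text{ Day convolution factors}} \simeq \bigl(G^{E_r^{\otimes n-1}}\bigr)^{(r)}.$$

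The key step is to prove this identification by induction on $n$. The base case $n=1$ follows from \eqref{iso2.3.4} combined with \eqref{equ2.4.5} and the formality of $F$:
$$F \otimesxL{\PcalK} \Gamma^{d(r)} \simeq \Rbf\Homxin{\PcalK}{F, S^{d(r)}}^\sharp \simeq \Lbf\ell^r(F)^{(r)} \simeq G^{(r)}.$$
For the inductive step, assuming the identification at stage $k$, tensor on the right with a further copy of $\Gamma^{d(r)}$ and apply \eqref{iso2.3.4} once more to rewrite the result as $\Rbf\Homxin{\PcalK}{(G^{E_r^{\otimes k-1}})^{(r)}, S^{d(r)}}^\sharp$. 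Touzé's Theorem \ref{thm2.6.1} then identifies this with $((G^{E_r^{\otimes k-1}})^{E_r})^{(r)} = (G^{E_r^{\otimes k}})^{(r)}$, using the parametrization identity $(H^{W_1})^{W_2} \simeq H^{W_1 \otimes W_2}$. Combining everything with Proposition \ref{pro2.4.12} gives (1).

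For (2), since $F \circ \otimes^n = (F \circ \boxtimes^n) \circ \Delta_n$, Proposition \ref{pro2.5.8} applied to $F \circ \boxtimes^n \in \Formel(r,n)$ immediately yields $F \circ \otimes^n \in \Formel(r,1)$ with
$$H_*\Lbf\ell^r(F \circ \otimes^n) \simeq H_*\Lbf\ell^r(F \circ \boxtimes^n) \circ \Delta_n \simeq G^{E_r^{\otimes n-1}} \circ \bigl(\boxtimes^n \circ \Delta_n\bigr) = G^{E_r^{\otimes n-1}} \circ \otimes^n.$$
The main obstacle is controlling the iterated Day convolution in the inductive step: at each stage the running expression must remain in the form $H^{(r)}$ for some graded $H \in \PcalK$ so that Touzé's Theorem \ref{thm2.6.1} applies, and this is precisely what the formality hypothesis on $F$ guarantees through the base case and what stability of the shape $H^{(r)}$ under parametrization by $E_r$ preserves throughout the induction.
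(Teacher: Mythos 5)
Your proof is correct and follows essentially the same route as the paper's: reduce via \eqref{equ2.4.5} and Theorem \ref{pro2.6.6} to the iterated Day convolution $F\otimesxL{\PcalK}\Gamma^{d(r)}\otimesxL{\PcalK}\cdots\otimesxL{\PcalK}\Gamma^{d(r)}$, identify it with $\left(H_*\Lbf\ell^r(F)^{E_r^{\otimes n-1}}\right)^{(r)}$ using Theorem \ref{thm2.6.1} together with the formality of $F$, conclude with Proposition \ref{pro2.4.12}, and deduce (2) from (1) via Proposition \ref{pro2.5.8}. The only difference is organizational and harmless: the paper first collapses the $n$ copies of $\Gamma^{d(r)}$ to $\left(\Gamma^{d,E_r^{\otimes n-1}}\right)^{(r)}$ and then convolves with $F$ using \eqref{iso2.4.3}, whereas you fold $F$ in at the base case of your induction so that the running term is always a genuine Frobenius twist to which Theorem \ref{thm2.6.1} applies directly.
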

\begin{proof}
	The assertion (2) is a direct consequence of Assertion (1) and Proposition \ref{pro2.5.8}. 
	It remains to prove the first assertion. By isomorphism \eqref{equ2.4.5} of Proposition \ref{pro2.2.11} and the isomorphism \eqref{equ2.6.2} of Theorem \ref{pro2.6.6}, there are isomorphisms:
	\begin{align}
	\Lbf\ell^r(F\circ\boxtimes^n)^{\sharp(r)}&\simeq \RHomxin{\PcalKn}{F\circ\boxtimes^n,\left(S^{d(r)}\right)^{\boxtimes n}}\label{equ2.6.5}\\
	&\simeq \left(F\otimesxL{\PcalK}\underbrace{\Gamma^{d(r)}\otimesxL{\PcalK}\cdots\otimesxL{\PcalK}\Gamma^{d(r)}}_{n\text{-times}}\right)^\sharp\circ\boxtimes^n.\notag
	\end{align}
	Moreover, Theorem \ref{thm2.6.1} and the isomorphism \eqref{iso2.3.4} of Proposition \ref{pro2.3.8} imply  $F^{(r)}\otimesxL{\PcalK}\Gamma^{d(r)}\simeq \left(F^{E_r}\right)^{(r)}$ natural in $F\in\Pcalx{p^rd}$. 
	It follows that
	\begin{equation}
	\underbrace{\Gamma^{d(r)}\otimesxL{\PcalK}\cdots\otimesxL{\PcalK}\Gamma^{d(r)}}_{n\text{-times}}\simeq \left(\Gamma^{d,E_r^{\otimes n-1}}\right)^{(r)}.\label{equ2.6.6}
	\end{equation}
	By isomorphism \eqref{iso2.4.3} of Proposition \ref{pro2.4.7}, the isomorphism \eqref{iso2.3.4} of Proposition \ref{pro2.3.8} and the isomorphisms \eqref{equ2.6.5}, \eqref{equ2.6.6}, we obtain isomorphisms:
	\begin{align*}
	\Lbf\ell^r(F\circ\boxtimes^n)^{\sharp(r)}&\simeq \left(F\otimesxL{\PcalK} \left(\Gamma^{d,E_r^{\otimes n-1}}\right)^{(r)}\right)^\sharp\circ\boxtimes^n\\
	&\simeq \RHomxin{\PcalK}{F,\left(S^d_{E_r^{\otimes n-1}}\right)^{(r)}}\circ\boxtimes^n\\
	&\simeq \RHomxin{\PcalK}{\Lbf\ell^{r}(F),S^d_{E_r^{\otimes n-1}}}^{(r)}\circ\boxtimes^n\\
	&\simeq \left(\Lbf\ell^r(F)^{E_r^{\otimes n-1}}\right)^{\sharp(r)}\circ\boxtimes^n\\
	&\simeq \left(\Lbf\ell^r(F)^{E_r^{\otimes n-1}}\circ\boxtimes^n\right)^{\sharp(r)}.
	\end{align*}
	Since $F$ belongs to $\Formel(r,1)$, the complex  $\Lbf\ell^r(F)$ is formal. 
	Then the complex  $\Lbf\ell^r(F\circ\boxtimes^n)^{(r)}$ is formal and its homology is isomorphic to the graded object $\left(H_*\Lbf\ell^r(F)^{E_r^{\otimes n-1}}\circ\boxtimes^n\right)^{(r)}$. 
	By Proposition \ref{pro2.4.12}, the functor $F\circ\boxtimes^n$ belongs to $\Formel(r,n)$ and $H_*\Lbf\ell^r(F\circ\boxtimes^n)\simeq H_*\Lbf\ell^r(F)^{E_r^{\otimes n-1}}\circ\boxtimes^n$.
\end{proof}

We recall a result of Cha\l upnik \cite{Cha08} (see also \cite{Tou13b}). 
If $X$ denotes one of the symbols $\Gamma,\Lambda,S$, we denote by $\epsilon_X$ respectively the integer $0,1,2$.

\begin{thm}[Cha\l upnik]\label{thm2.6.3}
	Let $X$ denote one of the symbols $\Gamma,\Lambda,S$. 
	There is an isomorphism
	\begin{equation*}
	\RHomxin{\PcalK}{X^{p^rd},S^{d(r)}}\simeq X^{d\sharp(r)}\left[\epsilon_X\left(p^rd-d\right)\right].
	\end{equation*}
\end{thm}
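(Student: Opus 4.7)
The plan is to reformulate the statement as a computation of the left derived functor $\Lbf\ell^r$. Using the isomorphism (2.4.5) from Proposition \ref{pro2.4.7}, which identifies $\RHomxin{\PcalK}{F, S^{d(r)}}$ with $\Lbf\ell^r(F)^{\sharp(r)}$, together with the Kuhn-duality identifications $\Gamma^{d\sharp} = S^d$, $S^{d\sharp} = \Gamma^d$, $\Lambda^{d\sharp} = \Lambda^d$ and the fact that the Frobenius twist is an embedding (Corollary \ref{cor2.3.8}), the theorem reduces to the derived-category isomorphism
\begin{equation*}
\Lbf\ell^r\bigl(X^{p^rd}\bigr) \simeq X^d\bigl[\epsilon_X(p^rd-d)\bigr] \quad \text{in } \Dbfb{\PcalK}.
\end{equation*}

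The case $X = \Gamma$ is immediate. Since $\Gamma^{p^rd}$ is projective (Proposition \ref{pro1.5.7}), $\Lbf\ell^r$ agrees with $\ell^r$ on it. The internal Yoneda isomorphism from Theorem \ref{pro2.3.2} yields $\Homxin{\PcalK}{\Gamma^{p^rd}, S^{d(r)}} \simeq S^{d(r)}$; combining this with (2.4.5) gives $\ell^r(\Gamma^{p^rd})^{\sharp(r)} \simeq S^{d(r)}$, whence $\ell^r(\Gamma^{p^rd}) \simeq \Gamma^d$ in cohomological degree zero, as required.

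The substantive cases $X = \Lambda$ and $X = S$ are handled following Cha\l upnik's approach in \cite{Cha08}. The key input is the Koszul acyclic complex in $\Pcalx{p^rd}$,
\begin{equation*}
0 \to \Lambda^{p^rd} \to \Lambda^{p^rd-1} \otimes S^1 \to \cdots \to \Lambda^1 \otimes S^{p^rd-1} \to S^{p^rd} \to 0,
\end{equation*}
which is exact over any field, combined with a Troesch-type coresolution of $S^{d(r)}$ analogous to the one used in the proof of Theorem \ref{thm2.6.1}. The resulting bicomplex computing $\RHomxin{\PcalK}{X^{p^rd}, S^{d(r)}}$ collapses by parity of its internal differentials, yielding the claimed shift of length $p^rd-d$ for $X = \Lambda$ and $2(p^rd-d)$ for $X = S$; formality then follows from the criterion of Proposition \ref{pro2.4.12}, which upgrades the graded-object calculation to a derived-category statement once the $r$-th Frobenius twist is stripped.

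The principal obstacle is precisely this parity-driven collapse, which is the combinatorial heart of \cite{Cha08} and is analogous to the mechanism by which Troesch coresolutions give even-degree concentration in the proof of Theorem \ref{thm2.6.1}. Once the collapse is established, identifying the surviving cohomology as $X^{d\sharp(r)}$ placed in degree $\epsilon_X(p^rd-d)$ is a matter of tracking Kuhn dualities along the Koszul complex and invoking the $X=\Gamma$ case above.
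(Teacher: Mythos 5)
Your reduction via \eqref{equ2.4.5} to the derived-category statement $\Lbf\ell^r\bigl(X^{p^rd}\bigr)\simeq X^d\bigl[\epsilon_X(p^rd-d)\bigr]$ and your treatment of the case $X=\Gamma$ are correct, and your overall strategy --- re-deriving the computation from the Koszul complex as in Cha\l upnik's original paper --- is genuinely different from what the paper does: the paper's proof is a three-line citation of Touz\'e's Ringel-duality results (\cite[Proposition 6.6, Lemma 3.6 and Corollary 5.8]{Tou13b}), which first peel off the Frobenius twist,
\begin{equation*}
\RHomxin{\PcalK}{X^{p^rd},S^{d(r)}}\simeq\left(\RHomxin{\PcalK}{X^{d},S^{d}}\right)^{(r)}\left[\epsilon_X(p^rd-d)\right],
\end{equation*}
and then identify the untwisted derived internal $\Hom$ with the underived one, namely $X^{d\sharp}$.

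Your sketch of the substantive cases $X=\Lambda,S$, however, has two concrete problems. First, the parity mechanism you invoke is not the one operating in the proof of Theorem \ref{thm2.6.1}: there, $\Homx{\PcalK}{F^{(r)},T(W,r)^{*}}$ is concentrated in even degrees because the \emph{source} is a Frobenius twist, so its $\Hom$ into $S^\mu$ vanishes unless $p^r$ divides $\mu$ (compare Lemma \ref{lem3.3.2}(1)), and the $p^r$-divisible summands of the Troesch complex sit exactly in even degrees (Lemma \ref{lem3.2.5}). Your sources $\Lambda^{p^rd}$ and $S^{p^rd}$ are untwisted, so this vanishing is unavailable and the bicomplex does not collapse for the reason you give; in Cha\l upnik's argument the shift $\epsilon_X(p^rd-d)$ arises instead from the lengths of the acyclic complexes together with an induction on $d$ exploiting exponentiality and the vanishing of $\Ext$ from tensor products of positive-degree functors into additive targets. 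Second, a single Koszul complex can at best trade $\Lambda$ for $S$ (or $\Gamma$) at the cost of one shift of length $p^rd-d$; to produce the shift $2(p^rd-d)$ in the case $X=S$ one needs a second step (a second Koszul complex, or the de Rham complex), which your sketch omits entirely. As written, the ``parity-driven collapse'' is therefore not an argument but a pointer to \cite{Cha08}; deferring to the literature is no worse than what the paper itself does with \cite{Tou13b}, but the mechanism you describe for the deferred step is not the right one.
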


\begin{proof}
	By using \cite[Proposition 6.6]{Tou13b}, \cite[Lemma 3.6]{Tou13b} and \cite[Corollary 5.8]{Tou13b}, we have
	\begin{equation*}
	\RHomxin{\PcalK}{\Gamma^{p^rd},S^{d(r)}}\simeq \Homxin{\PcalK}{\Gamma^{p^rd},S^{d(r)}}\simeq S^{d(r)}\simeq \Gamma^{d\sharp (r)},
	\end{equation*}
	\begin{align*}
	\RHomxin{\PcalK}{\Lambda^{p^rd},S^{d(r)}}&\simeq \left(\RHomxin{\PcalK}{\Lambda^{d},S^{d}}\right)^{(r)}\left[p^{r}d-d\right]\simeq\left(\Homxin{\PcalK}{\Lambda^{d},S^{d}}\right)^{(r)}\left[p^{r}d-d\right]\\
	&\simeq \Lambda^{d\sharp(r)}\left[p^{r}d-d\right]
	\end{align*}
	and
	\begin{align*}
	\RHomxin{\PcalK}{S^{p^rd},S^{d(r)}}&\simeq \left(\RHomxin{\PcalK}{S^{d},S^{d}}\right)^{(r)}\left[2\left(p^{r}d-d\right)\right]\simeq\left(\Homxin{\PcalK}{S^{d},S^{d}}\right)^{(r)}\left[2\left(p^{r}d-d\right)\right]\\
	&\simeq \Gamma^{d\sharp(r)}\left[2\left(p^{r}d-d\right)\right].
	\end{align*}
	We obtain the desired isomorphisms.
\end{proof}

\begin{cor}\label{cor2.6.5}
	Let $X$ denote one of the symbols $\Gamma,\Lambda,S$.
	Then, the functor $X^d$ belongs to $\Formel(r,1)$ for all $r$ and $\Lbf\ell^r(X^d)$ is isomorphic to $ X^e\left[\epsilon_X\left(d-e\right)\right]$ if $d$ is of the form $p^re,e\in\Nbb$ and to $0$ otherwise.
\end{cor}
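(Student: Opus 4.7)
The plan is to combine Cha\l upnik's Theorem \ref{thm2.6.3} with the characterization of $\Lbf\ell^r$ via Kuhn duality given in Proposition \ref{pro2.4.7}, and then descend formality through the Frobenius twist using Proposition \ref{pro2.4.12}.

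First I would handle the case $d = p^r e$. Applying the isomorphism \eqref{equ2.4.5} of Proposition \ref{pro2.4.7} with $\dunder = e$ to the functor $X^{p^r e}$ gives
\begin{equation*}
\Lbf\ell^r(X^{p^r e})^{\sharp(r)} \simeq \RHomxin{\PcalK}{X^{p^r e}, S^{e(r)}}.
\end{equation*}
By Theorem \ref{thm2.6.3}, the right-hand side is isomorphic to $X^{e\sharp(r)}[\epsilon_X(p^r e - e)]$, which is a graded object concentrated in a single cohomological degree, hence formal as a complex.

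Next I would feed this into Proposition \ref{pro2.4.12}: since $\Lbf\ell^r(X^{p^r e})^{(r)}$ becomes formal after Kuhn-dualizing the graded object $X^{e\sharp(r)}[\epsilon_X(p^r e - e)]$, the proposition yields that $X^{p^r e}$ belongs to $\Formel(r,1)$ and that $H_*\Lbf\ell^r(X^{p^r e})^{\sharp(r)} \simeq X^{e\sharp(r)}[\epsilon_X(p^r e - e)]$. Corollary \ref{cor2.3.8} lets me cancel the Frobenius twist, giving $H_*\Lbf\ell^r(X^{p^r e})^{\sharp} \simeq X^{e\sharp}[\epsilon_X(p^r e - e)]$, and then Kuhn-dualizing once more (using $X^{e\sharp\sharp} \simeq X^e$) produces $\Lbf\ell^r(X^{p^r e}) \simeq X^e[\epsilon_X(p^r e - e)]$, which is the desired conclusion with $\epsilon_X(d-e) = \epsilon_X(p^r e - e)$.

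For the other case, when $d$ is not of the form $p^r e$: the functor $X^d$ is homogeneous of degree $d \notin p^r\Nbb$, so it lies in a summand on which $\ell^r$ is zero by definition (as $\ell^r$ maps $\Pcalx{p^r\dunder}$ to $\Pcalx{\dunder}$). Consequently $\Lbf\ell^r(X^d) = 0$, which is vacuously formal, so $X^d \in \Formel(r,1)$ and the second clause of the statement holds. The main obstacle is not really an obstacle here: all the analytic work has been done in Theorem \ref{thm2.6.3}, and the only care required is tracking the Frobenius twist and the Kuhn duality through the identification, which is a purely formal bookkeeping exercise.
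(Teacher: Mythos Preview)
Your proposal is correct and follows essentially the same approach as the paper: both arguments combine the identification \eqref{equ2.4.5} with Theorem~\ref{thm2.6.3} and then invoke Proposition~\ref{pro2.4.12} to descend formality through the Frobenius twist, treating the non-divisible case by the trivial observation that $\ell^r$ vanishes off $\Pcalx{p^r\dunder}$. Your bookkeeping around Kuhn duality is a bit more explicit (and slightly redundant, since Proposition~\ref{pro2.4.12} already delivers $H_*\Lbf\ell^r(X^{p^re})\simeq X^e[\epsilon_X(d-e)]$ once you dualize both sides of $\Lbf\ell^r(X^{p^re})^{\sharp(r)}\simeq X^{e\sharp(r)}[\epsilon_X(d-e)]$ before feeding it in), but the argument is the same.
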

\begin{proof} 
	If $d$ is not a multiple of $p^r$ then $\Lbf\ell^r(F)=0$ by the definition of the functor $\ell^r$ and hence $X^d\in\Formel(r,1)$. 
	We next suppose that $d$ is of the form $p^re$ with $e\in\Nbb$.
	By the isomorphism \eqref{equ2.4.5} of Proposition \ref{pro2.4.7} and Theorem \ref{thm2.6.3}, the complex  $\Lbf\ell^r\left(X^{d}\right)^{(r)}$ is isomorphic to $X^{e(r)}\left[\epsilon_X\left(d-e\right)\right]$. By Proposition \ref{pro2.4.12}, the functor $X^{d}$ belongs to $\Formel(r,1)$ and $\Lbf\ell^r(X^d)\simeq X^e\left[\epsilon_X\left(d-e\right)\right]$.
\end{proof}

The following theorem is a generalization of Theorem \ref{thm2.6.3} and Corollary \ref{cor2.6.5}.

\begin{thm}\label{thm2.6.20}
	Let $X$ denote one of the symbols  $\Gamma,\Lambda,S$.  Let $F\in\PcalK$ and $G\in\PcalKn$.
	\begin{enumerate}
		\item[\rm (1)] The functor $X^d\circ\boxtimes^n$ belongs to $\Formel(r,n)$ for all $r$ and $\Lbf\ell^r(X^d\circ\boxtimes^n)$ is isomorphic to $ X^{e,E_r^{\otimes n-1}}\left[\epsilon_X\left(d-e\right)\right]\circ\boxtimes^n$ if $d$ is of the form $p^re,e\in\Nbb$ and to $0$ otherwise.
		\item[\rm (2)] The functor $X^d\circ\otimes^n$ belongs to $\Formel(r,1)$ for all $r$ and $\Lbf\ell^r(X^d\circ\otimes^n)$ is isomorphic to $ X^{e,E_r^{\otimes n-1}}\left[\epsilon_X\left(d-e\right)\right]\circ\otimes^n$ if $d$ is of the form $p^re,e\in\Nbb$ and to $0$ otherwise.
		\item[\rm (3)] There exist graded isomorphisms, natural in $F,G$:
		\begin{eqnarray}
		\Extxin{\PcalKn}{X^{p^rd}\circ\boxtimes^n,G^{(r)}}&\simeq& \Extxxin{*-\epsilon_X(p^rd-d)}{\PcalKn}{X^{d,E_r^{\otimes n-1}}\circ\boxtimes^n,G}^{(r)},\label{equ3.7}\\
		\Extxin{\PcalK}{X^{p^rd}\circ\otimes^n,F^{(r)}}&\simeq& \Extxxin{*-\epsilon_X(p^rd-d)}{\PcalK}{X^{d,E_r^{\otimes n-1}}\circ\otimes^n,F}^{(r)}.\label{equ3.8}
		\end{eqnarray}
	\end{enumerate}
\end{thm}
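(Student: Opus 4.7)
The three assertions form a natural chain of deductions from Theorem \ref{thm2.6.8} and Corollary \ref{cor2.6.5}. My plan is to treat (1) and (2) essentially as corollaries of those two results, and then derive (3) from them via Corollary \ref{cor2.4.9}.

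For part (1), I first observe that $X^d\circ\boxtimes^n$ lies in $\Pcalx{(d,\ldots,d)}$, so $\ell^r$ applied to it is automatically zero whenever $p^r\nmid d$; in that case the statement is immediate. When $d=p^re$, Corollary \ref{cor2.6.5} tells us that $X^d\in\Formel(r,1)$ with $H_*\Lbf\ell^r(X^d)\simeq X^e[\epsilon_X(d-e)]$, a graded object concentrated in a single homological degree. Applying Theorem \ref{thm2.6.8}(1) with $F=X^d$ then gives
\begin{equation*}
\Lbf\ell^r(X^d\circ\boxtimes^n)\simeq H_*\Lbf\ell^r(X^d)^{E_r^{\otimes n-1}}\circ\boxtimes^n\simeq X^{e,E_r^{\otimes n-1}}[\epsilon_X(d-e)]\circ\boxtimes^n,
\end{equation*}
where the last step uses that parameterization by $E_r^{\otimes n-1}$ is exact and commutes with the homological shift. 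This proves (1). Part (2) then admits two equally short routes: one may apply Theorem \ref{thm2.6.8}(2) in place of (1); alternatively, since $\otimes^n=\boxtimes^n\circ\Delta_n$, Proposition \ref{pro2.5.8} transports the formality and homology computation of (1) directly to $X^d\circ\otimes^n$.

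For part (3), I apply Corollary \ref{cor2.4.9} to $F=X^{p^rd}\circ\boxtimes^n$ in $\Pcalx{p^r\cdot(d,\ldots,d)}$, respectively to $F=X^{p^rd}\circ\otimes^n$ in $\Pcalx{p^rn d}$. By parts (1) and (2), $F$ belongs to the appropriate formal class, and $\Lbf\ell^r(F)$ is formal with homology concentrated in the single homological degree $\epsilon_X(p^rd-d)$, isomorphic to $X^{d,E_r^{\otimes n-1}}\circ\boxtimes^n$ (resp.\ $X^{d,E_r^{\otimes n-1}}\circ\otimes^n$). The direct sum $\bigoplus_{i+j=k}$ in Corollary \ref{cor2.4.9} therefore collapses to the unique summand with $j=\epsilon_X(p^rd-d)$, which yields exactly the isomorphisms \eqref{equ3.7} and \eqref{equ3.8}.

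There is no serious obstacle: the whole argument is a bookkeeping exercise combining Theorem \ref{thm2.6.8} and Corollary \ref{cor2.6.5}. The only conceptual point is the concentration in a single homological degree of $H_*\Lbf\ell^r(X^d)$, which is precisely what forces the degeneration in Corollary \ref{cor2.4.9} and turns the generic sum-formula into a clean shift on Ext-groups.
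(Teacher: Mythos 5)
Your proposal is correct and follows essentially the same route as the paper, which deduces (1) and (2) from Theorem \ref{thm2.6.8} and Corollary \ref{cor2.6.5} and then obtains (3) from Corollary \ref{cor2.4.9}. Your added remarks (vanishing when $p^r\nmid d$, and the collapse of the sum in Corollary \ref{cor2.4.9} because the homology is concentrated in degree $\epsilon_X(p^rd-d)$) simply make explicit what the paper leaves implicit.
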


\begin{proof}
	The assertions (1) and (2) are consequences of Theorem \ref{thm2.6.8} and Corollary \ref{cor2.6.5}. In order to obtain the last assertion of the theorem, we use the two assertions (1), (2) and Corollary \ref{cor2.4.9}.
\end{proof}

\begin{exe}
	\begin{enumerate}
		\item[\rm (1)] There are graded isomorphisms:
		\begin{align*}
		\Extx{\PcalKn}{\Gamma^{p^r}\circ\boxtimes^n,\boxtimes^{n(r)}}&\simeq \left(E_r\right)^{\otimes n-1},\\
		\Extx{\PcalKn}{\Lambda^{p^r}\circ\boxtimes^n,\boxtimes^{n(r)}}&\simeq \left(E_r\right)^{\otimes n-1}\left[p^r-1\right],\\
		\Extx{\PcalKn}{S^{p^r}\circ\boxtimes^n,\boxtimes^{n(r)}}&\simeq \left(E_r\right)^{\otimes n-1}\left[2p^r-2\right].
		\end{align*}
		For example, we have $ \Extx{\PcalKn}{\Lambda^{p^r}\circ\boxtimes^n,\boxtimes^{n(r)}}\simeq\Extxx{*-(p^rd-d)}{\PcalKn}{\Lambda^{1,E_r^{\otimes n-1}}\circ\boxtimes^n,\boxtimes^{n}} \simeq \left(E_r\right)^{\otimes n-1}\left[p^r-1\right] $.
		
		\item[\rm (2)] Let $V\in\VcalK$. 
		There are graded isomorphisms:
		\begin{align*}
		\Extx{\PcalK}{\Gamma^{p^rd}\circ\otimes^n,\left(S^{nd}_V\right)^{(r)}}&\simeq S^d\left(V^{\otimes n}\otimes \left(E_r\right)^{\otimes n-1}\right),\\
		\Extx{\PcalK}{\Lambda^{p^rd}\circ\otimes^n,\left(S^{nd}_V\right)^{(r)}}&\simeq \Lambda^d\left(V^{\otimes n}\otimes \left(E_r\right)^{\otimes n-1}\right)\left[p^rd-d\right],\\
		\Extx{\PcalK}{S^{p^rd}\circ\otimes^n,\left(S^{nd}_V\right)^{(r)}}&\simeq \Gamma^d\left(V^{\otimes n}\otimes \left(E_r\right)^{\otimes n-1}\right)\left[2p^rd-2d\right].
		\end{align*}
	\end{enumerate}
\end{exe}

\section{ $\Ext$-groups $\Extx{\PcalK}{\Gamma^{p^rd}\circ X, S^{\mu(r)}}$}\label{sect3.3}

In this section, we compute $\Ext$-groups $\Extx{\PcalK}{\Gamma^{p^rd}\circ X, S^{\mu(r)}}$ where $X$ is a direct factor of the $n$-th tensor power functor $\otimes^n$ and $\mu$ is a tuple of natural numbers of weight $dn$. 
The principal result is the following theorem.

\begin{thm}\label{thm3.3.8}
	Let $\mu$ be a tuple of natural numbers of weight $ nd $. Let $X$ be a direct factor of $\otimes^n$. There is a graded  isomorphism
	\begin{equation*}
	\alpha(S^\mu,X):\Extx{\PcalK}{\Gamma^{p^rd}\circ X,S^{\mu(r)}}\xrightarrow{\simeq} \Homx{\PcalK}{\Gamma^{d,E_r^{\otimes n-1}}\circ X,S^\mu}
	\end{equation*}
	natural in $S^\mu$, and compatible with the products.
\end{thm}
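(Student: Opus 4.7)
The plan is to derive Theorem~\ref{thm3.3.8} from the multivariable Frobenius-twist computation of Theorem~\ref{thm2.6.20} together with the injectivity of $S^{\mu}$ in $\PcalK$ provided by Kuhn duality. First I apply equation \eqref{equ3.8} with the symbol $X$ taken to be $\Gamma$ (so that the shift $\epsilon_X(p^rd-d)$ vanishes) and with the target $F$ equal to $S^{\mu}$. This yields a graded isomorphism, natural in $S^\mu$, of internal Ext functors
\begin{equation*}
\mathbf{Ext}^*_{\PcalK}\bigl(\Gamma^{p^rd}\circ\otimes^n,S^{\mu(r)}\bigr)\simeq\mathbf{Ext}^*_{\PcalK}\bigl(\Gamma^{d,E_r^{\otimes n-1}}\circ\otimes^n,S^\mu\bigr)^{(r)}.
\end{equation*}
Evaluating this identity of strict polynomial functors at $\Bbbk$ --- where the Frobenius twist acts trivially --- turns it into the analogous isomorphism of ordinary $\Ext$-groups.

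Next I reduce the right-hand $\Ext^*$ to a $\Hom$ via Kuhn duality: for any $G\in\PcalK$,
\begin{equation*}
\Ext^i_{\PcalK}(G,S^\mu)\simeq\Ext^i_{\PcalK}(\Gamma^\mu,G^\sharp).
\end{equation*}
By the decomposition \eqref{equ1.3.0} and the discussion following Proposition~\ref{pro1.5.7}, the functor $\Gamma^\mu=\bigotimes_j\Gamma^{\mu_j}$ is a direct summand of the standard projective $\Gamma^{|\mu|,\Bbbk^\ell}$ (for $\ell$ at least the length of $\mu$) and is therefore projective in $\PcalK$. Consequently $\Ext^i_{\PcalK}(\Gamma^\mu,G^\sharp)=0$ for $i\ge 1$, and the right-hand $\Ext$ collapses to the $\Hom$ appearing in the target of the isomorphism.

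It remains to restrict from $\otimes^n$ to the direct factor $X$. If $e\in\mathrm{End}(\otimes^n)$ is the idempotent cutting out $X$, then $\Gamma^{p^rd}(e)$ and $\Gamma^{d,E_r^{\otimes n-1}}(e)$ are idempotents on the respective composites, exhibiting $\Gamma^{p^rd}\circ X$ and $\Gamma^{d,E_r^{\otimes n-1}}\circ X$ as direct summands of $\Gamma^{p^rd}\circ\otimes^n$ and $\Gamma^{d,E_r^{\otimes n-1}}\circ\otimes^n$. Functoriality of $\Lbf\ell^r$ --- which underlies the proof of Theorem~\ref{thm2.6.20} --- shows that these idempotents are intertwined by the previously constructed isomorphism, so the isomorphism restricts to the direct summands and defines $\alpha(S^\mu,X)$. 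Naturality in $S^\mu$ is inherited from the naturality in $F$ of Theorem~\ref{thm2.6.20} and from that of Kuhn duality. The main obstacle is compatibility with products: both sides carry graded algebra structures as $\mu$ varies, induced by the multiplication $S^\mu\otimes S^\nu\to S^{\mu+\nu}$ together with the comultiplication $\Gamma^{p^r(a+b)}\to\Gamma^{p^ra}\otimes\Gamma^{p^rb}$ precomposed with $X$. Verifying that $\alpha$ is an algebra morphism reduces to a diagram chase checking that each ingredient of the construction --- the adjunction $\ell^r\dashv\Fr_r$, the Troesch-resolution computation behind Theorem~\ref{thm2.6.1}, and Kuhn duality --- is compatible with the relevant bialgebra structure on the divided and symmetric powers.
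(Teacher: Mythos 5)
Your proposal founders on precisely the point that the paper identifies as the whole difficulty of Section \ref{sect3.3}. Immediately after stating Theorem \ref{thm3.4.1} the paper remarks that an isomorphism between $\Extx{\PcalK}{\Gamma^{p^rd}\circ\otimes^n,S^{\mu(r)}}$ and $\Homx{\PcalK}{\Gamma^{d,E_r^{\otimes n-1}}\circ\otimes^n,S^\mu}$ \emph{was} already established in Theorem \ref{thm2.6.20}, but that it is not known whether that isomorphism is compatible with the products or with the action of $\Sfrak_n$; the entire content of Theorems \ref{thm3.3.1} and \ref{thm3.4.1} is the construction of a \emph{different} isomorphism for which these properties can be verified. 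Your final paragraph asserts that compatibility with products ``reduces to a diagram chase'' through the adjunction $\ell^r\dashv\Fr_r$, the Troesch computation, and Kuhn duality, but this is exactly the step that does not go through. The isomorphism of Theorem \ref{thm2.6.20} is assembled from the formality of $\Lbf\ell^r(F)$, and formality is obtained via Theorem \ref{thm:vdK} (descent of formality along the Frobenius twist, proved by $\mathrm{k}$-invariants): the resulting identification $\Lbf\ell^r(F)\simeq H_*\Lbf\ell^r(F)$ in the derived category is a non-canonical choice, and there is no mechanism in that construction forcing it to respect the coalgebra structure of $\Gamma^{*}$ or the $\Sfrak_n$-symmetry of $\otimes^n$. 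The paper circumvents this by working at the chain level with the Troesch coresolution $T$ of $I^{(r)}$ (Lemma \ref{lem3.2.5}), where the isomorphism $\bar\theta$ is written down explicitly on a complex of symmetric powers and its multiplicativity and $\Sfrak_n$-equivariance can be checked by hand, and then transports the result to one variable via the sum-diagonal adjunction $\beta$ of Lemma \ref{cor3.2.5}.

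The same issue undermines your passage to the direct factor $X$. To restrict along the idempotent $e\in\End(\otimes^n)$ one needs the isomorphism to be natural in $\otimes^n$, i.e.\ to intertwine $\Gamma^{p^rd}(e)$ with $\Gamma^{d,E_r^{\otimes n-1}}(e)$. The paper obtains this from the identification $\Homx{\PcalK}{\otimes^n,\otimes^n}\simeq\Bbbk\Sfrak_n$ together with the $\Sfrak_n$-equivariance of $\alpha(S^\mu)$ established in Theorem \ref{thm3.4.1}; equation \eqref{equ3.8} only asserts naturality in the target $F$, and ``functoriality of $\Lbf\ell^r$'' does not by itself yield equivariance of the composite isomorphism, for the reasons above (in particular the identification \eqref{equ2.6.6} of the $n$-fold derived convolution of $\Gamma^{d(r)}$ with $\left(\Gamma^{d,E_r^{\otimes n-1}}\right)^{(r)}$ is not manifestly symmetric in the factors). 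The parts of your argument that do work --- the vanishing of higher $\Ext$ into the injective $S^\mu$, and the formal summand argument once equivariance is granted --- are indeed how the paper proceeds, but the core of the proof, namely producing an isomorphism for which multiplicativity and $\Sfrak_n$-equivariance are actually verifiable, is missing.
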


This theorem, more concretely, its corollary \ref{cor3.3.9}, will be used in the following section to compute the cohomology of the orthogonal and symplectic groups schemes with coefficients in $S^\mu(\Bbbk^{2n\vee(r)})$.

The theorem \ref{thm3.3.8} is obtained from the following theorem.

\begin{thm}\label{thm3.4.1}
	Let $\mu$ be a tuple of natural numbers of weight $dn$.
	Let $r$ be a natural number.
	Then, there is a graded isomorphism, natural in $S^\mu$
	\begin{equation}
	\alpha(S^\mu):\Extx{\PcalK}{\Gamma^{p^rd}\circ\otimes^n,S^{\mu(r)}}\xrightarrow{\simeq} \Homx{\PcalK}{\Gamma^{d,E_r^{\otimes n-1}}\circ\otimes^n,S^\mu}.
	\end{equation}
	Moreover, $\alpha(S^\mu)$ satisfies the following properties.
	\begin{enumerate}
		\item[\rm (1)] $\alpha(S^\mu)$ is compatible with the products, i.e., we have
		\begin{equation*}
		\alpha(S^\mu)(c_1)\smile \alpha(S^\lambda)(c_2)=\alpha(S^\mu\otimes S^\lambda)(c_1\smile c_2).
		\end{equation*}
		\item[\rm (2)] $\alpha(S^\mu)$ is $\Sfrak_n$-equivariant (relatively to actions defined in Definition \rm{\ref{def3.2.3}}).
	\end{enumerate}
\end{thm}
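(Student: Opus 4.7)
The plan is to deduce Theorem \ref{thm3.4.1} from Theorem \ref{thm2.6.20}(3) by exploiting the injectivity of $S^\mu$ in $\PcalK$ to collapse the right-hand side from an $\Ext$ into a $\Hom$. Taking $X=\Gamma$ (so $\epsilon_X=0$) and $F=S^\mu$ in Theorem \ref{thm2.6.20}(3) gives an isomorphism of graded strict polynomial functors
\begin{equation*}
\Extxin{\PcalK}{\Gamma^{p^rd}\circ\otimes^n,S^{\mu(r)}}\simeq\Extxin{\PcalK}{\Gamma^{d,E_r^{\otimes n-1}}\circ\otimes^n,S^\mu}^{(r)},
\end{equation*}
and evaluation at $\Bbbk$ (using the canonical identification $\Bbbk^{(r)}\simeq\Bbbk$ of $\Bbbk$-vector spaces) converts both sides into the external $\Ext$-groups of interest.

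To reduce the right-hand side to a $\Hom$, I would verify that $S^\mu$ is injective in $\PcalK$. By the exponential decomposition of $S^{dn}_{\Bbbk^\ell}$ (the Kuhn dual of \eqref{equ1.3.0}), one has $S^{dn}_{\Bbbk^\ell}\simeq\bigoplus_{|\mu|=dn}S^\mu$, so each $S^\mu$ is a direct summand of the injective cogenerator $S^{dn}_{\Bbbk^\ell}$ and hence is itself injective. Consequently $\Ext^i_{\PcalK}(-,S^\mu)=0$ for $i>0$, and the right-hand side above becomes $\Homx{\PcalK}{\Gamma^{d,E_r^{\otimes n-1}}\circ\otimes^n,S^\mu}$. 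The composite isomorphism is the required $\alpha(S^\mu)$; its naturality in $S^\mu$ is inherited from Theorem \ref{thm2.6.20}(3).

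For the compatibility with products and the $\Sfrak_n$-equivariance, I would trace these structures through the chain of isomorphisms assembled in the proof of Theorem \ref{thm2.6.20}. Each constituent step respects the relevant symmetry: the left adjoint $\ell^r$ enjoys the K\"{u}nneth-type compatibility of Proposition \ref{pro2.5.5}; the key identity of Theorem \ref{pro2.6.6} is built from the closed symmetric monoidal structure on $\PcalK$ (Theorem \ref{pro2.3.2}); and the Frobenius-twist computation of Theorem \ref{thm2.6.1} is functorial. The Yoneda product on the $\Ext$-side, assembled from the coalgebra structure on $\Gamma^*$ and the tensor product of $S$-powers, should correspond under $\alpha$ to the tensor product of homomorphisms on the $\Hom$-side; and since every intermediate isomorphism is natural in the argument $\otimes^n$, permutations by $\Sfrak_n$ commute with $\alpha$.

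The main obstacle lies in the bookkeeping for product compatibility. While each individual isomorphism in the construction is monoidal in a precise sense, assembling them so that Yoneda products genuinely correspond to tensor products of homomorphisms requires care at the step involving the identification $\Gamma^{d(r)}\otimesxL{\PcalK}\cdots\otimesxL{\PcalK}\Gamma^{d(r)}\simeq\left(\Gamma^{d,E_r^{\otimes n-1}}\right)^{(r)}$ from the proof of Theorem \ref{thm2.6.8}, where the $\Sfrak_n$-action on the iterated derived tensor product must be matched against the action on $E_r^{\otimes n-1}$. I expect this matching to be the technical heart of the proof; once it is established, the remaining verifications reduce to routine naturality arguments.
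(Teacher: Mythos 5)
Your first paragraph is fine as far as it goes: specializing Theorem \ref{thm2.6.20}(3) to $X=\Gamma$ and $F=S^\mu$, and using the injectivity of $S^\mu$ (a direct summand of $S^{dn}_{\Bbbk^\ell}$), does yield a graded isomorphism between the two sides. But this is precisely the observation the paper makes right after stating Theorem \ref{thm3.4.1}, together with the caveat that it is not enough: the isomorphism supplied by Theorem \ref{thm2.6.20} is not known to be compatible with products or with the $\Sfrak_n$-action, and the entire content of the theorem lies in properties (1) and (2). Your plan for these — ``trace the structures through the chain of isomorphisms in the proof of Theorem \ref{thm2.6.20}'' — runs into an obstruction you do not address. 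That chain is assembled from formality statements: one only knows that \emph{some} isomorphism $\Lbf\ell^r(F)\simeq H_*\Lbf\ell^r(F)$ exists in the derived category. This isomorphism is produced by Proposition \ref{pro2.4.12}, which rests on Theorem \ref{thm:vdK} (descent of formality along the Frobenius twist, via $\mathrm{k}$-invariants); nothing makes it canonical, natural, monoidal, or equivariant, so the ``routine naturality arguments'' you defer to are not available at that step.

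The paper's actual proof therefore builds a \emph{different} isomorphism by explicit cochain-level work. It first proves the multivariable Theorem \ref{thm3.3.1} in the case $S^{\mu^1}=\cdots=S^{\mu^n}=\otimes^d$ using the Troesch coresolution $T$ of $I^{(r)}$: the complex $\Homx{\PcalKn}{\Gamma^{p^rd}\circ\boxtimes^n,\otimes^{d(r)}\boxtimes T(\otimes^d)^{\boxtimes n-1}}$ is concentrated in even degrees, and the graded splitting $T=(E_r\otimes S^{p^r})\oplus T'$ of Lemma \ref{lem3.2.5}, combined with the vanishing and twist statements of Lemma \ref{lem3.3.2}, gives an explicit identification on which products and the $\Sfrak_n$-action can be tracked by hand. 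The general $S^\mu$ is then handled by the left-exactness of Corollary \ref{cor3.3.1} and the presentation $T^\mu_1\to T^\mu_0\twoheadrightarrow S^\mu$ with $T^\mu_0=\otimes^d$; finally the sum-diagonal adjunction $\beta$ of Lemma \ref{cor3.2.5} and the exponential isomorphism transport the statement from $\boxtimes^n$ to $\otimes^n$. None of this machinery (or any substitute for it) appears in your proposal, so properties (1) and (2) remain unproved.
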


An isomorphism between $ \Extx{\PcalK}{\Gamma^{p^rd}\circ\otimes^n,S^{\mu(r)}}$ and $ \Homx{\PcalK}{\Gamma^{d,E_r^{\otimes n-1}}\circ\otimes^n,S^\mu} $ was established  in Theorem \ref{thm2.6.20}, however, we don't know if this isomorphism is compatible with the products and the actions of $\Sfrak_n$.
Our effort is to construct another isomorphism satisfying these properties.
The construction of this new isomorphism use essentially the isomorphism \eqref{equ3.8} in Theorem \ref{thm2.6.20}.

\subsection{ Cup products and actions of the symmetric group}\label{subsect3.2.2}

We recall the definition of cup products. Let $(\Mcal,\otimes)$ be an abelian category with enough injective objects, equipped with a biexact monoidal product $\otimes$ which preserves the injectives.
This monoidal product induces a tensor product on the $\Ext$-groups of $\Mcal$:
\begin{equation}\label{equ3.2.1}
\otimes: \Extx{\Mcal}{A_1,B_1}\otimes\Extx{\Mcal}{A_2,B_2}\to\Extx{\Mcal}{A_1\otimes A_2,B_1\otimes B_2}.
\end{equation}

The category of the strict polynomial functors in one or several variables  $(\PcalKn,\otimes)$ and the category $(G\Mod,\otimes)$ of the rational modules over algebraic group scheme $G$ have a tensor product on the $\Ext$-groups.

Let $C=(C^0,C^1,\ldots)$ be a graded coalgebra in $(\Mcal,\otimes)$. We denote by $\Delta_{d,e}:C^{d+e}\to C^d\otimes C^e$ the diagonal morphism.
We define the cup product
\begin{equation*}
\begin{array}{cccc}
\smile: &\Extx{\Mcal}{C^d,B_1}\otimes \Extx{\Mcal}{C^e,B_2}&\to&\Extx{\Mcal}{C^{d+e},B_1\otimes B_2}\\
\noalign{\medspace}
&e_1\otimes e_2&\mapsto& \Delta_{d,e}^*(e_1\otimes e_2).
\end{array}
\end{equation*}
The following lemma states some elementary properties of the products that we need in the sequel.

\begin{lem}\label{lem3.2.1}
	Let $\Mcal,\Mcal_1,\Mcal_2$ be abelian categories with enough injective objects, equipped with a biexact monoidal product $\otimes$ which preserves the injectives.
	\begin{enumerate}
		\item[\rm (1)] Let $\phi:\Mcal_1\to\Mcal_2$ be an exact functor.
		Suppose that $\phi$ is a strict monoidal functor. 
		The morphism $\phi:\Extx{\Mcal_1}{A,B}\to\Extx{\Mcal_2}{\phi A,\phi B}$ induced by $\phi$ is compatible with the products, i.e., we have $\phi(c_1\smile c_2)=(\phi c_1)\smile(\phi c_2)$.
		\item[\rm (2)] Let $\phi:\Mcal\to\Mcal$ be a strict monoidal functor.
		Let $\phi\to\Id$ be a natural transformation which is compatible with the monoidal product.
		The morphism induced by this natural transformation $\Extx{\Mcal}{A,B}\to \Extx{\Mcal}{\phi A,B}$ is compatible with the products.
	\end{enumerate}
\end{lem}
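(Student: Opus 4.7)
The plan is to prove both parts by working with the Yoneda description of Ext and the formula $e_1 \smile e_2 = \Delta_{d,e}^*(e_1 \otimes e_2)$ that \emph{defines} the cup product. Each assertion then reduces to two ingredients: compatibility of the operation under consideration with (a) the external tensor product \eqref{equ3.2.1} on Ext, and (b) pullback along a morphism of the underlying category.

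For assertion (1), the exact monoidal functor $\phi$ sends Yoneda $n$-extensions termwise to Yoneda $n$-extensions, giving the map $\Extx{\Mcal_1}{A,B} \to \Extx{\Mcal_2}{\phi A, \phi B}$. Since $\phi$ is strict monoidal, applying $\phi$ termwise to the tensor product (of complexes) of two Yoneda extensions yields the tensor product of their images, so $\phi(e_1 \otimes e_2) = \phi(e_1) \otimes \phi(e_2)$; and pullback along a morphism is preserved by $\phi$ by plain functoriality, so $\phi(f^* e) = \phi(f)^* \phi(e)$. Combining these with the fact that $\phi(\Delta_{d,e})$ is the diagonal of the coalgebra $\phi C$ (again by strict monoidality), I would write
\begin{equation*}
\phi(e_1 \smile e_2) \;=\; \phi\bigl(\Delta_{d,e}^*(e_1 \otimes e_2)\bigr) \;=\; \phi(\Delta_{d,e})^*\bigl(\phi(e_1) \otimes \phi(e_2)\bigr) \;=\; \phi(e_1) \smile \phi(e_2).
\end{equation*}

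For assertion (2), the map $\Extx{\Mcal}{A,B} \to \Extx{\Mcal}{\phi A, B}$ is the pullback $\eta_A^*$ along the component $\eta_A : \phi A \to A$ of the natural transformation $\phi \to \Id$. Writing out the two sides of the claimed compatibility, it reduces to the identity
\begin{equation*}
\bigl(\Delta_{d,e} \circ \eta_{C^{d+e}}\bigr)^*(e_1 \otimes e_2) \;=\; \bigl((\eta_{C^d} \otimes \eta_{C^e}) \circ \phi(\Delta_{d,e})\bigr)^*(e_1 \otimes e_2),
\end{equation*}
where I have used bifunctoriality of the external tensor product on Ext to pull the two pullbacks past $\otimes$ on the right-hand side. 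Equality of the two composites in $\Homx{\Mcal}{\phi C^{d+e},\, C^d \otimes C^e}$ then follows from naturality of $\eta$ applied to $\Delta_{d,e}$, which gives $\eta_{C^d \otimes C^e} \circ \phi(\Delta_{d,e}) = \Delta_{d,e} \circ \eta_{C^{d+e}}$, and the assumed compatibility of $\eta$ with the monoidal product, which identifies $\eta_{C^d \otimes C^e}$ with $\eta_{C^d} \otimes \eta_{C^e}$.

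The only substantive point in the argument is the bifunctoriality of the external tensor product on Ext, but this is standard and holds precisely because $\otimes$ is biexact and preserves injectives, so a tensor product of injective coresolutions of $B_1$ and $B_2$ is again an injective coresolution of $B_1 \otimes B_2$ and computes the product \eqref{equ3.2.1}. The rest is purely formal, and the main obstacle is bookkeeping: keeping track of conventions so that \emph{strict monoidal} and \emph{compatibility with the monoidal product} are applied to the right morphisms at the right places.
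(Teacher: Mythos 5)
The paper states this lemma without proof (it is introduced only as recording ``elementary properties of the products''), so there is nothing to compare against; your argument is the standard formal verification and it is correct. Both reductions are sound: writing $e_1\smile e_2=\Delta_{d,e}^*(e_1\otimes e_2)$, everything comes down to (a) bifunctoriality of the external product on $\Ext$, i.e.\ $(f_1\otimes f_2)^*(x_1\otimes x_2)=f_1^*x_1\otimes f_2^*x_2$, which holds because tensor products of injective coresolutions again compute the relevant derived functors under the stated hypotheses, and (b) in case (1) the fact that an exact strict monoidal functor commutes with pullbacks and with the external product of Yoneda extensions, and in case (2) the naturality of $\eta$ applied to $\Delta_{d,e}$ together with $\eta_{X\otimes Y}=\eta_X\otimes\eta_Y$. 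The only point worth spelling out slightly more, if you wanted a fully self-contained write-up, is the identification in (1) of the Yoneda-extension description of $\phi$'s action and of the external product with the resolution-based ones (e.g.\ by describing $e_1\otimes e_2$ as the Yoneda splice of $e_1\otimes A_2$ with $B_1\otimes e_2$, both of which $\phi$ visibly preserves); as written this is glossed as ``applying $\phi$ termwise,'' which is harmless but compresses a standard comparison.
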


\begin{deff}\label{def3.2.2}
	Let $\sigma$ be an element of the symmetric group $\Sfrak_n$. 
	\begin{enumerate}
		\item[\rm (1)] Denote by $\sigma_\otimes:\otimes^n\to\otimes^n$ the morphism which sends the element $v_1\otimes\cdots\otimes v_n\in V^{\otimes n},V\in\VcalK$ to $v_{\sigma^{-1}(1)}\otimes\cdots\otimes v_{\sigma^{-1}(n)}$. We obtain an action of $\Sfrak_n$ on the functor $\otimes^n\in\Pcalx{n}$.
		\item[\rm (2)] Denote by $\sigma_{\mathcal{V}}:\VcalK^{\times n}\to\VcalK^{\times n}$ the morphism which sends the object $(V_1,\ldots,V_n)$ of $\VcalK^{\times n}$ to $\left(V_{\sigma^{-1}(1)},\ldots,V_{\sigma^{-1}(n)}\right)$. We obtain an action of $\Sfrak_n$ on the category $\VcalK^{\times n}$.
		\item[\rm (3)] Denote by $\sigma_\boxtimes:\boxtimes^n\to\boxtimes^n\circ\sigma_{\mathcal{V}}$ the morphism which sends the element $v_1\otimes\cdots\otimes v_n\in V_1\otimes\cdots\otimes V_n$ to $v_{\sigma^{-1}(1)}\otimes\cdots\otimes v_{\sigma^{-1}(n)}$.
		\item[\rm (4)] Denote by $\sigma_\oplus:\oplus^n\to\oplus^n$ the morphism which sends the element $(v_1,\ldots, v_n)$ $\in V^{\oplus n},V\in\VcalK$ on $\left(v_{\sigma^{-1}(1)},\ldots, v_{\sigma^{-1}(n)}\right)$. We obtain an action of $\Sfrak_n$ on the functor $\oplus^n\in\Pcalx{1}$.
		\item[\rm (5)] Denote by $\sigma_\boxplus:\boxplus^n\to\boxplus^n\circ\sigma_{\mathcal{V}}$ the morphism which sends the element $(v_1,\ldots,v_n)\in V_1\oplus\cdots\oplus V_n$ on $(v_{\sigma^{-1}(1)},\ldots,v_{\sigma^{-1}(n)})$.
	\end{enumerate}
\end{deff}

For each $\sigma\in\Sfrak_n$, we have $\boxtimes^n\circ\sigma_{\mathcal{V}}\circ\Delta_n=\otimes^n$ and $\boxplus^n\circ\sigma_{\mathcal{V}}\circ\Delta_n=\oplus^n$.
By definition \ref{def3.2.2}, $\sigma_\otimes$ and $\sigma_\oplus$ are respectively the composite morphisms:
\begin{gather*}
\otimes^n=\boxtimes^n\circ\Delta_n\xrightarrow{\sigma_\boxtimes(\Delta_n)}\boxtimes^n\circ\sigma_{\mathcal{V}}\circ\Delta_n=\otimes^n,\\
\oplus^n=\boxplus^n\circ\Delta_n\xrightarrow{\sigma_\boxplus(\Delta_n)}
\boxplus^n\circ\sigma_{\mathcal{V}}\circ\Delta_n=\oplus^n.
\end{gather*}
Let $F,G\in\PcalK$.
The symmetric group $\Sfrak_n$ acts on $F\circ\otimes^n$ and $\Extxx{i}{\PcalK}{F\circ\otimes^n,G}$.
More precisely, an element $\sigma\in\Sfrak_n$ acts on $F\circ\otimes^n$ by $F(\sigma_\otimes)$ and acts on $\Extxx{i}{\PcalK}{F\circ\otimes^n,G}$ by $\Extxx{i}{\PcalK}{F(\sigma^{-1}_\otimes),G}$.
Moreover, each morphism $f:F_1\to F_2$ in $\PcalK$ induces  $\Sfrak_n$-equivariant morphisms:
\[ F_1\circ\otimes^n\to  F_2\circ\otimes^n,\qquad \Extxx{i}{\PcalK}{F_2\circ\otimes^n,G}\to\Extxx{i}{\PcalK}{F_1\circ\otimes^n,G}. \]

\begin{deff}\label{def3.2.3} Let $F,G\in\PcalKn$. 
	For each $\sigma\in\Sfrak_n$, we define the action of $\sigma$ on $\Extx{\PcalKn}{F\circ\boxtimes^n,G\circ\boxplus^n}$ by the composition:
	\begin{align*}
	\Extx{\PcalKn}{F\circ\boxtimes^n,G\circ\boxplus^n}&
	\to\Extx{\PcalKn}{F\circ\boxtimes^n\circ\sigma^{-1}_{\mathcal{V}},G\circ\boxplus^n\circ\sigma^{-1}_{\mathcal{V}}}\\
	&\to \Extx{\PcalKn}{F\circ\boxtimes^n,G\circ\boxplus^n}
	\end{align*}
	where the first map is induced by precomposition with $\sigma^{-1}_{\mathcal{V}}$, and the second one is the map $\Extx{\PcalKn}{F(\sigma^{-1}_\boxtimes),G(\sigma_\boxplus)}$. 
	We obtain an action of $\Sfrak_n$ on $\Ext$-group $\Extx{\PcalKn}{F\circ\boxtimes^n,G\circ\boxplus^n}$.
	We can define in analogous manner the action of the symmetric group $\Sfrak_n$ on the $\Ext$-groups:
	\begin{equation*}
	\Extx{\PcalKn}{F\circ\boxtimes^n,G\circ\boxtimes^n},\, \Extx{\PcalKn}{F\circ\boxplus^n,G\circ\boxplus^n},\, \Extx{\PcalKn}{F\circ\boxplus^n,G\circ\boxtimes^n}.
	\end{equation*}
\end{deff}
By this definition, for $\sigma\in\Sfrak_n$ and $f:F\circ\boxtimes^n\to G\circ\boxplus^n$, the morphism $\sigma\cdot f$ is the composition:
\begin{equation*}
F\left(\bigotimes_{i=1}^nV_i\right)\xrightarrow{F(\sigma^{-1}_\boxtimes)}F\left(\bigotimes_{i=1}^nV_{\sigma(i)}\right)\xrightarrow{f}G\left(\bigoplus_{i=1}^nV_{\sigma(i)}\right)\xrightarrow{G(\sigma_\boxplus)}G\left(\bigoplus_{i=1}^nV_{i}\right).
\end{equation*}
In particular, we have an action of $\Sfrak_n$ on the vector space $\Homx{\PcalKn}{\boxtimes^n,\boxplus^n}$. The Yoneda's lemma induces an isomorphism $\Homx{\PcalKn}{\boxtimes^n,\boxplus^n}\simeq \boxplus^n(\Bbbk,\ldots,\Bbbk)=\Bbbk^n$. This isomorphism is $\Sfrak_n$-equivariant where the group $\Sfrak_n$ acts canonically on $\Bbbk^n$, i.e., $\sigma\cdot(\lambda_1,\ldots,\lambda_n)=\left(\lambda_{\sigma^{-1}(1)},\ldots,\lambda_{\sigma^{-1}(n)}\right)$ for $\lambda_i\in\Bbbk$.

\begin{vid}\label{rem3.2.4}
	Let $F,G$ be two strict polynomial functors. The precomposition with $\boxplus^n$ induces a graded morphism $\Extx{\PcalK}{F,G}$ $\to\Extx{\PcalKn}{F\circ\boxplus^n,G\circ\boxplus^n}$. 
	The symmetric group $\Sfrak_n$ acts trivially on $\Homx{\PcalK}{F,G}$ {and acts} on the vector space $\Homx{\PcalKn}{F\circ\boxplus^n,G\circ\boxplus^n}$ as in Definition \ref{def3.2.3}. Then, this morphism is $\Sfrak_n$-equivariant. 
	
	The graded morphism
	$\Extx{\PcalK}{F,G}\to\Extx{\PcalKn}{F\circ\boxtimes^n,G\circ\boxtimes^n}$
	induced by the precomposition with $\boxtimes^n$ is also $\Sfrak_n$-equivariant if $\Sfrak_n$ acts trivially on the source of the morphism and its action on the target of the morphism is given by the precomposition with $\boxtimes^n$.
\end{vid}

Let $F_1,\ldots,F_n$ be strict polynomial functors and $\sigma$ be an element of $\Sfrak_n$.
Denote by $\sigma_{(F_1,\ldots,F_n)}$ the map $\bigotimes_{i=1}^nF_i(V_i)$ $\to \bigotimes_{i=1}^nF_{\sigma^{-1}(i)}(V_{\sigma^{-1}(i)})$ induced by the permutation in the tensor product of factors. Thus the morphism $\sigma_{(F_1,\ldots,F_n)}$ is a natural transformation of $\boxtimesx{i=1}{n}F_i\to \left(\boxtimesx{i=1}{n}F_{\sigma^{-1}(i)}\right)\circ\sigma_{\mathcal{V}}$. In particular, if $F_1=\cdots=F_n=I$ then $\sigma_{(I,\ldots,I)}$ is equal to the morphism $\sigma_\boxtimes$ in Definition \ref{def3.2.2}.

\begin{deff}\label{def3.2.5} Let $F_1,\ldots,F_n$ be strict polynomial functors. Let $\sigma$ be an element of $\Sfrak_n$. Define a morphism $\sigma\cdot:\Extx{\PcalKn}{\Gamma^d\circ\boxtimes^n,\boxtimesx{i=1}{n}F_i}\to\Extx{\PcalKn}{\Gamma^d\circ\boxtimes^n,\boxtimesx{i=1}{n}F_{\sigma^{-1}(i)}}$
	to be the composition
	\begin{align*}
	\Extx{\PcalKn}{\Gamma^d\circ\boxtimes^n,\boxtimesx{i=1}{n}F_i}&\to \Extx{\PcalKn}{\Gamma^d\circ\boxtimes^n\circ\sigma^{-1}_{\mathcal{V}},\left(\boxtimesx{i=1}{n}F_i\right)\circ\sigma^{-1}_{\mathcal{V}}}\\
	&\to \Extx{\PcalKn}{\Gamma^d\circ\boxtimes^n,\boxtimesx{i=1}{n}F_{\sigma^{-1}(i)}}
	\end{align*}
	where the first map is induced by the precomposition with $\sigma^{-1}_{\mathcal{V}}$, and the second is the map $\Extx{\PcalKn}{\Gamma^d(\sigma^{-1}_{\boxtimes}),\sigma_{(F_1,\ldots,F_n)}}$.
\end{deff}

By this definition, for $\sigma\in\Sfrak_n$, $(V_1,\ldots,V_n)\in\VcalK^{\times n}$ and $f:\Gamma^d\circ\boxtimes^n\to \boxtimesx{i=1}{n}F_i$, the morphism $\sigma\cdot f$ is the composition:
\begin{equation*}
\Gamma^d\left(\bigotimes_{i=1}^nV_i\right)\xrightarrow{\Gamma^d(\sigma^{-1}_{\mathcal{V}})} \Gamma^d\left(\bigotimes_{i=1}^nV_{\sigma(i)}\right)\xrightarrow{f} \bigotimes_{i=1}^nF_i(V_{\sigma(i)})\xrightarrow{\sigma_{(F_1,\ldots,F_n)}}\bigotimes_{i=1}^nF_{\sigma^{-1}(i)}(V_{i}).
\end{equation*}

\subsection{ Compatibility with the products and the actions of $\Sfrak_n$}

The following result is an important consequence of Theorem \ref{thm2.6.20}.

\begin{cor}\label{cor3.3.1} 
	Let $\mu^2,\ldots,\mu^n$ be tuples of natural numbers. 
	For $k\in\Nbb$, the following functor from $\PcalK$ to $\VcalK$ is exact:
	\begin{equation}\label{equ3.3.1}
	F\mapsto 
	\Extxx{k}{\PcalKn}{\Gamma^{p^rd}\circ\boxtimes^n,F^{(r)}\boxtimes\boxtimesx{i=2}{n}S^{\mu^i(r)}}.
	\end{equation}
\end{cor}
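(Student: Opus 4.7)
The plan is to convert the given $\Ext$-group, via the ``formality'' isomorphism of Theorem~\ref{thm2.6.20}, into one that can be computed by the K\"unneth-type formula of Theorem~\ref{pro2.6.6}, and then to exploit flatness of the standard projectives for the Day convolution in order to kill all higher derived contributions, leaving an explicit $\Hom$ that is plainly exact in $F$.

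First, I would apply the isomorphism~\eqref{equ3.7} of Theorem~\ref{thm2.6.20} with $X=\Gamma$ (so $\epsilon_\Gamma=0$ and there is no cohomological shift) and $G=F\boxtimes\boxtimesx{i=2}{n}S^{\mu^i}$, obtaining a natural-in-$F$ isomorphism
\begin{equation*}
\Extxx{k}{\PcalKn}{\Gamma^{p^rd}\circ\boxtimes^n,F^{(r)}\boxtimes\boxtimesx{i=2}{n}S^{\mu^i(r)}}\simeq\Extxx{k}{\PcalKn}{\Gamma^{d,E_r^{\otimes n-1}}\circ\boxtimes^n,F\boxtimes\boxtimesx{i=2}{n}S^{\mu^i}}^{(r)}.
\end{equation*}
Because the Frobenius twist is an exact operation on $\Bbbk$-vector spaces, it suffices to show that the functor $F\mapsto\Extxx{k}{\PcalKn}{\Gamma^{d,E_r^{\otimes n-1}}\circ\boxtimes^n,F\boxtimes\boxtimesx{i=2}{n}S^{\mu^i}}$ is exact; since $\Ext$ vanishes when degrees do not match, we may restrict attention to $F\in\Pcalx{d}$ with $|\mu^i|=d$ for every $i\ge 2$.

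Next, I would invoke Theorem~\ref{pro2.6.6} (isomorphism~\eqref{equ2.6.2}) with first argument $\Gamma^{d,E_r^{\otimes n-1}}\in\Pcalx{d}$, using $(S^{\mu^i})^\sharp\simeq\Gamma^{\mu^i}$, to rewrite
\begin{equation*}
\RHomxin{\PcalKn}{\Gamma^{d,E_r^{\otimes n-1}}\circ\boxtimes^n,F\boxtimes\boxtimesx{i=2}{n}S^{\mu^i}}\simeq\left(\Gamma^{d,E_r^{\otimes n-1}}\otimesxL{\PcalK}F^\sharp\otimesxL{\PcalK}\Gamma^{\mu^2}\otimesxL{\PcalK}\cdots\otimesxL{\PcalK}\Gamma^{\mu^n}\right)^\sharp\circ\boxtimes^n.
\end{equation*}
The main technical observation is that the standard projectives are flat for the Day convolution: by Theorem~\ref{pro2.3.2}(2) the functor $-\otimesx{\PcalK}\Gamma^{d,V}$ is the parameterization $(-)^V$, which is exact, so $\Gamma^{d,E_r^{\otimes n-1}}$ is flat; and each $\Gamma^{\mu^i}$ is flat as a direct summand of $\Gamma^{|\mu^i|,\Bbbk^\ell}$ via~\eqref{equ1.3.0}. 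Consequently the derived tensor products reduce to ordinary tensor products, and the displayed $\Rbf\mathbf{Hom}$ is concentrated in external cohomological degree~$0$.

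It follows that the $\Ext$-group vanishes in every positive external Ext-degree (trivially exact in $F$), and in external degree~$0$ it equals the explicit $\Hom$ given by $((F^\sharp)^{E_r^{\otimes n-1}}\otimesx{\PcalK}\Gamma^{\mu^2}\otimesx{\PcalK}\cdots\otimesx{\PcalK}\Gamma^{\mu^n})^\sharp\circ\boxtimes^n$, which is exact in $F$ because Kuhn duality, the parameterization $(-)^{E_r^{\otimes n-1}}$, tensoring with flat functors, and precomposition with $\boxtimes^n$ are all exact. The internal grading coming from $E_r^{\otimes n-1}$ is merely distributed over the external Ext-degree, so exactness descends to each homogeneous piece $k$. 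The main obstacle is really just the flatness verification for the standard projectives, but this is essentially tautological from Theorem~\ref{pro2.3.2}(2).
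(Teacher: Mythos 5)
Your proof is correct and follows essentially the same route as the paper's: reduce to the untwisted case via Theorem \ref{thm2.6.20}, then identify the resulting $\Ext$-groups via Theorem \ref{pro2.6.6} together with the flatness of the standard projectives for the Day convolution. The only difference is cosmetic: the paper delegates the second step to Lemma \ref{lem3.3.2a} (exactness of the corresponding $\Hom$-functor with injective factors), whereas you invoke the derived form \eqref{equ2.6.2} directly and thereby make explicit the vanishing of the higher $\Ext$-groups that the paper leaves implicit.
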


In order to prove this corollary, we first establish the following lemma that will also be used in the proof of Theorem \ref{thm3.3.1}.
\begin{lem}\label{lem3.3.2a}
	Let $J^2,\ldots,J^n\in\PcalK $ be injectives  and $V\in\VcalK$. 
	The functor
	\begin{equation*}
	F\mapsto \Homx{\PcalKn}{\Gamma^{d,V}\circ\boxtimes^n,F\boxtimes\boxtimesx{i=2}{n}J^i}
	\end{equation*}
	from $\PcalK$ to $\VcalK$ is an exact functor.
\end{lem}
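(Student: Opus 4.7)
The plan is to reduce exactness to a combination of Kuhn duality, Day-convolution tensor product with a projective object, and evaluation at $\Bbbk$. First I would invoke the isomorphism \eqref{equ2.6.1} of Theorem \ref{pro2.6.6} with the outer functor equal to $\Gamma^{d,V}$, with $F_1 = F$ and $F_i = J^i$ for $i\ge 2$. Evaluating the resulting isomorphism of internal Homs at $(\Bbbk,\ldots,\Bbbk)$ and using \eqref{iso2.3.1} to pass back to the ordinary Hom, one gets a natural isomorphism
\[
\Homx{\PcalKn}{\Gamma^{d,V}\circ\boxtimes^n,\,F\boxtimes\boxtimesx{i=2}{n}J^i}
\simeq
\left(\Gamma^{d,V}\otimesx{\PcalK} F^\sharp\otimesx{\PcalK}\bigotimes_{i=2}^n(J^i)^\sharp\right)^\sharp(\Bbbk).
\]

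Next I would analyze the right-hand side as a composition of exact operations in the variable $F$. Kuhn duality is an exact contravariant self-equivalence of $\PcalK$, so each $(J^i)^\sharp$ is projective. The functor $\Gamma^{d,V}$ is a standard projective by Proposition \ref{pro1.5.7}, and since the Day-convolution tensor product of projectives is projective (see the remark following Proposition \ref{pro1.5.7}), the object
\[
P := \Gamma^{d,V}\otimesx{\PcalK}\bigotimes_{i=2}^n(J^i)^\sharp
\]
is projective in $\PcalK$. By Theorem \ref{pro2.3.2}(2), the tensor product with the standard projective $\Gamma^{d,V}$ is the parametrization functor $(-)^V$, which is exact; a general projective $P$ is a direct summand of a direct sum of standard projectives, so $P\otimesx{\PcalK}(-)$ is exact.

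Finally the functor in question factors as
\[
F \;\longmapsto\; F^\sharp \;\longmapsto\; P\otimesx{\PcalK}F^\sharp \;\longmapsto\; \bigl(P\otimesx{\PcalK}F^\sharp\bigr)^\sharp \;\longmapsto\; \bigl(P\otimesx{\PcalK}F^\sharp\bigr)^\sharp(\Bbbk).
\]
The first and third arrows are exact contravariant (Kuhn duality), the second is exact covariant (tensor with the projective $P$), and evaluation at $\Bbbk$ is exact since kernels and cokernels in $\PcalK$ are computed pointwise. The composite is therefore an exact covariant functor in $F$, which is the required statement. There is no real obstacle beyond identifying $P$ as a projective and assembling the Day-convolution formulas already established in the paper.
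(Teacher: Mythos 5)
Your proof is correct and follows essentially the same route as the paper: both apply Theorem \ref{pro2.6.6} to rewrite the $\Hom$-group as the Kuhn dual of a Day convolution involving the projectives $(J^i)^\sharp$, and both conclude because convolving with a projective is exact (standard projectives give parametrization). One small caveat: the remark after Proposition \ref{pro1.5.7} concerns $\boxtimes$ and the pointwise $\otimes$, not the Day convolution $\otimesx{\PcalK}$, but your direct argument via $\Gamma^{d,V}\otimesx{\PcalK}\Gamma^{d,W}\simeq\Gamma^{d,V\otimes W}$ and direct summands covers this, so nothing is missing.
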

\begin{proof}
	By Theorem \ref{pro2.6.6}, there is an isomorphism natural in $F$:
	\begin{equation*}
	\Homx{\PcalKn}{\Gamma^{d,V}\circ\boxtimes^n,F\boxtimes\boxtimesx{i=2}{n}J^i}\simeq\left(F^{\sharp}\otimesx{\PcalK}J^{2\sharp}\otimesx{\PcalK}\cdots\otimesx{\PcalK}J^{n\sharp}\right)^\sharp(V).
	\end{equation*}
	Moreover, since $J^{i\sharp}$ is projective, the functor $-\otimesx{\PcalK}J^{i\sharp}$ is exact. It follows that the right hand side of the above isomorphism is an exact functor of  $F$, which proves the lemma.
\end{proof}

\begin{proof}[\rm \bfseries Proof of Corollary {\rm \ref{cor3.3.1}}]
	By Theorem \ref{thm2.6.20}, the functor \eqref{equ3.3.1} is isomorphic to the functor 
	\begin{equation*}
	F\mapsto \Extxx{k}{\PcalKn}{\Gamma^{d,E_r^{\otimes n-1}}\circ\boxtimes^n,F\boxtimes\boxtimesx{i=2}{n}S^{\mu^i}}
	\end{equation*}
	which is exact by Lemma \ref{lem3.3.2a}.
	This is the desired conclusion.
\end{proof}

The second ingredient that we use  to prove Theorem \ref{thm3.3.1} is the explicit injective coresolution $T$ of $I^{(r)}$ constructed by Friedlander-Suslin \cite{FS97} if $p=2$ and by Troesch \cite{Tro05b} in general case.
This idea of using this coresolution of Troesch to compute $\Ext$-groups in the category of the strict polynomial functors is due to Touz\'e, see \cite{Tou12}.
We describe the injective coresolution $T$ as a graded object in the following lemma (the explicit formula of the differential of $T$ is not necessary in the sequel).

\begin{lem}[\text{\cite{Tou12,Tro05b}}]\label{lem3.2.5}
	As graded functors, we have a decomposition
	\begin{equation*}
	T=\left(E_r\otimes S^{p^r}\right)\oplus T'
	\end{equation*}
	where the functor $T'$ is a direct sum of $S^\mu$ for some tuple of natural numbers $\mu$ which is not a multiple of $p^r$.
\end{lem}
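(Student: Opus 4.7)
The plan is to unwind Troesch's explicit construction of the coresolution $T$ (given in \cite{Tro05b} and recalled in \cite[Section 4]{Tou12}) and read off which symmetric powers $S^\mu$ appear in each homological degree. By construction, every term $T^k$ is a direct sum of symmetric functors $S^\mu$ indexed by certain combinatorial data, where $\mu$ runs over tuples of positive integers of total weight $p^r$. Thus the statement reduces to two independent tasks: identifying precisely the summands whose index $\mu$ is the one-part tuple $(p^r)$, and observing that every other $\mu$ has at least two parts.

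The first step is to isolate, in Troesch's combinatorial parameterization, the indexing data corresponding to a single-part tuple $\mu = (p^r)$. The content here is that such data occurs in exactly the homological degrees $k = 0, 2, \ldots, 2p^r - 2$, contributing one copy of $S^{p^r}$ in each such degree and nothing elsewhere. This reproduces the Poincaré series of $E_r$ (which has $\Bbbk$ in the even degrees $0, 2, \ldots, 2p^r - 2$ and vanishes otherwise), so the graded subfunctor of $T$ spanned by the one-part summands is canonically isomorphic to $E_r \otimes S^{p^r}$.

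The second step is to let $T'$ be the graded complement, spanned by the summands $S^\mu$ with $\mu = (\mu_1, \ldots, \mu_m)$ having $m \geq 2$ positive parts summing to $p^r$. For any such $\mu$, an identity of the form $\mu = p^r \cdot \nu$ for a tuple $\nu$ of natural numbers would force $|\mu| = p^r |\nu| \geq m\, p^r > p^r$, which is impossible. Hence $\mu$ is not a multiple of $p^r$, and $T'$ fulfills the description in the statement.

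The main obstacle is the combinatorial bookkeeping in the first step: extracting from Troesch's indexing scheme (which differs notationally between the $p = 2$ case of \cite{FS97} and the general case of \cite{Tro05b}) the precise number of one-part summands in each homological degree, and checking that this count matches $E_r$. Once this multiplicity calculation is in hand, the decomposition $T \cong (E_r \otimes S^{p^r}) \oplus T'$ of graded functors follows from pure linear algebra.
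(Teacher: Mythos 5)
Your proposal follows the same route as the paper: read off Troesch's explicit description of $T$ as a direct sum of symmetric functors, match the one-part summands against $E_r$, and observe that every multi-part summand of total weight $p^r$ cannot be a multiple of $p^r$. The one step you defer as ``combinatorial bookkeeping'' is exactly what the paper's proof supplies from \cite{Tro05b}: $T^i=\bigoplus_{\mu\in\Ical(i)}S^{\mu}$, where $\Ical(i)$ is the set of tuples $(\mu_0,\ldots,\mu_{p^r-1})$ of natural numbers with $\sum_j\mu_j=p^r$ and $\sum_j j\mu_j=p^r\lflooroper{i/2}+p^{r-1}\left(i-2\lflooroper{i/2}\right)$, so a one-part solution $\mu_j=p^r$ forces $i=2j$ with $0\le j\le p^r-1$, giving exactly one copy of $S^{p^r}$ in each even degree $0,2,\ldots,2p^r-2$ and none elsewhere.
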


\begin{proof} 
	For a natural number $i$, denote by $\Ical(i)$ the set of tuples of natural numbers $\mu=\left(\mu_0,\mu_1,\ldots,\mu_{p^r-1}\right)$ such that $\sum_{j=0}^{p^r-1}\mu_j=p^r$ and $\sum_{j=0}^{p^r-1}j\mu_j=p^r\lflooroper{\frac{i}{2}}+p^{r-1}\left(i-2\lflooroper{\frac{i}{2}} \right)$. 
	By \cite{Tro05b}, the part of cohomological degree $i$ of the complex  $T$ is $T^i=\bigoplus_{\mu\in\Ical(i)} S^{\mu}$.
	
	Let $\mu$ be an element of $\Ical(i)$.
	We have that $\sum_{j=0}^{p^r-1}\mu_j=p^r$ and $\mu_j\in\Nbb$. 
	Then $p^r$ divides $\mu$ if and only if $\mu$ is of the form $\left(0,\ldots,0,p^r,0,\ldots,0\right)$. Then we have $p^r\in \Ical(i)$ if and  only if $i\in\{0,2,\ldots,2p^r-2\}$. 
	Moreover, for each $\mu\in\Ical(i)\setminus\{p^r\}$, $p^r$ does not divide $\mu$.
	Thus we obtain the desired decomposition of $T$ as graded functor.
\end{proof}

The third ingredient of the proof of Theorem \ref{thm3.3.1} is the following generalization of \cite[Lemma 2.2, 2.3]{Tou12}.

\begin{lem}\label{lem3.3.2} Let $\mu^2,\ldots,\mu^n$ be tuples of natural numbers and $F,F_1,\ldots,F_n$ be objects of $\PcalK$.
	\begin{enumerate}
		\item[\rm (1)] If there is an $i$ such that $p^r$ does not divide $\mu^i$ then 
		\begin{equation*}
		\Homx{\PcalKn}{\Gamma^{p^rd}\circ\boxtimes^n,F^{(r)}\boxtimes S^{\mu^2}\cdots\boxtimes S^{\mu^n}}=0.
		\end{equation*}
		\item[\rm (2)] The following morphism, which is induced by Frobenius twist, is an isomorphism: 
		\begin{equation*}
		\Homx{\PcalKn}{\Gamma^d\circ\boxtimes^n,\boxtimesx{i=1}{n}F_i}\xrightarrow{\simeq} \Homx{\PcalKn}{\Gamma^{p^rd}\circ\boxtimes^n,\boxtimesx{i=1}{n}F_i^{(r)}}.
		\end{equation*}
		\item[\rm (3)] The following morphism, which is induced by the canonical inclusions $S^{\mu^i(r)}\hookrightarrow S^{p^r\mu^i}, i=2,\ldots,n$, is an isomorphism:
		\begin{equation*}
		\Homx{\PcalKn}{\Gamma^{p^rd}\circ\boxtimes^n, F^{(r)}\boxtimesx{i=1}{n}S^{\mu^i(r)} }\xrightarrow{\simeq}\Homx{\PcalKn}{\Gamma^{p^rd}\circ\boxtimes^n, F^{(r)}\boxtimesx{i=1}{n}S^{p^r\mu^i} }.
		\end{equation*}
	\end{enumerate}
\end{lem}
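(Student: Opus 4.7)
The plan is to prove the three parts in the order (2), (1), (3). All three are multivariable extensions of Touz\'e's one-variable results \cite{Tou12}, and the principal tools are Proposition \ref{pro2.3.5a} (Frobenius twist as an embedding), the isomorphism \eqref{equ2.6.1} of Theorem \ref{pro2.6.6} (rewriting $\Hom$ out of $F\circ\boxtimes^n$ as a Kuhn dual of a Day convolution), Corollary \ref{cor2.6.5} (the formal class of $\Gamma^d$ with respect to $\Lbf\ell^r$), and Troesch's coresolution (Lemma \ref{lem3.2.5}).

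For Part (2), the strategy mirrors Proposition \ref{pro2.3.5a} in the multivariable setting. The natural isomorphism $(\Gamma^d)^{(r)}\circ\boxtimes^n \simeq (\Gamma^d\circ\boxtimes^n)^{(r)}$, coming from $(V_1\otimes\cdots\otimes V_n)^{(r)}\simeq V_1^{(r)}\otimes\cdots\otimes V_n^{(r)}$, combined with the canonical projection $\Gamma^{p^rd}\twoheadrightarrow(\Gamma^d)^{(r)}$, produces the map in question. By left exactness in each variable $F_i$, one reduces to the representable case $F_i=\Gamma^{d,V_i}$, where both sides become representable and the isomorphism follows from the Yoneda lemma applied in $\Pcalx{(d,\ldots,d)}$ and $\Pcalx{(p^rd,\ldots,p^rd)}$.

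For Part (1), suppose without loss of generality that $\mu^2$ is not divisible by $p^r$. Using \eqref{equ2.6.1} together with the fact that $\Gamma^{p^rd}$ is the unit for the Day convolution in $\Pcalx{p^rd}$, the Hom of interest identifies with $\left(F^{\sharp(r)}\otimesx{\PcalK}\Gamma^{\mu^2}\otimesx{\PcalK}\cdots\otimesx{\PcalK}\Gamma^{\mu^n}\right)^\sharp(\Bbbk)$. By iterated convolution-$\mathbf{Hom}$ adjunction and the identity $\mathbf{Hom}_\PcalK(\Gamma^\nu,S^{p^rd})=S^\nu$ (itself a consequence of \eqref{iso2.3.2a} and the unit property of $\Gamma^{p^rd}$), this becomes $\Hom_\PcalK(F^{\sharp(r)},M)$ for a functor $M$ built from the $\Gamma^{\mu^i}$'s. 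Through Kuhn duality and the Frobenius adjunction $\ell^r\dashv\Fr_r$ of Proposition \ref{pro2.2.11}, the vanishing reduces to showing $\Lbf\ell^r(\Gamma^{\mu^i})=0$ for an index $i$ with $p^r\nmid\mu^i$. This is a consequence of Proposition \ref{pro2.5.5} (compatibility of $\Lbf\ell^r$ with outer tensor products) combined with Corollary \ref{cor2.6.5}, which gives $\Lbf\ell^r(\Gamma^{\mu^i_j})=0$ for any individual entry $\mu^i_j$ with $p^r\nmid\mu^i_j$.

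For Part (3), the canonical inclusion $S^{\mu^i(r)}\hookrightarrow S^{p^r\mu^i}$ appears naturally in Troesch's coresolution: by Lemma \ref{lem3.2.5}, the degree-zero term of such a coresolution for $S^{\mu^i(r)}$ splits as $S^{p^r\mu^i}$ plus direct summands of the form $S^\lambda$ with $p^r\nmid\lambda$, and all higher-degree terms are likewise direct sums of $S^\lambda$ with $p^r\nmid\lambda$. Applying $\Hom_{\PcalKn}(\Gamma^{p^rd}\circ\boxtimes^n,F^{(r)}\boxtimes -)$ to this coresolution in each coordinate $i=2,\ldots,n$ and invoking Part (1) to annihilate every non-$S^{p^r\mu^i}$ contribution produces the claimed isomorphism. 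The main obstacle throughout is Part (1): correctly tracking the Day convolution structure through the iterated internal-Hom adjunction and identifying the precise multi-degree condition on $\mu$ that forces the vanishing of $\Lbf\ell^r$ on $\Gamma^\mu$; the remainder of the argument is then a routine assembly.
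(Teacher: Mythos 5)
Your Part (1) is essentially sound, though it takes a longer route than the paper: after identifying the $\Hom$-group with the Day convolution $\bigl(F^{\sharp(r)}\otimesx{\PcalK}\Gamma^{\mu^2}\otimesx{\PcalK}\cdots\otimesx{\PcalK}\Gamma^{\mu^n}\bigr)^\sharp(\Bbbk)$ via Theorem \ref{pro2.6.6} (the paper's isomorphism \eqref{equ3.3.2}), you re-derive the key vanishing $G^{(r)}\otimesx{\PcalK}\Gamma^{\mu^i}=0$ from $\ell^r(\Gamma^{\mu^i})=0$ using \eqref{iso2.3.2a}, \eqref{iso2.4.5a}, Corollary \ref{cor2.6.5} and Proposition \ref{pro2.5.5}. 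That works, and it is an internal replacement for \cite[Lemma 2.2]{Tou12}. The paper instead simply cites \cite[Lemmas 2.2, 2.3]{Tou12}, translated through \eqref{iso2.3.2a} into three statements about the convolution product, and all three parts of the lemma then follow at once from \eqref{equ3.3.2}.

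Parts (2) and (3) have genuine gaps. In (2), the functors $F_i\mapsto\Hom_{\PcalKn}(\ldots,\boxtimesx{i=1}{n}F_i)$ are left exact \emph{covariant} functors, so the standard d\'evissage only lets you assume the $F_i$ are injective cogenerators $S^d_{V_i}$, not projectives $\Gamma^{d,V_i}$; and in either case the right-hand side $\Hom_{\PcalKn}\bigl(\Gamma^{p^rd}\circ\boxtimes^n,\boxtimesx{i=1}{n}F_i^{(r)}\bigr)$ is not computed by Yoneda, because $F_i^{(r)}$ is neither projective nor injective --- evaluating this $\Hom$ is precisely the nontrivial content (it requires \cite[Lemma 2.3]{Tou12}, or equivalently the identification of $\Lbf\ell^r(\Gamma^{p^rd}\circ\boxtimes^n)$ from Theorem \ref{thm2.6.20}). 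In (3), your structural claim about the coresolution contradicts Lemma \ref{lem3.2.5}, which you cite: the Troesch coresolution $T$ of $I^{(r)}$ contains the summand $E_r\otimes S^{p^r}$, i.e.\ the $p^r$-divisible summands $S^{(0,\ldots,0,p^r,0,\ldots,0)}\simeq S^{p^r}$ occur in \emph{every} even degree $0,2,\ldots,2p^r-2$, not only in degree $0$ (this is exactly why $E_r$ appears in all the $\Ext$-computations). Consequently ``apply Part (1) to annihilate everything past degree zero'' fails, and the surjectivity of the map in (3) is not established. The clean fix is the paper's: dualize via \eqref{iso2.3.2a}, so that (3) becomes the assertion that $F^{(r)}\otimesx{\PcalK}\Gamma^{p^r\mu}\to F^{(r)}\otimesx{\PcalK}\Gamma^{\mu(r)}$ is an isomorphism, and prove (or cite) that convolution statement directly rather than coresolving the target.
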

\begin{proof} 
	The isomorphism \eqref{iso2.3.2a} of Proposition \ref{pro2.3.8} yields an isomorphism $\left(F^\sharp\otimesx{\PcalK}G^\sharp\right)^\sharp(V)$ $\simeq \Homx{\PcalK}{F^\sharp,G_V}$ natural in $F,G\in\PcalK$ and $V\in\VcalK$.
	By  using this isomorphism and \cite[Lemmas 2.2, 2.3]{Tou12}, we obtain:
	\begin{enumerate}
		\item[\rm (1)] if $\mu$ is not a multiple of  $p^r$ then $F^{(r)}\otimesx{\PcalK}\Gamma^{\mu}=0$;
		\item[\rm (2)] the morphism induced by the Frobenius twist $\left(F\otimesx{\PcalK}G\right)^{(r)}\to F^{(r)}\otimesx{\PcalK}G^{(r)}$ is an isomorphism;
		\item[\rm (3)] the morphism $F^{(r)}\otimesx{\PcalK}\Gamma^{p^r\mu}\to F^{(r)}\otimesx{\PcalK}\Gamma^{\mu(r)}$ induced by the canonical projection  $\Gamma^{p^r\mu}\to\Gamma^{\mu(r)}$ is an isomorphism.
	\end{enumerate}
	Moreover, by Theorem \ref{pro2.6.6}, we have an isomorphism natural in $F_1,\ldots,F_n\in\Pcalx{d}$:
	\begin{equation}\label{equ3.3.2}
	\Homx{\PcalK}{\Gamma^d\circ\boxtimes^n,\boxtimesx{i=1}{n}F_i}\simeq \left(F_1^\sharp\otimesx{\PcalK}\cdots\otimesx{\PcalK}F_n^{\sharp} \right)^\sharp(\Bbbk).
	\end{equation}
	Combining the isomorphisms, we deduce the desired conclusion.
\end{proof}

\begin{thm}\label{thm3.3.1}
	Let $\mu^1,\ldots,\mu^n$ be tuples of natural numbers of weight $d$. Let $r$ be a natural number. There exists a graded isomorphism $\theta=\theta(S^{\mu^1},\ldots,S^{\mu^n})$, natural in $S^{\mu^i}, i=1,\ldots,n$
	\begin{equation*}
	\theta:\Extx{\PcalKn}{\Gamma^{p^rd}\circ\boxtimes^n,\boxtimesx{i=1}{n}S^{\mu^i(r)}}\xrightarrow{\simeq} \Homx{\PcalKn}{\Gamma^{d,E_r^{\otimes n-1}}\circ\boxtimes^n,\boxtimesx{i=1}{n}S^{\mu^i}}.
	\end{equation*}
	Moreover, $\theta(S^{\mu^1},\ldots,S^{\mu^n})$ satisfies the following properties.
	\begin{enumerate}
		\item[\rm (1)] $\theta(S^{\mu^1},\ldots,S^{\mu^n})$ is  compatible with the products, i.e., we have
		\begin{equation*}
		\theta(S^{\mu^1}\otimes S^{\lambda^1},\ldots,S^{\mu^n}\otimes S^{\lambda^n})(c_1\smile c_2)=\theta(S^{\lambda^1},\ldots,S^{\lambda^n})(c_1)\smile \theta(S^{\mu^1},\ldots,S^{\mu^n})(c_2).
		\end{equation*}
		\item[\rm (2)] $\theta(S^{\mu^1},\ldots,S^{\mu^n})$ is $\Sfrak_n$-equivariant, i.e., the diagram
		\begin{equation*}
		\xymatrix{
			\Extx{\PcalKn}{\Gamma^{p^rd}\circ\boxtimes^n,\boxtimesx{i=1}{n}S^{\mu^i(r)}}\ar[r]^-{\theta}_-\simeq\ar[d]^{\sigma\cdot}&\Homx{\PcalKn}{\Gamma^{d,E_r^{\otimes n-1}}\circ\boxtimes^n,\boxtimesx{i=1}{n}S^{\mu^i}}\ar[d]^{\sigma\cdot}\\
			\Extx{\PcalKn}{\Gamma^{p^rd}\circ\boxtimes^n,\boxtimesx{i=1}{n}S^{\mu^{\sigma^{-1}(i)}(r)}}\ar[r]^-{\theta}_-\simeq&\Homx{\PcalKn}{\Gamma^{d,E_r^{\otimes n-1}}\circ\boxtimes^n,\boxtimesx{i=1}{n}S^{\mu^{\sigma^{-1}(i)}}}
		}
		\end{equation*}
		is commutative with $\sigma\in\Sfrak_n$ and the vertical maps defined in definition {\rm \ref{def3.2.5}}.
	\end{enumerate}
\end{thm}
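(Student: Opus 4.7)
The existence of $\theta$ is already essentially implied by Theorem \ref{thm2.6.20}: applying isomorphism \eqref{equ3.7} with $X = \Gamma$ (so $\epsilon_X = 0$) and $G = \boxtimesx{i=1}{n} S^{\mu^i}$, and observing that this $G$ is a tensor product of injectives and therefore itself injective, the internal $\mathbf{Ext}$ on the right collapses to internal $\mathbf{Hom}$, and evaluation at $(\Bbbk,\ldots,\Bbbk)$ yields the stated graded isomorphism. The substantive content of the theorem is therefore the compatibility with the cup products and the $\Sfrak_n$-equivariance: the isomorphism from \eqref{equ3.7} is produced through the derived adjunction $(\Lbf\ell^r, \Fr_r)$ and does not visibly respect these structures.

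To establish these properties, I would reconstruct $\theta$ through an explicit injective coresolution. Take Troesch-type injective coresolutions of each tensor factor $S^{\mu^i_j(r)}$ appearing in $S^{\mu^i(r)}$, and form their exterior tensor product $T^\bullet$, which is an injective coresolution of $\boxtimesx{i=1}{n} S^{\mu^i(r)}$ in $\PcalKn$. Compute the cohomology of $\Homx{\PcalKn}{\Gamma^{p^rd}\circ\boxtimes^n, T^\bullet}$: by Lemma \ref{lem3.3.2}(1) and Lemma \ref{lem3.2.5}, only the $(E_r \otimes S^{p^r(\cdot)})$-summands of each factor of $T^\bullet$ contribute nontrivially. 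The formality of the contributing subcomplex is forced by the even-degree parity of Troesch's construction, and the surviving cohomology is identified via Lemma \ref{lem3.3.2}(2)--(3) with $\Homx{\PcalKn}{\Gamma^{d,E_r^{\otimes n-1}}\circ\boxtimes^n, \boxtimesx{i=1}{n} S^{\mu^i}}$, producing the explicit $\theta$.

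The $\Sfrak_n$-equivariance is then manifest from the symmetric construction of the exterior tensor product; the compatibility with cup products follows from the algebra/coalgebra structure on Troesch's coresolution — namely, that the coresolutions of $S^{*(r)}$ assemble into a resolution of algebras compatibly with the comultiplication on $\Gamma^*$ — together with Lemma \ref{lem3.2.1}. The principal obstacle will be the careful bookkeeping: one must track that exactly $n-1$ copies of $E_r$ survive (one copy being absorbed by the single global $\Gamma^{p^rd}$-constraint on the source via the canonical surjection $\Gamma^{p^r}\twoheadrightarrow I^{(r)}$), and one must verify that the explicit $\theta$ agrees, up to canonical identification, with the abstract isomorphism from Theorem \ref{thm2.6.20}. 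The latter reduces, using the exactness of both sides in each variable $S^{\mu^i}$ (Corollary \ref{cor3.3.1} and Lemma \ref{lem3.3.2a}), to a check on a small number of test cases — essentially extending Touz\'e's single-variable analysis in \cite{Tou12} to the multi-variable setting.
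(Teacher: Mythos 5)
Your overall strategy --- realize $\theta$ explicitly on a Troesch-type coresolution so that compatibility with products and the $\Sfrak_n$-action can be checked at the chain level, then reduce general $S^{\mu^i}$ to the case $\otimes^d$ using left-exactness (Corollary \ref{cor3.3.1}) and a presentation of $S^\mu$ by tensor powers --- is the paper's strategy. But the central computation, as you set it up, does not go through. You coresolve \emph{all} $n$ exterior factors, obtaining $T^{\boxtimes n}$, and claim via Lemmas \ref{lem3.3.2}(1) and \ref{lem3.2.5} that only the $E_r\otimes S^{p^r}$-summands contribute. Lemma \ref{lem3.3.2}(1), however, gives vanishing of $\Homx{\PcalKn}{\Gamma^{p^rd}\circ\boxtimes^n,F^{(r)}\boxtimes S^{\mu^2}\boxtimes\cdots\boxtimes S^{\mu^n}}$ only when one slot carries a genuine Frobenius twist; once every slot has been replaced by symmetric powers the cross terms need not vanish. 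For instance, $\Homx{\PcalKh}{\Gamma^{p^rd}\circ\boxtimes^2,S^{\mu}\boxtimes S^{p^rd}}\simeq\left(\Gamma^{\mu}\otimesx{\PcalK}\Gamma^{p^rd}\right)^{\sharp}(\Bbbk)\simeq S^{\mu}(\Bbbk)=\Bbbk$ is nonzero even when $p^r$ does not divide $\mu$, because $\Gamma^{p^rd}$ is the unit for the convolution product. Consequently $\Homx{\PcalKn}{\Gamma^{p^rd}\circ\boxtimes^n,T^{\boxtimes n}}$ is not concentrated in even degrees, its formality is not ``forced by parity,'' and the naive count of surviving $E_r$-factors would be $n$ rather than $n-1$. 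Your proposed mechanism for absorbing the extra copy (the surjection $\Gamma^{p^r}\twoheadrightarrow I^{(1)(r-1)}$ acting on the source) is not an identifiable step of the computation; it is exactly the point where the argument breaks.

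The repair is a different choice of coresolution: leave the first factor untouched and use $\otimes^{d(r)}\boxtimes T(\otimes^d)^{\boxtimes\, n-1}$. This complex is still quasi-isomorphic to $\left(\otimes^{d(r)}\right)^{\boxtimes n}$ and is $\Gamma^{p^rd}\circ\boxtimes^n$-acyclic by Lemma \ref{lem3.3.2a}; the retained Frobenius twist in the first slot makes Lemma \ref{lem3.3.2}(1) applicable and kills every summand except $\otimes^{d(r)}\boxtimes\left(E_r^{\otimes d}\otimes\left(S^{p^r}\right)^{\otimes d}\right)^{\boxtimes\, n-1}$. The resulting complex is concentrated in even degrees (hence has zero differential), produces exactly one copy of $E_r^{\otimes d}$ per \emph{resolved} slot --- whence $n-1$ copies --- and is identified with $\Homx{\PcalKn}{\Gamma^{d,E_r^{\otimes n-1}}\circ\boxtimes^n,\otimes^{d\boxtimes n}}$ via Lemma \ref{lem3.3.2}(2)--(3). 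With this corrected base case, your remaining steps (reduction to general $S^{\mu^i}$, and the chain-level verification of product-compatibility and $\Sfrak_n$-equivariance from the explicit maps of coresolutions) coincide with the paper's proof.
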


\begin{proof}
	We first consider the case  $S^{\mu^1}=\cdots=S^{\mu^n}=\otimes^d$.
	Denote by $T(\otimes^d)$ the complex  $T^{\otimes d}$. 
	Since $T$ is an injective coresolution of the functor $I^{(r)}$, and the tensor product is exact with respect to each variable and preserves injectives, then $T(\otimes^d)$ is an injective coresolution of $\otimes^{d(r)}$, i.e., there is a quasi-isomorphism $\phi_d:\otimes^{d(r)}\to T(\otimes^d)$. 
	By definition, the morphism $\phi_d$ is compatible with tensor product, this means that $\phi_d\otimes\phi_e=\phi_{d+e}$. 
	Then, the complex $\otimes^{d(r)}\boxtimes T(\otimes^d)^{\boxtimes n-1}$ is a coresolution of $\otimes^{d(r)\boxtimes n}$. 
	Moreover, by Lemma \ref{lem3.3.2a}, this complex  is $\Gamma^{p^rd}\circ\boxtimes^n$-acyclic.
	Hence, the $\Ext$-group  $\Extx{\PcalKn}{\Gamma^{p^rd}\circ\boxtimes^n,(\otimes^{d(r)})^{\boxtimes n}}$ is the homology of the complex
	\begin{equation*}
	\Homx{\PcalKn}{\Gamma^{p^rd}\circ\boxtimes^n,\otimes^{d(r)}\boxtimes T(\otimes^d)^{\boxtimes n-1}}.
	\end{equation*}
	By Lemma \ref{lem3.3.2}(1), this complex  is null at odd degree. 
	We next define an isomorphism $\bar{\theta}$ such that the diagram
	\begin{equation*}
	\xymatrix{W_1
		\ar[r]^-\simeq\ar[dd]^{\bar{\theta}}& \Homx{\PcalKn}{\Gamma^{p^rd}\circ\boxtimes^n,\otimes^{d(r)}\boxtimes T(\otimes^d)^{\boxtimes n-1}}\\
		&\ar[u]^\simeq_{(\dag_1)}\Homx{\PcalKn}{\Gamma^{p^rd}\circ\boxtimes^n,\otimes^{d(r)}\boxtimes \left(E_r^{\otimes d}\otimes \left(S^{p^r}\right)^{\otimes d}\right)^{\boxtimes n-1}}\\
		W_2\ar[r]^-\simeq&\Homx{\PcalKn}{\Gamma^{p^rd}\circ\boxtimes^n,\otimes^{d(r)}\boxtimes \left(E_r^{\otimes d}\otimes^{d(r)}\right)^{\boxtimes n-1}}\ar[u]^\simeq_{(\dag_2)},
	}
	\end{equation*}
	is commutative
	where the map $(\dag_1)$ is induced by the canonical inclusion $E_r\otimes S^{p^r}\hookrightarrow T$ (see Lemma \ref{lem3.2.5}), and the map $(\dag_2)$ is induced by the inclusion $\otimes^{d(r)}\hookrightarrow \left(S^{p^r}\right)^{\otimes d}$, $W_1$ is the vector space $\Extx{\PcalKn}{\Gamma^{p^rd}\circ\boxtimes^n,(\otimes^{d(r)})^{\boxtimes n}}$, and $W_2$ is the vector space $\Homx{\PcalKn}{\Gamma^{p^rd}\circ\boxtimes^n \otimes^{d(r)\boxtimes n}}\otimes\left(E_r^{\otimes d}\right)^{\otimes n-1}$.
	The morphism $\bar{\theta}$ is compatible with products and is $\Sfrak_n$-equivariant.
	Lemma \ref{lem3.3.2} yields an isomorphism 
	\begin{equation*}
	\Homx{\PcalKn}{\Gamma^{p^rd}\circ\boxtimes^n,\otimes^{d(r)\boxtimes n}}\simeq \Homx{\PcalKn}{\Gamma^{d}\circ\boxtimes^n,\otimes^{d\boxtimes n}}.
	\end{equation*}
	We define $\theta(\otimes^d,\ldots,\otimes^d)$ such that the following diagram is commutative
	\begin{equation*}
	\xymatrix{
		\Extx{\PcalKn}{\Gamma^{p^rd}\circ\boxtimes^n,(\otimes^{d(r)})^{\boxtimes n}}\ar[r]^-{\theta(\otimes^d,\ldots,\otimes^d)}\ar[d]^{\bar{\theta}}&\Homx{\PcalKn}{\Gamma^{d,E_r^{\otimes n-1}}\circ\boxtimes^n,(\otimes^d)^{\boxtimes n}}\ar[d]^{(\dag_3)}_\simeq\\
		\Homx{\PcalKn}{\Gamma^{p^rd}\circ\boxtimes^n,\otimes^{d(r)\boxtimes n}}\otimes\left(E_r^{\otimes d}\right)^{\otimes n-1}\ar[r]^-{(\dag_4)}_\simeq&\Homx{\PcalKn}{\Gamma^{d}\circ\boxtimes^n,\otimes^{d\boxtimes n}}\otimes\left(E_r^{\otimes d}\right)^{\otimes n-1}.
	}
	\end{equation*}
	Since the morphisms  $(\dag_3),(\dag_4)$ and $\bar{\theta}$ are compatible with products and  $\Sfrak_n$-equivariant, the morphism
	$\theta(\otimes^d,\ldots,\otimes^d)$ is too.
	
	We now can prove the general case. 
	By Corollary \ref{cor3.3.1}, the functor of $\PcalK\to\VcalK$ 
	\begin{equation}\label{equ3.3.10a}
	F\mapsto \Extx{\PcalKn}{\Gamma^{p^rd}\circ\boxtimes^n,F^{(r)}\boxtimes\boxtimesx{i=2}{n}S^{\mu^i(r)}}
	\end{equation}
	is left exact.
	To define the isomorphism $\theta(S^{\mu^1},\ldots,S^{\mu^n})$ we use the isomorphism $\theta(\otimes^d,\ldots,\otimes^d)$ constructed as above, the exactness of the functor \eqref{equ3.3.10a} and the fact that the functor $S^\mu$ admits a presentation of the form $T^\mu_1\to T^\mu_0\twoheadrightarrow S^\mu$ where $T^\mu_0=\otimes^d, T^\mu_1=\oplus_{\sigma\in\Sfrak_\mu}\otimes^d$.	
	The isomorphism $\theta(S^{\mu^1},\ldots,S^{\mu^n})$ is compatible with products and the action of the group $\Sfrak_n$ because the isomorphism $\theta(\otimes^d,\ldots,\otimes^d)$ is too.
\end{proof}

\subsection{ Calculations of $\Extx{\PcalK}{\Gamma^{p^rd}\circ X,S^{\mu(r)}}$ }\label{subs3.3.4}

The goal of this section is to establish Theorems \ref{thm3.3.8} and \ref{thm3.4.1}.
Theorem \ref{thm3.4.1} is an analogue to Theorem \ref{thm3.3.1} for the functors in one variable.

Let $G\in\Pcalx{d}$ and $U$ be a $\Bbbk$-vector space of finite dimension. 
The sum-diagonal adjunction $\text{-}\circ\Delta_n:\PcalKn\rightleftarrows \PcalK:\text{-}\circ\boxplus^n$ induces a graded isomorphism, natural in $G$:
\[\beta(G):\Extx{\PcalK}{\Gamma^{d,U}\circ\otimes^n,G}\xrightarrow{\simeq} \Extx{\PcalK(n)}{\Gamma^{d,U}\circ\boxtimes^n,G\circ\boxplus^n}.\] 
The following lemma gives two properties of this isomorphism.
\begin{lem}\label{cor3.2.5}
	The isomorphism $\beta(G)$ satisfies the following properties.
	\begin{enumerate}
		\item[\rm (1)] It is compatible with products
		\begin{equation*}
		\beta(G_1)(c_1)\smile\beta(G_2)(c_2)=\beta(G_1\otimes G_2)(c_1\smile c_2).
		\end{equation*}
		\item[\rm (2)] It is $\Sfrak_n$-equivariant (with respect to actions defined in Definition {\rm \ref{def3.2.3}}).
	\end{enumerate}
\end{lem}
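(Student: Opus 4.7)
The plan is to factor $\beta(G)^{-1}$ as a composition of two operations, each separately compatible with products and the $\Sfrak_n$-action. Since $\otimes^n=\boxtimes^n\circ\Delta_n$, applying the exact functor $\text{-}\circ\Delta_n:\PcalKn\to\PcalK$ yields a map
\begin{equation*}
\Extx{\PcalKn}{\Gamma^{d,U}\circ\boxtimes^n,G\circ\boxplus^n}\to\Extx{\PcalK}{\Gamma^{d,U}\circ\otimes^n,G\circ\boxplus^n\circ\Delta_n},
\end{equation*}
and composing with post-composition by the counit $\epsilon_G:G\circ\boxplus^n\circ\Delta_n\to G$ (namely $G$ applied to the codiagonal $V^{\oplus n}\to V$) recovers $\beta(G)^{-1}$. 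This is an isomorphism because $\text{-}\circ\boxplus^n$ preserves injectives, being right adjoint to the exact functor $\text{-}\circ\Delta_n$ (cf.~the proof of Lemma~\ref{lem2.5.6}).

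For (1), the first step preserves products via Lemma~\ref{lem3.2.1}(1), since $\text{-}\circ\Delta_n$ is exact and strict monoidal: the tensor product in both $\PcalKn$ and $\PcalK$ is computed pointwise, and precomposition with $\Delta_n$ commutes with pointwise tensoring. The counit $\epsilon$ involved in the second step is a monoidal natural transformation, since $\epsilon_{G_1\otimes G_2}=\epsilon_{G_1}\otimes\epsilon_{G_2}$ (both being $(G_1\otimes G_2)$ applied to the codiagonal); by the argument dual to Lemma~\ref{lem3.2.1}(2) (for post-composition in place of pre-composition), this implies compatibility of the second step with cup products.

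For (2), by the injective-resolution computation of Ext it suffices to establish equivariance at the level of $\Hom$, where the explicit formula reads $\beta(G)^{-1}(f)_V=G(\nabla)\circ f_{(V,\ldots,V)}$ with $\nabla:V^{\oplus n}\to V$ the codiagonal. By Definition~\ref{def3.2.3}, restricting to the diagonal so that $\sigma_\boxtimes$ specializes to $\sigma_\otimes$ and $\sigma_\boxplus$ to $\sigma_\oplus$, one has
\begin{equation*}
(\sigma\cdot f)_{(V,\ldots,V)}=G(\sigma_\oplus)\circ f_{(V,\ldots,V)}\circ\Gamma^{d,U}(\sigma^{-1}_\otimes),
\end{equation*}
whence $\beta(G)^{-1}(\sigma\cdot f)_V=G(\nabla\circ\sigma_\oplus)\circ f_{(V,\ldots,V)}\circ\Gamma^{d,U}(\sigma^{-1}_\otimes)=G(\nabla)\circ f_{(V,\ldots,V)}\circ\Gamma^{d,U}(\sigma^{-1}_\otimes)$ because the codiagonal is $\Sfrak_n$-invariant; this matches $(\sigma\cdot\beta(G)^{-1}(f))_V$, proving (2). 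The only delicate part of the argument is the bookkeeping of the twists $\sigma_\boxtimes,\sigma_\boxplus$ in Definition~\ref{def3.2.3}, which, after restriction to the diagonal, collapses against the manifest invariance of the codiagonal under permutation of summands.
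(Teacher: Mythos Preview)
Your proof is correct and takes the dual route to the paper's. The paper factors $\beta(G)$ itself as precomposition with $\boxplus^n$ followed by the map induced by the unit $\boxtimes^n(\eta):\boxtimes^n\to\otimes^n\circ\boxplus^n$, whereas you factor $\beta(G)^{-1}$ as precomposition with $\Delta_n$ followed by post-composition with the counit (codiagonal). For the $\Sfrak_n$-equivariance this buys you something: the paper must introduce an auxiliary $\Sfrak_n\times\Sfrak_n$-action on the intermediate $\Ext$-group and then verify by an explicit diagram chase (reducing to $n=2$) that $\boxtimes^n(\eta)$ is compatible with the action, while in your approach the single observation $\nabla\circ\sigma_\oplus=\nabla$ dispatches the question immediately. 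For the product compatibility the two arguments are essentially mirror images, both resting on Lemma~\ref{lem3.2.1}; your appeal to the ``dual of Lemma~\ref{lem3.2.1}(2)'' (post-composition with a monoidal natural transformation rather than pre-composition) is legitimate and just as elementary. Your reduction of~(2) to the $\Hom$ level is justified because $\text{-}\circ\boxplus^n$ carries an injective resolution of $G$ to one of $G\circ\boxplus^n$, and the $\Sfrak_n$-actions and the adjunction map are all defined termwise on such a resolution.
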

\begin{proof}
	By definition, the isomorphism $\beta(G)$ is the composition
	\begin{align*}
	\Extx{\PcalK}{\Gamma^{d,U}\circ\otimes^n,G}&\xrightarrow{(\dag_1)}\Extx{\PcalK(n)}{\Gamma^{d,U}\circ\otimes^n\circ\boxplus^n,G\circ\boxplus^n} \\ &\xrightarrow{(\dag_2)}  \Extx{\PcalKn}{\Gamma^{d,U}\circ\boxtimes^n,G\circ\boxplus^n}
	\end{align*}
	where the first map is induced by the precomposition with $\boxplus^n$, and the second is induced by the map
	\begin{equation*}
	\boxtimes^n(\eta): \boxtimes^n\to \boxtimes^{n}\circ \Delta_n\circ \boxplus^n=\otimes^n\circ \boxplus^n
	\end{equation*}
	where $\eta:\mathrm{Id}\to \Delta_n\circ \boxplus^n$ denotes the unit of adjunction.
	By Lemma \ref{lem3.2.1} the maps $(\dag_1)$ and $(\dag_2)$ are compatible with products. 
	To finish the proof of the lemma, it remains to verify that the composition $(\dag_2)\circ (\dag_1)$ is  $\Sfrak_n$-equivariant. 
	To get that, we define an action of $\Sfrak_n$ on $E:=\Extx{\PcalK(n)}{\Gamma^{d,U}\circ\otimes^n\circ\boxplus^n,G\circ\boxplus^n}$ and we next prove that the maps $(\dag_1)$ and $(\dag_2)$ are $\Sfrak_n$-equivariant.
	
	There are two actions of $\Sfrak_n$ on the $\Ext$-group $E$: the first is induced by the action of $\Sfrak_n$ on $\otimes^n$ and the second is induced by the precomposition with $\boxplus^n$ (see Definition \ref{def3.2.3}). 
	Since the two compositions $\otimes^n\circ\boxplus^n \xrightarrow{\sigma_\otimes(\boxtimes^n)}\otimes^n\circ\boxplus^n \xrightarrow{\otimes^n(\sigma_\boxplus)}\otimes^n\circ\boxplus^n\circ\sigma_{\mathcal{V}}$ and $\otimes^n\circ\boxplus^n\xrightarrow{\otimes^n(\sigma_\boxplus)}\otimes^n\circ\boxplus^n\circ\sigma_{\mathcal{V}}\xrightarrow{\sigma_\otimes(\boxplus^n\circ\sigma_{\mathcal{V}})}\otimes^n\circ\boxplus^n\circ\sigma_{\mathcal{V}}$  are equal, these two actions are commutative. 
	We obtain an action of $\Sfrak_n\times\Sfrak_n$ on $E$. 
	By taking the diagonal action, we deduce an action of $\Sfrak_n$ on $E$.
	
	To prove that $(\dag_1)$ is  $\Sfrak_n$-equivariant, we prove that $(\dag_1)$ is $\Sfrak_n\times\Sfrak_n$-equivariant where the second factor of $\Sfrak_n\times\Sfrak_n$ acts trivially on the source of $(\dag_1)$. 
	The compatibility of $(\dag_1)$ with the first action of $\Sfrak_n$ derives from the precomposition with $\boxplus^n$ and the compatibility with the second action of $\Sfrak_n$ is indicated in the paragraph \ref{rem3.2.4}.
	
	By the definitions \ref{def3.2.2} and \ref{def3.2.3}, to prove  the map $(\dag_2)$ is $\Sfrak_n$-equivariant, it suffices to verify that the map $\boxtimes^n(\eta)$ is compatible with the action of $\Sfrak_n$, i.e., the below diagram is commutative
	\begin{equation*}
	\xymatrix{
		\boxtimes^n\ar[rrrr]^-{\sigma_\boxtimes}\ar[d]_{\boxtimes^n(\eta)}&&&&\boxtimes^n\circ\sigma_{\mathcal{V}}\ar[d]^{\boxtimes^n(\eta)}\\
		\otimes^n\circ\boxplus^n\ar[rr]^-{\sigma_\otimes(\boxplus^n)}&&\otimes^n\circ\boxplus^n\ar[rr]^-{\otimes^n(\sigma_\boxplus)}&& \otimes^n\circ\boxplus^n\circ\sigma_{\mathcal{V}}.
	}
	\end{equation*}
	Since the transpositions form a system of generators of the symmetric group $\Sfrak_n$, we can assume that $\sigma$ is a transposition. In this case, we can assume more that $n=2$. We make concrete calculations: $v_1\otimes v_2\overset{\sigma_\boxtimes}{\longmapsto} v_2\otimes v_1\overset{\boxtimes^2(\eta)}{\longmapsto}(v_2,0)\otimes (0,v_1) $ and $v_1\otimes v_2\overset{\boxtimes^2(\eta)}{\longmapsto}(v_1,0)\otimes(0,v_2)\overset{\sigma_\otimes(\boxplus^2)}{\longmapsto}(0,v_2)\otimes (v_1,0)\overset{\otimes^2(\sigma_\boxplus)}{\longmapsto}(v_2,0)\otimes (0,v_1)$. Then, the above diagram is commutative.
\end{proof}

\begin{rem}
	In the definition of $\beta(G)$, we can replace $\Gamma^d$ by $\Lambda^d$ or $S^d$, or more generally by a family of strict polynomial functors $C^d$ equipped with a  structure of coalgebra. 
	In this case, the isomorphism
	\begin{equation*}
	\Extx{\PcalK}{C^{d,U}\circ\otimes^n,G}=\Extx{\PcalK}{C^{d,U}\circ\boxtimes^n\circ\Delta_n,G}\simeq \Extx{\PcalKn}{C^{d,U}\circ\boxtimes^n,G\circ\boxplus^n}
	\end{equation*}
	verifies all properties of Lemma \ref{cor3.2.5} (the proof is identical). 
	However, we only use the case $C^d=\Gamma^d$ in the sequel. 
\end{rem}

\begin{proof}[\rm \bfseries Proof of Theorem \ref{thm3.4.1}]
	Lemma \ref{cor3.2.5} yields an isomorphism 
	\begin{equation*}
	\beta(S^{\mu(r)}):\Extx{\PcalK}{\Gamma^{p^rd}\circ\otimes^n,S^{\mu(r)}}\xrightarrow{\simeq} \Extx{\PcalK}{\Gamma^{p^rd}\circ\boxtimes^n,S^{\mu(r)}\circ\boxplus^n}
	\end{equation*}
	which is compatible with the products and  $\Sfrak_n$-equivariant. The exponential isomorphism  $S^d(V\oplus W)\simeq\bigoplus_{i=0}^dS^i(V)\otimes S^{d-i}(W)$ induces an isomorphism $S^{\mu}\circ \boxplus^n\simeq \bigoplus_{\sum\mu^i=\mu}\boxtimesx{i=1}{n}S^{\mu^i}$ which is compatible with the products. Moreover, this isomorphism is compatible with the action of $\Sfrak_n$, i.e., we have a commutative diagram
	\begin{equation*}
	\xymatrix{
		S^{\mu}\circ\boxplus^n\ar[rr]^-\simeq\ar[d]^{S^\mu(\sigma_\boxplus)}&&\bigoplus\limits_{\sum\mu^i=\mu}\boxtimesx{i=1}{n}S^{\mu^i}\ar[d]\\
		S^{\mu}\circ\boxplus^n\circ\sigma_{\mathcal{V}}\ar[rr]^-\simeq&&\bigoplus\limits_{\sum\mu^i=\mu}\left(\boxtimesx{i=1}{n}S^{\mu^i}\right)\circ\sigma_{\mathcal{V}}
	}
	\end{equation*}
	where $\sigma$ is an element of $\Sfrak_n$ and the vertical morphism on the right is induced by the isomorphisms $\sigma_{(S^{\mu^1},\ldots,S^{\mu^n})}:\boxtimesx{i=1}{n}S^{\mu^i}\to \left(\boxtimesx{i=1}{n}S^{\mu^{\sigma^{-1}(i)}}\right)\circ\sigma_{\mathcal{V}}$. By the isomorphism induced by the exponential isomorphism as above and Theorem \ref{thm3.3.1}, we have isomorphisms 
	\begin{align*}
	\Extx{\PcalKn}{\Gamma^{p^rd}\circ\boxtimes^n,S^{\mu(r)}\circ\boxplus^n}&\simeq \bigoplus_{\sum\mu^i=\mu}\Extx{\PcalKn}{\Gamma^{p^rd}\circ\boxtimes^n,\boxtimesx{i=1}{n}S^{\mu^i(r)}}\\
	&\simeq \bigoplus_{\sum\mu^i=\mu}\Homx{\PcalKn}{\Gamma^{d,E_r^{\otimes n-1}}\circ\boxtimes^n,\boxtimesx{i=1}{n}S^{\mu^i}}\\
	&\simeq \Homx{\PcalKn}{\Gamma^{d,E_r^{\otimes n-1}}\circ\boxtimes^n,S^{\mu}\circ\boxplus^n}.
	\end{align*}
	These isomorphisms are compatible with the products and is $\Sfrak_n$-equivariant. Denote by $\chi(S^\mu)$ the composition. Moreover, by Lemma \ref{cor3.2.5}, we have an isomorphism
	\begin{equation*}
	\beta(S^\mu):\Homx{\PcalK}{\Gamma^{d,E_r^{\otimes n-1}}\circ\otimes^n,S^{\mu}}\simeq \Homx{\PcalKn}{\Gamma^{d,E_r^{\otimes n-1}}\circ\boxtimes^n,S^{\mu}\circ\boxplus^n}.
	\end{equation*}
	We define $\alpha(S^\mu)$ to be the composition $\beta(S^\mu)^{-1}\circ\chi(S^\mu)\circ\beta(S^{\mu(r)})$. It is compatible with the products and is $\Sfrak_n$-equivariant. 
	We obtain the desired isomorphism.
\end{proof}

\begin{proof}[\rm \bfseries Proof of Theorem \ref{thm3.3.8}]
	By Theorem \ref{thm3.4.1}, we have an isomorphism
	\begin{equation*}
	\alpha(S^\mu):\Extx{\PcalK}{\Gamma^{p^rd}\circ\otimes^n,S^{\mu(r)}}\xrightarrow{\simeq} \Homx{\PcalK}{\Gamma^{d,E_r^{\otimes n-1}}\circ\otimes^n,S^\mu}.
	\end{equation*}
	This isomorphism is compatible with the products and is $\Sfrak_n$-equivariant. Moreover, we have the isomorphism $\Homx{\PcalK}{\otimes^n,\otimes^n}\simeq\Bbbk\Sfrak_n$, then $\beta(S^\mu)$ is natural in $\otimes^n$. 
	Moreover, since $X$ is a direct factor of $\otimes^n$, there exists an idempotent $f$ of $\otimes^n$ whose image is $X$. 
	The following diagram is commutative
	\begin{equation}\label{equ3.3.6}
	\xymatrix{
		\Extx{\PcalK}{\Gamma^{p^rd}\circ \otimes^n,S^{\mu(r)}}\ar[rr]^-{\alpha(S^\mu)}\ar[d]_{\Extx{\PcalK}{\Gamma^{p^rd}(f),S^{\mu(r)}}} &&\Homx{\PcalK}{\Gamma^{d,E_r^{\otimes n-1}}\circ \otimes^n,S^\mu} \ar[d]^{\Homx{\PcalK}{\Gamma^{d,E_r^{\otimes n-1}}(f),S^\mu}}\\
		\Extx{\PcalK}{\Gamma^{p^rd}\circ \otimes^n,S^{\mu(r)}}\ar[rr]^-{\alpha(S^\mu)}&&\Homx{\PcalK}{\Gamma^{d,E_r^{\otimes n-1}}\circ \otimes^n,S^\mu}.
	}
	\end{equation}	
	Then the images of the vertical maps are isomorphic. 
	Since $f$ is an idempotent of $\otimes^n$ whose image is $X$, then $\Extx{\PcalK}{\Gamma^{p^rd}(f),S^{\mu(r)}}$
	is an idempotent of $\Extx{\PcalK}{\Gamma^{p^rd}\circ \otimes^n,S^{\mu(r)}}$ whose image is $\Extx{\PcalK}{\Gamma^{p^rd}\circ X,S^{\mu(r)}}$ and the map
	$\Homx{\PcalK}{\Gamma^{d,E_r^{\otimes n-1}}(f),S^{\mu}}$	
	is an idempotent of $\Homx{\PcalK}{\Gamma^{d,E_r^{\otimes n-1}}\circ \otimes^n,S^{\mu}}$ whose image is
	$\Homx{\PcalK}{\Gamma^{d,E_r^{\otimes n-1}}\circ X,S^{\mu}}$. 
	Hence, we obtain an isomorphism $\alpha(S^\mu,X)$ from $\Extx{\PcalK}{\Gamma^{p^rd}\circ X,S^{\mu(r)}}$ to $ \Homx{\PcalK}{\Gamma^{d,E_r^{\otimes n-1}}\circ X,S^\mu}$. Moreover, since the maps in the diagram \eqref{equ3.3.6} are compatible with the products, the morphism is also compatible with the products.
\end{proof}

\begin{cor}\label{cor3.3.9}
	Let $\mu$ be a tuple of natural numbers of weight $ nd $. Let $L$ be a simple functor in the category $\Pcalx{n}$ of the homogeneous strict polynomial functors of degree $n$. 
	We assume that the characteristic $p$ of the field $\Bbbk$ is strictly greater than $n$. 
	Then there is a graded isomorphism, natural in $S^\mu$
	\begin{equation*}
	\alpha(S^\mu,L):\Extx{\PcalK}{\Gamma^{p^rd}\circ L,S^{\mu(r)}}\xrightarrow{\simeq} \Homx{\PcalK}{\Gamma^{d,E_r^{\otimes n-1}}\circ L,S^\mu}
	\end{equation*}
	which is compatible with the products.
\end{cor}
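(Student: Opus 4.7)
The plan is to deduce this from Theorem \ref{thm3.3.8} by verifying that, under the hypothesis $p > n$, every simple object $L$ of $\Pcalx{n}$ is a direct factor of $\otimes^n$. Granted this, applying Theorem \ref{thm3.3.8} with $X = L$ yields the asserted graded isomorphism $\alpha(S^\mu, L)$; its naturality in $S^\mu$ and its compatibility with products are inherited from the corresponding properties established for $\alpha(S^\mu)$ in Theorem \ref{thm3.4.1}, since by construction $\alpha(S^\mu, L)$ is obtained by restricting $\alpha(S^\mu)$ along an idempotent of $\otimes^n$ whose image is $L$, in exactly the same way as in the final step of the proof of Theorem \ref{thm3.3.8} (diagram \eqref{equ3.3.6}).

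It remains to justify the direct factor claim. Since $p > n$, the integer $n!$ is a unit in $\Bbbk$, so by Maschke's theorem the group algebra $\Bbbk\Sfrak_n$ is semisimple. The permutation action of $\Sfrak_n$ on $\otimes^n$ defines a canonical isomorphism of $\Bbbk$-algebras $\Bbbk\Sfrak_n \xrightarrow{\simeq} \Endx{\Pcalx{n}}{\otimes^n}$, as follows from Schur-Weyl duality combined with the evaluation equivalence $\Pcalx{n} \simeq S_\Bbbk(n,n)\Modp$ via $\Bbbk^n$. A decomposition $1 = \sum_{\lambda \vdash n} e_\lambda$ into primitive orthogonal idempotents of $\Bbbk\Sfrak_n$ then produces a direct sum decomposition $\otimes^n \simeq \bigoplus_{\lambda \vdash n} S_\lambda^{\oplus d_\lambda}$ in $\Pcalx{n}$, where each Schur functor $S_\lambda$ is the image of $e_\lambda$ acting on $\otimes^n$. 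By semisimplicity of the Schur algebra $S_\Bbbk(n,n)$, which itself follows from that of $\Bbbk\Sfrak_n$ via Morita equivalence when $p > n$, the functors $\{S_\lambda\}_{\lambda \vdash n}$ exhaust the isomorphism classes of simple objects of $\Pcalx{n}$.

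Hence every simple $L$ is isomorphic to some $S_\lambda$ and is in particular a direct factor of $\otimes^n$, which completes the reduction. The only mildly delicate point is the identification $\Endx{\Pcalx{n}}{\otimes^n} \simeq \Bbbk\Sfrak_n$ together with the resulting classification of simples of $\Pcalx{n}$ as direct summands of $\otimes^n$; both are standard consequences of Schur-Weyl duality in the regime $p > n$, and no further computation is required beyond what is already established in Theorem \ref{thm3.3.8}.
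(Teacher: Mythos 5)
Your proposal is correct and follows essentially the same route as the paper: both reduce to Theorem \ref{thm3.3.8} by showing that, for $p>n$, every simple object of $\Pcalx{n}$ is a direct summand of $\otimes^n$, using the evaluation equivalence $\Pcalx{n}\simeq S(n,n)\Modp$ and the semisimplicity of the Schur algebra $S(n,n)$ in this range. The only (inessential) difference is that the paper cites Schur's classical semisimplicity theorem directly, whereas you rederive it from Maschke's theorem and Schur--Weyl duality.
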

\begin{proof}
	Since $p>n$, the simple functor $L$ is a direct factor of the tensorial product $\otimes^n$. 
	In fact, according to Friedlander-Suslin \cite[Theorem 3.2]{FS97}, the evaluation on $\Bbbk^n$ yields an equivalence of categories $\Pcalx{n}\to S(n,n)\Modp$ where $S(n,n)$ is the Schur algebra and $S(n,n)\Modp$ is the category of the representations of finite dimension of $S(n,n)$. 
	Moreover, since $p>n$, by Schur \cite{Schur73I,Schur73III} (see also \cite[Corollary 2.6e]{Gre07}), the algebra $S(n,n)$ is semi-simple. 
	Then the simple module $L(\Bbbk^n)$ is a direct factor of $\left(\Bbbk^{n}\right)^{\otimes n}$. 
	Therefore, we deduce that $L$ is a direct factor of $\otimes^n$. 
	By applying Theorem \ref{thm3.3.8}, we obtain the desired conclusion.
\end{proof}

\section{ Cohomology of the orthogonal and symplectic groups}\label{sec-cohom}

In this section, assume that the characteristic $p$ of the field $\Bbbk$ is \emph{odd}. 
Our purpose is to calculate explicitly examples of cohomology algebra for the symplectic group scheme $\Spoper_n$ and orthogonal group scheme $\Ooper_{n,n}$. 
More precisely, if $G_{n}$ is either $\Spoper_{n}$ or $\Ooper_{n,n}$, then $G_n$ is by definition a  subgroup scheme of $\GL_{2n}$, and acts naturally on the vector space $\Bbbk^{2n}$ (by matrix multiplication). 
By applying the $r$-th Frobenius twist (for $r\ge 0$), and by taking $\ell$ copies of the representation obtained, we will obtain an action of $G_n$ on $\Bbbk^{2n(r)\oplus \ell}$, subsequently an action by algebraic automorphisms on the $\Bbbk$-algebra $\Bbbk\left[\Bbbk^{2n(r)\oplus \ell}\right]$ of the polynomials on $\Bbbk^{2n(r)\oplus \ell}$. 
We remark that this algebra of polynomials can also be written  (as $G_n$-module) under the form $S^{*}\left(\Bbbk^{2n\vee(r)\oplus \ell}\right)$ where $\Bbbk^{2n\vee}$ denotes the dual representation of $\Bbbk^{2n}$. In this section, we compute the following cohomology algebra with $n\ge p^{r}\ell$:
\begin{equation}\label{equ3.5.1}
H^{*}_{\mathrm{rat}}\left(G_{n},S^{*}\left(\Bbbk^{2n\vee(r)\oplus \ell}\right)\right)\;.
\end{equation}

The cohomology subalgebras of degree zero, do not depend on $r$ (because applying the Frobenius twist does not change the invariants). 
For $r=0$, these algebras of invariants were described by the fundamental theorems of classical invariant theory. 
Our contribution is to describe the cohomology of higher degree, with an algebraic structure.

\begin{thm}\label{thm3.3.11}
	Let $n\ge p^{r}\ell$. 
	Suppose that the characteristic $p$ of the field $\Bbbk$ is odd. 
	The cohomology algebra
	$ H^{*}_{\mathrm{rat}}\left(G_{n},S^{*}\left(\Bbbk^{2n\vee(r)\oplus \ell}\right)\right)$ is a symmetric algebra on a set of generators $(h|i|j)_{G_{n}}\in H^{2h}_{\mathrm{rat}}\left(G_{n},S^{*}\left(\Bbbk^{2n\vee(r)\oplus \ell}\right)\right)$  where $0\le h<p^r,0\le i\le j\le \ell$ and $i\ne j$ if $G_n$ is the symplectic group $\Spoper_n$.
	
	Moreover, there are no relations among the $(h|i|j)_{G_{n}}$.
\end{thm}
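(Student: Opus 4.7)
The plan, following Touzé~\cite{Tou10a,Tou12}, is to convert the cohomology computation into a calculation of $\Ext$-groups in $\PcalK$ via the algebra morphism
\begin{equation*}
\Phi_{G_n}:\bigoplus_{d\ge 0}\Ext^{*}_{\PcalK}\left(\Gamma^{p^rd}\circ X_{G},S^{2d(r)}_{\Bbbk^\ell}\right)\to H^{*}_{\mathrm{rat}}\left(G_{n},S^{*}\left(\Bbbk^{2n\vee(r)\oplus \ell}\right)\right)
\end{equation*}
recalled in the introduction, with $X_G=S^2$ if $G_n=\Ooper_{n,n}$ and $X_G=\Lambda^2$ if $G_n=\Spoper_n$. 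The proof then has two main ingredients: (a) show that $\Phi_{G_n}$ is an isomorphism of graded algebras in the stable range $n\ge p^r\ell$; (b) compute the source algebra explicitly and identify its generators with the claimed classes $(h|i|j)_{G_n}$.

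For ingredient (a), I would adapt to $G_n$ the \emph{$n$-coresolved functor} machinery that Touzé developed in~\cite{Tou12} for the $\GL$-case. The idea is to construct a Troesch-type injective coresolution of $S^{2d(r)}_{\Bbbk^\ell}$ in $\PcalK$ whose terms become rationally $G_n$-acyclic after evaluation on $\Bbbk^{2n}$, and then to compare the complex computing $\Ext^*_{\PcalK}$ with the one computing $H^*_{\mathrm{rat}}(G_n,-)$ through the evaluation functor. The numerical hypothesis $n\ge p^r\ell$ is precisely the threshold at which this coresolution remains $n$-coresolved at every internal weight that occurs in $S^*(\Bbbk^{2n\vee(r)\oplus\ell})$, so that the comparison map becomes an isomorphism compatibly with cup products. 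This is the principal technical obstacle of the proof, and is where the specific geometry of $\Ooper_{n,n}$ and $\Spoper_n$ (and not merely of $\GL_{2n}$) really enters.

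For ingredient (b), since $p>2$ we have the splitting $\otimes^2\simeq S^2\oplus\Lambda^2$ in $\Pcalx{2}$, so $X_G$ is in both cases a direct factor of $\otimes^2$. Theorem~\ref{thm3.3.8} therefore supplies a product-compatible graded isomorphism
\begin{equation*}
\Ext^{*}_{\PcalK}\left(\Gamma^{p^rd}\circ X_{G},S^{2d(r)}_{\Bbbk^\ell}\right)\simeq \Hom_{\PcalK}\left(\Gamma^{d,E_r}\circ X_G,S^{2d}_{\Bbbk^\ell}\right).
\end{equation*}
The right-hand side can then be computed by combining the exponential decomposition $S^{2d}_{\Bbbk^\ell}\simeq \bigoplus_{|\mu|=2d}S^\mu$ (summed over $\ell$-tuples) with the Yoneda lemma: it identifies, via the classical first and second fundamental theorems now applied with $E_r$-coefficients, with the free symmetric algebra on generators indexed by a cohomological label $0\le h<p^r$ (coming from the non-zero degrees $0,2,\ldots,2p^r-2$ of $E_r$) together with pairs $0\le i\le j\le \ell$ labelling the $\ell$ copies, excluding the diagonal $i=j$ in the symplectic case because $\Lambda^2$ kills it. Transporting these generators across $\Phi_{G_n}$ produces the classes $(h|i|j)_{G_n}$ of the statement, placed in cohomological degree $2h$; polynomiality of the whole algebra, i.e.\ the absence of relations among the $(h|i|j)_{G_n}$, is inherited from the freeness of the source, and the stratum $h=0$ recovers the classical invariants $\omega_{G_n}(x_i,x_j)$ displayed in the introduction.
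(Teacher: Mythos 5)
Your proposal is correct and follows essentially the same route as the paper: the isomorphism range for $\Phi_{G_n}$ is obtained exactly as in the paper's Lemmas \ref{lem3.5.8} and \ref{cle1.5} and Theorem \ref{cle2} (Troesch coresolutions make $S^{2d(r)}_{\Bbbk^\ell}$ a $2p^r\ell$-coresolved functor, and $\phi_{G_n,F}$ is an isomorphism for $2n$-coresolved $F$), while the source algebra is computed from Theorem \ref{thm3.3.8} (equivalently Corollary \ref{cor3.3.9}) as in Lemma \ref{lem3.4.4}. One minor remark: the identification $\Hom_{\PcalK}\left(\Gamma^{d,E_r}\circ X_G,S^{2d}_{\Bbbk^\ell}\right)\simeq S^{d}\left(E_r\otimes X_G(\Bbbk^\ell)\right)$ is a pure Yoneda-lemma computation, so the classical first and second fundamental theorems are recovered as the degree-zero output of the theorem rather than used as input.
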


A similar theorem to \ref{thm3.3.1} for general linear group scheme $\GL_n$ was proved by Touz\'e \cite[Theorem 6.15]{Tou12}.

We now recall the first fundamental theorems for the symplectic and orthogonal group schemes. 
To deal with the two cases simultaneously, we introduce some notations.
Denote by $e_1^\vee,e_2^\vee,\ldots,e_{2n}^\vee$ the dual basis of the canonical basis of $\Bbbk^{2n}$. For each group $G_n$ we define a functor $X_G$, a particular element  $\omega_{G_n}\in X_G(\Bbbk^{2n})$ and a set of pairs of integers $\mathtt I_{G_{n}}$ as follows.

\begin{table}[!ht]
	\centering
	\renewcommand{\arraystretch}{1.25}
	\begin{tabular}{|c|c|c|}
		\hline
		$G_{n}$& $\Spoper_{n}$ &  $\Ooper_{n,n}$\\ 
		\hline
		$X_{G}$& $\Lambda^{2}$ & $S^{2}$ \\ 
		\hline
		$\omega_{G_{n}}$ &  $\sum_{i=1}^{n}e_{i}^{\vee}\wedge e_{n+i}^{\vee}$ &$\sum_{i=1}^{n}e_{i}^{\vee}e_{n+i}^{\vee}$\\ 
		\hline
		$\mathtt I_{G_{n}}$& $\{(i,j):1\le i<j\le \ell\}$ &  $\{(i,j):1\le i \le  j\le \ell\}$\\ 
		\hline
	\end{tabular}
\end{table}

\noindent We define invariants $(i|j)_{G_n}\in S^{2}\left(\Bbbk^{2n\vee\oplus \ell}\right)$ with $(i,j)\in\mathtt{I}_{G_n}$ under the action of $G_n$ by
\begin{equation*}
(i|j)_{G_{n}}(x_{1},...,x_{\ell})=\omega_{G_{n}}(x_{i},x_{j}).
\end{equation*}

\begin{thm}[Fundamental theorem for the groups $\Spoper_n$ and $\Ooper_{n,n}$ \cite{deCP76}] The set $\left\{(i|j)_{G_{n}}:(i,j)\in\mathtt I_{G_{n}}\right\}$ is a system of generators of the algebra $H^{0}_{\mathrm{rat}}\left(G_{n},S^{*}\left(\Bbbk^{2n\vee\oplus \ell}\right)\right)$.
	Moreover, if $n\ge \ell$, there are no relations among the $(i|j)_{G_{n}}.$
\end{thm}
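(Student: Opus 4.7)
The plan is to identify the invariant algebra with the image of a natural graded algebra morphism whose source is computable by elementary strict-polynomial-functor operations. Because $\omega_{G_n} \in X_G(\Bbbk^{2n\vee})$ is $G_n$-invariant by the very definition of $G_n$, evaluation at the standard representation yields, for each $d \geq 0$ and each $f : \Gamma^d \circ X_G \to S^{2d}_{\Bbbk^\ell}$, a $G_n$-invariant element $f_{\Bbbk^{2n\vee}}(\omega_{G_n}^{\otimes d}) \in S^{2d}(\Bbbk^{2n\vee\oplus\ell})$. This assembles into a graded $\Bbbk$-algebra map
\[
\Phi : \bigoplus_{d\ge 0} \Hom_{\PcalK}\bigl(\Gamma^d \circ X_G,\, S^{2d}_{\Bbbk^\ell}\bigr) \longrightarrow H^{0}_{\mathrm{rat}}\bigl(G_{n},S^{*}(\Bbbk^{2n\vee\oplus\ell})\bigr),
\]
which is simply the $r=0$, $\Hom$-level specialization of the morphism $\Phi_{G_n}$ introduced in the overview.

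I then compute the source. Combining Kuhn duality, the identity $(F_{\Vunderline})^\sharp \simeq (F^\sharp)^{\Vunderline}$, and the Yoneda lemma of Section 1.3, the degree-$d$ summand rewrites as
\[
\Hom_{\PcalK}\bigl(\Gamma^d \circ X_G,\, S^{2d}_{\Bbbk^\ell}\bigr) \;\simeq\; \Hom_{\PcalK}\bigl(\Gamma^{2d,\Bbbk^\ell},\, S^d \circ X_G^\sharp\bigr) \;\simeq\; S^d\bigl(X_G^\sharp(\Bbbk^\ell)\bigr),
\]
so the source is the symmetric algebra $S^{*}\bigl(X_G^\sharp(\Bbbk^\ell)\bigr)$. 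Since $X_G^\sharp$ equals $\Gamma^2$ when $G_n = \Ooper_{n,n}$ and $\Lambda^2$ when $G_n = \Spoper_n$, a canonical basis of $X_G^\sharp(\Bbbk^\ell)$ is indexed precisely by $\mathtt{I}_{G_n}$; tracing $\Phi$ through its degree-one part shows that the generator attached to $(i,j) \in \mathtt{I}_{G_n}$ is sent to the bilinear invariant $(i|j)_{G_n}(x_1,\ldots,x_\ell) = \omega_{G_n}(x_i, x_j)$.

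Under this reformulation, the two claims of the theorem become, respectively, surjectivity and injectivity of $\Phi$. Surjectivity --- i.e.\ that the $(i|j)_{G_n}$ generate the invariant algebra for every $n$ --- is the classical First Fundamental Theorem for the symplectic and orthogonal groups proved in \cite{deCP76}. Injectivity in the range $n \geq \ell$ amounts to the Second Fundamental Theorem in the stable range: geometrically, once $n \geq \ell$ there is enough room in $\Bbbk^{2n}$ for the closed $G_n$-orbits of $\ell$-tuples of vectors to be separated by the $\binom{\ell+1}{2}$ (respectively $\binom{\ell}{2}$) values $(i|j)_{G_n}$ alone, leaving no room for Pfaffian or determinantal relations. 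The genuine difficulty is the classical argument of \cite{deCP76} itself, which I borrow as a black box; the contribution of the present reformulation is purely interpretative, producing the polynomial algebra $S^{*}(X_G^\sharp(\Bbbk^\ell))$ as the natural home of the generators and thereby making transparent the statement in the stable range.
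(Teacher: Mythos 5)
Your proposal is correct and matches the paper's treatment: the paper states this theorem as a classical result of de Concini--Procesi \cite{deCP76} and gives no independent proof, and your reformulation via the map $\Phi$ is precisely the $r=0$, degree-zero specialization of the paper's morphism $\Phi_{G_n}$ from \eqref{equ3.5.3} together with Lemma \ref{lem3.4.4} (where $E_0=\Bbbk$). Both you and the paper defer the substantive First and Second Fundamental Theorems to \cite{deCP76}, so the only caveat is the one you already acknowledge, namely that the hard content is imported as a black box.
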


\subsection{ Relation with $\Ext$-group in $\PcalK$}
We first recall the relation between the cohomology of the symplectic and orthogonal groups with $\Ext$-groups in $\PcalK$.
The standard form $\omega_G\in X_G(\Bbbk^{2n})$ is  invariant under the action of $G_n$. 
Then we have a $G_n$-equivariant map $\iota_d:\Bbbk\to \Gamma^{d}\left(X_{G}\left(\Bbbk^{2n\vee}\right)\right)$ which sends $\lambda$ to $\lambda\omega_{G_{n}}^{\otimes d}$.
\begin{deff}
	Let $F\in\PcalK$.  We define a morphism $\phi_{G_n,F}$ by the following commutative diagram
	\begin{equation*}
	\xymatrix{
		\Extx{\PcalK}{\Gamma^{p^rd}\circ X_G,F}\ar[rr]^-{\mathrm{ev}_{\Bbbk^{2n\vee}}}\ar[dd]^{\phi_{G_n,F}}&&\Extx{\GL_n}{\Gamma^{p^rd}\circ X_G(\Bbbk^{2n\vee}),F(\Bbbk^{2n\vee})}\ar[d]^-{\mathrm{res}^{\GL_n}_{G_n}}\\
		&&\Extx{G_n}{\Gamma^{p^rd}\circ X_G(\Bbbk^{2n\vee}),F(\Bbbk^{2n\vee})}\ar[d]^-{\iota_{dp^r}^*}\\
		H^*_\mathrm{rat}\left(G_n,F(\Bbbk^{2n\vee})\right)\ar[rr]^-\simeq&&\Extx{G_n}{\Bbbk,F(\Bbbk^{2n\vee})}.
	}
	\end{equation*}
\end{deff}

The following key result is due to Touz\'e \cite[Subsections 3.2, 3.3]{Tou10a}.
\begin{thm}\label{cle0}  Let $F\in\Pcalx{d}$. The morphism
	\begin{equation*}
	\phi_{G_n,F}:\Extx{\PcalK}{\Gamma^{p^rd}\circ X_G,F}\to H^*_\mathrm{rat}\left(G_n,F(\Bbbk^{2n\vee})\right)
	\end{equation*}
	satisfies the following properties.
	\begin{enumerate}
		\item[\rm (1)] $\phi_{G_{n},F}$ is a graded map natural in $F$ and is compatible with the products.
		\item[\rm (2)]  $\phi_{G_{n},\text{-}}$ is a  morphism of universal $\delta$-functors.
		\item[\rm (3)] $\phi^{0}_{G_{n},F}$ is an epimorphism if $F$ is an injective.
		\item[\rm (4)] $\phi_{G_{n},F}$ is an isomorphism if $2n\ge \deg F$.
	\end{enumerate}
\end{thm}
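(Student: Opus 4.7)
The plan is to verify the four properties in order, since (1) is a structural check, (2) and (3) together will be used (via Grothendieck's criterion) to pin down the map, and (4) is the substantive computational input; we will largely follow the strategy of Touzé in \cite{Tou10a}.

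For (1), I would analyze $\phi_{G_{n},F}$ as the composition of three maps: the evaluation $\mathrm{ev}_{\Bbbk^{2n\vee}}$, restriction $\mathrm{res}^{\GL_{2n}}_{G_n}$, and the pullback $\iota_{p^rd}^*$. Evaluation is an exact, symmetric monoidal functor $\Pcalx{d}\to\GL_{2n}\mathrm{-Mod}$, hence induces graded, natural, cup-compatible maps on $\Ext$. Restriction is obviously graded, natural, and compatible with cup products since cup products are defined via tensor products of cocycles. For $\iota_{p^rd}^*$, the key point is that the element $\omega_{G_n}\in X_G(\Bbbk^{2n})$ is group-like for the divided-power coalgebra structure on $\Gamma^\bullet$: the diagram expressing $\Delta_{p^rd,p^re}\circ\iota_{p^r(d+e)}=\iota_{p^rd}\otimes\iota_{p^re}$ (via $\omega_{G_n}^{\otimes(d+e)}=\omega_{G_n}^{\otimes d}\otimes\omega_{G_n}^{\otimes e}$) is exactly what is needed to turn $\iota^*$ into a morphism of cup products.

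For (2), the source $\Extx{\PcalK}{\Gamma^{p^rd}\circ X_G,-}$ is automatically a universal $\delta$-functor on $\PcalK$. On the target, $F\mapsto H^*_{\mathrm{rat}}\left(G_n,F(\Bbbk^{2n\vee})\right)$ is a $\delta$-functor because evaluation at $\Bbbk^{2n\vee}$ is exact. By Grothendieck's criterion, universality reduces to effaceability: I would show that any $F\in\PcalK$ embeds in an injective $J=\bigoplus S^{d_i,V_i}$ such that $J(\Bbbk^{2n\vee})$ is $G_n$-acyclic. For this one uses that the symmetric powers $S^*(V\otimes\Bbbk^{2n\vee})$ carry a good filtration as a $G_n$-module for $G_n$ reductive (Kempf vanishing / the work of Donkin and of van der Kallen). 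A short exact sequence chase then gives the compatibility of $\phi$ with connecting morphisms from its naturality.

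For (3), when $F$ is an injective standard cogenerator $S^{d,V}$ (with $d=2p^re$), the source in degree $0$ is $\Homx{\PcalK}{\Gamma^{p^rd}\circ X_G,S^{d,V}}\simeq \left(S^{d,V}\right)^\sharp\left(X_G(\Bbbk)\right)^{(?)}$, which by Kuhn duality computes a space of multilinear $G$-equivariant polynomials; the target $H^0_{\mathrm{rat}}(G_n,S^{d,V}(\Bbbk^{2n\vee}))$ is the classical invariant algebra in dual variables. Surjectivity of $\phi^0_{G_n,S^{d,V}}$ is then essentially the classical First Fundamental Theorem for $G_n$, which exhibits every invariant as a polynomial in the bilinear forms $(x_i,x_j)\mapsto \omega_{G_n}(x_i,x_j)$; these are exactly the images of natural transformations $\Gamma^d\circ X_G\to S^{d,V}$ under $\phi^0$. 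The twisted case ($r>0$) follows by a Frobenius argument, replacing $\Bbbk^{2n}$ by $\Bbbk^{2n(r)}$ throughout.

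For (4), the main obstacle, I would decompose as follows. The evaluation $\mathrm{ev}_{\Bbbk^{2n\vee}}:\Extx{\PcalK}{F_1,F_2}\to\Extx{\GL_{2n}}{F_1(\Bbbk^{2n\vee}),F_2(\Bbbk^{2n\vee})}$ is an isomorphism when $2n\ge\max(\deg F_1,\deg F_2)$ by \cite[Lemma 2.3]{Tou10a}, which handles the first arrow. It remains to show that the composition
\begin{equation*}
\Extx{\GL_{2n}}{\Gamma^{p^rd}(X_G(\Bbbk^{2n\vee})),M}\xrightarrow{\mathrm{res}}\Extx{G_n}{\Gamma^{p^rd}(X_G(\Bbbk^{2n\vee})),M}\xrightarrow{\iota^*}H^*_{\mathrm{rat}}(G_n,M)
\end{equation*}
is an isomorphism for $M=F(\Bbbk^{2n\vee})$ with $\deg F\le 2n$. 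For this I would use a change-of-groups Shapiro lemma argument: $H^*_{\mathrm{rat}}(G_n,M)\simeq H^*_{\mathrm{rat}}(\GL_{2n},M\otimes\Bbbk[\GL_{2n}/G_n])$, combined with the classical geometric identification of the coordinate ring $\Bbbk[\GL_{2n}/G_n]$ with a direct factor of $S^*\left(X_G(\Bbbk^{2n\vee})^{\oplus N}\right)$, compatible via $\iota$ with the element $\omega_{G_n}$. The universality established in (2)--(3) then lets one reduce the verification that the composite is an isomorphism to the case of injective $F=S^{d,V}$ and cohomological degree zero, which is precisely the First Fundamental Theorem invoked in (3); a standard $\delta$-functor comparison propagates the isomorphism to all $F$ with $\deg F\le 2n$.
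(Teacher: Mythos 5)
The paper does not prove this theorem itself --- it is quoted from Touz\'e \cite[Subsections 3.2, 3.3]{Tou10a} --- and your overall strategy ($\delta$-functors, good filtrations, the evaluation isomorphism, the fundamental theorems of invariant theory) is indeed the one used there; parts (1)--(3) of your sketch are essentially sound. The genuine gap is in (4). Your final reduction asserts that the degree-zero base case for injective $F=S^{d,V}$ ``is precisely the First Fundamental Theorem invoked in (3)''. But the FFT only gives \emph{surjectivity} of $\phi^{0}_{G_n,J}$ on injectives --- which is exactly what (3) claims and no more --- whereas the morphism-of-universal-$\delta$-functors comparison you invoke needs $\phi^{0}$ to be an \emph{isomorphism} on injectives in the stable range before it can propagate to higher degrees. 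Injectivity of $\phi^{0}_{G_n,J}$ is a separate input: classically it is the Second Fundamental Theorem (absence of relations among the quadratic invariants for $n$ large), and in the present paper it is supplied instead by the $2n$-cogenerated machinery --- Lemma \ref{cle1}(2) identifies $\Homx{\PcalK}{\Gamma^{d}\circ X_G,J}$ with a $\Hom$ over the Schur algebra, and Lemma \ref{cle1.5} deduces the injectivity of $\phi^{0}_{G_n,J}$ from the surjectivity of $\bigl(\tilde\theta_{G_{n}}\bigr)_{\Bbbk^{2n\vee}}$. Without one of these two ingredients your argument only shows that $\phi$ is an epimorphism in degree zero, and the proof of (4) does not close.

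Two secondary inaccuracies. First, $\Bbbk\left[\GL_{2n}/G_n\right]$ is not a direct factor of $S^{*}\bigl(X_G(\Bbbk^{2n\vee})^{\oplus N}\bigr)$: the quotient $\GL_{2n}/G_n$ is the \emph{open} locus of non-degenerate forms inside the affine space $X_G(\Bbbk^{2n\vee})$, so its coordinate ring is a localization (at the Pfaffian, resp.\ the determinant); this step is not load-bearing in your sketch, but as written it is false. Second, for $G_n=\Ooper_{n,n}$, which is not connected, the good-filtration/Kempf-vanishing acyclicity used in (2) requires a word about the component group $\Zbb/2$ (harmless here since $p$ is odd, but it should be said).
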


The cohomology algebra \eqref{equ3.5.1} can be decomposed as
\begin{align*}
H^{*}_{\mathrm{rat}}\left(G_{n},S^{*}\left(\Bbbk^{2n\vee(r)\oplus \ell}\right)\right)
&=\bigoplus_{d\ge 0} H^{*}_{\mathrm{rat}}\left(G_{n},S^{d}\left(\Bbbk^{2n\vee(r)\oplus \ell}\right)\right)\\
&=\bigoplus_{d\ge 0} H^{*}_{\mathrm{rat}}\left(G_{n},S^{d(r)}_{\Bbbk^\ell}\left(\Bbbk^{2n\vee}\right)\right).
\end{align*}
By Theorem \ref{cle0}(1), there is a morphism of  $\Bbbk$-graded algebras:
\begin{equation}\label{equ3.5.3}
\Phi_{G_n}=\bigoplus_{d\ge 0}\phi_{G_n,S^{2d(r)}_{\Bbbk^\ell}}:\bigoplus_{d\ge 0}\Ext^{*}_{\PcalK} \left(\Gamma^{p^rd}\circ X_{G},S^{2d(r)}_{\Bbbk^\ell}\right) \to H^{*}_{\mathrm{rat}}\left(G_{n},S^{*}\left(\Bbbk^{2n\vee(r)\oplus \ell}\right)\right). 
\end{equation}

\begin{lem}\label{lem3.4.4}
	There is an algebra isomorphism
	\begin{equation*}
	\bigoplus_{d\ge 0}\Ext^{*}_{\PcalK} \left(\Gamma^{p^rd}\circ X_{G},{S^{2d(r)}_{\Bbbk^\ell}}\right)\simeq S^{*}\left(E_{r}\otimes X_G(\Bbbk^{\ell})\right).
	\end{equation*}
\end{lem}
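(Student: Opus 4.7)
The strategy is to apply Theorem~\ref{thm3.3.8} to convert the $\Ext$-algebra into a $\Hom$-algebra, and then to identify the result with the symmetric algebra by Kuhn duality.

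Since $p>2$, the functor $X_G$ is a direct factor of $\otimes^{2}$, so Theorem~\ref{thm3.3.8} (with $n=2$ and $X=X_G$) provides, for each $\ell$-tuple $\mu$ of non-negative integers with $|\mu|=2d$, a graded isomorphism
\[
\Ext^{*}_{\PcalK}\bigl(\Gamma^{p^{r}d}\circ X_{G},S^{\mu(r)}\bigr)\xrightarrow{\simeq}\Hom_{\PcalK}\bigl(\Gamma^{d,E_{r}}\circ X_{G},S^{\mu}\bigr),
\]
natural in $S^\mu$ and compatible with products. Using the exponential decomposition $S^{2d}_{\Bbbk^\ell}=\bigoplus_{|\mu|=2d}S^\mu$ and its Frobenius twist to assemble these isomorphisms, and summing over $d$, one obtains an algebra isomorphism
\[
\bigoplus_{d\ge 0}\Ext^{*}_{\PcalK}\bigl(\Gamma^{p^{r}d}\circ X_{G},S^{2d(r)}_{\Bbbk^{\ell}}\bigr)\simeq \bigoplus_{d\ge 0}\Hom_{\PcalK}\bigl(\Gamma^{d,E_{r}}\circ X_{G},S^{2d}_{\Bbbk^{\ell}}\bigr).
\]

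Next, I invoke the Yoneda identity $\Hom_{\PcalK}(F,\Sx{2d}{\Bbbk^{\ell}})\simeq F^{\sharp}(\Bbbk^{\ell})$ (the injective form, dual to Proposition~\ref{pro1.5.7}). Combining the compatibility of Kuhn duality with composition, the relation $(F^{V})^{\sharp}\simeq(F^{\sharp})_{V}$, and $(\Gamma^{d})^{\sharp}\simeq S^{d}$ yields $(\Gamma^{d,E_{r}}\circ X_{G})^{\sharp}\simeq S^{d}_{E_{r}}\circ X_{G}^{\sharp}$, which evaluated at $\Bbbk^{\ell}$ equals $S^{d}\bigl(E_{r}\otimes X_{G}^{\sharp}(\Bbbk^{\ell})\bigr)$. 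Because $p$ is odd, $\Lambda^{2}$ is self-Kuhn-dual and the norm map furnishes a natural isomorphism $\Gamma^{2}\simeq S^{2}$, so in either case $X_{G}^{\sharp}(\Bbbk^{\ell})\simeq X_{G}(\Bbbk^{\ell})$. Summing over $d$ produces $S^{*}\bigl(E_{r}\otimes X_{G}(\Bbbk^{\ell})\bigr)$.

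The remaining point, and the main obstacle, is to check that the composite identification intertwines the cup-product algebra structure with the natural multiplication in the symmetric algebra. The cup product on $\bigoplus_{d}\Hom(\Gamma^{d,E_{r}}\circ X_{G},S^{2d}_{\Bbbk^{\ell}})$ is induced by the coproduct $\Gamma^{d+e}\to\Gamma^{d}\otimes\Gamma^{e}$ (extended through the $E_{r}$-parameterization and precomposition with $X_{G}$) together with the algebra structure on $S^{2\bullet}_{\Bbbk^{\ell}}$. Under Kuhn duality the coproduct on $\Gamma^{\bullet}$ dualizes to the symmetric-algebra multiplication on $S^{\bullet}_{E_{r}}\circ X_{G}^{\sharp}$, while the algebra structure on $S^{2\bullet}_{\Bbbk^{\ell}}$ dualizes to the diagonal on $\Gamma^{2\bullet,\Bbbk^{\ell}}$, whose action on the Yoneda element $\mathrm{id}_{\Bbbk^{\ell}}^{\otimes 2(d+e)}$ is simply $\mathrm{id}^{\otimes 2d}\otimes\mathrm{id}^{\otimes 2e}$. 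The net effect is the standard multiplication $S^{d}(U)\otimes S^{e}(U)\to S^{d+e}(U)$ with $U=E_{r}\otimes X_{G}(\Bbbk^{\ell})$. Tracking this compatibility through the chain of identifications is the delicate part; all other steps reduce to direct applications of the machinery established earlier in the paper.
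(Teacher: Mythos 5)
Your proposal is correct and follows essentially the same route as the paper: the paper invokes Corollary \ref{cor3.3.9} (which is exactly Theorem \ref{thm3.3.8} applied to $X_G$ as a direct factor of $\otimes^2$, using $p>2$) to pass to $\Homx{\PcalK}{\Gamma^{d,E_r}\circ X_G,S^{2d}_{\Bbbk^\ell}}$, then identifies this via Yoneda with $S^d\left(E_r\otimes X_G^\sharp(\Bbbk^\ell)\right)\simeq S^d\left(E_r\otimes X_G(\Bbbk^\ell)\right)$ and notes compatibility with products. Your extra care on the exponential decomposition of $S^{2d}_{\Bbbk^\ell}$ and on the product structure only makes explicit what the paper leaves implicit.
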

\begin{proof}
	Since $p>2$ and $X_G\in\{S^2,\Lambda^2\}$, $X_G$ is a simple functor in the category $\Pcalx{2}$. By Corollary \ref{cor3.3.9}, there is an isomorphism
	\begin{equation*}
	\Extx{\PcalK}{\Gamma^{p^rd}\circ X_G,S^{2d(r)}_{\Bbbk^\ell}}\simeq \Homx{\PcalK}{\Gamma^{d,E_r}\circ X_G,S^{2d}_{\Bbbk^\ell}}.
	\end{equation*}
	Moreover, Yoneda's lemma yields isomorphisms
	\begin{equation*}
	\Homx{\PcalK}{\Gamma^{d,E_r}\circ X_G,S^{2d}_{\Bbbk^\ell}}\simeq S^d\left(E_r\otimes X_G^\sharp(\Bbbk^\ell)\right)\simeq S^d\left(E_r\otimes X_G(\Bbbk^\ell)\right).
	\end{equation*}
	Furthermore, these isomorphisms are compatible with the products. 
	Then we obtain an isomorphism of graded algebras $\bigoplus_{d\ge 0}\Ext^{*}_{\PcalK} \left(\Gamma^{p^rd}\circ X_{G},S^{2d(r)}_{\Bbbk^\ell}\right)\simeq S^{*}\left(E_{r}\otimes X_G(\Bbbk^{\ell})\right)$.
\end{proof}

\subsection{ $n$-coresolved functors \`a la Touz\'e}\label{subsect3.5.2}

In order to compute the cohomology algebras \eqref{equ3.5.1} of the symplectic and orthogonal groups, we need calculate the groups of cohomology of the form $H^*_{\mathrm{rat}}\left(G_n,F(\Bbbk^{2n\vee})\right)$ for $2n<\deg F$, i.e., these values of $n$ are different from the ones that Theorem \ref{cle0} gives an isomorphism:
\begin{equation*}
\phi_{G_n,F}:\Extx{\PcalK}{\Gamma^{p^rd}\circ X_G,F}\xrightarrow{\simeq} H^*_{\mathrm{rat}}\left(G_n,F(\Bbbk^{2n\vee})\right).
\end{equation*}
However, we will prove that for the bad values of $ n $, i.e., $ 2n < \deg (F) $, the morphism $\phi_{G_n,F}$ remains an isomorphism for the functors $F$ considered.
For this, we use the notion of functor $n$-coresolved introduced in \cite{Tou12}.
Let $F\in\Pcalx{d}$ and $n$ be a positive integer.
We recall that the morphism $\theta_{F}=\theta_{F,\Bbbk^n}:\Gamma^{d,\Bbbk^n}\otimes F\left(\Bbbk^{n}\right)\to F$ defined for all $V\in\VcalK$ by
\begin{equation*}
\Gamma^{d}\left(\Hom\left(\Bbbk^{n},V\right)\right)\otimes F\left(\Bbbk^{n}\right)\to F(V),\quad f\otimes x\mapsto F(f)(x).
\end{equation*}
Dually, we have a morphism $ \left(\theta_{F^{\sharp}}\right)^{\sharp}:F\to S^{d}_{\Bbbk^n}\otimes F\left(\Bbbk^{n\vee}\right) $.
\begin{deff}\label{def3.4.5}
	A functor $F\in \Pcalx{d}$ is  \emph{$n$-cogenerated} if $\theta_{F^\sharp}$ is an epimorphism, or equivalently, if $\left(\theta_{F^\sharp}\right)^{\sharp}$ is a monomorphism.
\end{deff}

\begin{lem}[\text{\cite[Lemma 6.8]{Tou12}}]\label{cle1} Let $J\in\PcalKoper{d}$ be a $n$-cogenerated injective.
	\begin{enumerate}
		\item[\rm (1)] $J(\Bbbk^{n})$ is injective in the category $S(n,d)${\rm -mod}.
		\item[\rm (2)] For $F\in \PcalKoper{d}$, the evaluation map induces an isomorphism
		\begin{equation*}
		\Hom_{\PcalK}(F,J)\simeq \Hom_{S(n,d)}\left(F(\Bbbk^{n}),J(\Bbbk^{n})\right).
		\end{equation*}
	\end{enumerate}
\end{lem}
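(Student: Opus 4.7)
My plan is to reduce both assertions to the single case $J = S^d_{\Bbbk^n}$, which is then handled by direct computation using the injective Yoneda formula recalled in the excerpt.

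The first step is to exploit the definition of $n$-cogenerated: the morphism $\left(\theta_{J^\sharp}\right)^\sharp \colon J \to S^d_{\Bbbk^n}\otimes J(\Bbbk^{n\vee})$ is a monomorphism. Since $J$ is injective and $J(\Bbbk^{n\vee})$ is a finite-dimensional vector space, this monomorphism splits, and consequently $J$ is a direct summand of a finite direct sum of copies of $S^d_{\Bbbk^n}$. Both the property ``injective as an $S(n,d)$-module'' and the property ``the evaluation map is an isomorphism'' are stable under taking direct summands and direct sums, so it suffices to prove (1) and (2) in the special case $J = S^d_{\Bbbk^n}$.

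Next, I would compute the $S(n,d)$-module structure of $S^d_{\Bbbk^n}(\Bbbk^n) = S^d(\End \Bbbk^n)$. The trace pairing $\End\Bbbk^n \otimes \End\Bbbk^n \to \Bbbk$ identifies $S^d(\End \Bbbk^n)$ with the $\Bbbk$-linear dual of $\Gamma^d(\End \Bbbk^n) = S(n,d)$, i.e.\ with the contragredient $S(n,d)^\vee$ of the left regular representation. Since $S(n,d)$ is finite-dimensional, the left regular representation is projective, and hence $S(n,d)^\vee$ is an injective $S(n,d)$-module. This establishes (1).

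For (2) with $J = S^d_{\Bbbk^n}$, the plan is to identify both sides with $F(\Bbbk^n)^\vee$. On the source side, the injective Yoneda formula $\Hom_{\PcalK}(F, \Sx{d}{\Vunderline}) \simeq F^\sharp(\Vunderline)$ recalled in the excerpt, together with the identification $F^\sharp(\Bbbk^n) = F(\Bbbk^{n\vee})^\vee \simeq F(\Bbbk^n)^\vee$, yields $\Hom_{\PcalK}(F, S^d_{\Bbbk^n}) \simeq F(\Bbbk^n)^\vee$. On the target side, the standard adjunction $\Hom_{S(n,d)}(M, S(n,d)^\vee) \simeq M^\vee$ for a finite-dimensional algebra yields $\Hom_{S(n,d)}(F(\Bbbk^n), S(n,d)^\vee) \simeq F(\Bbbk^n)^\vee$. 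The main obstacle I foresee is verifying that the evaluation map $f \mapsto f_{\Bbbk^n}$ is compatible with these two identifications of $F(\Bbbk^n)^\vee$. This will be done by first checking it on the standard projective $F = \Gamma^{d,\Bbbk^n}$, where both sides reduce via Yoneda to evaluation at the canonical element $\gamma_d(\mathrm{Id}_{\Bbbk^n}) \in S(n,d)$ and the match is immediate, and then extending to arbitrary $F$ by taking a presentation $\Gamma^{d,V'} \to \Gamma^{d,V} \to F \to 0$ and invoking left exactness in $F$ of both sides (for the target side, left exactness uses that evaluation at $\Bbbk^n$ is exact).
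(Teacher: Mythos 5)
The paper does not prove this lemma; it only cites \cite[Lemma 6.8]{Tou12}, so your proposal is measured against the standard argument rather than a proof in the text. Your overall route is the right one and is essentially Touz\'e's: split the monomorphism $\left(\theta_{J^\sharp}\right)^{\sharp}\colon J\hookrightarrow S^d_{\Bbbk^n}\otimes J\!\left(\Bbbk^{n\vee}\right)$ using injectivity of $J$, reduce to $J=S^d_{\Bbbk^n}$, identify $S^d_{\Bbbk^n}(\Bbbk^n)\simeq S(n,d)^\vee$ (an injective $S(n,d)$-module, being the dual of the regular representation), and match both sides of the evaluation map with $F(\Bbbk^n)^\vee$.

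There is, however, one step that does not work as written. In the final verification you check the compatibility of the two identifications only at $F=\Gamma^{d,\Bbbk^n}$, and then invoke a presentation $\Gamma^{d,V'}\to\Gamma^{d,V}\to F\to 0$. This is only legitimate if the projectives occurring in the presentation are ones for which the base case has been established. Now $\Gamma^{d,\Bbbk^n}$ is a projective generator of $\Pcalx{d}$ only when $n\ge d$; but the lemma is applied in the paper precisely in the opposite regime (the terms of the Troesch coresolution of $S^{2d(r)}_{\Bbbk^\ell}$ have degree $2p^rd$, typically much larger than the cogeneration parameter $2p^r\ell\le 2n$). In that regime an arbitrary $F$ admits no presentation by copies of $\Gamma^{d,\Bbbk^n}$, so your base case does not feed the d\'evissage you describe. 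The repair is straightforward: carry out the same Yoneda computation for $\Gamma^{d,V}$ with arbitrary $V$ (or at least $\dim V\ge d$), where $\Hom_{\PcalK}\!\left(\Gamma^{d,V},S^d_{\Bbbk^n}\right)\simeq S^d_{\Bbbk^n}(V)$ and $\Hom_{S(n,d)}\!\left(\Gamma^{d,V}(\Bbbk^n),S(n,d)^\vee\right)\simeq\Gamma^{d,V}(\Bbbk^n)^\vee$ are canonically isomorphic and the evaluation map realizes this isomorphism; then both sides of the evaluation map are even exact in $F$ (the left-hand side being $F^\sharp(\Bbbk^n)$, the right-hand side $F(\Bbbk^n)^\vee$), so the presentation argument goes through. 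A cosmetic point: to get $S(n,d)^\vee$ injective as a \emph{left} module you should dualize the \emph{right} regular representation, but this changes nothing of substance. With the base case corrected, your proof is complete.
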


\begin{lem}\label{cle1.5}
	Let $J\in \PcalKoper{2d}$ be a  $2n$-cogenerated injective. 
	Then $\phi^{0}_{G_{n},J}$ is an isomorphism.
\end{lem}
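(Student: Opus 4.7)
The plan is to split the problem into surjectivity, which is immediate from Theorem \ref{cle0}(3), and injectivity, which will use the $2n$-cogeneration hypothesis. Since $J$ is injective, Theorem \ref{cle0}(3) already guarantees that $\phi^{0}_{G_n,J}$ is an epimorphism, so the entire task reduces to showing it is injective.

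First, I will apply Lemma \ref{cle1}(2) to the $2n$-cogenerated injective $J$: evaluation at $\Bbbk^{2n\vee}$ yields an isomorphism
\[
\Hom_{\PcalK}(\Gamma^{p^{r}d}\circ X_G,\, J)\xrightarrow{\simeq}\Hom_{\GL_{2n}}\bigl(\Gamma^{p^{r}d}(X_G(\Bbbk^{2n\vee})),\, J(\Bbbk^{2n\vee})\bigr),
\]
using $\Bbbk^{2n\vee}\simeq\Bbbk^{2n}$ and that $\Hom_{S(2n,\bullet)}$ coincides with $\Hom_{\GL_{2n}}$ on polynomial representations. Under this isomorphism $\phi^{0}_{G_n,J}$ is identified with the map $\alpha_{J}: f\mapsto f(\omega_{G_n}^{\otimes p^{r}d})\in J(\Bbbk^{2n\vee})^{G_n}$, so it suffices to prove $\alpha_{J}$ is injective.

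Next, I will reduce to a standard injective. Kuhn dualizing the defining epimorphism of the $2n$-cogeneration condition yields a monomorphism $(\theta_{J^{\sharp}})^{\sharp}:J\hookrightarrow S^{2p^{r}d,\Bbbk^{2n}}\otimes J(\Bbbk^{2n\vee})$, which splits since $J$ is injective. Because tensoring with a finite-dimensional vector space $U$ commutes both with $\Hom_{\PcalK}(\Gamma^{p^{r}d}\circ X_G,-)$ and with the formation of $G_n$-invariants of the evaluation at $\Bbbk^{2n\vee}$ (in $\Pcalx{2p^{r}d}$ the factor $U$ carries no $G_n$-action), one has $\alpha_{J'\otimes U}\simeq\alpha_{J'}\otimes\mathrm{id}_{U}$ for any vector space $U$. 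Consequently injectivity of $\alpha_{J}$ follows from injectivity of $\alpha_{S^{2p^{r}d,\Bbbk^{2n}}}$.

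Finally, for $J=S^{2p^{r}d,\Bbbk^{2n}}$, the dual Yoneda lemma identifies the source $\Hom_{\PcalK}(\Gamma^{p^{r}d}\circ X_G,\, S^{2p^{r}d,\Bbbk^{2n}})$ with $(\Gamma^{p^{r}d}\circ X_G)^{\sharp}(\Bbbk^{2n})\simeq S^{p^{r}d}(X_G(\Bbbk^{2n}))$ (using Kuhn duality of composition and the fact that $p>2$ yields $X_{G}^{\sharp}\simeq X_{G}$), while the target becomes $S^{2p^{r}d}(\Bbbk^{2n}\otimes\Bbbk^{2n\vee})^{G_n}$, i.e.\ the polynomial $G_n$-invariants on $2n$ copies of $\Bbbk^{2n\vee}$. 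Under these identifications $\alpha_{S^{2p^{r}d,\Bbbk^{2n}}}$ becomes the classical polarization morphism into this invariant ring, and its injectivity is a consequence of the first fundamental theorem for $G_n$. The principal obstacle I foresee is making this identification with the classical polarization fully precise; an alternative route is to argue directly that the $\GL_{2n}$-submodule generated by $\omega_{G_n}^{\otimes p^{r}d}$ is all of $\Gamma^{p^{r}d}(X_G(\Bbbk^{2n\vee}))$, using irreducibility of $X_G(\Bbbk^{2n\vee})$ as a $\GL_{2n}$-module and non-degeneracy of $\omega_{G_n}$, which yields injectivity of the evaluation map $f\mapsto f(\omega_{G_n}^{\otimes p^{r}d})$ by a separation argument.
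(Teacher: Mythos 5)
Your overall skeleton is the same as the paper's: surjectivity is free from Theorem \ref{cle0}(3), and Lemma \ref{cle1}(2) transfers injectivity to the statement that the $\GL_{2n}$-equivariant evaluation $f\mapsto f\bigl(\omega_{G_n}^{\otimes p^rd}\bigr)$ is injective, equivalently that $\omega_{G_n}^{\otimes p^rd}$ generates $\Gamma^{p^rd}\bigl(X_G(\Bbbk^{2n\vee})\bigr)$ as a module over the Schur algebra. Your route (b) names exactly this statement, which is the real content of the lemma, but the justification offered (irreducibility of $X_G(\Bbbk^{2n\vee})$ plus ``a separation argument'') does not prove it. Irreducibility of a module $M$ gives that a nonzero vector generates $M$, but says nothing about $\Gamma^d(M)$, which is far from irreducible; in general $v^{\otimes d}$ generates only a proper submodule. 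Concretely, for $\GL_4$ and the \emph{degenerate} bivector $v=e_1\wedge e_2$, the submodule of $\Gamma^2(\Lambda^2\Bbbk^4)$ generated by $v^{\otimes 2}$ is a quotient of the Weyl module of weight $(2,2,0,0)$, of dimension at most $20<21=\dim\Gamma^2(\Lambda^2\Bbbk^4)$. So non-degeneracy of $\omega_{G_n}$ is not a side remark but the whole mechanism, and you never use it. The paper's proof supplies precisely this missing step: it builds an explicit natural transformation $\tilde\theta_{G_n}:\Gamma^{2d,\Bbbk^{n\vee}\oplus\Bbbk^{n\vee}}\to\Gamma^{d}\circ X_G$ carrying the canonical generator to $\omega_{G_n}^{\otimes d}$, and its evaluation at $\Bbbk^{2n\vee}$ is an epimorphism because the non-degenerate form identifies $\Bbbk^{2n}$ with $\Bbbk^{2n\vee}$ and lets one factor the generic element through the composition pairing; Lemma \ref{cle1}(2) then turns this epimorphism into the desired monomorphism.

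Your route (a) is a genuinely different endgame but also not complete as stated. The splitting $J\hookrightarrow S^{2p^rd}_{\Bbbk^{2n}}\otimes J(\Bbbk^{2n\vee})$ and the compatibility $\alpha_{J'\otimes U}\simeq\alpha_{J'}\otimes\mathrm{id}_U$ are fine, and they correctly reduce everything to the standard injective. But injectivity of the resulting map into $S^{2p^rd}\bigl((\Bbbk^{2n\vee})^{\oplus 2n}\bigr)^{G_n}$ is the \emph{second}, not the first, fundamental theorem: the absence of relations among the $(i|j)_{G_n}$ for $2n$ vectors in $\Bbbk^{2n}$. This does hold characteristic-freely (relations only appear from $2n+1$, resp.\ $2n+2$, vectors on, by de Concini--Procesi), but it is a much stronger input than the partial statement ($\ell\le n$) quoted in the paper, and it is against the grain of the paper's aim of recovering such statements; moreover the identification of $\alpha_{S^{2p^rd}_{\Bbbk^{2n}}}$ with the classical polarization map, which you acknowledge as an obstacle, still has to be carried out. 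In short: the reduction steps are sound and match the paper, but the decisive surjectivity/generation statement is left unproved in route (b) and outsourced to a heavy classical theorem plus an unverified identification in route (a).
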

\begin{proof}
	{By Theorem} \ref{cle0}(3), it suffices to show that $\phi^{0}_{G_{n},J}$ is a monomorphism.
	Denote by $\tilde \theta$ the composition
	\begin{align*}
	\Gamma^{2d,\Bbbk^{n\vee}\oplus\Bbbk^{n\vee}}\twoheadrightarrow &\Gamma^{d}\left(\Hom(\Bbbk^{n\vee},-)\right)\otimes \Gamma^{d}\left(\Hom(\Bbbk^{n\vee},-)\right)\\
	\xrightarrow{\simeq}&\Gamma^{d}\left(\Hom(-^{\vee},\Bbbk^{n\vee})\right)\otimes \Gamma^{d}\left(\Hom(\Bbbk^{n\vee},-)\right)\\
	\to&\Gamma^{d}\left(\Hom(-^{\vee},-)\right)\xrightarrow{\simeq}\Gamma^{d}\circ\otimes^{2}.
	\end{align*}
	Since $p$ is odd, $\otimes^{2}=\Lambda^{2}\oplus S^{2}$.
	We denote by $\tilde\theta_{G_{n}}$ the composition $\Gamma^{2d,\Bbbk^{n\vee}\oplus\Bbbk^{n\vee}}\to \Gamma^{d}\circ \otimes^{2}\to \Gamma^{d}\circ X_{G}.$
	By definition, $\tilde \theta_{\Bbbk^{2n\vee}}$ is an epimorphism. Then the $\left(\tilde \theta_{G_{n}}\right)_{\Bbbk^{2n\vee}}$ are epimorphisms.
	By Lemma \ref{cle1}, we have a commutative diagram
	\begin{equation*}
	\xymatrix{   \Hom\left(\Gamma^{d}\circ X_{G},J\right) \ar[rrr]^-{\Hom\left(\tilde\theta_{G_{n}},J\right)}\ar[d]^{\simeq}&&& \Hom\left(\Gamma^{2d,\Bbbk^{n}\oplus\Bbbk^{n}},J\right)   \ar[d]^{\simeq}\\
		\Hom\left(\Gamma^{d}\circ X_{G} (\Bbbk^{2n\vee}),J(\Bbbk^{2n\vee})\right) \ar[rrr]^-{\Hom\left(\left(\tilde\theta_{G_{n}}\right)_{\Bbbk^{2n\vee}},J(\Bbbk^{2n\vee})\right)}&&&\Hom\left(\Gamma^{2d,\Bbbk^{n}\oplus\Bbbk^{n}}(\Bbbk^{2n\vee}),J(\Bbbk^{2n\vee})\right).
	}
	\end{equation*}
	Then $\Hom\left(\tilde\theta_{G_{n}},J\right)$ is a monomorphism. Moreover, the following diagram commutes
	\begin{equation*}
	\xymatrix{  \Hom\left(\Gamma^{2d},\Bbbk^{n\vee}\oplus \Bbbk^{n\vee},J\right)  \ar[rrr]^{\simeq}_{\rm Yoneda}&&&  J\left(\Bbbk^{2n\vee}\right) \\
		\Hom\left(\Gamma^{d}\circ X_{G},J\right)  \ar[rrr]^{\phi^{0}_{G_{n},J}}\ar[u]^{\Hom\left(\left(\tilde\theta\right)_{G_{n}},J\right)}&&&H^{0}_{\mathrm{rat}}\left(G_{n},J\left(\Bbbk^{2n\vee}\right)\right).\ar[u]
	}
	\end{equation*}
	Then we obtain the conclusion.
\end{proof}

\begin{deff} A functor $F\in\Pcalx{d}$ is called \emph{$n$-coresolved} if there is an injective coresolution  $J_F^\bullet$ of $F$ such that the injective functors $J_F^{i}$ are $n$-cogenerated.
\end{deff}
The Troesch coresolutions \cite{Tro05b} of the functors $S^{d(r)}$ are $p^r$-coresolutions. 
Thus we deduce the following result.
\begin{lem}[\text{\cite[Proposition 6.11]{Tou12}}]\label{lem3.5.8}
	The functors $S^{d(r)}_{\Bbbk^\ell}$ are $p^{r}\ell$-coresolved.
\end{lem}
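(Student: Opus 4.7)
My plan is to construct an explicit injective coresolution of $S^{d(r)}_{\Bbbk^\ell}$ by combining single-variable Troesch coresolutions via tensor product, and then to verify that its terms are $p^r\ell$-cogenerated using the exponential formula. Since the Frobenius twist commutes with direct sums, the exponential formula for symmetric powers supplies a natural isomorphism
\[
S^{d(r)}_{\Bbbk^\ell}\;\simeq\;\bigoplus_{d_1+\cdots+d_\ell=d} S^{d_1(r)}\otimes\cdots\otimes S^{d_\ell(r)},
\]
which reduces the problem to building a $p^r\ell$-coresolved injective coresolution of each summand.

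For each $i$, Troesch's construction provides an injective coresolution $T_i^\bullet$ of $S^{d_i(r)}$ whose terms, by Lemma \ref{lem3.2.5}, are finite direct sums of functors $S^\mu$ with $\mu$ a tuple of weight $p^r$ indexed by $\{0,\ldots,p^r-1\}$, and in particular of length at most $p^r$. Because $\otimes$ is exact in each variable and preserves injectives in $\PcalK$ (any $S^a_V\otimes S^b_W$ embeds as a direct summand of $S^{a+b}_{V\oplus W}$ via the exponential isomorphism), the total complex $T_1^\bullet\otimes\cdots\otimes T_\ell^\bullet$ is an injective coresolution of $\bigotimes_i S^{d_i(r)}$. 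Summing over all compositions $(d_1,\ldots,d_\ell)$ of $d$ assembles these into an injective coresolution of $S^{d(r)}_{\Bbbk^\ell}$ whose terms are direct sums of functors of the form $S^{\mu^1}\otimes\cdots\otimes S^{\mu^\ell}\simeq S^{(\mu^1,\ldots,\mu^\ell)}$, indexed by a concatenated tuple of length at most $p^r\ell$.

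The key lemma I will prove is: for any tuple $\nu=(\nu_1,\ldots,\nu_k)$ of nonnegative integers, $S^\nu$ is $n$-cogenerated whenever $n\ge k$. First, $S^{d}_{\Bbbk^n}$ is itself $n$-cogenerated: its Kuhn dual is the representable projective $\Gamma^{d,\Bbbk^n}$, and the universal evaluation map $\theta_{\Gamma^{d,\Bbbk^n}}$ contains in its image the element corresponding to $\mathrm{id}_{\Bbbk^n}$ under Yoneda, hence is surjective. Next, the exponential isomorphism
\[
S^{|\nu|}_{\Bbbk^n}(V)\;=\;S^{|\nu|}\!\left(V^{\oplus n}\right)\;\simeq\;\bigoplus_{a_1+\cdots+a_n=|\nu|}S^{a_1}(V)\otimes\cdots\otimes S^{a_n}(V)
\]
exhibits $S^\nu$, after padding $\nu$ with zeros up to length $n$, as a direct summand of $S^{|\nu|}_{\Bbbk^n}$. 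Since $\theta_{(-)^\sharp}$ distributes over direct sums, any direct summand of an $n$-cogenerated functor is $n$-cogenerated, and the lemma follows.

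Applying the lemma with $n=p^r\ell$ to each concatenated tuple $(\mu^1,\ldots,\mu^\ell)$ shows that every term of the constructed coresolution is $p^r\ell$-cogenerated, completing the proof. The only technical point lies in the bookkeeping needed to identify the concatenated $S^{(\mu^1,\ldots,\mu^\ell)}$ compatibly with its embedding as a summand of $S^{|\mu|}_{\Bbbk^{p^r\ell}}$; beyond this, all ingredients are standard consequences of Troesch's coresolution together with the exponential formula for symmetric powers, so I expect no real obstacle.
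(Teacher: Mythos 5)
Your argument is correct and is essentially the proof of the cited \cite[Proposition 6.11]{Tou12}: tensor the Troesch coresolutions together via the exponential formula and observe that the resulting terms $S^{(\mu^1,\ldots,\mu^\ell)}$, having length at most $p^r\ell$, are $p^r\ell$-cogenerated as direct summands of $S^{|\mu|}_{\Bbbk^{p^r\ell}}$. The only slip is cosmetic: the terms of the Troesch coresolution of $S^{d_i(r)}$ are direct sums of $S^\mu$ with $\mu$ of length $p^r$ but weight $p^rd_i$ (weight $p^r$ is the case $d_i=1$ treated in Lemma \ref{lem3.2.5}); since only the length of the tuple enters your cogeneration lemma, nothing is affected.
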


\begin{thm}\label{cle2}
	Let $F\in\PcalKoper{d}$ be $2n$-coresolved. Then $\phi_{G_{n},F}$ is an isomorphism.  
\end{thm}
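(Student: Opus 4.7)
The plan is to prove the theorem by a hypercohomology spectral sequence argument built on a $2n$-cogenerated injective coresolution of $F$, reducing to the degree-zero comparison supplied by Lemma \ref{cle1.5}. This is the natural adaptation, to the classical groups $\Spoper_n$ and $\Ooper_{n,n}$, of the strategy Touz\'e employs for $\GL_n$ in \cite[Theorem 6.14]{Tou12}.

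First, fix a $2n$-cogenerated injective coresolution $J^\bullet \to F$ in $\PcalKoper{d}$. Since each $J^i$ is injective in $\PcalK$, the source $\Extx{\PcalK}{\Gamma^{p^rd}\circ X_G,F}$ is computed by the complex $\Homx{\PcalK}{\Gamma^{p^rd}\circ X_G,J^\bullet}$; by Lemma \ref{cle1.5} applied term-by-term together with the naturality in Theorem \ref{cle0}(1), this complex is identified, via $\phi^0$, with $H^0_{\mathrm{rat}}\left(G_n,J^\bullet(\Bbbk^{2n\vee})\right)$. Next, since evaluation at $\Bbbk^{2n\vee}$ is exact in $\PcalK$, the complex $J^\bullet(\Bbbk^{2n\vee})$ is a rational $G_n$-resolution of $F(\Bbbk^{2n\vee})$; forming the hypercohomology double complex by resolving each $J^p(\Bbbk^{2n\vee})$ vertically by rational $G_n$-injectives, its two associated spectral sequences both abut to $H^*_{\mathrm{rat}}\left(G_n,F(\Bbbk^{2n\vee})\right)$. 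The vertical-then-horizontal one degenerates because $J^\bullet(\Bbbk^{2n\vee})$ is acyclic outside degree zero, while the other has $E_1^{p,q}=H^q_{\mathrm{rat}}\left(G_n,J^p(\Bbbk^{2n\vee})\right)$ and its edge map on the row $q=0$ recovers $\phi_{G_n,F}$ by construction.

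The crux is therefore the vanishing $H^q_{\mathrm{rat}}\left(G_n,J(\Bbbk^{2n\vee})\right)=0$ for every $q>0$ and every $2n$-cogenerated injective $J$ in $\PcalKoper{d}$: granting it, the second spectral sequence collapses onto its $q=0$ row and the resulting isomorphism must coincide with $\phi_{G_n,F}$ by universality of the source $\delta$-functor (Theorem \ref{cle0}(2)) combined with the degree-zero agreement already furnished by Lemma \ref{cle1.5}. To establish this vanishing I would exploit the structural content of being $2n$-cogenerated: Kuhn-dualizing the defining epimorphism $\theta_{J^{\sharp}}$ yields an embedding $J\hookrightarrow S^d_{\Bbbk^{2n}}\otimes J(\Bbbk^{2n\vee})$, which splits by injectivity of $J$, so $J(\Bbbk^{2n\vee})$ becomes a $G_n$-direct summand of a direct sum of copies of $S^d\left(\Bbbk^{2n}\otimes\Bbbk^{2n\vee}\right)$. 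The required $G_n$-acyclicity then reduces to the classical statement that, in odd characteristic, polynomial modules of this form admit a good filtration as rational modules over the reductive classical groups $\Spoper_n$ and $\Ooper_{n,n}$ and hence have vanishing higher rational cohomology by Kempf-type vanishing. This vanishing step is where essentially all of the technical work of the proof lies; once past it, the remainder is a routine comparison of spectral sequences and an application of universality.
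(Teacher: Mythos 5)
Your proposal is correct and follows essentially the same route as the paper: take a $2n$-cogenerated injective coresolution $J^\bullet$ of $F$, apply Lemma \ref{cle1.5} termwise to identify $\Homx{\PcalK}{\Gamma^{p^rd}\circ X_G,J^\bullet}$ with $H^0_{\mathrm{rat}}\left(G_n,J^\bullet(\Bbbk^{2n\vee})\right)$, and pass to (hyper)homology. The only difference is one of presentation: the paper's proof simply "takes homology" of the two isomorphic complexes and leaves implicit the acyclicity $H^{q}_{\mathrm{rat}}\left(G_n,J^p(\Bbbk^{2n\vee})\right)=0$ for $q>0$, which you rightly isolate as the crux and justify via the split embedding $J\hookrightarrow S^d_{\Bbbk^{2n}}\otimes J\left(\Bbbk^{2n\vee}\right)$ and the good-filtration/Kempf-vanishing results underlying \cite{Tou10a}.
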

\begin{proof}
	Since the functor $F\in\PcalKoper{d}$ is $2n$-coresolved, by definition, there is an injective coresolution $J^\bullet$ of $F$ such that the injective functors $J^{i}$ are $2n$-cogenerated. 
	By Lemma \ref{cle1}, the evaluation on $\Bbbk^{2n}$ induces an injective coresolution  $J^\bullet(\Bbbk^{2n})$ of $F(\Bbbk^{2n})$. Moreover, by Lemma \ref{cle1.5}, we have an isomorphism of the complexes $\phi^0_{G_n,J^\bullet}:\Homx{\PcalK}{\Gamma^d\circ X_G,J^\bullet}\to \Homx{G_n}{\Bbbk,J^\bullet(\Bbbk^{2n})}$. 
	Taking the homology, we obtain the result.
\end{proof}

\subsection{ Proof of Theorem \ref{thm3.3.11}}\label{subs3.3.5}

By \eqref{equ3.5.3} and Lemma \ref{lem3.4.4}, there is a morphism of algebras
\begin{equation}\label{equ3.4.5}
\Phi_{G_n}:S^{*}\left(E_{r}\otimes X_G(\Bbbk^{\ell})\right)\to  H^{*}_{\mathrm{rat}}\left(G_{n},S^{*}\left(\Bbbk^{2n\vee(r)\oplus \ell}\right)\right).
\end{equation}
Moreover, this morphism is an isomorphism if the following morphisms are isomorphisms for all $d\in\Nbb$:
\begin{equation*}
\phi_{G_n,S^{2d(r)}_{\Bbbk^\ell}}:\Extx{\PcalK}{\Gamma^{p^rd}\circ X_G,S^{2d(r)}_{\Bbbk^\ell}}\to H^*_\mathrm{rat}\left(G_n,S^{2d(r)}_{\Bbbk^\ell}(\Bbbk^{2n\vee})\right).
\end{equation*}
By Lemma \ref{lem3.5.8}, $S^{2d(r)}_{\Bbbk^\ell}$ is $2p^r\ell$-coresolved. 
By the hypothesis $n\ge p^r\ell$ and Theorem \ref{cle2},  the morphisms $\phi_{G_n,S^{2d(r)}_{\Bbbk^\ell}}$ are isomorphisms. 
Then, the morphism of algebras $\Phi_{G_n}$ is an isomorphism. 

Since $E_r$ is a graded vector space given by $\left(E_r\right)^i=\Bbbk$ if $i=0,2,\ldots 2p^r-2$ and $0$ otherwise, $E_r$ has a basis $\epsilon_h,h=0,1,\ldots,p^r-1$ with $\epsilon_h\in \left(E_r\right)^{2h}$. 
Denote by $e_i, i=1,2,\ldots,\ell$ a basis of $\Bbbk^\ell$. 
Then $\epsilon_h\otimes (e_i\wedge e_j)$ with $0\le h<p^r$ and $1\le i<j\le n$ is a basis of the algebra $S^*\left(E_r\otimes\Lambda^2(\Bbbk^\ell)\right)$, and $\epsilon_h\otimes (e_ie_j)$ with $0\le h<p^r$ and $1\le i\le j\le n$ is a basis of algebra $S^*\left(E_r\otimes S^2(\Bbbk^\ell)\right)$.	
For $0\le h<p^{r}, (i,j)\in\mathtt I_{G_{n}}$ we define a cohomological class $(h|i|j)_{G_n}\in H_{\mathrm{rat}}^{2h}\left(G_n,S^{*}\left(\Bbbk^{2n\vee(r)\oplus \ell}\right)\right) $ by
\begin{equation*}
(h|i|j)_{G_{n}}= \begin{cases}
\Phi_{\Spoper_{n}}(\epsilon_{h}\otimes (e_{i}\wedge e_{j}))& G_{n}=\Spoper_{n},\\
\Phi_{\Ooper_{n,n}}(\epsilon_{h}\otimes (e_{i} e_{j}))& G_{n}=\Ooper_{n,n}.
\end{cases}
\end{equation*}
Since the morphism \eqref{equ3.4.5} is an isomorphism, the elements $(h|i|j)_{G_{n}}$ with $0\le h<p^r$ and $(i,j)\in\mathtt{I}_{G_n}$ is a basis of algebra $H^{*}_{\mathrm{rat}}\left(G_{n},S^{*}\left(\Bbbk^{2n\vee(n)\oplus \ell}\right)\right)$. 
This completes the proof of Theorem \ref{thm3.3.11}.

\subsection*{Acknowledgments} 
This is part of the author's Ph.D. thesis \cite{PhamVT15}, written under the supervision of Lionel Schwartz and Antoine Touz\'e at the Universit\'e
Paris 13. 
The author is greatly indebted to Antoine Touz\'e for suggesting the problem and for many stimulating conversations.
I would like to thank
the two referees for carefully reading our manuscript and for giving such  valuable comments which substantially helped improving the quality of the paper.

\end{document}